\documentclass[12pt,a4paper,reqno]{amsart}
\usepackage{amsmath}
\usepackage{amssymb}
\usepackage{enumitem}
\usepackage{geometry}
\usepackage{mathtools,nccmath}
\usepackage{etoolbox} 
\usepackage{xparse} 
\usepackage{amsthm}
\usepackage{tikz}
\usetikzlibrary{shapes}
\usepackage{mathrsfs}
\usepackage{hyperref}
\usetikzlibrary{arrows}
\usepackage[utf8]{inputenc}
\numberwithin{equation}{section}
\usepackage{tikz-cd}
\usepackage[T1]{fontenc}
\usepackage{txfonts}

\usetikzlibrary{calc}
\usepackage{comment}
\usepackage{xcolor}

\usepackage[%
shortalphabetic, 
abbrev, 
backrefs, 
]{amsrefs}
\hypersetup{%
	linktoc=all, 
	colorlinks=true,
	citecolor=magenta,
}

\NewDocumentCommand\Crefnameitem { m m m O{\textup} O{(\roman*)}} {%
	\Crefname{#1enumi}{#2}{#3} 
	\AtBeginEnvironment{#1}{%
		\crefalias{enumi}{#1enumi}%
		\setlist[enumerate,1]{
			label={#4{#5}.},
			ref={#5}
		}%
	}
}

\usetikzlibrary{%
	cd, 
	shapes, 
	arrows, 
}
\NewDocumentCommand\placeholder{}{\:\cdot\:} 
\NewDocumentCommand\NewPairedDelimiterS{mmm}{%
	\DeclarePairedDelimiterX{#1}[1]{#2}{#3}%
	{\ifblank{##1}{\placeholder}{##1}}%
}
\NewDocumentCommand\NewPairedDelimiterSS{mmmO{,}}{%
	\DeclarePairedDelimiterX{#1}[2]{#2}{#3}%
	{\ifblank{##1}{\placeholder}{##1}%
		#4%
		\ifblank{##2}{\placeholder}{##2}}%
}
\NewPairedDelimiterS\abs\lvert\rvert 
\NewPairedDelimiterS\norm\lVert\rVert 
\NewPairedDelimiterS\lrangle\langle\rangle 
\NewPairedDelimiterS\paren\lparen\rparen 
\NewPairedDelimiterS\lrbrack\lbrack\rbrack 
\NewPairedDelimiterS\lrbrace\lbrace\rbrace 
\NewPairedDelimiterS\ceil\lceil\rceil 
\NewPairedDelimiterS\floor\lfloor\rfloor 
\NewPairedDelimiterS\bra\langle\vert 
\NewPairedDelimiterS\ket\vert\rangle 
\NewPairedDelimiterS\normord{\mathopen{:}}{\mathclose{:}} 
\NewPairedDelimiterSS\braket\langle\rangle[%
\,\delimsize\vert\,\mathopen{}%
] 
\NewPairedDelimiterSS\pairing\langle\rangle
\NewPairedDelimiterSS\inner\lparen\rparen
\NewPairedDelimiterSS\Liebracket\lbrack\rbrack

\newtheorem{thm}{Theorem}[section]
\newtheorem*{theorem*}{Theorem}

\newtheorem{prop}[thm]{Proposition}
\newtheorem{lm}[thm]{Lemma}

\newtheorem{coro}[thm]{Corollary}

\newcommand\numberthis{\addtocounter{equation}{1}\tag{\theequation}}

\theoremstyle{definition}
\newtheorem{example}[thm]{Example}
\newtheorem{df}[thm]{Definition}
\newtheorem{remark}[thm]{Remark}

\newtheorem{innercustomgeneric}{\customgenericname}
\providecommand{\customgenericname}{}
\newcommand{\newcustomtheorem}[2]{%
	\newenvironment{#1}[1]
	{%
		\renewcommand\customgenericname{#2}%
		\renewcommand\theinnercustomgeneric{##1}%
		\innercustomgeneric
	}
	{\endinnercustomgeneric}
}
\newcustomtheorem{customthm}{Theorem}

\DeclareMathOperator{\Adm}{Adm}

\newcommand{\bt}{\boxtimes}
\newcommand{\fusion}[3]{{\binom{#3}{#1\;#2}}}

\newcommand{\al}{\alpha}

\newcommand{\1}{{{\bf 1}}}

\newcommand{\Hom}{{\rm Hom}\,}

\newcommand{\End}{{\rm End}\,}
\newcommand{\Res}{{\rm Res}\,}

\newcommand{\Z}{\mathbb{Z}}
\newcommand{\Q}{\mathbb{Q}}
\newcommand{\N}{\mathbb{N}}
\newcommand{\A}{\mathbb{A}}
	\def\Res{{\rm Res}}
	\def\wt{{\rm wt}}

	\newcommand{\la}{\lambda}
	
	\def\C{{\mathbb C}}
	\def\P{{\mathbb P}}
	\def\R{{\mathcal R}}

	\def\Z{{\mathbb Z}}
	
	\def\N{{\mathbb N}}
	
	\def\Y{{\mathcal Y}}
	
	\def\1{{\bf 1}}
	
	\def \End{{\rm End}}
	\def \Hom{{\rm Hom}}

	\def \b{\beta}

	\def \ra {\rightarrow}
	\def \g{\mathfrak{g}}

	\def\Log{\mathrm{Log}}

	\def \ssq{\subseteq}
	\def \C {\mathbb{C}}
	
	\def\Id{\mathrm{Id}}
	\def\op{\oplus}
	
	\def\vac{\mathbf{1}}
	\def\spn{\mathrm{span}}
	\def \vac{\mathbf{1}}
	
	\def\bs{\backslash}
	\def\om{\omega}
	\def\fd{\mathrm{fd}}
	\def\sl{\mathfrak{sl}}

	\def\o{\otimes}
	
	\def\<{\langle}
	\def\>{\rangle}
	
	\def\Om{\Omega}

	\def\ds{\dots}
	\def\res{\mathrm{res}}
	
	\def\L{\mathsf{L}}
	\newcommand\CC{\mathcal C}

	\def\gl{\mathfrak{gl}}
	
	\def\Mod{\mathsf{Mod}}
	\def\ord{\mathrm{ord}}
	
	\def\Arg{\mathrm{Arg}}
	
	\def\ga{\gamma}
	
	\def\sL{\mathsf{L}}
	\def\sN{\mathsf{N}}
	\def\CC{\mathcal{C}}
	
	\def\Adm{\mathsf{Adm}}
	\def\scrU{\mathscr{U}}
	\def\Ord{\mathsf{Ord}}
	\newcommand \WV {\begin{smallmatrix}W\\WV\end{smallmatrix}}

	\def\<{\langle}
	\def\>{\rangle}
	
	\newcommand{\jq}[1]{}
	
	\allowdisplaybreaks
	
	\makeatletter
	\providecommand*{\braiding@longrightarrowfill}{%
		\arrowfill@\relbar\relbar\longrightarrow}
	\providecommand*{\xlongrightarrow}[2][]{%
		\ext@arrow 0359\braiding@longrightarrowfill{#1}{#2}}
	\makeatother
	
\begin{document}
		\title{Restricted conformal blocks and the braiding on Zhu algebra modules}
		
		\author{Zhiguang Cui}
		\address{School of Mathematics, Sichuan university, Chengdu, China,}
		\email{zgcui@scu.edu.cn}
		
		\author{Jianqi Liu}
		\address{Department of Mathematics, New Uzbekistan University, Movarounnahr street 1, Tashkent 100001, Uzbekistan}
		\email{j.liu@newuu.uz}

		\begin{abstract}
		Let $V$ be a vertex operator algebra (VOA), not necessarily rational or $C_2$-cofinite, such that the category $\CC_1(V)$ of $C_1$-cofinite grading-restricted generalized $V$-modules is semisimple and contains the generalized Verma modules associated to finite-dimensional modules over the Zhu algebra $A(V)$. Such VOAs include affine VOAs at admissible and generic levels. We prove that the bottom degree of Huang-Lepowsky's $P(w)$-tensor product module $M^1\bt_{P(w)}M^2$ is isomorphic to the left $A(V)$-module $M^1\odot_{w,0}M^2$ arising from the restricted conformal blocks on $(\P^1,\infty,w,0)$. By shifting the marked points and using the parallel transport of the restricted conformal blocks, we construct an explicit braiding isomorphism on the category of finite-dimensional $A(V)$-modules, which is the restriction of the braiding isomorphism of Huang-Lepowsky-Zhang to the bottom degrees. For the affine VOA $L_k(\mathfrak g)$ at a positive integral level, it is given by the Casimir tensor of $\mathfrak g$ on certain quotients of tensor products of finite-dimensional $\mathfrak g$-modules, and at generic levels it recovers Drinfeld's commutativity isomorphism.
		\end{abstract}
		
		\maketitle
		
		\section{Introduction}

The tensor product of modules over a vertex operator algebra (VOA) $V$ and its braiding isomorphism play an essential role in the representation theory of VOAs and in conformal field theory (CFT). Using the restriction technique of conformal blocks \cite{GLZ25,GLZ26,Liu26}, we obtain an explicit description of the braiding isomorphism on the bottom degrees of $V$-modules, which are modules over the Zhu algebra $A(V)$ \cite{Z96}. As a consequence, for a class of VOAs that are not necessarily rational or $C_2$-cofinite, the category $\Mod_\fd(A(V))$ of finite-dimensional left $A(V)$-modules is a braided tensor category with an explicit braiding isomorphism, and it is braided equivalent to the corresponding category of $V$-modules.

In the VOA tensor product theory, the module $M^1\bt_{P(z)}M^2$ is defined by a universal property, and the braiding isomorphism is given by the commutativity isomorphism and the parallel transport of the $P(z)$-tensor products \cite{HLZ}, which depend on the analytic properties of intertwining operators among infinite-dimensional modules. On the other hand, the fusion rules can be computed through the bottom degrees $M^2(0)$ and $M^3(0)^\ast$, and the bimodule $A(M^1)$ over the Zhu algebra $A(V)$. This is the well-known fusion rules theorem claimed in \cite{FZ92}, which was revised by Li \cite{Li99b}, and a variant of it was proved by the second-named author \cite{Liu23}; see also \cite{H22,H24}. It is natural to expect that the tensor product and the braiding isomorphism can also be described using the bottom degrees, that is, on the category of finite-dimensional $A(V)$-modules.

Recently, Gao, the second-named author, and Zhu introduced (twisted) restricted conformal blocks in \cite{GLZ25,GLZ26}. By restricting the module $(M^3)'$ attached to $\infty$ in the three-point conformal blocks on $(\P^1,\infty,1,0)$ to its bottom degree $M^3(0)^\ast$, the second-named author introduced a left $A(V)$-module
		\[
		M^1\odot M^2=\frac{M^1\o_\C M^2}{\mathcal{L}_{\P^1\bs\{1,0\}}(V)_{<0}.(M^1\o_\C M^2)},	\]
		where $\mathcal{L}_{\P^1\bs\{1,0\}}(V)_{<0}$ is an ideal of the $\infty$-restricted chiral Lie algebra \cite{Liu26,GLZ26}. This quotient space also occurred in a construction of fusion tensor product of modules over $C_2$-cofinite VOAs by Tsuchiya and Wood (see \cite[(2.84)]{TW13}). If $V$ is strongly rational, then $M^1\odot M^2$ is isomorphic to the bottom degree of Huang-Lepowsky's $P(1)$-tensor product module $M^1\bt_{P(1)}M^2$, and $\Mod_\fd(A(V))$ is a tensor category with respect to the fusion tensor product $S\bt T=\Phi^\L(S)\odot \Phi^\L(T)$ \cite[Theorem~6.8]{Liu26}, where $\Phi^\L$ is the generalized Verma module functor \cite{DLM98,DGK25}. In this paper, we give an explicit formula for the braiding on $\Mod_\fd(A(V))$ and extend this tensor product construction to VOAs that need not be rational or $C_2$-cofinite.

\subsection{Main theorems}

Let $\CC_1(V)$ denote the category of $C_1$-cofinite grading-restricted generalized $V$-modules, and write $\Om$ for the bottom-degree functor. Throughout most of the paper, we assume that
\begin{equation}\label{eq:intro-assumption}
    \CC_1(V)\ \text{is semisimple, and}\ \ \Phi^\L(\Mod_\fd(A(V)))\ssq \CC_1(V).
\end{equation}
Then $(\Phi^\L\dashv \Om):\Mod_\fd(A(V))\rightleftarrows \CC_1(V)$ is an equivalence of categories, see Proposition~\ref{prop:condforC1V} and \cite[Theorem~3.2]{AL26}.

For $w\in\C^\times$, the same restriction at $\infty$ gives a left $A(V)$-module $M^1\odot_{w,0}M^2$, with $M^1\odot_{1,0}M^2=M^1\odot M^2$, see Definition~\ref{df:odotw}. Our first main theorem identifies this module with the bottom degree of the $P(w)$-tensor product (see Theorem~\ref{thm:eta-explicit}).

\begin{customthm}{A}
			Assume $\CC_1(V)$ is semisimple and $\Phi^\L(\Mod_\fd(A(V)))\ssq \CC_1(V)$. Let $M^1,M^2\in \CC_1(V)$. Then the canonical map
			\[
			\eta_w: M^1\odot_{w,0}M^2\ra \Om\left(M^1\bt_{P(w)}M^2\right),\quad v^1\odot_w v^2\mapsto \pi_0\left(v^1\bt_{P(w)}v^2\right),
			\]
			is a well-defined isomorphism of left $A(V)$-modules, which is natural in $M^1$ and $M^2$, where $\pi_0$ is the projection onto the bottom degree.
		\end{customthm}

By shifting the marked points and using parallel transport of the restricted conformal blocks, we obtain the following isomorphisms (see Theorem~\ref{thm:Rmatrix}).

\begin{customthm}{B}
			Assume $\CC_1(V)$ is semisimple and $\Phi^\L(\Mod_\fd(A(V)))\ssq \CC_1(V)$. Let $M^1,M^2\in \CC_1(V)$. Then there exist isomorphisms of left $A(V)$-modules which are natural in $M^1,M^2$:
			\[
			\begin{aligned}
			R^{M^2,M^1}_\pm:&\ M^2\odot M^1\ra M^1\odot M^2,\\
			R^{M^2,M^1}_\pm(v^2\odot v^1)
			&=e^{\mp\pi iL(0)}.\left(e^{\pm\pi iL(0)}v^1\odot e^{\pm\pi iL(0)}v^2\right),
			\end{aligned}
			\]
			where the outer factor $e^{\mp\pi iL(0)}$ acts on $M^1\odot M^2$ through $e^{\mp\pi i[\om]}$. Moreover, \[\left(R_-^{M^1,M^2}\right)^{-1}=R_+^{M^2,M^1}\quad  \mathrm{and}\quad \left(R_+^{M^1,M^2}\right)^{-1}=R_-^{M^2,M^1}.\]
		\end{customthm}

Let $\R_{M^1,M^2}=R_-^{M^1,M^2}$. Using Theorems A and B, we prove that $\R$ is the restriction of the braiding isomorphism $c_{M^1,M^2}$ of Huang-Lepowsky-Zhang \cite{HLZ,H25} to the bottom degrees, namely, $\eta_1\circ\R_{M^1,M^2}=\Om(c_{M^1,M^2})\circ\eta_1$, see Theorem~\ref{thm:braided-tensor-structure}. Hence $\Mod_\fd(A(V))$ is a braided tensor category with respect to the fusion tensor product $S\bt T=\Phi^\L(S)\odot \Phi^\L(T)$ and the braiding isomorphism
		\begin{equation}\label{eq:intro-braiding}
			\R_{M^1,M^2}(v^1\odot v^2)=e^{\pi i[\om]}.\left(e^{-\pi iL(0)}v^2\odot e^{-\pi iL(0)}v^1\right),
		\end{equation}
		and $\Phi^\L:\Mod_\fd(A(V))\to \CC_1(V)$ is a braided tensor equivalence. In particular, if $V$ is strongly rational, then $\Mod_\fd(A(V))$ is a modular tensor category with the twist $e^{2\pi i[\om]}$. Note that the associativity constraints on $\Mod_\fd(A(V))$ are transported from $\CC_1(V)$ through $\Phi^\L$, while the braiding isomorphism \eqref{eq:intro-braiding} is given by an explicit formula on finite-dimensional $A(V)$-modules, which only involves the action of $L(0)$ and the central element $[\om]\in A(V)$. As far as we know, such a formula for the braiding isomorphism on the bottom degrees did not appear in the literature before, even in the strongly rational case.

Besides the strongly rational VOAs, condition \eqref{eq:intro-assumption} is satisfied by the affine VOAs at admissible levels, the affine VOAs $V^k(\g)$ at generic levels $k+h^\vee\notin \Q$, the affine VOAs $V^{-2\pm\frac{1}{p}}(\sl_2)$, and some affine VOAs at negative levels in \cite{CY21}, see Example~\ref{ex:levels}. Note that the second condition in \eqref{eq:intro-assumption} does not follow from the first one. For instance, it fails for the Virasoro VOA with generic central charge.

\subsection{Relation to earlier work and proofs of the main theorems}

The braided tensor structure on module categories has been studied extensively in the theory of both VOAs and CFTs. For instance, on the CFT side, Tsuchiya and Kanie proved that the conformal blocks of the WZNW-model on $\P^1$ give rise to monodromy representations of the braid groups \cite{TK87}, and Moore and Seiberg described the braiding and fusion matrices of rational CFTs by a system of polynomial equations \cite{MS89}. For affine Lie algebras, these conformal blocks satisfy the Knizhnik-Zamolodchikov (KZ) equations \cite{KZ84}, whose monodromy was studied by Kohno \cite{Koh87} and Drinfeld \cite{Dri90}. Using the KZ equations, Drinfeld introduced a braided tensor category of modules over the quasi-triangular quasi-Hopf algebra $U(\g)[\![\varpi]\!]$, whose commutativity isomorphism is given by $e^{\pi i\varpi t}$, where $t$ is a multiple of the Casimir tensor of $\g$ \cite{Dri90}; and Kazhdan and Lusztig constructed a braided tensor structure on the category $KL^k(\g)$ of modules over the affine Lie algebra $\hat{\g}$ of level $k$, with $k+h^\vee\notin \Q_{\geq 0}$, and proved that it is equivalent to the module category of a quantum group \cite{KL1,KL2,KL3,KL4}. On the VOA side, Huang and Lepowsky developed the tensor product theory for module categories of VOAs \cite{HL1,HL2,HL3}, and Huang proved the convergence of products of intertwining operators \cite{H05} and the modularity of the module category of a strongly rational VOA \cite{H08}. This theory was generalized to the logarithmic case by Huang, Lepowsky, and Zhang \cite{HLZI,HLZII,HLZ}, and it was applied to many irrational VOAs, such as the affine VOAs at admissible levels \cite{CHY18,Cre19,MY25}, the affine VOAs beyond admissible levels \cite{CY21}, and the Virasoro VOAs \cite{CJORY21}. Recently, Huang proved that the category $\CC_1(V)$ admits a natural vertex tensor category structure for any VOA $V$ of CFT-type \cite{H25}.

We prove Theorem A by relating the universal property of Huang-Lepowsky's $P(w)$-tensor product \cite{HL1,HL2} with the isomorphisms among the spaces of $P(w)$-intertwining maps, three-point conformal blocks, and $\infty$-restricted conformal blocks on $(\P^1,\infty,w,0)$, see the diagram \eqref{diagram}. In comparison with \cite{Liu26}, one advantage of Theorem A is that we do not need $V$ to be rational or $C_2$-cofinite. Moreover, $\eta_w$ is given by an explicit formula, which will be used to compare our braiding isomorphism with the one of Huang-Lepowsky-Zhang.

To prove Theorem B, we first study the effect of shifting the marked points on the restricted conformal blocks. Recall that there is a linear isomorphism $\Om_0: I\fusion{M^1}{M^2}{M^3}\cong I\fusion{M^2}{M^1}{M^3}$ given by Huang and Lepowsky \cite{HL2}. It turns out that the geometric effect of $\Om_0$ on the space of three-point conformal blocks is given by shifting the marked points $(w,0)$ uniformly to $(0,-w)$, with the attached modules unchanged, see Proposition~\ref{prop:Om0blocks} and Remark~\ref{remark:Om0}. Since the factor $e^{-wL(1)}$ in $\Om_0$ acts trivially on $M^3(0)^\ast$, $\Om_0$ restricts to the $\infty$-restricted conformal blocks, and it gives rise to a canonical isomorphism of left $A(V)$-modules
		\[
		R_w: M^2\odot_{-w,0}M^1\ra M^1\odot_{w,0}M^2,\quad R_w(v^2\odot_{-w}v^1)=v^1\odot_w v^2,
		\]
		see Theorem~\ref{thm:Rw}. Then we move the point $-1$ back to $1$ by the parallel transport of the three-point conformal blocks along the upper and lower half unit circles, see Subsection~\ref{subsec:parallel}. Combining these maps gives the isomorphisms in Theorem B.


\subsection{Affine examples}

Our main example is the affine VOA $L_k(\g)$ at a positive integral level $k$, where everything in \eqref{eq:intro-braiding} can be written in terms of finite-dimensional $\g$-modules. Let $\g$ be a finite-dimensional simple Lie algebra, $\kappa=k+h^\vee$, and $\Om=\sum_a x_a\o x^a$ the Casimir tensor of $\g$. Recall that $A(L_k(\g))\cong U(\g)/I_k$, with $I_k=\<e_\theta^{k+1}\>$ \cite{FZ92}. For $\la,\mu\in P_+^k$, we first show that
		\[
		L(\la)\bt L(\mu)\cong T_{\la,\mu}=\frac{L(\la)\o_\C L(\mu)}{U(\g).J_{\la,\mu}},\quad
		J_{\la,\mu}=v_\la\o e_\theta^{k-(\la|\theta)+1}L(\mu)+e_\theta^{k-(\mu|\theta)+1}L(\la)\o v_\mu,
		\]
		where $U(\g)$ acts on $L(\la)\o_\C L(\mu)$ diagonally, see Theorem~\ref{thm:affine-odot-quotient}, which is a variation of \cite[Proposition~6.5]{AL26}. Using the Segal-Sugawara construction and the relations in the $\odot$-tensor product, we show that $[\om]$ acts on $T_{\la,\mu}$ by $h_\la+h_\mu+\frac{1}{\kappa}\Om_{\la,\mu}$, see Lemma~\ref{prop:affine-omega-action}. Then Theorem B gives the following explicit braiding isomorphism on the modular tensor category $\Mod_\fd(U(\g)/I_k)$ (see Theorem~\ref{thm:VOA-Drinfeld-braiding}):
		\begin{equation}\label{eq:intro-affine}
			\R_{\la,\mu}: T_{\la,\mu}\xrightarrow{\simeq} T_{\mu,\la},\quad \R_{\la,\mu}\left(\pi_{\la,\mu}(u\o v)\right)
			=\pi_{\mu,\la}\left(\left(\tau\circ \exp\left(\frac{\pi i }{\kappa} \Om_{\la,\mu}\right)\right)(u\o v) \right),
		\end{equation}
		where $\pi_{\la,\mu}: L(\la)\o_\C L(\mu)\to T_{\la,\mu}$ is the quotient map, and $\tau(u\o v)=v\o u$. Note that the ideal $I_k$ is not a Hopf ideal of $U(\g)$, and $L(\la)\o_\C L(\mu)$ is in general not a $U(\g)/I_k$-module. It turns out that the operator $\tau\circ\exp(\frac{\pi i}{\kappa}\Om)$ on $L(\la)\o_\C L(\mu)$ descends to the quotients $T_{\la,\mu}$, and gives the braiding isomorphism of the modular tensor category of $L_k(\g)$-modules. Thus the fusion tensor product and the braiding for $L_k(\g)$-modules are described by finite-dimensional $\g$-modules, their quotients $T_{\la,\mu}$, and the Casimir tensor $\Om$.

At generic levels, there is no quotient. If $k+h^\vee\notin\Q$, or $\g=\sl_2$ and $k=-2\pm\frac{1}{p}$, then $A(V^k(\g))\cong U(\g)$, and we show that $S\bt T=S\o_\C T$ with the diagonal action of $U(\g)$, and the braiding isomorphism is $\tau\circ\exp(\frac{\pi i}{\kappa}\Om_{S,T})$, see Proposition~\ref{prop:generic-odot} and Corollary~\ref{coro:generic-braiding}. This is Drinfeld's commutativity isomorphism $e^{\pi i\varpi t}$, with $t=d\,\Om$, at $\varpi=1/(d\kappa)$ \cite{Dri90,KL3}. In other words, the braiding isomorphism of the Drinfeld category is recovered from the restricted conformal blocks. However, the associativity constraints are not trivial, see Remark~\ref{rem:generic-associativity}, and their description using the KZ connection will be addressed in a subsequent work.

This paper is organized as follows: we first recall the basics of module categories of VOAs, some examples of $\CC_1(V)$, and the three-point conformal blocks on $\P^1$ in Section~2. In Section~3, we introduce the category $\CC_{A(V)}$ and the left $A(V)$-module $M^1\odot_{w,0}M^2$ using the restricted conformal blocks, and prove Theorem A. In Section~4, we construct the isomorphisms $R_w$ and $T_\ga$ and prove Theorem B. Then we compare our braiding isomorphism with the one of Huang-Lepowsky-Zhang, and show that $\Mod_\fd(A(V))$ is a braided tensor category. In Section~5, we determine the fusion tensor product and the braiding isomorphism for the affine VOAs at positive integral levels. Finally, in Section~6, we discuss the affine VOAs at generic levels.

		\section[Three-point conformal blocks on P1 and P(w)-intertwining operators]{Preliminaries}
		
		\subsection{Module categories of VOAs}
		
		For the definitions and basic properties of vertex operator algebras
		(VOAs), their modules, Borcherds Lie algebras, and Zhu algebras, we refer
		to \cite{FLM88,FHL93,Z96,LL04}. We recall the module categories used in
		this paper and fix our conventions. All vector spaces are over $\C$,
		and $\N=\{0,1,2,\ldots\}$.
		
		Throughout this paper, $V$ is a VOA of \emph{CFT-type}, that is,
		\[ V=\bigoplus_{n\in\N}V_n=\C\vac\oplus V_+,
		\quad V_+=\bigoplus_{n\geq 1}V_n.
		\]
		We denote its vacuum element and conformal element by $\vac\in V_0$ and
		$\omega\in V_2$, respectively. For a homogeneous vector $a\in V_n$,
		we write $\wt a=n$. Recall the following definition for various module categories over a VOA $V$.
		
		\begin{df}\cite{FHL93,HL1,Z96,DLM98,LL04,H08,H25}\label{df:2.1}
			A \emph{weak $V$-module} is a vector space $M$ equipped with a linear map $Y_M: V\to \End (M)[\![z,z^{-1}]\!]$, dented by
			\[
			Y_M(a,z)=\sum_{n\in\Z}a_{(n)}z^{-n-1},
			\qquad a\in V,
			\]
			satisfying the lower truncation, vacuum property, and Jacobi identity. We write
			\[
			Y_M(\omega,z)=\sum_{n\in\Z}L(n)z^{-n-2}.
			\]
			Morphisms in all module categories considered below are
			$V$-module homomorphisms.
			
			\begin{enumerate}
				\item An \emph{admissible} or $\N$-gradable \cite{Z96,DLM98} $V$-module is a weak $V$-module $M$ admitting a decomposition
				$M=\bigoplus_{n=0}^\infty M(n)$ such that
				\begin{equation}\label{eq:grading}
					a_{(m)}M(n)\subseteq M(n+\wt a-m-1)
				\end{equation}
				for every homogeneous $a\in V$, $m\in\Z$, and $n\in\N$,
				where $M(r)=0$ for $r<0$. The category of admissible $V$-modules is denote by $\Adm(V)$. A VOA is called {\bf rational} if the category $\Adm(V)$ is semisimple.
				\item
				A \emph{generalized $V$-module} is a weak $V$-module admitting
				a decomposition
				\[
				M=\bigoplus_{\alpha\in\C}M_{[\al]},
				\quad M_{[\al]}
				=\{v\in M:(L(0)-\alpha)^Nv=0
				\text{ for some }N\geq 1\}.
				\]
				A generalized module is called
				\emph{grading-restricted} \cite{H08,H25} if every $M_{[\al]}$ is
				finite-dimensional and $M_{[\al]}=0$ when
				$\operatorname{Re}\alpha\ll 0$.
				
				Note that for homogeneous $a\in V$ and $\al\in \C$, one has
				$ a_{(m)}M_{[\al]} \subseteq M_{[\alpha+\wt a-m-1]}.$ Hence
				every grading-restricted generalized module is admissible: $M=\bigoplus_{n=0}^\infty M(n)$, where $M(n)=\bigoplus_{\b\in \C/\Z} M_{[\b +n]}$ satisfying \eqref{eq:grading}.
				
				\item
				We denote by $\Ord(V)$ the category of \emph{ordinary}
				$V$-modules \cite{FHL93}. With our lower-bound convention,
				these are grading-restricted generalized modules on which
				$L(0)$ acts semisimply:
				\[ M=\bigoplus_{\alpha\in\C}M_\alpha,
				\quad M_\alpha=\{v\in M:L(0)v=\alpha v\}. \]
				An irreducible ordinary module has the form
				$
				M=\bigoplus_{n\in \N }M_{h+n},
				$ where $M_h\neq 0.$
				The number $h\in \C $ is called the \emph{conformal weight} of $M$.
				
				\item
				For a weak $V$-module $M$, set
				\[
				C_1(M)
				=\operatorname{span}_{\C}
				\{a_{(-1)}v:a\in V_+,\ v\in M\}.
				\]
				The module $M$ is called \emph{$C_1$-cofinite} if
				$\dim_\C M/C_1(M)< \infty$ \cite{H05}. Introduce the following category \cite{H25}:
				\begin{equation}\label{eq:maincat}
					\mathcal C_1(V)=\{ \text{\(C_1\)-cofinite grading-restricted generalized $V$-modules}\}.
				\end{equation}
				In particular, $C_1(V)$ is a full-subcategory of $\Adm(V)$.
			\end{enumerate}
		\end{df}
		For a grading-restricted generalized module $M$, its
		\emph{contragredient module} is the restricted dual
		$ M'=\bigoplus_{\alpha\in\C}M_{\al}^*, $
		equipped with $Y_{M'}: V\to \End(M')[\![z,z^{-1}]\!]$
		determined by
		\begin{equation}\label{eq:contragredient}
			\langle Y_{M'}(a,z)f,v\rangle
			=\left\langle
			f,\,
			Y_M\bigl(e^{zL(1)}(-z^{-2})^{L(0)}a,z^{-1}\bigr)v
			\right\rangle
		\end{equation}
		for $a\in V$, $f\in M'$, and $v\in M$
		\cite{FHL93,HLZ}. In particular, as an adjoint module, $V$ itself has a contragredient module $V'$. Set
		\[
		C_2(V)=\operatorname{span}_{\C}
		\{a_{(-2)}b:a,b\in V\}.
		\]
		The VOA $V$ is \emph{$C_2$-cofinite} \cite{Z96} if
		$\dim_\C V/C_2(V)<\infty$.
		
		\begin{df} \cite{DLM98,DLM00}
			A VOA $V$ is called {\bf strongly rational} if it is of CFT-type, simple,
			rational, $C_2$-cofinite, and self-dual $V\cong V'$. In this case, Any irreducible admissible $V$-module is ordinary; there are finitely many irreducible ordinary modules up to isomorphism, denoted by
			\[
			\mathscr W=\{V=M^0,M^1,\dots, M^N\};
			\]
			and the conformal weight $h_{M^i}$ of an irreducible ordinary module $M^i$ is a rational number.
		\end{df}
		
		The following theorem was proved by Huang:
		
		\begin{thm} \cite[Theorem~1.2]{H25}\label{thm:huang'sthm}
			Let $V$ be a CFT-type VOA. Then the category $\mathcal C_1(V)$ of $C_1$-cofinite grading-restricted
			generalized $V$-modules admits a natural vertex tensor category structure. In particular, $\mathcal C_1(V)$ is a braided tensor category with a twist.
		\end{thm}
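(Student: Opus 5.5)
Since this statement is \cite[Theorem~1.2]{H25}, the plan is to reduce it to the general machinery of Huang--Lepowsky--Zhang \cite{HLZ}. That machinery produces a vertex tensor category structure on a full subcategory $\CC$ of grading-restricted generalized $V$-modules, and hence a braided tensor category with twist, once $\CC$ satisfies a short list of hypotheses:
\begin{enumerate}
\item[(a)] $\CC$ is closed under finite direct sums and contains $V$;
\item[(b)] $\CC$ is closed under the $P(z)$-tensor product, constructed as a subspace of $(M^1\o M^2)^*$ satisfying the $P(z)$-compatibility condition;
\item[(c)] the convergence and extension property holds for products and iterates of intertwining operators among objects of $\CC$;
\item[(d)] the requisite modules are grading-restricted, so that the associativity isomorphism can be constructed.
\end{enumerate}
I would verify these for $\CC=\CC_1(V)$ in that order. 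Once they are checked, the braiding is assembled from the commutativity isomorphism $\Om_0$ composed with parallel transport, and the twist is $e^{2\pi iL(0)}$.

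\emph{Step 1: closure and convergence.} Condition (a) is immediate, since $C_1$-cofiniteness is preserved by finite direct sums and $V/C_1(V)=\C\vac$. For (c), I would follow the method of \cite{H05}. For $w_i\in M^i$ in $\CC_1(V)$, consider the matrix coefficients $\langle w_4',\Y_1(w_1,z_1)\Y_2(w_2,z_2)w_3\rangle$. The $C_1$-cofiniteness of all four modules gives a finitely generated module over $\C[z_1^{\pm1},z_2^{\pm1},(z_1-z_2)^{-1}]$, and this module produces systems of differential equations with regular singularities at $z_1=z_2$, at $z_2=0$, and at $\infty$. Standard Frobenius theory then gives convergence of products and iterates in the appropriate domains, with expansions of the form required by the extension property, namely finitely many powers of $\log$ and $\C$-exponents bounded below in real part. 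The main input here is the grading-restriction of the modules, which controls the exponents, and crucially \emph{not} $C_2$-cofiniteness of $V$.

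\emph{Step 2: closure under $\bt_{P(z)}$ (the main obstacle).} One must show that the $P(z)$-tensor product of $M^1,M^2\in\CC_1(V)$ exists in $\CC_1(V)$. Concretely, the $V$-submodule of $(M^1\o M^2)^*$ generated by $P(z)$-compatible, $L(0)$-generalized-eigen elements must be grading-restricted and $C_1$-cofinite. This is exactly where the general theory normally assumes $C_2$-cofiniteness or rationality.

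My approach would be to use Step 1 in dual form. First, I would show that any intertwining operator of type $\binom{W}{M^1\,M^2}$ with $W$ lower-bounded has image whose $C_1$-quotient is spanned by images of $M^1/C_1(M^1)\o M^2/C_1(M^2)$, via a Li-type spanning argument on $a_{(-1)}$ modes. This bounds $\dim W/C_1(W)$ uniformly in $W$. Second, I would use the differential equations to bound the generalized $L(0)$-eigenvalues and the Jordan block sizes appearing in such images. Together these facts show that the sum of all such images inside the dual is a grading-restricted, $C_1$-cofinite module. This module then satisfies the universal property and serves as $M^1\bt_{P(z)}M^2$. The hardest point is finite-dimensionality of each generalized weight space without $C_2$-cofiniteness. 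I would attack it by showing that $M/C_1(M)$ finite-dimensional forces each $M_{[\al]}$ to be spanned by finitely many $PBW$-type monomials in negative modes applied to a finite set of generators.

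\emph{Step 3: associativity and coherence.} With (a)--(c) established and grading-restriction from Step 2, the HLZ theorems supply the associativity isomorphism between $\bt_{P(z_1)}\circ(1\bt_{P(z_2)})$ and $(\bt_{P(z_1-z_2)}1)\circ\bt_{P(z_2)}$. They also yield the full vertex tensor category structure, including the coherence (pentagon and hexagon) conditions, which follow formally from the analytic properties. Specializing to $z=1$ and fixing paths for parallel transport then produces the braided tensor category with twist.
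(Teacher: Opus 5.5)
The paper gives no proof of this statement; it is imported from \cite{H25}. So your sketch has to stand on its own, and as written it has a genuine gap: it never deals with contragredient modules.

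In Step 1 you assert that $C_1$-cofiniteness of the four modules makes the relevant module over $\C[z_1^{\pm1},z_2^{\pm1},(z_1-z_2)^{-1}]$ finitely generated. In the method of \cite{H05} the vector placed at $\infty$ is $w_4'\in(M^4)'$, and the relations at $\infty$ only let you reduce elements of $C_1((M^4)')$. Finite generation therefore needs $\dim (M^4)'/C_1((M^4)')<\infty$, i.e.\ $C_1$-cofiniteness of the \emph{contragredient} of the target. $C_1$-cofiniteness of $M^4$ itself only controls the degree-raising modes $a_{(-1)}$ on $M^4$. Their adjoints are degree-lowering on $(M^4)'$ and cut nothing down.

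Closure of $\CC_1(V)$ under $M\mapsto M'$ is not automatic, and your sketch does not address it. In this paper it is proved only under the extra hypotheses of Proposition~\ref{prop:condforC1V}(2). The same hidden hypothesis appears in two more places. In Step 2 you bound $L(0)$-eigenvalues and Jordan blocks of images ``by the differential equations''; the three-point functions $\langle w_3',I(w_1,z)w_2\rangle$ require $(M^3)'$ to be $C_1$-cofinite. In Step 3, closure under the contragredient functor is among the standing assumptions of the HLZ theory you quote. With that assumption added, your outline is the earlier route: a Miyamoto-type $C_1$-spanning argument plus \cite{H05} plus \cite{HLZ}, cf.\ \cite{CJORY21}. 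Dealing with the contragredient issue is exactly what \cite{H25} must do beyond that route, and your proposal contains no idea for it.

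There are two further problems in Step 2. First, the variance is reversed. Each intertwining map of type $\fusion{M^1}{M^2}{M^3}$ gives a map $(M^3)'\to(M^1\o M^2)^*$, so the submodule of $(M^1\o M^2)^*$ you describe is the \emph{contragredient} of $M^1\bt_{P(z)}M^2$. It is the contragredient of that dual object which has the universal property. What you must show about the dual object is that it is grading-restricted; its own $C_1$-cofiniteness is neither needed nor available. Grading-restriction includes a lower bound on weights that is uniform over all intertwining maps out of $M^1\o M^2$, and your spanning argument does not supply one. An argument of that type at best reduces the coefficients $(u_1)_nu_2$ (with $u_i$ in finite-dimensional complements of $C_1(M^i)$) for $n\ll 0$. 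How far up in $n$ the nonzero ones go, and hence how low the weights go, depends on the particular operator.

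Second, the point you single out as hardest is routine. Suppose $M$ is lower-bounded with $\dim M/C_1(M)<\infty$, and pick a finite-dimensional graded complement $U$ of $C_1(M)$. Since $\wt a\geq 1$ for $a\in V_+$ and each $V_n$ is finite-dimensional, induction on the weight shows that $M_{[\al]}$ is spanned by the finitely many monomials $a^1_{(-1)}\cdots a^k_{(-1)}u$, $u\in U$, of weight $\al$. So the proposal misplaces the main difficulty and leaves the real ones unresolved: contragredients and the uniform lower bound on weights.
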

		
		\subsection{\texorpdfstring{Examples of $\mathcal C_1(V)$}{Examples of C\_1(V)}}\label{sec2.2}
		The category $\mathcal C_1(V)$ will be the principal VOA module
		category considered in this paper. We list some examples of VOA and this category.
		
		\subsubsection{\bf Strongly rational VOAs.}
		If $V$ is strongly rational, then any grading-restricted generalized \(V\)-module is ordinary and $C_1$-cofinite \cite{ABD04}. In particular,
		\begin{equation}
			\mathcal C_1(V)=\Ord(V)\subsetneq \Adm(V),
		\end{equation}
		which is semisimple. $\mathcal C_1(V)$ consist precisely of finite direct sums of irreducible ordinary modules. Moreover, $\Adm(V)$ is generated by $\mathcal C_1(V)$ by arbitrary direct sums. Furthermore, $\Ord(V)$ is a modular tensor category, with fusion tensor product $\bt=\bt_{P(1)}$, tensor unit $V$, and twist $\theta_M=e^{2\pi iL(0)}$ \cite{H08,HLZ}.
		
		Examples include positive-definite
		even lattice VOAs $V_L$ \cite{D93}, Virasoro minimal model VOAs $L(c_{p,q},0)$ \cite{W93,DMZ94}, affine VOAs at positive integral levels $L_k(\g)$ \cite{DLM97}, and the principal series $\mathscr W$-algebras $W_k(\g)$ \cite{A15,ACL19}, etc.
		
		The examples listed below are all irrational:
		
		\subsubsection{\bf Admissible level affine VOAs } \label{sec:2.2.2}
		Let $\mathfrak g$ be a finite-dimensional simple Lie algebra,
		and let $p,q\in \Z_{\geq 1}$ be coprime integers. Define $k\in \Q$ by \cite{KW89}: $$k+h^\vee=\frac{p}{q},\quad p\geq \begin{cases}
			h^\vee &\mathrm{if}\ (r^\vee,q)=1,\\
			h&\mathrm{if}\ (r^\vee,q)=r^\vee.
		\end{cases}$$
		The simple affine VOA $L_k(\g)$ at level $k$ is called an {\em admissible level affine VOA}. The category $\mathcal O_{k,\mathrm{Ord}}$ of ordinary $L_k(\g)$-modules in the category $\mathcal O_k$ of $\hat{\g}$-modules is semisimple \cite{A16}. Moreover, it was proved in \cite[Section~3.4]{CY21} that
		\begin{equation}
			\mathcal C_1(L_k(\g))=\mathcal O_{k,\mathrm{Ord}},
		\end{equation}
		which is also semisimple.
		Prior to Theorem~\ref{thm:huang'sthm}, it was proved that $\mathcal O_{k,\mathrm{Ord}}$ is a braided tensor category with a twist \cite{CHY18}, and it is rigid when $\g$ is simply-laced \cite{Cre19}.
		
		\subsubsection{\bf Generic level affine VOAs}\label{sec:2.2.3} Let $\mathfrak g$ be a finite-dimensional simple Lie algebra, and let $k\in \C$ be such that $k+h^\vee\notin\mathbb Q_{\geq 0}$. Such a level $k$ is called {\em generic}. In this case, the vacuum module VOA $V^k(\g)$ is simple \cite{KL1}, and so it is the affine VOA $V^k(\g)=L_k(\g)$.
		It was proved in \cite[Corollary 3.4.5]{CY21} that
		\begin{equation}
			\mathcal C_1(V^k(\g))=KL^k(\g)
		\end{equation}
		is the Kazhdan--Lusztig's category for the affine Lie algebra $\g$, which contains finite-length grading-restricted generalized $V^k(\g)$-modules. Note that $KL^k(\g)$ was proved to admit a monoidal structure in \cite{KL1,KL2}.
		Note that this category is semisimple if $k+h^\vee\notin \Q$.
		
		However, if $k+h^\vee\in \Q_{<0}$, then $KL^k(\g)$ may or may not be semisimple. For instance, let
		\[
		V=V^{-2-\frac{1}{p}}(\mathfrak{sl}_2),
		\quad p\in\Z_{>0}.
		\]
		Then the category $\mathcal C_1(V)=KL_{-2-\frac{1}{p}}(\mathfrak {sl}_2)$ is semisimple.
		Its objects are finite direct sums of the simple $\hat{\mathfrak{sl}}_2$
		modules whose top spaces are finite-dimensional
		$\mathfrak{sl}_2$-modules \cite{CY21,AM26}.
		
		But if we take
		\[
		V=V^{-2-\frac{a}{b}}(\sl_2),\quad a\geq 2,\ b\geq 1,\ \gcd(a,b)=1.
		\]
		Then it follows from Kazhdan-Lusztig equivalence \cite{KL3,KL4} that
		\begin{equation}
			\mathcal C_1(V^{-2-\frac{a}{b}}(\sl_2))=KL_{-2-\frac{a}{b}}(\mathfrak {sl}_2)\simeq \mathcal C(q,\sl_2),\quad q=e^{\frac{\pi i}{k+2}}=e^{-\frac{\pi ib}{a}},
		\end{equation}
		where $\mathcal C(q,\sl_2)$ is the module category of the quantum group $U_q(\sl_2)$, which is not semisimple.
		
		\subsubsection{\bf The Virasoro VOAs at generic central charge}\label{sec:2.2.4}
		Let $V=L(c,0)$ be the Virasoro VOA \cite{FZ92}, with central charge
		\[
		c=13-6(t+t^{-1}),\qquad t\notin\mathbb Q.
		\]
		It was proved in \cite[Theorems~3.1.4 and~5.1.2]{CJORY21} that $\mathcal C_1(L(c,0))$ is semisimple, with infinitely many simple objects
		\[
		L(c,h_{r,s}(t)),\quad r,s\in\mathbb Z_{>0},\quad \mathrm{where}\quad h_{r,s}(t)
		=\frac{(rt-s)^2-(t-1)^2}{4t}.
		\]
		Every object in $\mathcal C_1(L(c,0))$ is a finite direct sum of these modules.
		
		\subsection[Formal intertwining operators and P(w)-intertwining operators]{\texorpdfstring{Formal intertwining operators and $P(w)$-intertwining operators}{Formal intertwining operators and P(w)-intertwining operators}}
		In this paper, we primarily consider $\CC_1(V)$, or a full
		tensor subcategory, whose objects are {\bf ordinary}
		$V$-modules.
		For any three such objects, every logarithmic intertwining
		operator among them contains no logarithmic terms
		\cite[Remark~3.23]{HLZII}, and hence is an intertwining operator
		in the usual sense \cite{FHL93}.
		
		\begin{lm}\label{lm:ssordmod}
			If $\CC_1(V)$ is semisimple, then every object of $\CC_1(V)$
			is an ordinary $V$-module.
		\end{lm}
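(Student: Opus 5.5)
The plan is to show that in a semisimple $\CC_1(V)$ every object is a finite direct sum of irreducible objects of $\CC_1(V)$, and that $L(0)$ acts semisimply on each irreducible object. Since a finite direct sum of modules with semisimple $L(0)$ again has semisimple $L(0)$, this gives the claim.

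First, let $M\in\CC_1(V)$. Because $M$ is grading-restricted, $M$ has finite length, or at least is a direct sum of simple submodules in $\CC_1(V)$. More precisely, semisimplicity of the category means every object is a direct sum of simple objects. Hence it suffices to treat a simple object $W$ of $\CC_1(V)$.

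Second, for such a simple $W$, consider the operator $L(0)_s$, the semisimple part of $L(0)$, and the nilpotent part $L(0)_n=L(0)-L(0)_s$. Both act on each finite-dimensional generalized eigenspace $W_{[\al]}$. It is standard (see \cite{HLZ}) that $L(0)_n$ commutes with the action of $V$: since $[L(0),a_{(m)}]=(\wt a-m-1)a_{(m)}$ for homogeneous $a$, the map $a_{(m)}$ sends $W_{[\al]}$ to $W_{[\al+\wt a-m-1]}$, and $L(0)_s$ satisfies the same commutation relation. Therefore $L(0)_n$ commutes with all $a_{(m)}$, so it is a $V$-module endomorphism of $W$.

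Third, the kernel of $L(0)_n$ is then a submodule of $W$. It is nonzero, since $L(0)_n$ is nilpotent on any nonzero $W_{[\al]}$. It is also a grading-restricted generalized submodule. Simplicity in $\CC_1(V)$ requires this submodule to lie in $\CC_1(V)$, which is the one point needing care. The kernel is a submodule of a $C_1$-cofinite module, but $C_1$-cofiniteness need not pass to submodules in general. To avoid this, I would instead use the image $L(0)_nW$, which is a quotient of $W$, and quotients of $C_1$-cofinite modules are $C_1$-cofinite. If $L(0)_n\neq0$, then $\ker L(0)_n$ and $\mathrm{im}\,L(0)_n$ are proper nonzero, so $W$ would not be simple as a $V$-module. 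Simplicity in the semisimple category $\CC_1(V)$ forces this: the surjection $W\to L(0)_nW$ is a morphism in $\CC_1(V)$, hence splits by semisimplicity, which gives a decomposition $W\cong \ker\oplus \mathrm{im}$ inside $\CC_1(V)$. Simplicity of $W$ then forces $L(0)_n=0$, or else $L(0)_n$ is injective, which is impossible for a nonzero nilpotent operator.

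Thus $L(0)$ acts semisimply on $W$, and hence on every object of $\CC_1(V)$. I expect the main obstacle to be this closure issue, namely ensuring that the relevant sub/quotient objects stay inside $\CC_1(V)$. I handle it by working with the image, which is a quotient, together with the splitting provided by semisimplicity.
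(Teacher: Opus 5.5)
Your proof is correct. It is a close cousin of the paper's argument, but it is not the same.

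The paper also reduces to a simple summand $W$, but it skips the Jordan decomposition. It observes directly that the eigenvector part $W'=\bigoplus_{\al}W_\al$ is a nonzero $V$-submodule, since $a_{(m)}$ sends $L(0)$-eigenvectors to $L(0)$-eigenvectors. Irreducibility then gives $W=W'$. Since $W'=\ker L(0)_n$, the paper uses the kernel where you use the image.

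Your version gains exactly the point you flagged. The paper's step needs $W$ to be irreducible as a $V$-module, or equivalently $W'$ to be an object of $\CC_1(V)$. Semisimplicity of the category only gives you objects that are simple in $\CC_1(V)$, and $C_1$-cofiniteness is not obviously inherited by submodules. Your image $L(0)_nW\cong W/\ker L(0)_n$ is a quotient of $W$, so it automatically lies in $\CC_1(V)$, and only categorical simplicity is used. The paper's route is shorter and needs no endomorphism argument.

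Two small remarks on your write-up:
\begin{itemize}
\item You do not need the splitting of $W\to L(0)_nW$, which presupposes that $\CC_1(V)$ is an abelian semisimple category. The image $L(0)_nW$ is itself a subobject of the simple object $W$ in $\CC_1(V)$, so it is $0$ or $W$. The case $L(0)_nW=W$ is impossible, because $L(0)_n$ preserves each finite-dimensional $W_{[\al]}$ and is nilpotent there.
\item Your opening claim that grading-restricted modules have finite length is unjustified. You never use it, so it can simply be dropped.
\end{itemize}
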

		\begin{proof}
			Let $W$ be an irreducible grading-restricted generalized module
			in $\CC_1(V)$. Choose $\al\in\C$ such that $W_{[\al]}\neq 0$,
			and let $0\neq v\in W_{[\al]}$. There exists a positive integer
			$N$ such that
			$
			u=(L(0)-\al)^{N-1}v\neq 0,
			$ $
			(L(0)-\al)^Nv=0.
			$
			Then $0\neq u\in W_\al$, where $W_\al$ denotes the eigenspace
			of $L(0)$ with eigenvalue $\al$. Set
			$
			W'=\bigoplus_{\al\in\C}W_\al.
			$
			Then $W'$ is a nonzero submodule of $W$, so irreducibility
			implies that $W=W'$. Hence $W$ is ordinary.
			
			For any object $M\in\CC_1(V)$, semisimplicity gives a decomposition
			$M=\bigoplus_i W^i$, where each $W^i$ is irreducible.
			Since each $W^i$ is ordinary and $M$ is grading-restricted,
			$M$ is also ordinary.
		\end{proof}
		
		Examples of semisimple $\CC_1(V)$ are given in the previous subsection.
		
		\begin{df}\cite{FHL93}
			Let $M^1,M^2,M^3$ be ordinary $V$-modules. A {\em (formal) intertwining operator of type $\fusion{M^1}{M^2}{M^3}$} is a linear map
			$$I(\cdot,x):M^1\ra \Hom(M^2,M^3)\{x,x^{-1}\},\quad I(v^1,x)v^2=\sum_{n\in \C}v^1_{(n)}v^2\cdot x^{-n-1},$$
			where $x$ is a formal variable,
			satisfying the following axioms:
			\begin{enumerate}
				\item (truncation property) For any $v^1\in M^1$ and $v^2\in M^2$, $v^1_{(n)}v^2=0$ when the real part of $n$ is sufficiently large.
				\item ($L(-1)$-derivative property) For any $v^1\in M^1$, $I(L(-1)v^1,x)=\frac{d}{dx} I(v^1,x)$.
				\item (Formal Jacobi identity) For any $v^1\in M^1$, $v^2\in M^2$, and $a\in V$, one has
				\begin{equation}\label{formalIOjacobi}
					\begin{aligned}
						x_0^{-1}\delta\left(\frac{x_1-x_2}{x_0}\right) &Y_{M^3}(a,x_1)I(v^1,x_2)v^2-x_0^{-1}\delta\left(\frac{-x_2+x_1}{x_0}\right)I(v^1,x_2)Y_{M^2}(a,x_1)v^2\\
						&=x_2^{-1}\delta\left(\frac{x_1-x_0}{x_2}\right) I(Y_{M^1}(a,x_0)v^1,x_2)v^2.
					\end{aligned}
				\end{equation}
			\end{enumerate}
			In particular, $v^1_{(n)}.M^2_\al\subset M^3_{\wt (v^1)-n-1+\al}$ for all $n,\al\in \C $.
			
			The vector space of intertwining operators of type $\fusion{M^1}{M^2}{M^3}$ is denoted by $I\fusion{M^1}{M^2}{M^3}$. Its dimension, denoted by $N\fusion{M^1}{M^2}{M^3}$, is called the {\em fusion rule among $M^1,M^2$, and $M^3$}.
		\end{df}
		
		\begin{remark}\label{rem:modes}
			Assume $M^1,M^2,M^3$ are irreducible ordinary $V$-modules of conformal weights $h_1,h_2$, $h_3$, respectively. Let $M^i=\bigoplus_{n=0}^\infty M^i(n)$, where $M^i(n)=M^i_{h_i+n}$ is the eigenspace of $L(0)$ of eigenvalue $h_i+n$. Denote $\deg (v^i)=n$ for $v^i\in M^i(n)$, $i=1,2,3$, and write $h=h_1+h_2-h_3$. Then the intertwining operator can be written as
			\[
			I(v^1,x)v^2=\sum_{n\in \Z} v^1(n)v^2 \cdot x^{-n-1-h}.
			\]
			The components satisfy $v^1(n)M^2(m)\subset M^3(\deg v^1-n-1+m),\quad m\in \N,\ n\in \Z$. However, if the modules$M^i$ do not have a uniform conformal weight $h_i$, one cannot make such a simplification.
		\end{remark}
		
		
		
		Let $w\in \P^1\bs\{\infty,0\}$. Fix an integer $p\in \Z$ for the branch of logarithm of $w$:
		\begin{equation}\label{logw}
			l_p(w)=\Log(w)+2p \pi i, \quad \mathrm{where}\quad \Log(w)=\log(|w|)+i\Arg(w)
		\end{equation}
		is the principal branch of $\log(w)$, with the branch cut along the negative real axis. i.e., $-\pi< \Arg(w)\leq \pi$. Then $w^n=e^{nl_p(w)}$ for any $n\in \C$.
		
		The notion of $P(w)$-intertwining operators was introduced by Huang-Lepowsky. Given a graded vector space $W=\bigoplus_{n\in \C} W(n)$, with $\dim W(n)<\infty$ for all $n$, let its graded dual space be $W'=\bigoplus_{n\in \C} W(n)^\ast$. Write
		\[
		\overline{W}=\Hom_\C(W',\C)=\prod_{n\in \C} W(n).
		\]
		If $(W,Y_W)$ is an ordinary $V$-module, then the action of $a_{(n)}=\Res_z z^n Y_W(a,z)$ on $W$ extends to an action on $\overline{W}$, for all $a\in V$, $n\in \Z$.
		
		\begin{df}\cite{HL1,HL3}\label{df:P(w)IO}
			Let $M^1,M^2,M^3$ be three ordinary $V$-modules. A $P(w)$-intertwining operator is a linear map
			\[ F:M^1\o_\C M^2\ra \overline{M^3}=\prod_{\al\in\C} M^3_\al,\]
			satisfying the formal Jacobi identity
			\begin{equation}\label{Fjacobi}
				\begin{aligned}
					x_0^{-1}\delta\left(\frac{x_1-w}{x_0}\right)Y_{M^3}(a,x_1)F(v_1\o v_2)
					&= w^{-1}\delta\left(\frac{x_1-x_0}{w}\right) F(Y_{M^1}(a,x_0)v_1\o v_2)\\
					&\ \ \ + x_0^{-1}\delta\left(\frac{w-x_1}{-x_0}\right) F(v_1\o Y_{M^2}(a,x_1)v_2).
				\end{aligned}
			\end{equation}
			The vector space of $P(w)$-intertwining operators is denoted by $\mathcal{M}[P(w)]_{M^1M^2}^{M^3}$.
		\end{df}
		
		Note that the formal Jacobi identity above also has an equivalent residue form:
		\begin{equation}\label{FResjacobi}
			\begin{aligned}
				&\Res_{x_1} Y_{M^3}(a,x_1) F(v^1\o v^2) \cdot \iota_{|x_1|>|w|}( x_1^n(x_1-w)^l)\\
				&\ \ -\Res_{x_1} F(v^1\o Y_{M^2}(a,x_1)v^2 )\cdot \iota_{|w|>|x_1|}(x_1^n(x_1-w)^l)\\
				&=\Res_{x_1-w} F(Y_{M^1}(a,x_1-w)v^1\o v^2) \cdot \iota_{|w|>|x_1-w|} (x_1^n (x_1-w)^l),
			\end{aligned}
		\end{equation}
		for all $n,l\in \Z$.
		Indeed, by applying $\Res_{x_0}\Res_{x_1}x_0^lx_1^n$ to \eqref{Fjacobi}, we obtain \eqref{FResjacobi}, with the indicated expansions of the rational functions. Conversely, \eqref{FResjacobi} gives the coefficient of $x_0^{-l-1}x_1^{-n-1}$ in \eqref{Fjacobi} for every $l,n\in\Z$. Hence it implies \eqref{Fjacobi}.
		
		We briefly recall the argument in \cite[Section 12]{HL3} showing that $$I\fusion{M^1}{M^2}{M^3}\cong \mathcal{M}[P(w)]_{M^1M^2}^{M^3}.$$
		
		Let $I\in I\fusion{M^1}{M^2}{M^3}$. Fix a branch $l_p(w)$ for $\log(w)$ \eqref{logw}, define $F^{P(w)}_{I,p}\in \mathcal{M}[P(w)]_{M^1M^2}^{M^3}$ by
		\begin{equation}\label{eq:forward}
			F^{P(w)}_{I,p}(v^1\o v^2):=I(v^1,e^{l_p(w)})v^2=\sum_{n\in \C} v^1_{(n)}v^2 \cdot e^{(-n-1)l_p(w)} \in \overline{M^3}.
		\end{equation}
		Conversely, given $F\in \mathcal{M}[P(w)]_{M^1M^2}^{M^3}$, define $I_{F,p}\in I\fusion{M^1}{M^2}{M^3}$ by
		\begin{equation}\label{eq:backward}
			I_{F,p}(v^1,x)v^2:=\sum_{n\in \C} v^1_{(n)}v^2 \cdot x^{-n-1} \in M^3\{x \},
		\end{equation}
		where $v^1_{(n)}v^2\in M^3_{\wt (v^1)-n-1+\wt (v^2)}$ is the projection of $F(v^1\o v^2)\in \overline{M^3}$ onto this homogeneous graded subspace multiplied by $e^{(n+1)l_p(w)}\in \C$. 
		
		\begin{lm}\cite[Proposition 12.2]{HL3}\label{lm2.6}
			Fix a branch $l_p(w)$ for $\log(w)$ \eqref{logw}, so that $w^{m}=e^{ml_p(w)}$ for any $m\in \C$. The linear maps $I\mapsto F_{I,p}$ \eqref{eq:forward} and $F\mapsto I_{F,p}$ \eqref{eq:backward} induce mutually inverse isomorphism between the vector spaces $I\fusion{M^1}{M^2}{M^3}$ and $\mathcal{M}[P(w)]_{M^1M^2}^{M^3}$.
			
			Furthermore, let $z=l_p(w)\in\C$, so that $e^z=w$. Then there is an isomorphism of vector spaces
			\[
			\mathcal{M}[P(w)]_{M^1M^2}^{M^3} \xrightarrow{\simeq} \mathcal{M}[P(1)]_{M^1M^2}^{M^3},\quad F^{P(w)}\mapsto F^{P(1)},
			\]
			where $F^{P(1)}(v^1\o v^2)=e^{-z L_3(0)} F^{P(w)}(e^{z L_1(0)}v^1\o e^{z L_2(0)}v^2)$, where $L_i(0)$ acts on $M^i$, for $i=1,2,3$.
		\end{lm}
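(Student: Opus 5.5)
We check directly that the two maps in the statement land in the right spaces and are mutually inverse; the second assertion is then a conjugation by $L(0)$-exponentials. All modules are ordinary, so $L(0)$ acts semisimply on each $M^i$ and $\overline{M^3}$ decomposes as a product of $L(0)$-eigenspaces.

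\textbf{The map $I\mapsto F_{I,p}$.} Take $I\in I\fusion{M^1}{M^2}{M^3}$. For homogeneous $v^1,v^2$, the coefficient $v^1_{(n)}v^2$ lies in $M^3_{\wt v^1+\wt v^2-n-1}$. Distinct $n$ therefore give distinct weights, so the sum \eqref{eq:forward} is a well-defined element of $\overline{M^3}$, with at most one term in each graded piece.

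To verify \eqref{Fjacobi}, start from the formal Jacobi identity \eqref{formalIOjacobi}. Write $x_2=e^{y}$ with the formal variable $y$, specialize $y\mapsto l_p(w)$, and note that every monomial $x_2^{m}$ with $m\in\C$ becomes $e^{m l_p(w)}=w^m$. The step to justify is that this substitution is legitimate in each delta-function expansion. Project \eqref{formalIOjacobi} onto a fixed weight space $M^3_\beta$ and extract the coefficient of $x_0^{-l-1}x_1^{-n-1}$. Each side then involves only finitely many modes in a fixed homogeneous component, times integral powers of $x_2$ multiplied by the single factor $x_2^{\beta-\wt v^1-\wt v^2}$. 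So the specialization is a finite computation.

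It is cleanest to check the residue form \eqref{FResjacobi} instead, which is equivalent to \eqref{Fjacobi}. The three expansions $\iota_{|x_1|>|w|}$, $\iota_{|w|>|x_1|}$ and $\iota_{|w|>|x_1-w|}$ of $x_1^n(x_1-w)^l$ match exactly the expansions produced by $\delta\bigl(\frac{x_1-x_2}{x_0}\bigr)$, $\delta\bigl(\frac{-x_2+x_1}{x_0}\bigr)$ and $\delta\bigl(\frac{x_1-x_0}{x_2}\bigr)$ at $x_2=w$. Hence $F_{I,p}\in\mathcal{M}[P(w)]_{M^1M^2}^{M^3}$.

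\textbf{The map $F\mapsto I_{F,p}$.} Given $F\in\mathcal{M}[P(w)]_{M^1M^2}^{M^3}$, define $I_{F,p}$ by \eqref{eq:backward}, so that each coefficient is a rescaled homogeneous component of $F(v^1\otimes v^2)$. We check the three axioms.

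\emph{Jacobi identity.} Reverse the argument above. Component by component, \eqref{FResjacobi} is a finite identity between homogeneous vectors. Multiply the $M^3_\beta$-component by $e^{-(\beta-\wt v^1-\wt v^2)l_p(w)}$ and reinsert the formal variable $x_2$; this recovers the coefficients of \eqref{formalIOjacobi}.

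\emph{$L(-1)$-derivative property.} Take $a=\omega$ and read off the $x_0^{-1}$ coefficient (the $L(-1)$ term) in \eqref{Fjacobi}. This gives
\[
F(L(-1)v^1\otimes v^2)=L(-1)F(v^1\otimes v^2)-F(v^1\otimes L(-1)v^2).
\]
Comparing weights, this says exactly that $(L(-1)v^1)_{(n)}=-n\,v^1_{(n-1)}$, which is the derivative property.

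\emph{Truncation.} This is the step I expect to be the main obstacle, since a general element of $\overline{M^3}$ can have components in arbitrarily high weight. I would use that $M^3$ is grading-restricted, so its weights have real parts bounded below. Then $v^1_{(n)}v^2$ lies in $M^3_{\wt v^1+\wt v^2-n-1}$, and this space vanishes for $\operatorname{Re} n\gg0$. This is the form of truncation required.

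\textbf{Inverses and the second assertion.} The two constructions are mutually inverse because one multiplies the homogeneous components by $e^{(-n-1)l_p(w)}$ and the other by $e^{(n+1)l_p(w)}$.

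For the isomorphism $\mathcal{M}[P(w)]\to\mathcal{M}[P(1)]$, compose the bijections $\mathcal{M}[P(w)]\cong I\fusion{M^1}{M^2}{M^3}$ (branch $z=l_p(w)$) and $I\fusion{M^1}{M^2}{M^3}\cong\mathcal{M}[P(1)]$ (branch $0$). On homogeneous $v^1,v^2$ with output in weight $\beta$, the composite rescales by
\[
e^{-z(\beta-\wt v^1-\wt v^2)}.
\]
This is precisely the effect of $e^{-zL_3(0)}F^{P(w)}\bigl(e^{zL_1(0)}v^1\otimes e^{zL_2(0)}v^2\bigr)$. It is a bijection, with inverse obtained by replacing $z$ with $-z$.
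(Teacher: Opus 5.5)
There is a genuine gap in your verification of the $L(-1)$-derivative property for $I_{F,p}$. The paper itself only cites Huang--Lepowsky here, and your direct-verification route is the standard one, but this step fails as written.

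The identity you extract ($a=\omega$, $n=l=0$ in \eqref{FResjacobi}) is
\[
F(L(-1)v^1\otimes v^2)=L(-1)F(v^1\otimes v^2)-F(v^1\otimes L(-1)v^2).
\]
This is only the $L(-1)$-commutator formula. Its $M^3_{\wt v^1+\wt v^2-n}$-component reads $(L(-1)v^1)_{(n)}v^2=L(-1)v^1_{(n)}v^2-v^1_{(n)}L(-1)v^2$. All three terms carry the same factor $e^{(-n-1)l_p(w)}$, so comparing weights never produces $-n\,v^1_{(n-1)}$.

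The derivative property does not follow from the Jacobi identity alone. For $c\in\C\setminus\{0\}$, the operator $x^{c}I(v^1,x)$ satisfies \eqref{formalIOjacobi}, since both sides acquire the factor $x_2^{c}$. It satisfies neither the derivative property nor the weight condition.

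What actually works is the weight condition built into \eqref{eq:backward}, combined with the $L(0)$-part of the Jacobi identity. Take $a=\omega$, $n=1$, $l=0$ in \eqref{FResjacobi}, and use $x_1=w+(x_1-w)$ in the third term:
\[
L(0)F(v^1\otimes v^2)-F(v^1\otimes L(0)v^2)-F(L(0)v^1\otimes v^2)=w\,F(L(-1)v^1\otimes v^2).
\]
For homogeneous $v^1,v^2$, take the component in $M^3_\beta$ with $\beta=\wt v^1+\wt v^2-m-1$. This gives
\[
(-m-1)\,v^1_{(m)}v^2\,e^{(-m-1)l_p(w)}=w\,(L(-1)v^1)_{(m+1)}v^2\,e^{(-m-2)l_p(w)}.
\]
Since $w=e^{l_p(w)}$, this is exactly $(L(-1)v^1)_{(m+1)}=-(m+1)\,v^1_{(m)}$.

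Equivalently, once the formal Jacobi identity for $I_{F,p}$ is established, you can extract $[L(0),I(v^1,x)]=I(L(0)v^1,x)+x\,I(L(-1)v^1,x)$. Comparing this with the grading forced by \eqref{eq:backward} gives the same conclusion.

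There is also a smaller point in your reverse Jacobi step. You recover a formal identity in $x_2$ from its value at the single point $x_2=w$. This works only because each coefficient of $x_0^{-l-1}x_1^{-n-1}$, projected to $M^3_\beta$, has all its terms carrying the same power $x_2^{\gamma}$, with $\gamma=\beta-\wt v^1-\wt v^2-\wt a+n+l+1$. This holds by weight homogeneity: the integer powers of $w$ from the delta-function expansions combine with the rescaling factors $e^{(\cdot)l_p(w)}$ into one uniform factor.

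Your phrase ``integral powers of $x_2$ multiplied by a single factor'' should be sharpened to this statement. A Laurent polynomial in $x_2$ that vanishes at $x_2=w$ need not vanish identically.

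With these two corrections, the rest of your argument is correct. This covers truncation from grading-restriction, the mutual inverses, and the $P(w)\to P(1)$ rescaling by $e^{-z(\beta-\wt v^1-\wt v^2)}$.
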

		
		In the residue formulas below, $\Res_z^{\mathrm{out}}$ denotes the coefficient of $z^{-1}$ in the Laurent expansion for $|z|>|w|$. Thus it is the negative of the geometric residue at $z=\infty$:
		\[
		\Res_z^{\mathrm{out}}F(z)\, dz
		=-\Res_{z=\infty}F(z)\, dz.
		\]
		
		We use the usual local residues at the finite marked points.
		Under the correspondence \eqref{eq:forward}, we can rewrite the residue form Jacobi identity for the $P(w)$-intertwining operators \eqref{FResjacobi} as a residue form Jacobi identity for $I(v^1,e^{l_p(w)})$, which is useful in our later discussion.
		
		\begin{lm}
			Let $I\in I\fusion{M^1}{M^2}{M^3}$. Fix a branch $l_p(w)$ for $\log(w)$ \eqref{logw}, so that $w^{m}=e^{ml_p(w)}$ for any $m\in \C$.
			The formal Jacobi identity \eqref{formalIOjacobi} has the following equivalent residue form in terms of the corresponding $P(w)$-intertwining operator $I(\cdot, e^{l_p(w)})$:
			\begin{equation}\label{eq:residueJacobi}
				\begin{aligned}
					&\Res_z^{\mathrm{out}} \braket*{(v^3)'}{Y_{M^3}(a,z)I(v^1,e^{l_p(w)})v^2} \cdot \iota_{|z|>|w|}\left(\frac{z^n}{(z-w)^m}\right) dz\\
					&\ \ \ -\Res_{z=0} \braket*{(v^3)'}{I(v^1,e^{l_p(w)}) Y_{M^2}(a,z)v^2} \cdot \iota_{|w|>|z|}\left(\frac{z^n}{(z-w)^m}\right)dz\\
					&=\Res_{z-w=0} \braket*{(v^3)'}{I(Y_{M^1}(a,z-w)v^1,e^{l_p(w)})v^2} \cdot \iota_{|w|>|z-w|} \left(\frac{z^n}{(z-w)^m}\right)dz,
				\end{aligned}
			\end{equation}
			where $z^n/(z-w)^m$, with $m,n\in \Z$, is a standard rational function on $\P^1$ in the affine coordinate $z$, with possible poles at $z=0,\infty, w$, and $a\in V$, $(v^3)'\in (M^3)'$, $v^1\in M^1$, $v^2\in M^2$ are homogeneous elements.
		\end{lm}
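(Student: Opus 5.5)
The plan is to derive \eqref{eq:residueJacobi} from the residue form \eqref{FResjacobi} of the $P(w)$-Jacobi identity, using the correspondence of Lemma~\ref{lm2.6}. Concretely, set $F=F^{P(w)}_{I,p}$, so that $F(v^1\o v^2)=I(v^1,e^{l_p(w)})v^2$. Then \eqref{Fjacobi} for $F$ is equivalent to \eqref{formalIOjacobi} for $I$ by Lemma~\ref{lm2.6}, and hence so is \eqref{FResjacobi}. It therefore suffices to show that \eqref{eq:residueJacobi} is exactly the pairing of \eqref{FResjacobi} with $(v^3)'$, with the exponents chosen appropriately.

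The key steps, in order, are as follows.

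\begin{enumerate}
\item Pair \eqref{FResjacobi} with a homogeneous $(v^3)'\in (M^3)'$, taking $l=-m$. Pairing with a homogeneous element of $(M^3)'$ extracts a single homogeneous component of $\overline{M^3}$. The weight condition $a_{(k)}M^3_\beta\subseteq M^3_{\beta+\wt a-k-1}$ then shows that only finitely many terms contribute in each expansion. Each formal series becomes a genuine Laurent series in $z$ (respectively in $z-w$) with numerical coefficients, so the formal residues are well defined numbers.

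\item Identify the three terms one at a time.
\begin{itemize}
\item The first term is the coefficient of $z^{-1}$ in the expansion in the domain $|z|>|w|$, which by definition is $\Res_z^{\mathrm{out}}$.
\item In the second term, $Y_{M^2}(a,z)v^2$ is lower truncated. The pairing with $(v^3)'$ of $I(v^1,e^{l_p(w)})Y_{M^2}(a,z)v^2$ is therefore a Laurent polynomial-type series with finite principal part at $z=0$. Multiplying it by $\iota_{|w|>|z|}$ of the rational function gives the local expansion at $0$, so the formal residue is $\Res_{z=0}$.
\item For the third term, write $z=w+(z-w)$ and expand $z^n(z-w)^{-m}$ for $|w|>|z-w|$. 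The series $Y_{M^1}(a,z-w)v^1$ is lower truncated in $z-w$, so the coefficient of $(z-w)^{-1}$ is the local residue at $z=w$.
\end{itemize}

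\item Conversely, starting from \eqref{eq:residueJacobi} for all $m,n\in\Z$ and all homogeneous $(v^3)'$, we recover every component of \eqref{FResjacobi}. This gives \eqref{Fjacobi} for $F$, and hence \eqref{formalIOjacobi} for $I_{F,p}=I$ by Lemma~\ref{lm2.6}.
\end{enumerate}

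I expect the main obstacle to be the bookkeeping of branches in the third term. Here $I(Y_{M^1}(a,z-w)v^1,e^{l_p(w)})$ involves non-integral powers $e^{(-k-1)l_p(w)}$, and we must check that substituting $x_2=e^{l_p(w)}$ in \eqref{formalIOjacobi} is compatible with the delta-function expansions. In particular, the expansion $w^{-1}\delta\big(\frac{x_1-x_0}{w}\big)$ only involves integral powers of $w$, so no branch ambiguity arises beyond the fixed $l_p(w)$ inside $I$. This is precisely the content of \cite[Section~12]{HL3}, and I would cite it there rather than redo it.
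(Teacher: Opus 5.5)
Your proposal is correct and is essentially the paper's own argument. The paper gives no separate proof: the lemma is simply \eqref{FResjacobi} for $F=F^{P(w)}_{I,p}$ with $l=-m$, paired with $(v^3)'$, and its equivalence with \eqref{formalIOjacobi} comes from the coefficient-extraction equivalence between \eqref{Fjacobi} and \eqref{FResjacobi} together with Lemma~\ref{lm2.6}, exactly as you outline.
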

		
		\subsection[VOA-conformal blocks associated to P1]{\texorpdfstring{Space of VOA-conformal blocks associated to $(\P^1, \infty,w,0)$}{Space of VOA-conformal blocks associated to (P\textasciicircum{}1, infinity, w, 0)}}
		Following the notation in \cite{NT05,Liu26},
		spanning elements of the chiral Lie algebra $\mathcal{L}_{\P^1\bs\{\infty,w, 0\}}(V)$ can be represented by
		\begin{equation}\label{section4point}
			X_w(z)=a\o f(z)(dz)^{1-\wt a}=a\o \frac{z^n}{(z-w)^m}(dz)^{1-\wt a},\quad a\in V,\ m,n\in \Z,
		\end{equation}
		where $z$ is the affine coordinate on $\P^1\bs\{\infty,w,0\}$.
		The representatives \eqref{section4point} satisfy the following relation:
		$$
		L(-1)a\o f(z)(dz)^{-\wt a}=-a\o \frac{df(z)}{dz} (dz)^{1-\wt a},\quad a\in V.
		$$
		The Lie bracket on $\mathcal{L}_{\P^1\bs\{\infty,w,0\}}(V)$ is given by
		\begin{equation}
			\left[ a\o f(z) (dz)^{1-\wt a}, b\o g(z)(dz)^{1-\wt b} \right]:=\sum_{j\geq 0} \frac{1}{j!} a_{(j)}b\o \frac{d^jf(z)}{dz^j}\cdot g(z) (dz)^{2-\wt a-\wt b+j}.
		\end{equation}
		
		We omit the symbol of the differential forms for simplicity of our notation.
		
		
		\begin{df}\label{def:cfb} \cite{NT05,DGT24,Liu26}
			Let $V$ be a VOA, $M^1,M^2,M^3\in \Ord(V)$, and let $(M^3)'$ be the contragredient module of $M^3$. Then
			\begin{enumerate}
				\item $(M^3)'$ is a module over the Lie algebra $\mathcal{L}_{\P^1\bs\{\infty,w,0\}}(V)$ via $\rho_\infty:\mathcal{L}_{\P^1\bs\{\infty,w,0\}}(V)\ra \gl((M^3)')$,
				\begin{equation}\label{eq:inftyaction}
					\begin{aligned}
						\rho_\infty\left(a\o\frac{z^n}{(z-w)^m}\right)((v^3)')&=\Res_z^{\mathrm{out}} Y_{(M^3)'}(\vartheta(a),z^{-1})(v^3)'\cdot \iota_{|z|>|w|}\left(\frac{z^n}{(z-w)^m}\right)\\
						&=-\sum_{j\geq 0}\binom{-m}{j} (-w)^j a'_{(n-m-j)}(v^3)',\quad (v^3)'\in (M^3)',
					\end{aligned}
				\end{equation}
				where $\vartheta(a)=-e^{zL(1)}(-z^{-2})^{L(0)}(a)$ and $a'_{(k)}=\sum_{i\geq 0} \frac{(-1)^{\wt a}}{i!} (L(1)^ia)_{(2\wt a-k-i-2)}$.
				\item $M^1$ is a module over the Lie algebra $\mathcal{L}_{\P^1\bs\{\infty,w,0\}}(V)$ via $\rho_w:\mathcal{L}_{\P^1\bs\{\infty,w,0\}}(V)\ra \gl(M^1)$,
				\begin{equation}\label{eq:1action}
					\begin{aligned}
						\rho_w\left(a\o \frac{z^n}{(z-w)^m}\right)(v^1)&=\Res_{z=w} Y_{M^1}(a,z-w)v^1\cdot \iota_{|w|>|z-w|}\left(\frac{z^n}{(z-w)^m}\right)\\
						&=\sum_{j\geq 0} \binom{n}{j} w^{n-j} a_{(j-m)}v^1,\quad v^1\in M^1.
					\end{aligned}
				\end{equation}
				\item $M^2$ is a module over the Lie algebra $\mathcal{L}_{\P^1\bs\{\infty,w,0\}}(V)$ via $\rho_0:\mathcal{L}_{\P^1\bs\{\infty,w,0\}}(V)\ra \gl(M^2)$,
				\begin{equation}\label{eq:0action}
					\begin{aligned}
						\rho_{0}\left(a\o\frac{z^n}{(z-w)^m}\right)(v^2)&=\Res_{z=0} Y_{M^2}(a,z)v^2\cdot \iota_{|w|>|z|}\left(\frac{z^n}{(z-w)^m}\right)\\
						&=\sum_{j\geq 0} \binom{-m}{j}(-w)^{-m-j} a_{(n+j)}v^2,\quad v^2\in M^2.
					\end{aligned}
				\end{equation}
			\end{enumerate}
			
			In particular, $(M^3)' \o_\C M^1\o_\C M^2$ is a tensor product module over the chiral Lie algebra $\mathcal{L}_{\P^1\bs\{\infty,w,0\}}(V)$. The action of the Lie algebra can be expressed explicitly:
			\begin{equation}\label{eq:actionofchiralliealgebraelements}
				\begin{aligned}
					&\rho_{\infty,w,0}\left(a\o\frac{z^n}{(z-w)^m}\right). ((v^3)'\o v^1\o v^2)\\
					&=-\sum_{j\geq 0}\binom{-m}{j} (-w)^j a'_{(n-m-j)}((v^3)')\o v^1\o v^2+ \sum_{j\geq 0} \binom{n}{j} w^{n-j}(v^3)'\o a_{(j-m)}(v^1)\o v^2\\
					&\ \ \ +\sum_{j\geq 0} \binom{-m}{j}(-w)^{-m-j}(v^3)'\o v^1\o a_{(n+j)}(v^2).
				\end{aligned}
			\end{equation}
			where $(v^3)'\o v^1\o v^2\in (M^3)' \o M^1\o M^2$.
			
			The {\em space of coinvariants} associated to $(\P^1,(\infty, w,0), ((M^3)',M^1, M^2))$:
			\[ [(M^3)' \o M^1\o M^2]_{(\P^1,\infty,w,0)}=\frac{(M^3)'\otimes M^1\otimes M^2}{\mathcal{L}_{\P^1\bs\{\infty,w,0\}}(V).\left((M^3)'\otimes M^1\otimes M^2\right)}.\]
			The {\em space of conformal blocks} associated to $(\P^1,(\infty, w,0), ((M^3)',M^1, M^2))$:
			\[ \mathscr{C}((M^3)' \o M^1\o M^2)_{(\P^1,\infty,w,0)}= \left(\frac{(M^3)'\otimes M^1\otimes M^2}{\mathcal{L}_{\P^1\bs\{\infty,w,0\}}(V).\left((M^3)'\otimes M^1\otimes M^2\right)}\right)^\ast. \]
		\end{df}
		
		\begingroup\emergencystretch=1em
		When $M^1,M^2,M^3$ are irreducible ordinary modules, the space of three-point conformal blocks on $\P^1$ can be identified with the intertwining operators $I\fusion{M^1}{M^2}{M^3}$ \cite{Z96,GLZ26}. We generalize this fact to arbitrary ordinary $V$-modules in $\Ord(V)$ and to the $P(w)$-intertwining operators.
		\par\endgroup
		
		\begin{prop}\label{prop2.7}
			Let $M^1,M^2,M^3$ be three ordinary $V$-modules. Fix a branch $l_p(w)$ for $\log(w)$ \eqref{logw}, so that $w^{m}=e^{ml_p(w)}$ for any $m\in \C$. Then we have the following isomorphism of vector spaces:
			\begin{equation}\label{eq:P(w)isocfb}
				\begin{aligned}
					I\fusion{M^1}{M^2}{M^3}&\cong \mathcal{M}[P(w)]_{M^1M^2}^{M^3}\cong \mathscr{C}((M^3)' \o M^1\o M^2)_{(\P^1,\infty,w,0)},\\
					I(\cdot, x)&\mapsto F=I(\cdot , e^{l_p(w)})\mapsto \varphi_{I,p},
				\end{aligned}
			\end{equation}
			where
			\begin{equation}\label{eq:defofvarphiI}
				\braket*{\varphi_{I,p}}{(v^3)'\o v^1\o v^2}:= \braket*{(v^3)'}{F(v^1\o v^2)}=\braket*{(v^3)'}{I(v^1,e^{l_p(w)})v^2},
			\end{equation}
			where $(v^3)', v^1,v^2$ are homogeneous elements in $(M^3)',M^1,M^2$, respectively.
		\end{prop}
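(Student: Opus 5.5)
The first isomorphism $I\fusion{M^1}{M^2}{M^3}\cong \mathcal{M}[P(w)]_{M^1M^2}^{M^3}$ is Lemma~\ref{lm2.6}, so only the second isomorphism needs proof. The plan is to show that $F\mapsto \varphi_F$, with $\braket*{\varphi_F}{(v^3)'\o v^1\o v^2}=\braket*{(v^3)'}{F(v^1\o v^2)}$, identifies $\mathcal{M}[P(w)]_{M^1M^2}^{M^3}$ with the space of linear functionals on $(M^3)'\o M^1\o M^2$ that vanish on $\mathcal{L}_{\P^1\bs\{\infty,w,0\}}(V).\left((M^3)'\o M^1\o M^2\right)$.

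First, linear maps $F:M^1\o M^2\to \overline{M^3}=\Hom_\C((M^3)',\C)$ correspond bijectively to linear functionals on $(M^3)'\o M^1\o M^2$, by the adjunction $\Hom(M^1\o M^2,\Hom((M^3)',\C))\cong ((M^3)'\o M^1\o M^2)^\ast$. Thus the correspondence is a linear isomorphism at the level of all linear maps versus all functionals. It remains to match the two defining conditions.

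Next, I would show that $\varphi_F$ is annihilated by $\rho_{\infty,w,0}(X_w)$ for every spanning element $X_w=a\o z^n(z-w)^{-m}$ in \eqref{section4point} if and only if $F$ satisfies \eqref{FResjacobi} for all $a,n,l$. Pairing \eqref{eq:residueJacobi} (equivalently \eqref{FResjacobi} with $l=-m$) against $(v^3)'$, the three terms should be matched with the three summands of \eqref{eq:actionofchiralliealgebraelements}:
\begin{enumerate}
\item The $\rho_w$ term \eqref{eq:1action} is the residue at $z=w$ of $F(Y_{M^1}(a,z-w)v^1\o v^2)$.
\item The $\rho_0$ term \eqref{eq:0action} is the residue at $z=0$ of $F(v^1\o Y_{M^2}(a,z)v^2)$.
\item The $\rho_\infty$ term is the outer residue of $\braket*{(v^3)'}{Y_{M^3}(a,z)F(v^1\o v^2)}$, rewritten through the contragredient action.
\end{enumerate}
For the third term I would use \eqref{eq:contragredient} to write
\[
\braket*{Y_{(M^3)'}(\vartheta(a),z^{-1})(v^3)'}{u}=-\braket*{(v^3)'}{Y_{M^3}(a,z)u},
\]
using that the operator $e^{zL(1)}(-z^{-2})^{L(0)}$ applied twice (with $z$ replaced by $z^{-1}$) gives the identity. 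The sign built into $\vartheta$ is exactly what turns the Jacobi identity ``$\infty$-term $-$ $0$-term $=$ $w$-term'' into the vanishing ``$\infty+w+0=0$''. Expanding the rational function in the three regions and comparing coefficients against the explicit formulas, including $a'_{(k)}$, is the routine check.

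The main obstacle is a well-definedness issue: $F(v^1\o v^2)$ lies in the completion $\overline{M^3}$, and the $\rho_\infty$-action involves infinite sums. I would handle this by noting the following. For fixed homogeneous $(v^3)'$, $\rho_\infty(X_w)(v^3)'$ is a finite sum, since $a'_{(k)}(v^3)'=0$ for $k\gg0$ by the grading. Likewise, the left side of \eqref{eq:residueJacobi} paired with $(v^3)'$ only sees finitely many graded components of $F(v^1\o v^2)$. So both sides are finite expressions computed componentwise, and the identification of the outer residue with $\rho_\infty$ holds degree by degree.

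Since the elements $a\o z^n(z-w)^{-m}$ span the chiral Lie algebra, the two conditions are equivalent. Hence $F\mapsto\varphi_F$ restricts to an isomorphism onto $\mathscr{C}((M^3)'\o M^1\o M^2)_{(\P^1,\infty,w,0)}$. Composing with Lemma~\ref{lm2.6} gives $I\mapsto \varphi_{I,p}$ as in \eqref{eq:defofvarphiI}.
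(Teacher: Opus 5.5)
Your proposal is correct and follows the paper's proof. The paper also takes the first isomorphism from Lemma~\ref{lm2.6}. It then pairs $\varphi_{I,p}$ against the three-term action \eqref{eq:actionofchiralliealgebraelements} of $a\otimes z^n(z-w)^l$, using that $a'_{(k)}$ is adjoint to $a_{(k)}$, and recognizes the residue Jacobi identity \eqref{FResjacobi}; the inverse $\varphi\mapsto F_\varphi$ comes from reversing this computation.

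One small slip in your finiteness remark: the sum in \eqref{eq:inftyaction} runs over $a'_{(n-m-j)}$ with $j\to\infty$. Since $a'_{(k)}$ shifts degree by $k+1-\wt a$, what you need is $a'_{(k)}(v^3)'=0$ for $k\ll 0$, not for $k\gg 0$.
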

		\begin{proof}
			By Lemma~\ref{lm2.6}, $ I(\cdot, x)\mapsto F=I(\cdot , e^{l_p(w)})$ is a linear isomorphism. It remains to show that $F\mapsto \varphi_{I,p}$ is an isomorphism.
			Given an element $a\o z^n(z-w)^l\in \mathcal{L}_{\P^1\bs\{\infty,w,0\}}(V)$, with $n,l\in \Z$, it follows from \eqref{eq:actionofchiralliealgebraelements} that
			\begin{align*}
				&\braket*{\varphi_{I,p}}{(a\o z^n(z-w)^l).((v^3)'\o v^1\o v^2)}\\
				&=-\sum_{j\geq 0} \binom{l}{j} (-w)^j \braket*{\varphi_{I,p}}{a'_{(n+l-j)}(v^3)'\o v^1\o v^2}+ \sum_{j\geq 0}\binom{n}{j} w^{n-j} \braket*{\varphi_{I,p}}{(v^3)'\o a_{(j+l)}v^1\o v^2}\\
				&\ \ \ +\sum_{j\geq 0}\binom{l}{j} (-w)^{l-j} \braket*{\varphi_{I,p}}{(v^3)'\o v^1\o a_{(n+j)}v^2}\\
				&=-\sum_{j\geq 0}\binom{l}{j} (-w)^j \braket*{(v^3)'}{a_{(n+l-j)}F(v^1\o v^2)}+\sum_{j\geq 0}\binom{n}{j}w^{n-j}\braket*{(v^3)'}{F(a_{(j+l)}v^1\o v^2)}\numberthis\label{eq:process}\\
				&\ \ \ +\sum_{j\geq 0}\binom{l}{j} (-w)^{l-j}\braket*{(v^3)'}{F(v^1\o a_{(n+j)}v^2)}\\
				&=-\Res_{x_1} \braket*{(v^3)'}{Y_{M^3}(a,x_1) F(v^1\o v^2)} \iota_{|x_1|>|w|}( x_1^n(x_1-w)^l)\\
				&\ \ \ +\Res_{x_1-w} \braket*{(v^3)'}{F(Y_{M^1}(a,x_1-w)v^1\o v^2)} \iota_{|w|>|x_1-w|} (x_1^n (x_1-w)^l)\\
				&\ \ \ +\Res_{x_1}\braket*{(v^3)'}{F(v^1\o Y_{M^2}(a,x_1)v^2)} \iota_{|w|>|x_1|} (x_1^n (x_1-w)^l)\\
				&=0,
			\end{align*}
			where the last equality follows from the fact that $F$ satisfies Jacobi identity \eqref{FResjacobi}. Hence $\varphi_{I,p}$ is a well-defined element in $\mathscr{C}((M^3)' \o M^1\o M^2)_{(\P^1,\infty,w,0)}$.
			
			Conversely, we define
			\begin{align*}
				\mathscr{C}((M^3)' \o M^1\o M^2)_{(\P^1,\infty,w,0)}&\ra \mathcal{M}[P(w)]_{M^1M^2}^{M^3},\quad \varphi\mapsto F_\varphi,\\
				\braket*{(v^3)'}{F_\varphi(v^1\o v^2)}&:=\braket*{\varphi}{(v^3)'\o v^1\o v^2},
			\end{align*}
			The reverse calculation of \eqref{eq:process} shows that $F_\varphi$ is a well-defined $P(w)$-intertwining operator. Clearly, $\varphi\mapsto F_\varphi$ is the inverse of the second map of \eqref{eq:P(w)isocfb}.
		\end{proof}
		
		The following observation which immediately follows from Proposition~\ref{prop2.7} and Definition~\ref{def:cfb} is useful in our later discussion. It is convenient for us to use the notation $I(\cdot, e^{l_p(w)})$ instead of $F$ for the $P(w)$-intertwining operators.
		
		\begin{coro}\label{coro2.8}
			Let $X_w(z)=a\o \frac{z^n}{(z-w)^m}\in \mathcal{L}_{\P^1\bs\{\infty,w,0\}}(V)$. Under the identification \eqref{eq:P(w)isocfb}, we have
			\begin{align*}
				&\braket*{\varphi_{I,p}}{\rho_\infty(X_w(z))(v^3)'\o v^1\o v^2}\\
				&=-\Res_z^{\mathrm{out}} \braket*{(v^3)'}{Y_{M^3}(a,z)I(v^1,e^{l_p(w)})v^2} \cdot \iota_{|z|>|w|}\left(\frac{z^n}{(z-w)^m}\right) dz,\\
				&\braket*{\varphi_{I,p}}{(v^3)'\o \rho_w(X_w(z))(v^1)\o v^2}\\
				&=\Res_{z-w=0} \braket*{(v^3)'}{I(Y_{M^1}(a,z-w)v^1,e^{l_p(w)})v^2}\cdot \iota_{|w|>|z-w|}\left(\frac{z^n}{(z-w)^m}\right)dz,\\
				&\braket*{\varphi_{I,p}}{(v^3)'\o v^1\o \rho_0(X_w(z))(v^2)}\\
				&=\Res_{z=0} \braket*{(v^3)'}{I(v^1,e^{l_p(w)}) Y_{M^2}(a,z)v^2} \cdot\iota_{|w|>|z|}\left(\frac{z^n}{(z-w)^m}\right)dz
			\end{align*}
			In particular, the fact that $\varphi_{I,p}$ is invariant under the action of $X_w(z)$ \eqref{eq:actionofchiralliealgebraelements} is equivalent to the residue form Jacobi identity \eqref{eq:residueJacobi} of the $P(w)$-intertwining operator $I(\cdot, e^{l_p(w)})$.
		\end{coro}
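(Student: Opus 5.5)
The statement is an immediate bookkeeping consequence of Proposition~\ref{prop2.7} together with the explicit formula \eqref{eq:actionofchiralliealgebraelements} for the action of $\mathcal{L}_{\P^1\bs\{\infty,w,0\}}(V)$ on $(M^3)'\o M^1\o M^2$. The plan is to verify the three displayed identities one tensor factor at a time. Then the invariance statement follows by adding them and comparing with \eqref{eq:residueJacobi}.

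First, for the factor at $\infty$, I start from $\rho_\infty(X_w(z))(v^3)'=-\sum_{j\ge0}\binom{-m}{j}(-w)^j a'_{(n-m-j)}(v^3)'$ and pair with $\varphi_{I,p}$ using \eqref{eq:defofvarphiI}. Next I use the defining property \eqref{eq:contragredient} of the contragredient module to move $a'_{(k)}$ across the pairing. By construction of $a'_{(k)}$ (the opposite of the modes of $Y_{M'}$), this gives $\langle a'_{(k)}(v^3)',u\rangle=\langle (v^3)',a_{(k)}u\rangle$ for $u\in\overline{M^3}$, which extends from $M^3$ because only one graded component contributes. Finally I recognize $\sum_j\binom{-m}{j}(-w)^j a_{(n-m-j)}$ as the coefficient extraction $\Res^{\mathrm{out}}_z Y_{M^3}(a,z)\,\iota_{|z|>|w|}(z^n(z-w)^{-m})$, by expanding $(z-w)^{-m}=\sum_j\binom{-m}{j}(-w)^jz^{-m-j}$. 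The overall minus sign then yields the first formula. This is exactly the computation already carried out in the first line of \eqref{eq:process} (with $l=-m$), so I would simply cite it.

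Second, for the factors at $w$ and $0$, I would expand $z^n=\sum_j\binom{n}{j}w^{n-j}(z-w)^j$ for $|z-w|<|w|$ and $(z-w)^{-m}=\sum_j\binom{-m}{j}(-w)^{-m-j}z^j$ for $|z|<|w|$. Taking residues against $Y_{M^1}(a,z-w)v^1$ and $Y_{M^2}(a,z)v^2$ reproduces \eqref{eq:1action} and \eqref{eq:0action}. Pairing with $\varphi_{I,p}$ via \eqref{eq:defofvarphiI}, and using linearity of $I(\cdot,e^{l_p(w)})$ in each slot (the sums are finite by truncation), gives the second and third formulas.

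Finally, summing the three identities shows that $\langle\varphi_{I,p},\rho_{\infty,w,0}(X_w(z))((v^3)'\o v^1\o v^2)\rangle$ equals the difference between the right-hand side and the left-hand side of \eqref{eq:residueJacobi}. Vanishing for all $a,m,n$ and all homogeneous vectors is therefore equivalent to \eqref{eq:residueJacobi}. That identity in turn is equivalent to \eqref{FResjacobi}, and hence to the formal Jacobi identity, by the discussion before Proposition~\ref{prop2.7}. The only step needing care is the sign and adjoint convention at $\infty$, namely matching $\vartheta(a)$ and $a'_{(k)}$ with the definition of $Y_{M'}$. Since this was implicitly used in \eqref{eq:process}, I expect no genuine obstacle.
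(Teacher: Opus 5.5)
Your proposal is correct and follows the paper's route. The paper treats this corollary as immediate from Proposition~\ref{prop2.7} and Definition~\ref{def:cfb}. Concretely, that means pairing the explicit actions $\rho_\infty,\rho_w,\rho_0$ against $\varphi_{I,p}$ from \eqref{eq:defofvarphiI} (the term-by-term computation in \eqref{eq:process}, with the adjoint property of $a'_{(k)}$ at $\infty$) and then summing the three identities to compare with \eqref{eq:residueJacobi}, exactly as you do.
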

		
		
		
		\section{\texorpdfstring{The fusion tensor structure on $\mathcal C_{A(V)}$}{The fusion tensor structure on C\_\{A(V)\}}}
In this section, we generalize the contracted tensor product $M^1\odot M^2$ in \cite{Liu26} to an arbitrary point $w\in\C^\times$, and show that if $\CC_1(V)$ is semisimple and $\Phi^\L(\Mod_\fd(A(V)))\ssq \CC_1(V)$, then $M^1\odot_{w,0}M^2$ is isomorphic to the bottom degree of Huang-Lepowsky's $P(w)$-tensor product module $M^1\bt_{P(w)}M^2$, see Theorem~\ref{thm:eta-explicit}. This isomorphism will be used to define the fusion tensor structure on $\CC_{A(V)}=\Mod_\fd(A(V))$ in Subsection~\ref{sec:braiding}.
		\subsection{\texorpdfstring{Zhu algebra and the category $\CC_{A(V)}$}{Zhu algebra and the category C\_\{A(V)\}}}
We first recall properties of the Zhu algebra $A(V)$ \cite{Z96} and functors between the categories of $A(V)$-modules and $V$-modules.
		\subsubsection{\texorpdfstring{The adjoint functors $(\Phi^\L \dashv \Omega): \Mod(A(V))\rightleftarrows \Adm(V)$}{The adjoint functors (Phi\textasciicircum{}L left adjoint to Omega): Mod(A(V)) <-> Adm(V)}}
		
		Let $A(V)$ be the {\em Zhu algebra} associated to the VOA $V$ \cite{Z96}, and let $\Mod(A(V))$ denote the category of left $A(V)$-modules.
		
		The bottom-degree $M(0)$ of an admissible $V$-module $M$, or more generally, the space of highest-weight vectors $\Om(M)=\{v\in M: a_{(m)}v=0,\ \wt a-m-1<0\}$ of a weak $V$-module $M$, is a left $A(V)$-module via
		$$A(V)\ra \End(\Om(M)),\quad [a]\mapsto o(a)=a_{(\wt a-1)},\quad a\in V.$$
		We can view $\Omega$ as a functor $\Om: \Adm(V)\to \Mod(A(V))$.
		
		For a left $A(V)$-module $S$, the linear dual space $S^\ast=\Hom_\C(S,\C)$ is a natural right $A(V)$-module. We give it a left $A(V)$-module via the contragredient action $A(V)\times S^\ast \to S^\ast,\ ([a],f)\mapsto [a].f$, where
		\begin{equation}\label{eq:dualAVmod}
			\<[a].f,s\>=\<f, \theta_{A(V)}([a])s\>,\qquad
			\theta_{A(V)}([a])=[e^{L(1)}(-1)^{L(0)}a],
		\end{equation}
		for $f\in S^\ast$, $s\in S$.
		Now let $M\in \Ord(V)$, and $M'$ be its contragredient module via \eqref{eq:contragredient}. Then $\Om(M')$ is also a natural right $A(V)$-module:
		\begin{equation}\label{eq:OmM'naturalrightmod}
			\<f.[a], v\>=\<f,o(a)v\>,\quad f\in \Om(M'),\ [a]\in A(V),\ v\in M,
		\end{equation}
		which can be viewed as a left $A(V)$-module via \eqref{eq:dualAVmod}. Note that this left $A(V)$-module structure agrees with the left $A(V)$-module structure inherited from the contragredient module $M'$.
		
		Let $\scrU=\scrU(V)$ be the {\em universal enveloping algebra} associated to the VOA $V$ \cite{FZ92,FBZ04}, which is a complete $\Z$-graded topological associative algebra with neighborhood bases
		$$\sN^n_\sL\scrU=\scrU\cdot \scrU_{\leq -n},\quad n\geq 0.$$
		The {\em generalized Verma module} \cite{DLM98,DGK25,GL26} associated to a left $A(V)$-module $S$ is an admissible $V$-module defined by
		\begin{equation}\label{Verma}
			\Phi^\L(S)=(\scrU/\sN^1_\sL\scrU)\o_{\scrU_0}S=\bigoplus_{n=0}^\infty \Phi^\L(S)(n).
		\end{equation}
		Here $S$ is a $\scrU_0$-module via $\scrU_0\twoheadrightarrow A(V)\cong \scrU_0/\sN^1_\L \scrU_0$, and
		\[
		\Phi^\L(S)(n)=\spn\left\{ a^1_{[n_1]}\ds a^r_{[n_r]} v: a^i\in V,\ v\in S,\  \sum_{i=1}^r (\wt a^i-n_i-1)=n\right\}.
		\]
		$\Phi^\L$ is also a functor $\Phi^\L: \Mod(A(V))\to \Adm(V)$.
		Moreover, $(\Phi^\L\dashv \Om): \Mod(A(V))\rightleftarrows \Adm(V)$ is an adjoint pair of functors. Recall the following fact in \cite{DLM98,DGK25,GL26}:
		
		\begin{lm}\label{lm:sic}
			If the VOA $V$ satisfies the {\em strong identity condition}, then the pair of functors
			$$(\Phi^\L\dashv \Om): \Mod(A(V))\rightleftarrows \Adm(V)$$
			is an adjoint equivalence. In particular, it is an adjoint equivalence if $V$ is rational or if $V$ is the Heisenberg VOA $M(1)$.
		\end{lm}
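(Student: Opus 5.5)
The plan is to prove the adjunction first, then use the strong identity condition to upgrade it to an equivalence, and finally verify that condition in the two named cases.

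\textbf{Adjunction.} For $S\in\Mod(A(V))$ and $M\in\Adm(V)$ I will produce a natural bijection
\[
\Hom_V(\Phi^\L(S),M)\cong\Hom_{A(V)}(S,\Om(M)).
\]
The forward map restricts a $V$-homomorphism to $S=\Phi^\L(S)(0)$. Since $\Phi^\L(S)$ is generated by its bottom degree, this lands in $\Om(M)$ and the map is injective. For the inverse, $\Om(M)$ is a $\scrU_0$-module annihilated by $\sN^1_\sL\scrU\cap\scrU_0$. Given $f\colon S\to\Om(M)$, the map $u\otimes s\mapsto u.f(s)$ is well defined on $(\scrU/\sN^1_\sL\scrU)\o_{\scrU_0}S$, because $\scrU_{\le -1}$ kills $\Om(M)$. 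Naturality in both variables is immediate.

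\textbf{Equivalence.} The counit $\Phi^\L\Om(M)\to M$ and the unit $S\to\Om\Phi^\L(S)$ must both be isomorphisms. The unit is the main point, namely that $\Phi^\L(S)(0)=S$. Here I would recall the strong identity condition as in \cite{DGK25,GL26}: the $\scrU$-module structure makes $\Phi^\L(S)$ a genuine $V$-module, with the Jacobi identity holding, and $\Phi^\L(S)(0)\cong S$.

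For the counit, I would use the following characterization in \cite{DGK25,GL26}: every admissible module is generated by its bottom degree, and any admissible module with zero bottom degree vanishes. Granting this, surjectivity of the counit holds because the image contains $M(0)$ and $M$ is generated by $\Om(M)$. For injectivity, let $K$ be the kernel. Then $K$ is an admissible submodule with $K\cap\Phi^\L\Om(M)(0)=0$. If $K\neq0$, its lowest nonzero degree consists of highest-weight vectors. The strong identity condition forces such vectors to lie in degree $0$, so $K=0$.

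\textbf{Obstacle and the two cases.} The main obstacle is precisely this step: showing that no nonzero submodule of $\Phi^\L(S)$ avoids degree $0$. This is what the strong identity condition is designed to encode, so the general statement reduces to citing it.
\begin{itemize}
\item For $V$ rational, I will cite \cite{DLM98}. There $\Adm(V)$ is semisimple, so $\Phi^\L(S)$ has no such submodule; otherwise it would split off a summand, and that summand would have to be generated by degree $0$, a contradiction.
\item For $V=M(1)$, I have $A(M(1))\cong\C[h]$. I would then check directly that $\Phi^\L(S)\cong M(1)\o_\C S$ (Fock space tensored with $S$), and that highest-weight vectors there lie only in $1\o S$, since the Heisenberg annihilators act as $\partial/\partial h_{-n}$.
\end{itemize}
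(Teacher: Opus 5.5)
The paper gives no proof of this lemma; it quotes it from \cite{DLM98,DGK25,GL26}. Your adjunction paragraph is correct, and reducing the general case to the literature is all the paper does too. (Landing in $\Om(M)$ is a degree argument, not a consequence of generation.)

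\textbf{The unit step fails as written.} You misidentify what the strong identity condition supplies. That $\Phi^\L(S)$ is a genuine $V$-module with $\Phi^\L(S)(0)\cong S$ holds for every $V$: the degree-zero part is $(\scrU_0/\sN^1_\sL\scrU_0)\o_{\scrU_0}S=A(V)\o_{A(V)}S$, which is Dong--Li--Mason's generalized Verma module. Since $\Om$ is the space of \emph{all} highest-weight vectors, the unit $S\to\Om\Phi^\L(S)$ is an isomorphism exactly when (i) $\Phi^\L(S)$ has no nonzero highest-weight vectors of positive degree. This fails in general. For $V=V^k(\g)$ with $k\in\Z_{>0}$, $\Phi^\L(L(\la))=V^k(\g,\la)$ contains the singular vector $e_\theta(-1)^{k-(\la|\theta)+1}v_\la$. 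Counit surjectivity needs (ii): every admissible $M$ is generated by $\Om(M)$. This also fails in general. The highest-weight vectors of $V^k(\g,\la)'$ all lie in degree $0$ and generate only the proper submodule $\hat L_k(\la)'$.

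\textbf{What (i) and (ii) require.} Properties (i) and (ii) are the entire content of the lemma. In your write-up, (i) appears only in the counit-injectivity step, although it is also exactly what the unit needs. Property (ii) is simply asserted. Since neither is derived from the hypothesis, the general case cites its own conclusion. That is acceptable here, given the paper does the same, but attribute it correctly. For the mechanism, suppose an element of $\scrU_d\scrU_{-d}$ acts as the identity on $M(d)$; strong identity elements of the mode transition algebras in \cite{DGK25} are designed to provide this. Such an element factors through $M(0)$, so it forces $M(d)\subseteq\scrU_dM(0)$ and kills highest-weight vectors of degree $d$.

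\textbf{A grading point.} The kernel $K$ of the counit need not be graded, because $\Om(M)$ need not sit in one degree of $M$. So ``its lowest nonzero degree'' is not available. Instead, take a nonzero $x\in K$ of minimal top degree. Then $\scrU_{<0}x=0$, so (i) applies to each homogeneous component of $x$, and $x\in K\cap\Phi^\L\Om(M)(0)=0$.

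\textbf{The special cases.} Here you take a different route from the sources: you check the equivalence directly rather than verifying the strong identity condition. That is legitimate, but in both cases you only establish (i). For rational $V$, (ii) follows from semisimplicity, since each irreducible admissible module is generated by its nonzero highest-weight space. For $M(1)$, the $\partial/\partial h_{-n}$ computation gives only (i). Property (ii) is the complete reducibility theorem for level-one Heisenberg modules with locally nilpotent positive part, $M\cong M(1)\o\Om(M)$ \cite{FLM88}. That theorem is the real input in this case.
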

		
		\subsubsection{\texorpdfstring{The subcategory $\CC_{A(V)}$ of $A(V)$-modules }{The subcategory C\_\{A(V)\} of A(V)-modules}}
		\begin{df}
			Note that $\CC_1(V)\subset \Adm(V)$ is a full subcategory, see Definition~\ref{df:2.1}. Then we have a restricted functor
			\begin{equation}\label{eq:resOm}
				\Omega: \CC_1(V)\to \Mod(A(V)).
			\end{equation}
			We denote its essential image by $\CC_{A(V)}$, which is the full subcategory of $\Mod(A(V))$ consisting of modules isomorphic to $\Omega(M)$ for some $M\in\CC_1(V)$.
		\end{df}
		Whenever the functor \eqref{eq:resOm} is fully faithful,
		it induces an equivalence of categories
$
\Omega:\CC_1(V)\xrightarrow{\sim}\CC_{A(V)}.$
		We give some sufficient conditions for the fully faithfulness of $\Om$ \eqref{eq:resOm}.
		\begin{prop}\label{prop:sufffullyfaithful}
			The functor $\Om$ \eqref{eq:resOm} is fully faithful if either one of the following condition holds:
			\begin{enumerate}
				\item $V$ satisfies the strong identity condition. In this case, the quasi-inverse of $\Om$ is the generalized Verma module functor $\Phi^\L$ \eqref{Verma}, and so $$(\Phi^\L\dashv \Om): \CC_{A(V)}\rightleftarrows \CC_1(V)$$ are equivalence of categories.
				\item $\CC_1(V)$ is semisimple. In this case, the quasi-inverse of $\Om$ is Dong-Li-Mason's quotient module functor $L$ \cite[Section 7]{DLM98}, and
				$$(L\dashv \Om): \CC_{A(V)}\rightleftarrows \CC_1(V)$$
				are equivalence of categories. Moreover, in this case, if $\Phi^\L(\Om(M))\in C_1(V)$ for any object $M$ in $C_1(V)$, then there is a natural isomorphism
				$\Phi^\L|_{\CC_{A(V)}}\cong L|_{\CC_{A(V)}}.$
				In particular, $(\Phi^\L\dashv\Om)$ also restricts
				to an adjoint equivalence.
			\end{enumerate}
			In particular, when $V$ is strongly rational, then both conditions hold, and $\Phi^\L|_{\CC_{A(V)}}\cong L|_{\CC_{A(V)}}.$
		\end{prop}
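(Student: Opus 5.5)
For condition (1), I would use the adjoint equivalence of Lemma~\ref{lm:sic}. Under the strong identity condition, the unit $S\to\Om(\Phi^\L(S))$ and the counit $\Phi^\L(\Om(M))\to M$ are isomorphisms for every $S\in\Mod(A(V))$ and every $M\in\Adm(V)$. Since $\CC_1(V)$ is a full subcategory of $\Adm(V)$, it follows that $\Om$ restricted to $\CC_1(V)$ is fully faithful. Indeed, for $M,N\in\CC_1(V)$ we have
\[
\Hom_V(M,N)\cong\Hom_V(\Phi^\L\Om M,N)\cong\Hom_{A(V)}(\Om M,\Om N),
\]
and this composite is the map induced by $\Om$. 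By definition the essential image is $\CC_{A(V)}$. For $S\cong\Om(M)$ we get $\Phi^\L(S)\cong\Phi^\L\Om(M)\cong M\in\CC_1(V)$, so $\Phi^\L$ lands in $\CC_1(V)$ and restricts to a quasi-inverse. The adjunction restricts because both categories are full subcategories.

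For condition (2), I would use Lemma~\ref{lm:ssordmod} to reduce to the case of finite direct sums of irreducible ordinary modules $W$. For such $W$, $\Om(W)=W(0)$ is a simple $A(V)$-module by Zhu's correspondence \cite{Z96,DLM98}. Moreover, $W\cong L(W(0))$, the unique irreducible quotient of $\Phi^\L(W(0))$ constructed in \cite[Section 7]{DLM98}. Two irreducibles $W_1,W_2$ are isomorphic exactly when $W_1(0)\cong W_2(0)$. By Schur's lemma, $\Hom_V(W_1,W_2)$ and $\Hom_{A(V)}(W_1(0),W_2(0))$ are both $\C$ or both $0$, and the map induced by $\Om$ is injective because a nonzero morphism between irreducibles is nonzero on the bottom degree. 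This gives full faithfulness on irreducibles, and additivity extends it to finite direct sums. The functor $L$, applied summandwise to $\Om(M)=\bigoplus\Om(W^i)$, gives a quasi-inverse. The adjunction $L\dashv\Om$ on these subcategories then follows from
\[
\Hom_V(L(S),M)\cong\Hom_{A(V)}(\Om L(S),\Om M)\cong\Hom_{A(V)}(S,\Om M).
\]

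For the "moreover" statement, take $S=\Om(M)$ and assume $\Phi^\L(S)\in\CC_1(V)$. Then $\Phi^\L(S)$ is semisimple, and each summand is generated by its bottom degree, because $\Phi^\L(S)$ is generated by $S=\Phi^\L(S)(0)$. The bottom degree of $\Phi^\L(S)$ is $S$, so $\Phi^\L(S)$ is a direct sum of irreducibles whose bottom degrees add up to $S$. Hence $\Phi^\L(S)\cong L(S)$, and this isomorphism is natural because it is induced by the canonical surjection $\Phi^\L\to L$. Consequently $\Phi^\L\dashv\Om$ also restricts to an adjoint equivalence.

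The strongly rational case is then immediate. Strong rationality gives the strong identity condition, and $\CC_1(V)=\Ord(V)$ is semisimple by \S\ref{sec2.2}.

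I expect the main obstacle to be the step in (2) where $\Phi^\L(S)$ has no extra radical. This needs the fact that a submodule of $\Phi^\L(S)$ meeting $S$ trivially must vanish once $\Phi^\L(S)$ is semisimple. I would prove it by noting that any irreducible summand $W$ of $\Phi^\L(S)$ has $W(0)\subseteq\Phi^\L(S)(0)=S$. Summands with $W(0)=0$ cannot occur: the grading on $W$ then starts at a positive degree, and the projection $\Phi^\L(S)\to W$ kills the generating space $S$, so it would be zero.
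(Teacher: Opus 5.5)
Your treatment of (1) is the paper's argument. In (2) you prove full faithfulness more explicitly than the paper does. The paper cites Dong--Li--Mason's Theorem~7.2 for $L(\Om(W))\cong W$ and extends by semisimplicity; your Schur's lemma argument on irreducibles, plus additivity, is a cleaner way to see that $\Om$ is fully faithful with quasi-inverse $L$.

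For the ``moreover'' part you take a genuinely different route. The paper notes that once $\Phi^\L(\CC_{A(V)})\subseteq\CC_1(V)$, the adjunction $\Phi^\L\dashv\Om$ restricts to $\CC_{A(V)}\rightleftarrows\CC_1(V)$. Since $L\dashv\Om$ there too, uniqueness of left adjoints gives $\Phi^\L|_{\CC_{A(V)}}\cong L|_{\CC_{A(V)}}$. Equivalently, $\Hom_V(\Phi^\L(S),N)\cong\Hom_{A(V)}(S,\Om(N))\cong\Hom_V(L(S),N)$ naturally in $N$, and Yoneda applies. You instead show directly that the Dong--Li--Mason radical $J(S)$ of $\Phi^\L(S)$ vanishes. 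This gives the concrete statement that the canonical surjection $\Phi^\L(S)\to L(S)$ is itself the isomorphism. The adjoint argument yields the same map without looking inside $\Phi^\L(S)$.

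The step you flagged needs one more ingredient. You claim that an irreducible summand $W$ satisfies $W(0)\subseteq\Phi^\L(S)(0)=S$, and that the projection onto $W$ kills $S$. Both presuppose that $W$ and the projection respect the $\N$-grading of $\Phi^\L(S)$, but $V$-submodules and $V$-homomorphisms only respect $L(0)$-eigenspaces. Suppose $[\om]$ has eigenvalues $h$ and $h+1$ on $S$. Then $\Phi^\L(S_h)(1)$ and $\Phi^\L(S_{h+1})(0)$ have the same $L(0)$-eigenvalue. \emph{A priori} an irreducible summand could sit diagonally across them, meeting $S$ trivially while the projection onto it does not kill $S$.

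The fix is to split first:
\begin{itemize}
\item $[\om]$ is central and acts semisimply on $S$, so $S=\bigoplus_h S_h$ as $A(V)$-modules.
\item Hence $\Phi^\L(S)=\bigoplus_h\Phi^\L(S_h)$. Moreover $J(S)=\bigoplus_h J(S_h)$, because a graded $L(0)$-stable submodule decomposes along the $[\om]$-eigenvalues.
\item On $\Phi^\L(S_h)$, the degree-$n$ piece is exactly the $L(0)$-eigenspace of eigenvalue $h+n$. So every submodule and every $V$-homomorphism there is graded, and your argument goes through verbatim.
\end{itemize}
Alternatively, use the adjoint-uniqueness argument above, which avoids this bookkeeping entirely.
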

		\begin{proof}
			If $V$ satisfies the strong identity condition, then $(\Phi^\L\dashv \Om): \Mod(A(V))\rightleftarrows \Adm(V)$ is an adjoint equivalence by Lemma~\ref{lm:sic}. Hence the restriction $\Om|_{\CC_1(V)}$ is fully faithful, and the restriction of functors give rise to an adjoint equivalence $(\Phi^\L\dashv \Om): \CC_{A(V)}\rightleftarrows \CC_1(V)$.
			
			Now assume $\CC_1(V)$ is semisimple. By Lemma~\ref{lm:ssordmod} any object $M$ in $\CC_1(V)$ is an ordinary $V$-module. Then by \cite[Theorem 7.2]{DLM98}, for any irreducible $V$-module $W$ in $\CC_1(V)$, one has $L(\Om(W))\cong W$. It follows from the semisimplicity of $\CC_1(V)$ that $L(\Om(M))\cong M$ for any object $M$ in $\CC_1(V)$. i.e., $(L\dashv \Om): \CC_{A(V)}\rightleftarrows \CC_1(V)$ are adjoint equivalence.
			
			Finally, if $\Phi^\L(\Om(M))\in C_1(V)$ for all the objects $M$ in $\CC_1(V)$, then both $\Phi^\L|_{\CC_{A(V)}}$
			and $L|_{\CC_{A(V)}}$ are therefore left adjoints to
			$\Om|_{\CC_1(V)}$. By uniqueness of left adjoints,
			there is a natural isomorphism
			$
			\Phi^\L|_{\CC_{A(V)}}\cong L|_{\CC_{A(V)}}.
			$
		\end{proof}
		
		\subsection{\texorpdfstring{The left $A(V)$-module $M^1\odot_{w,0}M^2$ from one-point restricted conformal blocks}{The left A(V)-module M\textasciicircum{}1 odot\_\{w,0\} M\textasciicircum{}2 from one-point restricted conformal blocks}} We recall the definition of (one-point) restricted conformal blocks on smooth genus-zero curves \cite{GLZ25,GLZ26}, then mildly generalize the construction left $A(V)$-module $M^1\odot M^2$ by the second-named author in \cite{Liu26}.
		In this subsection, we fix a point $w\in \P^1\bs \{ \infty, 0\}=\C^\times$.
		
		
		\begin{df}\cite{GLZ26,Liu26}\label{def:inftyrestrictedchiralLie}
			Consider the following subspace of the chiral Lie algebra $\mathcal{L}_{\P^1\bs\{\infty,w,0\}}(V)$:
			\begin{equation}\label{eq:defofinftyrestrictchiral}
				\begin{aligned}
					\mathcal{L}_{\P^1\bs\{w,0\}}(V)_{\prec 0}
					&=\spn\{a\o f(z)\in \mathcal{L}_{\P^1\bs\{\infty,w,0\}}(V)
					: a\in V,\ \ord_\infty (f(z))\succ 1-\wt a \}\\
					&=\spn\left\{a\o\frac{z^{n}}{(z-w)^m}\in \mathcal{L}_{\P^1\bs\{\infty,w,0\}}(V)
					: a\in V,\ n-m\prec \wt a-1 \right\},
				\end{aligned}
			\end{equation}
			where $\prec$ represents $\leq $ or $<$.
			
			$ \mathcal{L}_{\P^1\bs\{w,0\}}(V)_{\leq 0}$ is a Lie subalgebra of $ \mathcal{L}_{\P^1\bs\{\infty,w,0\}}(V)$, called the the {\em $\infty$-restricted chiral Lie algebra}. $\mathcal{L}_{\P^1\bs\{w,0\}}(V)_{<0}$ is an ideal of the Lie algebra $ \mathcal{L}_{\P^1\bs\{w,0\}}(V)_{\leq 0}$.
			
			The quotient Lie algebra $ \mathcal{L}_{\P^1\bs\{w,0\}}(V)_{\leq 0}/ \mathcal{L}_{\P^1\bs\{w,0\}}(V)_{<0}$ is isomorphic to the degree-zero part of Borcherds' Lie algebra $ L(V)_0=\spn\{a_{[\wt a-1]}:a\in V \}$.
		\end{df}
		
		\begin{df}\cite{GLZ26,Liu26}
			Let $M^1,M^2,M^3\in \Ord(V)$. Then $\Om((M^3)')$ is a left $A(V)$-module via $\theta_{A(V)}$ \eqref{eq:dualAVmod}, which is also module over the Lie algebra $ \mathcal{L}_{\P^1\bs\{w,0\}}(V)_{\leq 0}$ via $\rho_\infty$ \eqref{eq:inftyaction}.
			The space of $\infty$-restricted conformal blocks associated to the data $(\P^1, (\infty, w,0),(\Om((M^3)'), M^1, M^2))$ is defined by
			\begin{equation}\label{eq:defrescfb}
				\begin{aligned}
					&\mathscr{C}_{\mathrm{res}}\left(\Om((M^3)')\o M^1\o M^2\right)_{(\P^1,\infty,w,0)}=\left(\frac{\Om((M^3)')\o M^1\o M^2}{ \mathcal{L}_{\P^1\bs\{w,0\}}(V)_{\leq 0}.(\Om((M^3)')\o M^1\o M^2) }\right)^*\\
					&\cong \Hom_{L(V)_0}\left(\frac{M^1\o M^2}{ \mathcal{L}_{\P^1\bs\{w,0\}}(V)_{< 0}.(M^1\o M^2)}, \Om((M^3)')^\ast \right).
				\end{aligned}
			\end{equation}
			The action of $ \mathcal{L}_{\P^1\bs\{w,0\}}(V)_{<0}$ on $M^1\o M^2$ is given by the tensor action of $\rho_w$ and $\rho_0$ as below:
			\begin{equation}\label{odotaction}
				\begin{aligned}
					\rho_{w,0}: \mathcal{L}_{\P^1\bs\{w,0\}}(V)_{< 0}&\ra \gl(M^1\o M^2),\\
					\rho_{w,0} \left(a\o\frac{z^n}{(z-w)^m}\right)(v^1\o v^2)&=\rho_w \left(a\o\frac{z^n}{(z-w)^m}\right)v^1\o v^2+v^1\o \rho_0 \left(a\o\frac{z^n}{(z-w)^m}\right)v^2,
				\end{aligned}
			\end{equation}
			where $\rho_w$ and $\rho_0$ are given by \eqref{eq:1action} and \eqref{eq:0action}, respectively.
		\end{df}
		
		Restricted conformal blocks extend uniquely to (unrestricted) conformal blocks after replacing the restricted $A(V)$-module by its associated generalized Verma module; see \cite[Theorem~5.5]{Liu26} and \cite[Theorems~5.18 and~5.19]{GLZ26}.
		
		\begin{lm}\label{lm:rescfbandcfb}
			Let $M^1,M^2,M^3$ be ordinary $V$-modules, and let $S=\Om((M^3)')$, equipped with the natural left $A(V)$-module structure
			inherited from the contragredient module $(M^3)'$ \eqref{eq:dualAVmod}. Then the restriction of conformal blocks induces an isomorphism
			\[
			\mathscr{C}(\Phi^\L(S)\o M^1\o M^2)_{(\P^1,(\infty, w,0))}\cong \mathscr{C}_{\mathrm{res}}(S\o M^1\o M^2)_{(\P^1,(\infty, w,0))},\quad \varphi\mapsto \varphi|_{S\o M^1\o M^2}.
			\]
			In particular, if the counit
			$\varepsilon_{(M^3)'}:
			\Phi^\L(\Om((M^3)'))\to (M^3)'$
			is an isomorphism, then the restriction also induces
			an isomorphism
			\begin{equation}\label{eq:cfb-restriction}
				\mathscr{C}((M^3)'\o M^1\o M^2)_{(\P^1,(\infty, w,0))}\cong \mathscr{C}_{\mathrm{res}}(\Om((M^3)')\o M^1\o M^2)_{(\P^1,(\infty, w,0))},\quad \varphi\mapsto \varphi|_{\Om((M^3)')\o M^1\o M^2}.
			\end{equation}
		\end{lm}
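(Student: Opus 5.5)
The proof splits into two parts: first an extension statement for the generalized Verma module $\Phi^\L(S)$, then the transfer to $(M^3)'$ through the counit. In both parts the restriction map $\varphi\mapsto\varphi|_{S\o M^1\o M^2}$ is well defined. If $\varphi$ kills $\mathcal{L}_{\P^1\bs\{\infty,w,0\}}(V).(\Phi^\L(S)\o M^1\o M^2)$, then it kills the image of the subalgebra $\mathcal{L}_{\P^1\bs\{w,0\}}(V)_{\leq 0}$. This subalgebra preserves $S=\Phi^\L(S)(0)\o M^1\o M^2$ under $\rho_\infty\o 1\o 1+1\o\rho_w\o1+1\o1\o\rho_0$. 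Indeed, by \eqref{eq:inftyaction}, the elements $a\o z^n(z-w)^{-m}$ with $n-m\leq \wt a-1$ act at $\infty$ through the modes $a'_{(n-m-j)}$, $j\ge0$. These have degree $\le 0$ on $(M^3)'$, and only the top one can act nontrivially on the bottom degree, where it acts via the $A(V)$-action. So the restriction is an $\infty$-restricted conformal block.

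For injectivity and surjectivity I use the decomposition
\[
\mathcal{L}_{\P^1\bs\{\infty,w,0\}}(V)=\mathcal{L}_{\P^1\bs\{w,0\}}(V)_{\leq 0}\oplus \mathcal{L}_{>0},
\]
where $\mathcal{L}_{>0}$ is spanned by the $a\o z^n$ with $n\geq \wt a$. These are the sections whose $\infty$-action is by the raising modes $a'_{(n)}$ of positive degree on $\Phi^\L(S)$; up to the $L(-1)$-relation they are the elements with a pole at $\infty$ and regular elsewhere. Via $\rho_\infty$, the Lie algebra $\mathcal{L}_{\P^1\bs\{\infty,w,0\}}(V)$ maps onto the degree-positive part of $\scrU$. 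Since $\Phi^\L(S)$ is generated from $S$ by raising modes (PBW-type spanning in \eqref{Verma}), we get
\[
\Phi^\L(S)\o M^1\o M^2=S\o M^1\o M^2+\mathcal{L}_{\P^1\bs\{\infty,w,0\}}(V).(\Phi^\L(S)\o M^1\o M^2).
\]
This follows by induction on degree: write $x.u\o v^1\o v^2=\rho(x)(u\o v^1\o v^2)-u\o(\rho_w+\rho_0)(x)(v^1\o v^2)$. This gives injectivity of the restriction.

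Surjectivity is the main obstacle. Given $\psi\in\mathscr{C}_{\mathrm{res}}$, I extend it by the recursion $\varphi(x.u\o v^1\o v^2):=-\varphi(u\o\rho_{w,0}(x)(v^1\o v^2))$ for $x\in\mathcal{L}_{>0}$. I then need to check well-definedness. The cleanest way is to avoid this directly and use the second description in \eqref{eq:defrescfb}. A restricted block is an $L(V)_0$-map $M^1\odot_{w,0}M^2\to S^\ast$; equivalently, it is an $A(V)$-map, after identifying $L(V)_0$ acting through $o(a)$. By the adjunction $\Phi^\L\dashv\Om$ applied to contragredients, this should correspond to a $\scrU$-equivariant pairing with $\Phi^\L(S)$. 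Concretely, the coinvariant space $[\Phi^\L(S)\o M^1\o M^2]$ equals
\[
\Phi^\L(S)\o_{\scrU}(M^1\o M^2),
\]
with $M^1\o M^2$ a right $\scrU$-module via the antipode-twisted $\rho_{w,0}$. Because $\Phi^\L(S)=(\scrU/\sN^1_\L\scrU)\o_{\scrU_0}S$, this collapses to
\[
S\o_{\scrU_0}\bigl(M^1\o M^2/\mathcal{L}_{<0}.(M^1\o M^2)\bigr).
\]
Dualizing gives exactly $\mathscr{C}_{\mathrm{res}}$. The delicate points are the following:
\begin{enumerate}
\item the action of $\mathcal{L}_{\P^1\bs\{\infty,w,0\}}(V)$ on $M^1\o M^2$ must extend continuously to the completion $\scrU$, with $\sN^1_\L\scrU$ acting through $\mathcal{L}_{<0}$. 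This uses the truncation of $\rho_w,\rho_0$ on fixed vectors.
\item $\theta_{A(V)}$ must match the $\scrU_0$-action at $\infty$.
\end{enumerate}
I would rely on \cite[Theorem~5.5]{Liu26} and \cite[Theorems~5.18, 5.19]{GLZ26} for these verifications, since the statement is quoted from there.

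Finally, if the counit $\varepsilon_{(M^3)'}$ is an isomorphism, it is a $V$-module isomorphism intertwining $\rho_\infty$. It therefore induces an isomorphism of coinvariant spaces that is the identity on $\Om((M^3)')\o M^1\o M^2$. Composing with the first isomorphism yields \eqref{eq:cfb-restriction}.
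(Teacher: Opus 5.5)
The paper gives no independent proof of this lemma. It quotes it from \cite[Theorem~5.5]{Liu26} and \cite[Theorems~5.18 and~5.19]{GLZ26}, so your final appeal to those results matches what the paper does. The parts you argue directly are correct:
\begin{itemize}
\item $\mathcal{L}_{\P^1\bs\{w,0\}}(V)_{\leq 0}$ preserves $S\o M^1\o M^2$: all modes $a'_{(n-m-j)}$ have degree $\leq 0$ at $\infty$, and on the bottom degree only the $j=0$ term with $n-m=\wt a-1$ survives.
\item Your degree induction with raising modes gives $\Phi^\L(S)\o M^1\o M^2=S\o M^1\o M^2+\mathcal{L}.(\Phi^\L(S)\o M^1\o M^2)$, hence injectivity.
\item The transfer through $\varepsilon_{(M^3)'}$ is fine.
\end{itemize}

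The route you sketch for surjectivity, however, would fail as written, so it should not be offered as the reason the lemma holds. Your point (1) is false: $M^1\o M^2$ with $\rho_{w,0}$ is not a topological $\scrU$-module. Take $x=a\o z^k\in\mathcal{L}_{\P^1\bs\{w,0\}}(V)_{<0}$. Then $\rho_0(x)v^2=a_{(k)}v^2$, which is in general nonzero for all $k\ll 0$, while the corresponding mode at $\infty$ has degree $k+1-\wt a\to-\infty$. Truncation therefore does not make $\scrU_{\leq -n}$ kill a fixed $v^1\o v^2$, and the infinite sums in $\scrU$ (such as the normal-ordered expansions in the Borcherds identity) do not act on $M^1\o M^2$. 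So $\Phi^\L(S)\o_{\scrU}(M^1\o M^2)$ is not available, and the collapse to $S\o_{\scrU_0}(M^1\odot_{w,0}M^2)$ presupposes what is to be shown.

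The formal argument is valid only for the Lie-induced module $\mathrm{Ind}(S)=U(\mathcal{L})\o_{U(\mathcal{L}_{\leq 0})}S$, with $\mathcal{L}=\mathcal{L}_{\P^1\bs\{\infty,w,0\}}(V)$. There Frobenius reciprocity gives $\mathscr{C}(\mathrm{Ind}(S)\o M^1\o M^2)\cong\mathscr{C}_{\res}(S\o M^1\o M^2)$ with no completion issues. But $\Phi^\L(S)$ is in general a proper quotient of $\mathrm{Ind}(S)$; the surjection exists by your own raising-mode spanning argument. So the real content of the cited theorems is this: every invariant functional on $\mathrm{Ind}(S)\o M^1\o M^2$ vanishes on $K\o M^1\o M^2$, where $K=\ker(\mathrm{Ind}(S)\twoheadrightarrow\Phi^\L(S))$ encodes the associativity (Jacobi-identity) relations imposed in $\scrU$ (cf.\ \cite{DLM98}). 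Proving this needs the Jacobi identities on $M^1$ and $M^2$, for instance via rationality and associativity of the correlation functions of the extended block. That is the missing idea if you want a proof that does not rest on the citation.
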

		
		\begin{df}\label{df:odotw}
			With the notations as above, define
			\[
			M^1\odot_{w,0} M^2:=\frac{M^1\o M^2}{ \mathcal{L}_{\P^1\bs\{w,0\}}(V)_{< 0}.(M^1\o M^2)}=\spn\{v^1\odot_w v^2: v^1\in M^1,\ v^2\in M^2\},
			\]
			where $v^1\odot_w v^2$ denotes $\overline{v^1\o v^2}$.
			
			$ M^1\odot_{w,0} M^2$ is spanned by symbols $v^1\odot_{w} v^2$, subject to the following relations:
			\begin{equation}\label{eq:actionfogeneralspann}
				\begin{aligned}
					\rho_w\left(a\o\frac{z^{\wt a-s}}{(z-w)^t}\right). v^1 \odot_w v^2
					&=-v^1\odot_w \rho_0\left(a\o \frac{z^{\wt a-s}}{(z-w)^t}\right). v^2 ,\quad \mathrm{equivalently,}\\
					\sum_{j\geq 0} \binom{\wt a-s}{j} w^{\wt a-s-j} a_{(j-t)}v^1\odot_w v^2
					&=-v^1\odot_w \sum_{j\geq 0}\binom{-t}{j} (-w)^{-t-j} a_{(\wt a-s+j)}v^2,
				\end{aligned}
			\end{equation}
			where $s,t\in \Z$ such that $s+t>1$.
			Moreover, $M^1\odot_{w,0} M^2$ is a $L(V)_0$-module with respect to the action
			\begin{equation}\label{LV0action}
				a_{[\wt a-1]}. (v^1\odot_w v^2)=\sum_{j\geq 0} \binom{\wt a-1}{j}w^{\wt a-1-j} a_{(j)}v^1\odot_w v^2+ v^1\odot_w o(a)v^2,
			\end{equation}
			where $o(a)=a_{(\wt a-1)}$. When $w=1$, $M^1\odot _{1,0} M^2$ is the contracted tensor product $M^1\odot M^2$ in \cite[Definition 4.3]{Liu26}.
		\end{df}
		
		The following proposition follows from \cite[Theorem 4.8]{Liu26} by the change of coordinate $z=wx$. The proof is similar, we omit the details. 
		\begin{prop}\label{prop:AVaction}
			Assume either $M^1$ or $M^2$ is generated by its bottom degrees $\Om(M^1)$ or $\Om(M^2)$, respectively. Then the $L(V)_0$-action \eqref{LV0action} factors through $L(V)_0\twoheadrightarrow A(V),\, a_{[\wt a-1]}\mapsto [a]$, which makes $M^1\odot_{w,0} M^2$ a left $A(V)$-module with respect to the following action:
			\begin{equation}\label{A(V)action}
				\begin{aligned}
					A(V)&\times (M^1\odot_{w,0} M^2)\ra M^1\odot_{w,0} M^2,\\
					[a].(v^1\odot_w v^2)
					&=\sum_{j\geq 0}\binom{\wt a-1}{j} w^{\wt a-1-j} a_{(j)}v^1\odot_w v^2+ v^1\odot_w o(a)v^2\\
					&=\rho_{w,0}(a\o z^{\wt a-1}).(v^1\odot_{w}v^2).
				\end{aligned}
			\end{equation}
		\end{prop}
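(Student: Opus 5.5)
The plan is to reduce to the case $w=1$, which is \cite[Theorem 4.8]{Liu26}, through the coordinate change $z=wx$. Equivalently, one can rerun that argument directly for general $w$. I would organize it in three steps.

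\textbf{Step 1 (the $L(V)_0$-action is well defined).} Since $\mathcal{L}_{\P^1\bs\{w,0\}}(V)_{<0}$ is an ideal in $\mathcal{L}_{\P^1\bs\{w,0\}}(V)_{\leq 0}$ and the quotient is $L(V)_0$, the quotient $M^1\odot_{w,0}M^2$ carries an action of $L(V)_0$. Concretely, $a_{[\wt a-1]}$ acts as the class of $a\o z^{\wt a-1}$. Its $\rho_w$-part is $\sum_j\binom{\wt a-1}{j}w^{\wt a-1-j}a_{(j)}$ and its $\rho_0$-part is $o(a)$, which gives \eqref{LV0action}.

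\textbf{Step 2 (the kernel of $L(V)_0\twoheadrightarrow A(V)$ acts by zero).} This kernel is spanned by the images of $L(-1)a+L(0)a$ and of $a\circ b$ (Zhu's $O(V)$). Rather than working with Borcherds' elements directly, I would prove two things.

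The first is that the element $a\o z^{\wt a-1}$ with $a\in O(V)$ acts trivially on $M^1\odot_{w,0}M^2$. Suppose $M^2$ is generated by $\Om(M^2)$. Then $M^2$ is spanned by vectors $b^1_{(n_1)}\cdots b^r_{(n_r)}u$ with $u\in\Om(M^2)$ and the $b^i$ lowering degree. Using the relations \eqref{eq:actionfogeneralspann} with $s+t>1$, I would move each negative-degree mode from the $M^2$ slot to the $M^1$ slot. Since $\rho_0(b\o z^{k}(z-w)^{-t})$ are exactly such modes up to triangular corrections, this shows
\[
M^1\odot_{w,0}M^2=\spn\{v^1\odot_w u: u\in\Om(M^2)\}.
\]
On such vectors the $\rho_0$ part of $a\o z^{\wt a-1}$ is $o(a)u$, which vanishes for $a\in O(V)$ because $\Om(M^2)$ is an $A(V)$-module.

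It remains to show that the $\rho_w$ part $\sum_j\binom{\wt a-1}{j}w^{\wt a-1-j}a_{(j)}v^1\odot_w u$ also vanishes modulo relations. For $a=b\circ c=\Res_x Y(b,x)c\,\frac{(1+x)^{\wt b}}{x^2}$, I would write $a\o z^{\wt a-1}$, up to the ideal, as the class of the bracket-type element
\[
b\o \frac{z^{\wt b}}{(z-w)^{2}}\cdot(\text{suitable power of } z)
\]
acting after $c$. Its $w$-dependence is dictated by $z=wx$. This element lies in $\mathcal{L}_{<0}$, since its order at $\infty$ exceeds $1-\wt b$. For $a=(L(-1)+L(0))b$, I would use the relation $L(-1)b\o f=-b\o f'$, which gives $(L(-1)+L(0))b\o z^{\wt b}=b\o(\wt b\, z^{\wt b-1}-\wt b\, z^{\wt b-1})$ modulo $\mathcal{L}_{<0}$, i.e. zero. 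The case where $M^1$ is generated by its bottom is symmetric.

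\textbf{Step 3 (alternative: transport of structure).} The cleanest rigorous route may be the following. Under $z=wx$, the rescaling $a\o f(z)\mapsto w^{\wt a-1}a\o f(wx)$ is a Lie isomorphism
\[
\mathcal{L}_{\P^1\bs\{\infty,w,0\}}(V)\cong\mathcal{L}_{\P^1\bs\{\infty,1,0\}}(V),
\]
and it preserves the $\infty$-order filtration. I would check that it intertwines $\rho_{w,0}$ with $\rho_{1,0}$ composed with $w^{L(0)}\o w^{L(0)}$, using Lemma~\ref{lm2.6}-style conjugation $w^{L(0)}a_{(n)}w^{-L(0)}=w^{\wt a-n-1}a_{(n)}$. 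Then $v^1\odot_w v^2\mapsto w^{L(0)}v^1\odot w^{L(0)}v^2$ is a linear isomorphism $M^1\odot_{w,0}M^2\cong M^1\odot M^2$ which intertwines the $L(V)_0$-actions up to the scalar rescaling $a_{[\wt a-1]}\mapsto w^{0}a_{[\wt a-1]}$; the degree-zero part is invariant. This lets \cite[Theorem 4.8]{Liu26} be applied directly.

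\textbf{Main obstacle.} The main obstacle is the branch choice for $w^{L(0)}$ on non-integral weights. Since the modules are ordinary, any fixed $l_p(w)$ works; the relations are homogeneous, so the ambiguity is a global scalar on each weight space, which is harmless.
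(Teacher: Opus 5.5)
Your Step~3 is the paper's route. The paper omits the details, saying the statement follows from \cite[Theorem~4.8]{Liu26} by the change of coordinate $z=wx$, and the identity it needs is the $M^1\o M^2$ part of Lemma~\ref{lm3.1}. However, both formulas you write in Step~3 have the exponent with the wrong sign, and as stated they fail.

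\emph{The rescaling.} Since $(dz)^{1-\wt a}=w^{1-\wt a}(dx)^{1-\wt a}$, the rescaling must be $a\o f(z)\mapsto w^{1-\wt a}\,a\o f(wx)$. With your factor $w^{\wt a-1}$ three things go wrong. The $j$-th term of the bracket picks up $w^{-j}$ on one side and $w^{j}$ on the other. The relation $L(-1)a\o f=-a\o f'$ is not respected. And $a\o z^{\wt a-1}$ is sent to $w^{2(\wt a-1)}a\o x^{\wt a-1}$, which contradicts your own claim that the degree-zero part is fixed.

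\emph{The module map.} It must use $w^{-L(0)}$, not $w^{L(0)}$. From \eqref{eq:1action}, \eqref{eq:0action} and $w^{-L(0)}a_{(k)}=w^{-(\wt a-k-1)}a_{(k)}w^{-L(0)}$ one gets
\[
(w^{-L(0)}\o w^{-L(0)})\circ\rho_{w,0}\Bigl(a\o\frac{z^n}{(z-w)^m}\Bigr)
=w^{\,n-m-\wt a+1}\,\rho_{1,0}\Bigl(a\o\frac{x^n}{(x-1)^m}\Bigr)\circ(w^{-L(0)}\o w^{-L(0)}).
\]
By contrast, $w^{L(0)}\o w^{L(0)}$ conjugates $\rho_{w,0}$ into a nonzero multiple of $\rho_{w^2,0}$. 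So your map identifies $M^1\odot_{w,0}M^2$ with $M^1\odot_{w^2,0}M^2$, not with $M^1\odot M^2$.

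\emph{The corrected argument.} With this identity the proof closes, and you do not even need a Lie isomorphism.
\begin{enumerate}
\item The map $v^1\odot_w v^2\mapsto w^{-L(0)}v^1\odot w^{-L(0)}v^2$ is well defined and bijective. The same $a,m,n$ appear on both sides, so the condition $n-m<\wt a-1$ is preserved, and the scalar is nonzero; hence the $\mathcal{L}_{<0}$-images correspond.
\item On $a\o z^{\wt a-1}$ the scalar is $w^0=1$, so the map intertwines $\rho_{w,0}(a\o z^{\wt a-1})$ with $\rho_{1,0}(a\o x^{\wt a-1})$ exactly.
\item The generation hypothesis concerns the same modules $M^1,M^2$, so \cite[Theorem~4.8]{Liu26} transports.
\end{enumerate}
Your remark about branches is correct: only the integer powers $w^{\wt a-k-1}$ enter the conjugation.

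Step~2 should not be offered as an alternative proof. The reduction of the spanning set to vectors $v^1\odot_w u$ with $u\in\Om(M^2)$ is fine. So is the $(L(-1)+L(0))$ case, which is already zero in $L(V)_0$. The case $b\circ c$, however, is only gestured at. The element $(b\circ c)\o z^{\wt b+\wt c-1}$ is built from the products $b_{(-2)}c$ and $b_{(-1)}c$. These never arise in brackets $[b\o f,c\o g]=\sum_{j\geq0}\frac{1}{j!}b_{(j)}c\o f^{(j)}g$, where only nonnegative modes occur. So the ``bracket-type element acting after $c$'' you appeal to is not available. Proving the vanishing directly would require expanding $(b\circ c)_{(j)}v^1$ by the iterate formula and moving the resulting modes across with \eqref{eq:actionfogeneralspann}. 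That is exactly reproving \cite[Theorem~4.8]{Liu26}. Drop Step~2 and keep the corrected Step~3.
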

		Furthermore, it follows from \cite[Remark 4.7]{Liu26} that
		\begin{equation}\label{AVactionq}
			[a].(v^1\odot_w v^2)=\rho_{w,0}\left(a\o \frac{z^{\wt a-1+q}}{(z-w)^q}\right). (v^1\odot_w v^2),\quad q\in \Z.
		\end{equation}
		With the definition of $M^1\odot_{w,0} M^2$ given by Definition~\ref{df:odotw}, it follows from \eqref{eq:defrescfb} that
		\[
		\mathscr{C}_{\mathrm{res}}\left(\Om((M^3)')\o M^1\o M^2\right)_{(\P^1,\infty,w,0)}\cong \Hom_{A(V)}\left(M^1\odot_{w,0} M^2,\Om((M^3)')^\ast\right).
		\]
		
		Recall that $\Om((M^3)')$ is a natural right $A(V)$-module via \eqref{eq:OmM'naturalrightmod}. Hence $\Om((M^3)')^\ast$ is a natural left $A(V)$-module via $A(V)\times \Om((M^3)')^\ast\to \Om((M^3)')^\ast,\ ([a],u)\mapsto [a].u$, where
		\[
		\<[a].u, f\>:=\<u,f.[a]\>,\quad u\in \Om((M^3)')^\ast,\ f\in \Om((M^3)'),\ [a]\in A(V).
		\]
		Since we do not assume $V$ is strongly rational, it is a subtle question whether $\Om((M^3)')^\ast$ is isomorphic to $\Om(M^3)$ as a left $A(V)$-module. Fortunately, the sufficient conditions in Proposition~\ref{prop:sufffullyfaithful} are also sufficient here.
		\begin{lm}\label{lm:Omdualiso}
			Let $M\in \CC_1(V)$ be an ordinary module. Then the natural left $A(V)$-module $\Om(M')^\ast$ is isomorphic to the bottom-degree left $A(V)$-module $\Om(M)$ if one of the following conditions hold:
			\begin{enumerate}
				\item $\CC_1(V)$ is semisimple.
				\item $V$ satisfies the strong identity condition.
			\end{enumerate}
			In particular, when $V$ is strongly rational, then both condition are satisfied.
		\end{lm}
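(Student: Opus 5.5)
The plan is to compare both left $A(V)$-modules through the natural pairing between $M(0)$ and $M'(0)$, and to reduce everything to one key identity relating the two actions. Since $M$ is ordinary and grading-restricted, $M'(0)=\Om(M')$ contains the bottom degree dual $M(0)^\ast$. First we determine $\Om(M')$. Suppose $M$ is either semisimple with irreducible summands (case (1), via Lemma~\ref{lm:ssordmod}) or a module with $\Phi^\L(\Om(M))\cong M$ (case (2), via Proposition~\ref{prop:sufffullyfaithful}). Then $\Om(M')=M'(0)=M(0)^\ast$. In case (1) this holds because $M'$ is again a finite direct sum of irreducible ordinary modules $(W^i)'$, and the highest-weight vectors of an irreducible module form exactly its bottom degree. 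In case (2) we argue as follows. A vector $f\in\Om(M')$ annihilates $a_{(m)}M$ whenever $\wt a-m-1>0$, by the contragredient formula \eqref{eq:contragredient}. Since $M$ is generated by $M(0)$, the positive-degree part $\bigoplus_{n>0}M(n)$ is spanned by such vectors, so $f$ is determined by its restriction to $M(0)$.

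Next we define the candidate isomorphism
\[
\psi:\Om(M)=M(0)\to \Om(M')^\ast=M(0)^{\ast\ast},\qquad \<\psi(v),f\>=\<f,v\>,
\]
which is a linear isomorphism because $\dim M(0)<\infty$. This is where grading-restriction is used: only finitely many $L(0)$-eigenspaces meet $M(0)$.

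It remains to check that $\psi$ intertwines the $A(V)$-actions. By definition,
\[
\<[a].\psi(v),f\>=\<\psi(v),f.[a]\>=\<f.[a],v\>,
\]
where $f.[a]$ is the natural right action \eqref{eq:OmM'naturalrightmod}, namely $\<f.[a],v\>=\<f,o(a)v\>$. Hence $\<[a].\psi(v),f\>=\<f,o(a)v\>=\<\psi(o(a)v),f\>$, which is exactly $A(V)$-linearity.

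The main obstacle is the compatibility assertion stated before \eqref{eq:OmM'naturalrightmod}. That assertion says the right action $f.[a]$, turned into a left action via $\theta_{A(V)}$, agrees with the left action induced by the vertex operators on $M'$. Concretely, one must show that the zero mode of $Y_{M'}(a,z)$ on $M'(0)$ is the transpose of $o(\theta(a))$, with $\theta(a)=e^{L(1)}(-1)^{L(0)}a$. This is obtained by taking the zero mode in \eqref{eq:contragredient}, which gives $\<o'(a)f,v\>=\<f,o(e^{L(1)}(-1)^{L(0)}a)v\>$ on bottom degrees; only the $z^{-\wt a}$-type coefficient survives the degree count. The second ingredient is that $\theta_{A(V)}$ is an anti-involution of $A(V)$ (Zhu, Frenkel--Zhu). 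Together these make the two left structures on $\Om(M')$ coincide, so the double dual carries the transported structure and the verification above is legitimate.

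The strongly rational case follows because then $\CC_1(V)=\Ord(V)$ is semisimple and the strong identity condition holds (Lemma~\ref{lm:sic}).
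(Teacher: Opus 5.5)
Your strategy is the paper's: pair $M(0)$ against $\Om(M')$ and take the double dual. In case (2), your identification $\Om(M')=M'(0)=M(0)^\ast$ is correct and a bit leaner than the paper's appeal to $M'\cong\Phi^\L(\Om(M'))$ and $M''\cong M$. It combines the trivial inclusion $M'(0)\subseteq\Om(M')$ with the fact that $f\in\Om(M')$ vanishes on positive degrees, because $M$ is generated by $M(0)$. The $A(V)$-linearity of $\psi$ is tautological given the paper's definition of the action on $\Om(M')^\ast$. The compatibility you call the main obstacle is a background fact stated before the lemma. It is needed only to know that $f.[a]$ lies in $\Om(M')$ and defines a right $A(V)$-action.

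The real gap is the finite-dimensionality of $M(0)$, on which the step $M(0)\cong(M(0)^\ast)^\ast$ rests. You attribute it to grading-restriction, claiming that only finitely many $L(0)$-eigenspaces meet $M(0)$. That is false: grading-restriction bounds each eigenspace, but $M(0)$ can meet one eigenspace in every coset of $\C/\Z$.

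Here is a concrete example. Take the Heisenberg VOA $M(1)$, which satisfies the strong identity condition, and set $M=\bigoplus_{j\geq1}M(1,\lambda_j)$ with real $\lambda_j\to\infty$ and $\lambda_j^2/2$ pairwise incongruent modulo $\Z$. Then $M$ is ordinary, grading-restricted, and $\Phi^\L(\Om(M))\cong M$, so every other step of your argument goes through. Yet $\Om(M)$ has countable dimension while $\Om(M')^\ast\cong\prod_j\C$ does not, so the conclusion fails.

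Your proof never uses $C_1$-cofiniteness, and that is exactly what is needed. For $a\in V_+$, $a_{(-1)}$ raises degree by $\wt a\geq1$, so $C_1(M)\subseteq\bigoplus_{n\geq1}M(n)$. Hence $M(0)$ embeds in the finite-dimensional space $M/C_1(M)$. The paper phrases this as $\Om(M')\subseteq(M/C_1(M))^\ast$, via \eqref{eq:dualpairing}.

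The same hypothesis is what makes the decomposition in case (1) a finite direct sum, which you take for granted. Each irreducible summand $W^i$ contributes $W^i(0)\neq 0$ to $M/C_1(M)$, so there can be only finitely many. With this fix your proof is complete.
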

		\begin{proof}
			Assume $\CC_1(V)$ is semisimple, then $M=\bigoplus_i W^i$ with $W^i$ being irreducible $C_1$-cofinite ordinary modules. Since $\Om(M)=\bigoplus_{i} \Om(W^i)$, it suffices to show $\Om(W)\cong \Om(W')^\ast$ for irreducible ordinary module $W\in \CC_1(V)$. By Definition~\ref{df:2.1}, $W$ is admissible and $\Om(W)=W(0)$, which is finite dimensional. Then $W'=\bigoplus_{n=0}^\infty W(n)^\ast$ is also irreducible \cite{FHL93}, with $\Om(W')=W(0)^\ast$. Hence there is a natural isormophism $\Om(W')^\ast=(W(0)^\ast)^\ast \cong W(0)=\Om(W)$.
			
			Now assume $V$ satisfies the strong identity condition. Then by Lemma~\ref{lm:sic} and Proposition~\ref{prop:sufffullyfaithful}, there is an adjoint equivalence $(\Phi^\L\dashv \Om): \Mod(A(V))\rightleftarrows \Adm(V)$. For $M\in \CC_1(V)\subset \Adm(V)$, since $M\cong \Phi^\L(\Om(M))$, $M'\cong \Phi^\L (\Om(M'))$, and $(M')'\cong M$ as $M$ is ordinary, the natural pairing $\<\cdot,\cdot\>: M\times M'\to \C$ restricts to a non-degenerate paring $\<\cdot,\cdot\>: \Om(M)\times \Om(M')\to \C$. On the other hand, since
			\begin{equation}\label{eq:dualpairing}
				\<f,a_{(-1)}v\>=\<\theta(a_{(-1)})f,v\>=0,\quad a\in V_+,\ v\in M,\  f\in \Om(M'),
			\end{equation}
			we have $\Om(M')\subseteq (M/C_1(M))^\ast$, which is finite dimensional. Hence $\Om(M)\cong \Om(M')^\ast$ via the perfect pairing.
		\end{proof}
		\begin{coro}\label{coro:3.10}
			Assume $\CC_1(V)$ is semisimple or $V$ satisfies the strong identity condition, then for any ordinary modules $M^1,M^2,M^3\in \CC_1(V)$, we have
			\begin{equation}\label{eq:rescfbandhom}
				\mathscr{C}_{\mathrm{res}}\left(\Om((M^3)')\o M^1\o M^2\right)_{(\P^1,\infty,w,0)}\cong \Hom_{A(V)}\left(M^1\odot_{w,0} M^2,\Om(M^3)\right).
			\end{equation}
			The corresponding elements $\varphi_{\mathrm{res}}$ and $f$ satisfy
			\begin{equation}\label{f}
				\braket*{(u^3)'}{f(v^1\odot_wv^2)}
				=\braket*{\varphi_{\mathrm{res}}}{(u^3)'\otimes v^1\otimes v^2},
				\quad (u^3)'\in\Om((M^3)').
			\end{equation}
		\end{coro}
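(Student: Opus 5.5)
The statement follows by combining the identification of restricted conformal blocks with $A(V)$-module maps and Lemma~\ref{lm:Omdualiso}. By definition \eqref{eq:defrescfb}, an element $\varphi_{\mathrm{res}}$ of the restricted block space is a linear functional on $\Om((M^3)')\o M^1\o M^2$ that vanishes on the image of $\mathcal{L}_{\P^1\bs\{w,0\}}(V)_{\leq 0}$. The first step is to split this Lie algebra into the ideal $\mathcal{L}_{\P^1\bs\{w,0\}}(V)_{<0}$ and the quotient $L(V)_0$.

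On the ideal $\mathcal{L}_{\P^1\bs\{w,0\}}(V)_{<0}$, the action $\rho_\infty$ kills $\Om((M^3)')$. Indeed, by \eqref{eq:inftyaction} it is a sum of modes $a'_{(n-m-j)}$, and each of these lowers the degree when $n-m<\wt a-1$. Hence invariance under this ideal says exactly that the map $v^1\o v^2\mapsto \varphi_{\mathrm{res}}(\,\cdot\,\o v^1\o v^2)$ descends to a linear map $M^1\odot_{w,0}M^2\to \Om((M^3)')^\ast$.

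Invariance under the elements $a\o z^{\wt a-1}$, which span $\mathcal{L}_{\leq 0}$ modulo the ideal, says that this map intertwines two actions:
\begin{enumerate}
\item the $A(V)$-action on $M^1\odot_{w,0}M^2$ from Proposition~\ref{prop:AVaction}, given by $\rho_{w,0}(a\o z^{\wt a-1})$;
\item the action on $\Om((M^3)')^\ast$ dual to $\rho_\infty(a\o z^{\wt a-1})$, with a sign.
\end{enumerate}
I would check directly from \eqref{eq:inftyaction} that $-\rho_\infty(a\o z^{\wt a-1})$ on $\Om((M^3)')$ equals $a'_{(\wt a-1)}=o(\theta(a))$, restricted to the bottom degree. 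Its transpose is then the natural left action $\<[a].u,f\>=\<u,f.[a]\>$, and via \eqref{eq:dualAVmod} this matches the contragredient structure. This sign and involution bookkeeping is the only delicate point. Its outcome is
\[
\mathscr{C}_{\mathrm{res}}\cong \Hom_{A(V)}\bigl(M^1\odot_{w,0}M^2,\Om((M^3)')^\ast\bigr),
\]
which the paper already records just before Lemma~\ref{lm:Omdualiso}. We may also assume the $A(V)$-action is well defined. Under semisimplicity every object of $\CC_1(V)$ is a direct sum of irreducibles, each generated by its bottom degree, so Proposition~\ref{prop:AVaction} applies. Under the strong identity condition, $M\cong\Phi^\L(\Om(M))$ is generated by $\Om(M)$.

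The final step is to compose with the $A(V)$-isomorphism $\Om((M^3)')^\ast\cong\Om(M^3)$ from Lemma~\ref{lm:Omdualiso}, which holds under either hypothesis. This isomorphism is induced by the pairing $\<\cdot,\cdot\>:\Om(M^3)\times\Om((M^3)')\to\C$, that is, by $u^3\mapsto \<\cdot,u^3\>$. Composing gives \eqref{eq:rescfbandhom}. Unwinding the identifications, the $A(V)$-homomorphism $f$ is characterized by $\<(u^3)',f(v^1\odot_w v^2)\>=\<\varphi_{\mathrm{res}},(u^3)'\o v^1\o v^2\>$, which is \eqref{f}.
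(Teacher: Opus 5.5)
Your proposal is correct and follows the paper's route. The corollary is obtained by composing the identification $\mathscr{C}_{\mathrm{res}}\cong\Hom_{A(V)}\bigl(M^1\odot_{w,0}M^2,\Om((M^3)')^\ast\bigr)$, which the paper records right after \eqref{AVactionq}, with the pairing isomorphism $\Om((M^3)')^\ast\cong\Om(M^3)$ of Lemma~\ref{lm:Omdualiso}. Your degree check that $\rho_\infty$ of the ideal kills $\Om((M^3)')$, and your sign and $\theta$ bookkeeping showing $-\rho_\infty(a\o z^{\wt a-1})=o(\theta(a))$, supply the details the paper leaves implicit.
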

		
		
		
		
		\subsection[ P(w)-tensor product ]{\texorpdfstring{The isomorphism $\eta_w: M^1\odot_{w,0}M^2\to \Om(M^1\boxtimes_{P(w)}M^2)$}{The isomorphism eta\_w: M\textasciicircum{}1 odot\_\{w,0\} M\textasciicircum{}2 -> Omega(M\textasciicircum{}1 boxtimes\_\{P(w)\} M\textasciicircum{}2)}}\label{sec:eta}
		In this subsection, we fix a point $w\in \P^1\bs\{\infty, 0\}=\C^\times$. We show that under certain conditions for the category $\CC_1(V)$, the $\odot$-tensor product in Definition~\ref{df:odotw} captures all the information of the bottom degree Zhu algebra module of Huang-Lepowsky's $P(w)$-tensor product.
		
		\subsubsection{\texorpdfstring{Key assumptions for $\CC_1(V)$}{Key assumptions for C\_1(V)}}
		In order to describe the top degree Zhu algebra module of $M^1\boxtimes_{P(w)}M^2$, we need some further conditions on $\CC_1(V)$.
		
		
		\begin{prop}\label{prop:condforC1V}
			Assume $\CC_1(V)$ is semisimple, and $\Phi^\L (\Mod_{\fd}(A(V)))\ssq \CC_1(V)$, where $\Phi^\L$ is the generalized Verma module functor \eqref{Verma}. Then
			\begin{enumerate}
				\item $\CC_{A(V)}=\Mod_{\fd}(A(V))$, and $$(\Phi^\L \dashv \Om): \Mod_{\fd}(A(V))\rightleftarrows \CC_1(V)$$ are equivalence of categories;
				\item $\CC_1(V)\ssq \Ord(V)$ is closed under contragredient: $M'\in \CC_1(V)$ when $M\in \CC_1(V)$;
				\item For $M^1,M^2\in \CC_1(V)$, we have $M^1\odot_{w,0} M^2\in \CC_{A(V)}$.
			\end{enumerate}
		\end{prop}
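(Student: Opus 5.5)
For (1), I would first show that every object of $\CC_{A(V)}$ is finite dimensional. If $M\in\CC_1(V)$, then by Lemma~\ref{lm:ssordmod} $M$ is ordinary and grading-restricted, hence $M=\bigoplus_i W^i$ is a finite direct sum of irreducibles. (Finiteness follows because $M/C_1(M)$ is finite dimensional and each irreducible summand contributes a nonzero quotient.) Therefore $\Om(M)=\bigoplus_i W^i(0)$ is finite dimensional, so $\CC_{A(V)}\ssq\Mod_\fd(A(V))$.

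Conversely, for $S\in\Mod_\fd(A(V))$ the hypothesis gives $\Phi^\L(S)\in\CC_1(V)$. I would then use the standard fact $\Om(\Phi^\L(S))\cong S$ for generalized Verma modules, i.e. that the unit $S\to\Om\Phi^\L(S)$ is an isomorphism onto the degree-zero part (\cite{DLM98,DGK25}). This gives $S\in\CC_{A(V)}$. The equivalence then follows from Proposition~\ref{prop:sufffullyfaithful}(2): $\Om$ is fully faithful with essential image $\Mod_\fd(A(V))$. Since $\Phi^\L(\Om(M))\in\CC_1(V)$ for every $M\in\CC_1(V)$, the same proposition gives $\Phi^\L\cong L$ on $\CC_{A(V)}$, so $(\Phi^\L\dashv\Om)$ is an adjoint equivalence.

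For (2), I would reduce to irreducible $W\in\CC_1(V)$, since contragredients commute with finite direct sums. The module $W'$ is irreducible ordinary with $\Om(W')=W(0)^\ast$ finite dimensional. The key point, and the main obstacle, is $C_1$-cofiniteness of $W'$, which is not automatic. I would argue as follows. The module $\Phi^\L(W(0)^\ast)$ lies in $\CC_1(V)$ by hypothesis, so by semisimplicity it is a direct sum of irreducibles. Its bottom degree is $W(0)^\ast$, which is irreducible as an $A(V)$-module. Hence it has a unique irreducible summand meeting degree zero, and since it is generated by degree zero it equals $L(W(0)^\ast)$. By \cite[Theorem~7.2]{DLM98}, $L(W(0)^\ast)\cong W'$. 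Therefore $W'\cong\Phi^\L(W(0)^\ast)\in\CC_1(V)$.

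For (3), I would first show that $M^1\odot_{w,0}M^2$ is a finite-dimensional left $A(V)$-module. Both $M^i$ are semisimple with irreducible summands generated by their bottom degrees, so Proposition~\ref{prop:AVaction} applies summandwise and gives the $A(V)$-module structure. For finite dimensionality I would mimic \cite{Liu26}, after the change of coordinate $z=wx$. Using the relations \eqref{eq:actionfogeneralspann} with suitable $s,t$, one moves $a_{(-1)}$-type modes (for $a\in V_+$) from $M^1$ to $M^2$ and conversely. This shows that $M^1\odot_{w,0}M^2$ is spanned by images of a finite-dimensional complement of $C_1(M^1)$ tensored with one of $C_1(M^2)$, modulo lower-order terms, and an induction on weight then yields finite dimensionality. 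If that induction is delicate, I would fall back on the alternative route: the dual of $M^1\odot_{w,0}M^2$ embeds into conformal blocks and so relates to fusion rules. With (1), the module then lies in $\Mod_\fd(A(V))=\CC_{A(V)}$.
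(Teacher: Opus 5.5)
Your proposal is correct and follows essentially the same route as the paper. For (1) you use finiteness of the decomposition via $M/C_1(M)$ together with $S\cong\Om(\Phi^\L(S))$; for (2) you use the identification $M'\cong\Phi^\L(\Om(M'))$, and your argument through irreducibility of $\Phi^\L(W(0)^\ast)$ and \cite[Theorem~7.2]{DLM98} makes explicit a step the paper states tersely (the paper instead gets $\dim\Om(M')<\infty$ from the pairing bound $\Om(M')\subseteq (M/C_1(M))^\ast$); and for (3) you use a degree induction on the $\odot_w$-relations. That induction is not delicate: the paper applies \eqref{eq:actionfogeneralspann} with $(s,t)=(1,1)$ to rewrite $(a_{(-1)}u)\odot_w v$ and with $(s,t)=(\wt a+1,0)$ to rewrite $u\odot_w (a_{(-1)}v)$, and every resulting term has strictly smaller total degree, so your fallback via conformal blocks is unnecessary.
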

		\begin{proof}
$(1)$. Since $\CC_1(V)$ is semisimple, by Proposition~\ref{prop:sufffullyfaithful} and Lemma~\ref{lm:ssordmod}, $\Om:\CC_1(V)\to \Mod(A(V))$ is fully faithful and any object in $\CC_1(V)$ is an ordinary $V$-module. Let $M\in \CC_1(V)$. Then $M=\bigoplus_i W^i$, with $W^i$ being irreducible for all $i$ and
			$M/C_1(M)\cong \bigoplus_i W^i/C_1(W)$ is finite dimensional. Therefore, the sum is finite and $\Om(M)=\bigoplus_{i=0}^n \Om(W^i)\in \Mod_{\fd}(A(V))$. Since $\Phi^\L(S)\in \CC_1(V)$ for any $S\in \Mod_\fd(A(V))$ by the assumption, and we have equivalence of categories
			\[(\Phi^\L\dashv \Om): \CC_{A(V)} \rightleftarrows \CC_1(V) ,\quad \CC_{A(V)}\subseteq \Mod_\fd(A(V)). \]
			It follows that $S\cong \Om(\Phi^\L(S))\in \CC_{A(V)}$, and so $\CC_{A(V)}=\Mod_\fd(A(V))$. 
			
$(2)$. Given $M\in \CC_1(V)$, it follows from \eqref{eq:dualpairing} that $\Om(M')\ssq (M/C_1(M))^\ast$ is finite dimensional. Then $\Om(M')\in \Mod_\fd(A(V))=\CC_{A(V)}$ and $M'\cong \Phi^\L(\Om(M'))\in \CC_1(V)$. 
			
$(3)$.	Let $M^1,M^2\in \CC_1(V)$ be irreducible ordinary $V$-modules. Choose homogeneous subspaces $U^1,U^2$ such that $M^i=U^i\op C_1(M^i)$ and $M^i(0)\ssq U^i$, $i=1,2$.
			In order to show $(3)$, we claim that the canonical map
			\begin{equation}\label{eq:odotsurj}
				U^1\otimes U^2\to M^1\odot_{w,0} M^2,\quad u^1\otimes u^2\mapsto u^1\odot_w u^2
			\end{equation}
			is surjective. In other words, $M^1\odot_{w,0}M^2=U^1\odot_{w,0} U^2=\spn\{u^1\odot_w u^2: u^i\in U^i,\ i=1,2 \}$.
			
			Let $u\in M^1(p)$, $v\in M^2(q)$, and $a\in V_d$, with $p,q\geq 0$ and $d\geq 1$. For homogeneous elements $u^i\in M^i$, we define $\deg(u^1\odot_w u^2):=\deg(u^1)+\deg(u^2)$.
			It follows from \eqref{eq:actionfogeneralspann}, with $(s,t)=(1,1)$ that
			\[
			(a_{(-1)}u)\odot_w v=-\sum_{j=1}^{d-1} \binom{d-1}{j} w^{-j} (a_{(j-1)}u)\odot_w v+\sum_{j\geq 0} w^{-d-j} u\odot_w a_{(d-1+j)}v.
			\]
			Note that $\deg ((a_{(-1)}u)\odot_w v)=d+p+q$, and $\deg ( (a_{(j-1)}u)\odot_w v)=d-j+p+q< d+p+q$ for all $j\geq 1$, and $\deg(u\odot_w a_{(d-1+j)}v)=-j+p+q< d+p+q$ for all $j\geq 0$. Then by induction on the degree $\deg((a_{(-1)}u)\odot_w v)$, we have $C_1(M^1)\odot_{w,0} M^2\ssq U^1\odot_{w,0} U^2$.
			
			On the other hand, apply \eqref{eq:actionfogeneralspann} with $(s,t)=(d+1,0)$, we have
			\[
			u\odot_w (a_{(-1)}v)=
			-\sum_{j\ge0}(-1)^j w^{-1-j}
			(a_{(j)}u)\odot_w v,
			\]
			where $\deg ((a_{(j)}u)\odot_w v)=d-j-1+p+q<d+p+q=\deg ( u\odot_w (a_{(-1)}v))$, for all $j\geq 0$. Then by induction on degree again, we have $M^1\odot_{w,0} C_1(M^2)\ssq U^1\odot_{w,0}U^2$. This proves the surjectivity of \eqref{eq:odotsurj}. Since $M^1,M^2$ are $C_1$-cofinite, we may choose $U^1,U^2$ to be finite dimensional. Then $\dim (M^1\odot_{w,0}M^2)\leq \dim U^1\cdot \dim U^2<\infty$. Hence $M^1\odot_{w,0} M^2\in \Mod_\fd(A(V))=\CC_{A(V)}$. The general case follows since $\odot_{w,0}$ commutes with finite direct sums.
		\end{proof}
		
		\begin{example}\label{ex:levels}
			The following VOAs satisfy the conditions in Proposition~\ref{prop:condforC1V}, namely, $\CC_1(V)$ is semisimple and $\Phi^\L (\Mod_{\fd}(A(V)))\ssq \CC_1(V)$.
			\begin{enumerate}
				\item Any strongly rational VOA $V$, in which case $\CC_1(V)=\Ord(V)$. In particular, $V=L_k(\g)$ an affine VOA at positive integral level $k\in \Z_{>0}$ \cite{FZ92,DLM97}.
				\item $V=L_k(\g)$ an admissible level affine VOA, where
				$$k+h^\vee=\frac{p}{q},\qquad p\geq \begin{cases}
					h^\vee &\mathrm{if}\ (r^\vee,q)=1,\\
					h&\mathrm{if}\ (r^\vee,q)=r^\vee.
				\end{cases}$$
				See subsection~\ref{sec:2.2.2}.
				\item $V=V^k(\g)=L_k(\g)$ a generic level affine VOA, where $k+h^\vee\notin \Q$. See subsection~\ref{sec:2.2.3}.
				\item
				$V=V^k(\mathfrak{sl}_2)=L_k(\mathfrak{sl}_2)$, where
				\[
				k=-2\pm\frac{1}{p},
				\qquad p\in\mathbb Z_{>0};
				\]
				see \cite[Proposition~3.1]{AM26}.
				\item
				$V=L_k(\g)$, where $(\g,k)$ is one of the following
				pairs:
				\[
				\setlength{\arraycolsep}{4pt}
				\begin{array}{c|c|c|c|c|c|c|c|c|c}
					\g
					& A_\ell & A_{2\ell-1} & B_\ell & C_\ell
					& D_\ell & D_{2\ell-1} & E_6 & E_7 & F_4 \\ \hline
					k
					& -1 & -\ell & -2 & -1-\ell/2
					& -2 & -2\ell+3 & -4 & -6 & -3 \\ \hline
					\text{range}
					& \ell\geq2 & \ell\geq2 & \ell\geq2 & \ell\geq2
					& \ell\geq3 & \ell\geq3 & & &
				\end{array}
				\]
				See \cite[Theorems~4.2.1 and~4.2.3]{CY21}.
			\end{enumerate}
		\end{example}
		
		\begin{remark}
			Even if $\CC_1(V)$ is semisimple, we do not have $\Phi^\L(\Mod_\fd(A(V)))\ssq \CC_1(V)$ in general. For example, let $V=L(c,0)$ be the Virasoro VOA with generic central charge in the subsection~\ref{sec:2.2.4}. Then $A(V)\cong \C[x]$ \cite{W93,DMZ94}.
			For $S=\C v_{c,0}\in \Mod_\fd(A(V))$, we have $\Phi^\L(S)\cong M(c,0)$, the Verma module over the Virasoro algebra. It is not $C_1$-cofinite since $L(-1)^mv_{c,0}=\omega_{(0)}^m v_{c,0}\neq 0$ in $M(c,0)$ for all $m\geq 1$.
		\end{remark}
		
		\subsubsection{\texorpdfstring{Universal property of $\boxtimes_{P(w)}$}{Universal property of boxtimes\_\{P(w)\}}}
		We recall the definition of $P(w)$-tensor product of ordinary modules by Huang-Lepowsky \cite{HL1,HL2}.
		Let $M^1,M^2\in \CC_1(V)$. A {\em $P(w)$-tensor product} is a pair $\left(M^1\bt_{P(w)}M^2,\ \bt_{P(w)}\right)$, consisting of a $V$-module $M^1\bt_{P(w)}M^2\in \CC_1(V)$ together with a $P(w)$-intertwining map, see Definition~\ref{df:P(w)IO},
		\[
		\bt_{P(w)}: M^1\o_\C M^2\ra \overline{M^1\bt_{P(w)}M^2},\quad v^1\o v^2\mapsto v^1\bt_{P(w)}v^2,
		\]
		satisfying the following universal property: for any $M^3\in \CC_1(V)$, and any $P(w)$-intertwining map $F\in \mathcal{M}[P(w)]^{M^3}_{M^1M^2}$, there exists a unique homomorphism of $V$-modules $g: M^1\bt_{P(w)}M^2\ra M^3$ such that $\bar{g}\circ \bt_{P(w)}=F$.
		
		It was proved by Huang \cite[Theorem~1.2]{H25} that the category $\CC_1(V)$
		admits a natural vertex tensor category structure. In particular, $M^1\boxtimes_{P(w)} M^2\in \CC_1(V)$ for $M^1,M^2\in \CC_1(V)$. Then the universal property of the $P(w)$-tensor product states that, for every
		$M^3\in\CC_1(V)$, there is a natural isomorphism
		\begin{equation}\label{eq:P(w)IOhom}
			\mathcal{M}[P(w)]^{M^3}_{M^1M^2}
			\xrightarrow{\ \simeq\ }
			\Hom_{\CC_1(V)}
			(M^1\boxtimes_{P(w)}M^2,M^3),
			\qquad F\mapsto g,
		\end{equation}
		where $g$ is the unique $V$-module homomorphism
		satisfying $\bar g\circ\boxtimes_{P(w)}=F$.
		
		Moreover, $\bt_{P(w)}$ is a bifunctor on $\CC_1(V)$. Given homomorphisms $g_i\in \Hom_V(M^i, N^i)$, $i=1,2$, then $g_1\bt_{P(w)}g_2$ is the unique homomorphism in $\Hom_V(M^1\bt_{P(w)} M^2, N^1\bt_{P(w)}N^2 )$ satisfying
		\[
		\overline{g_1\bt_{P(w)}g_2}\left(v^1\bt_{P(w)}v^2\right)=g_1v^1\bt_{P(w)}g_2v^2,\quad v^1\in M^1,\ v^2\in M^2.
		\]
		
		\subsubsection{\texorpdfstring{The top degree of $M^1\boxtimes_{P(w)}M^2$}{The top degree of M\textasciicircum{}1 boxtimes\_\{P(w)\} M\textasciicircum{}2}}
		Now assume the category $\CC_1(V)$ is semisimple and $\Phi^\L(\Mod_\fd(A(V)))\ssq \CC_1(V)$. We fix a set $\mathscr W$ of representatives of the irreducible objects in $\CC_1(V)$.
From the proof of Proposition~\ref{prop:condforC1V}, every $M\in\CC_1(V)$ is a finite direct sum of irreducible ordinary modules. Since an irreducible module $W\in \mathscr W$ is admissible $W=\bigoplus_{n=0}^\infty W(n)$, with $ \Om(W)=W(0)$, we may write
		\begin{equation}\label{eq:canonical-grading}
			M=\bigoplus_{W\in \mathscr W} n_W W,\quad M(n)=\bigoplus_{W\in \mathscr W} n_W W(n),\quad n\geq 0.
		\end{equation}
		Then $M$ is admissible, with $\Om(M)=\bigoplus_{W\in \mathscr W} n_W \Om(W)=M(0)$.
		Write
		\begin{equation}
			\pi_0:\overline M=\prod_{n=0}^\infty M(n)\to\Om(M)=M(0)
		\end{equation}
		for the bottom-degree projection. For a $V$-module homomorphism $f:M\to N$ and $y\in\C$, it satisfies
		\begin{equation}\label{eq:pi0-properties}
			\pi_0\bar f=\Om(f)\pi_0,\qquad
			\pi_0e^{yL(0)}=e^{y[\om]}\pi_0,\qquad
			\pi_0e^{yL(-1)}=\pi_0.
		\end{equation}
		Indeed, module homomorphisms preserve this grading, $L(0)$ acts on $\Om(M)$ by $[\om]$, and $L(-1)$ raises degree.
		
		\begin{thm}\label{thm:eta-explicit}
			Assume $\CC_1(V)$ is semisimple and $\Phi^\L(\Mod_\fd(A(V)))\ssq \CC_1(V)$.
			Let $M^1,M^2\in \CC_1(V)$. Then the canonical map
			\begin{equation}\label{eq:eta-explicit}
				\eta_w=\eta_w^{M^1,M^2}: M^1\odot_{w,0}M^2\ra \Om\left(M^1\bt_{P(w)}M^2\right),\quad v^1\odot_w v^2\mapsto \pi_0\left(v^1\bt_{P(w)}v^2\right),
			\end{equation}
			is a well-defined isomorphism of left $A(V)$-modules, which is natural in $M^1$ and $M^2$.
		\end{thm}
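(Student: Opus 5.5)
Throughout, write $T=M^1\bt_{P(w)}M^2$. The plan is to show that $\eta_w$ is well defined and $A(V)$-linear by direct computation, and then to get bijectivity by a Yoneda argument over $\Mod_\fd(A(V))$, using the chain of isomorphisms already available in the paper.

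\emph{Well-definedness and $A(V)$-linearity.} The map $\bt_{P(w)}$ is a $P(w)$-intertwining map into $\overline{T}$. Under Proposition~\ref{prop2.7} with $M^3=T$ it gives a conformal block $\varphi\in\mathscr{C}(T'\o M^1\o M^2)_{(\P^1,\infty,w,0)}$. Restrict $\varphi$ to $\Om(T')\o M^1\o M^2$. Take $X=a\o z^n(z-w)^{-m}$ with $n-m<\wt a-1$. By \eqref{eq:inftyaction}, $\rho_\infty(X)$ acts on $\Om(T')$ through modes $a'_{(k)}$ with $k\le n-m$. These lower degree, so $\rho_\infty(X)$ kills $\Om(T')=T(0)^\ast$.

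Invariance of $\varphi$ (Corollary~\ref{coro2.8}) therefore yields
\[
\braket*{f}{\pi_0\bigl(\rho_{w,0}(X)(v^1\o v^2)\text{ under }\bt_{P(w)}\bigr)}=0
\]
for all $f\in\Om(T')$. Since $T$ is a finite direct sum of irreducibles (Proposition~\ref{prop:condforC1V}), Lemma~\ref{lm:Omdualiso} identifies $\Om(T')^\ast\cong\Om(T)$ by this pairing. It follows that $\pi_0\circ\bt_{P(w)}$ vanishes on $\mathcal L_{\P^1\bs\{w,0\}}(V)_{<0}.(M^1\o M^2)$, so $\eta_w$ is well defined.

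For linearity, use $X=a\o z^{\wt a-1}$, which lies in the $\le 0$ part. Then $\rho_\infty(X)$ acts on $\Om(T')$ by the right action of $[a]$ (equivalently, through $o(a)$ on $T(0)$). The same invariance identity together with \eqref{A(V)action} gives $\eta_w([a].x)=o(a)\eta_w(x)$.

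\emph{Naturality.} This follows from $\overline{g_1\bt g_2}(v^1\bt v^2)=g_1v^1\bt g_2v^2$ and $\pi_0\bar f=\Om(f)\pi_0$ from \eqref{eq:pi0-properties}.

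\emph{Bijectivity.} Both modules lie in $\Mod_\fd(A(V))$ by Proposition~\ref{prop:condforC1V}(3) and Huang's theorem. Let $S\in\Mod_\fd(A(V))$ and set $M^3=\Phi^\L(S)\in\CC_1(V)$, so that $\Om(M^3)\cong S$ and $M^3$ is ordinary. Consider the chain
\[
\Hom_{A(V)}(\Om(T),S)\cong\Hom_V(T,M^3)\cong\mathcal M[P(w)]^{M^3}_{M^1M^2}\cong\mathscr C((M^3)'\o M^1\o M^2)\cong\mathscr C_{\mathrm{res}}(\Om((M^3)')\o M^1\o M^2)\cong\Hom_{A(V)}(M^1\odot_{w,0}M^2,S).
\]
The links come, in order, from the equivalence $\Phi^\L\dashv\Om$, from \eqref{eq:P(w)IOhom}, from Proposition~\ref{prop2.7}, from Lemma~\ref{lm:rescfbandcfb}, and from Corollary~\ref{coro:3.10}. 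Lemma~\ref{lm:rescfbandcfb} applies because the counit for $(M^3)'$ is an isomorphism: $(M^3)'\in\CC_1(V)$ by Proposition~\ref{prop:condforC1V}(2), and on $\CC_1(V)$ the adjunction is an equivalence.

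The main obstacle is checking that this composite equals precomposition with $\eta_w$, i.e.\ $g\mapsto\Om(g)\circ\eta_w$. Tracing an element $g$ through the chain: it goes to $F=\bar g\circ\bt_{P(w)}$, then to the block $\braket{(v^3)'}{\bar g(v^1\bt v^2)}$, then to the restriction to $(u^3)'\in\Om((M^3)')$, and finally to $f(v^1\odot_w v^2)=\pi_0\bar g(v^1\bt v^2)=\Om(g)\pi_0(v^1\bt v^2)$. The last equality uses \eqref{eq:pi0-properties} and the identification of Lemma~\ref{lm:Omdualiso} via \eqref{f}.

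Since this composite is natural in $S$ and an isomorphism for every $S$ in $\Mod_\fd(A(V))$, and both source and target of $\eta_w$ lie in that category, Yoneda's lemma shows that $\eta_w$ is an isomorphism. The delicate bookkeeping is making sure that the identifications $\Om(M')^\ast\cong\Om(M)$ used in Corollary~\ref{coro:3.10} are exactly the pairing restrictions, so that no twist by $\theta_{A(V)}$ enters.
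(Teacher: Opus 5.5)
Your proposal is correct and follows the paper's proof. Bijectivity comes from the same chain of natural isomorphisms as in diagram \eqref{diagram}: the universal property, Proposition~\ref{prop2.7}, Lemma~\ref{lm:rescfbandcfb} and Corollary~\ref{coro:3.10}. You then identify the composite with precomposition by $\eta_w$ and conclude by Yoneda, exactly as the paper does.

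The only difference is minor. The paper defines $\eta_w:=\beta^{\res}_{M^1\bt_{P(w)}M^2}(\Id)$, so well-definedness and $A(V)$-linearity come for free, and then computes the explicit formula. You instead verify these properties directly: $\rho_\infty(\mathcal L_{<0})$ kills $\Om(T')$, and the block is invariant under $a\o z^{\wt a-1}$. This is equally valid, and your one-line naturality argument via $\pi_0\bar f=\Om(f)\pi_0$ is fine.
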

		\begin{proof}
			By Proposition~\ref{prop:condforC1V}, $(\Phi^\L\dashv \Om): \CC_{A(V)}=\Mod_\fd(A(V))\rightleftarrows \CC_1(V)$ are equivalence of categories, and $M^1\odot_{w,0}M^2\in \CC_{A(V)}$ for any $M^1,M^2\in \CC_1(V)$.
			Let $M^3\in \CC_1(V)$, then by our previous results, there is a commutative diagram of isomorphisms
			\begin{equation}\label{diagram}
				\begin{tikzcd}
					\mathcal M[P(w)]_{M^1M^2}^{M^3}\arrow[r,"\text{Prop.}~\ref{prop2.7}","\simeq"'] \arrow[d,"\eqref{eq:P(w)IOhom}","\simeq"']& \mathscr{C}((M^3)' \o M^1\o M^2)_{(\P^1,\infty,w,0)} \arrow[d,"\text{Lemma}~\ref{lm:rescfbandcfb}","\simeq"']\\
					\Hom_{\CC_1(V)}(M^1\bt_{P(w)}M^2, M^3)\arrow[r,dashed,"\beta_{M^3}","\simeq"'] \arrow[d,"\text{Prop.}~\ref{prop:condforC1V}","\simeq"']& \mathscr{C}_{\mathrm{res}}(\Om((M^3)') \o M^1\o M^2)_{(\P^1,\infty,w,0)}\arrow[d,"\text{Corollary}~\ref{coro:3.10}","\simeq"']\\
					\Hom_{A(V)}\left(\Om(M^1\bt_{P(w)}M^2), \Om(M^3)\right)\arrow[r,dashed,"\beta^\res_{M^3}","\simeq"']& \Hom_{A(V)}\left(M^1\odot_{w,0}M^2, \Om(M^3)\right),
				\end{tikzcd}
			\end{equation}
			Note that all the isomorphisms in diagram \eqref{diagram} are natural in
			the input $M^3$. Denote the middle horizontal
			isomorphism by
			\[
			\beta_{M^3}:
			\Hom_{\CC_1(V)}\left(M^1\bt_{P(w)} M^2,M^3\right)
			\xrightarrow{\simeq}
			\mathscr{C}_{\mathrm{res}}(\Om((M^3)') \o M^1\o M^2)_{(\P^1,\infty,w,0)}.
			\]
			Then by \eqref{eq:P(w)IOhom}, Proposition~\ref{prop2.7}, and Lemma~\ref{lm:rescfbandcfb}, given $g\in \Hom_{\CC_1(V)}(M^1\bt_{P(w)}M^2, M^3)$ and $v^1\in M^1$, $v^2\in M^2$, and $(u^3)'\in \Om((M^3)')$, we have
			\begin{equation}\label{eq:betaM3}
				\begin{aligned}
					\braket*{\b_{M^3}(g)}{(u^3)'\o v^1\o v^2}&=\braket*{\varphi_{I,p}}{(u^3)'\o v^1\o v^2}=\braket*{(u^3)'}{I(v^1,e^{l_p(w)})v^2}\\
					&=\braket*{(u^3)'}{F(v^1\o v^2)}=\braket*{(u^3)'}{\bar{g}(v^1\bt_{P(w)}v^2)}\\
					&=\braket*{(u^3)'}{\Om(g)(\pi_0(v^1\bt_{P(w)} v^2))},
				\end{aligned}
			\end{equation}
			where the last equality follows from the fact that $g$ preserves the grading subspaces. Denote the bottom horizontal isormorphism by
			\[
			\b_{M^3}^\res: \Hom_{A(V)}\left(\Om(M^1\bt_{P(w)}M^2), \Om(M^3)\right)\xrightarrow{\simeq} \Hom_{A(V)}\left(M^1\odot_{w,0}M^2, \Om(M^3)\right).
			\]
			Let $M^3=M^1\bt_{P(w)} M^2\in \CC_1(V)$, define
			\[
			\eta_w:=\beta^\res_{M^1\bt_{P(w)}M^2}(\Id_{\Om(M^1\bt_{P(w)} M^2)})\in\Hom_{A(V)}\left(M^1\odot_{w,0}M^2,\Om(M^1\bt_{P(w)} M^2)\right).
			\]
			Since $\Om$ is an equivalence of category, then by \eqref{eq:betaM3} we have
			\begin{align*}
				\braket*{(u^3)'}{\eta_w(v^1\odot_w v^2)}&= \braket*{(u^3)'}{\b^\res_{M^1\bt_{P(w)}M^2}(\Om(\Id_{M^1\bt_{P(w)}M^2}))(v^1\odot_w v^2)}\\
				&= \braket*{\b_{M^1\bt_{P(w)}M^2}(\Id_{M^1\bt_{P(w)}M^2})}{(u^3)'\o v^1\o v^2}\\
				&=\braket*{(u^3)'}{\pi_0(v^1\bt_{P(w)} v^2)},
			\end{align*}
			where $(u^3)'\in \Om((M^3)')$ is arbitrary. This shows
			\[
			\eta_w(v^1\odot_w v^2)
			=
			\pi_0(v^1\bt_{P(w)}v^2),\quad v^1\in M^1,\ v^2\in M^2.
			\]
			Thus, \eqref{eq:eta-explicit} is a
			well-defined homomorphism of left $A(V)$-modules.
			
			Furthermore, for any $f\in\Hom_{A(V)}(\Om(M^1\bt_{P(w)}M^2),\Om(M^3))$, by the full faithfulness
			of $\Om$, there exists a unique $V$-module homomorphism
			$g:M^1\bt_{P(w)}M^2\to M^3$ such that $\Om(g)=f$. By \eqref{eq:betaM3}, we have
			\begin{align*}
				\braket*{(u^3)'}{\b_{M^3}^\res(f)(v^1\odot_w v^2)}&=\braket*{\b_{M^3}(g)}{(u^3)'\o v^1\o v^2}=\braket*{(u^3)'}{\Om(g)(\pi_0(v^1\bt_{P(w)}v^2))}\\
				&=\braket*{(u^3)'}{(f\circ \eta_w)(v^1\odot_w v^2))}.
			\end{align*}
			This shows $\b^\res_{M^3}(f)=f\circ \eta_w$. In other words, $\beta^\res_{M^3}=\eta_w^\ast$ is an isomorphism for every object $S=\Om(M^3)$ in the category $\CC_{A(V)}$. Then by the Yoneda lemma, $\eta_w$ is also an isomorphism.
			
			Finally, let $f_i:M^i\to N^i$ be morphisms in $\CC_1(V)$, $i=1,2$.
			Since both $-\bt_{P(w)}-$ and $-\odot_{w,0}-$ are natural in the two arguments, by adopting a similar argument as above, we have
			\[
			\Om(f_1\bt_{P(w)}f_2)\circ\eta_w^{M^1,M^2}
			=
			\eta_w^{N^1,N^2}\circ(f_1\odot_{w,0}f_2).
			\]
			Hence $\eta_w=\eta_w^{M^1,M^2}$ is natural in $M^1$ and $M^2$.
		\end{proof}
		
		It follows from Lemma~\ref{lm2.6} that $\mathcal M[P(w)]_{M^1M^2}^{M^3}\cong \mathcal M[P(1)]_{M^1M^2}^{M^3}$. Moreover, by \cite[Section~12.2]{HLZ}, one can take
		$\bt=\bt_{P(1)}$ as the tensor product bifunctor of the braided tensor category $\CC_1(V)$.
		
		In particular, Theorem~\ref{thm:eta-explicit} and Proposition~\ref{prop:condforC1V} give the natural isomorphism
		\begin{equation}\label{eq:chain}
			\begin{aligned}
				\Hom_V\left(M^1\bt_{P(w)}M^2,M^3\right)
				&\xrightarrow{\ \simeq\ }\Hom_{A(V)}\left(M^1\odot_{w,0}M^2,\Om(M^3)\right),\\
				g&\longmapsto\Om(g)\circ\eta_w,
			\end{aligned}
		\end{equation}
		for $M^3\in\CC_1(V)$. This isomorphism will be used later.
		
		Moreover, semisimplicity gives the decomposition
		\begin{equation}\label{odotiso}
			M^1\odot_{w,0}M^2\cong
			\bigoplus_{W\in\mathscr W}W(0)\otimes_\C
			I\fusion{M^1}{M^2}{W}^{\ast},
		\end{equation}
where only finitely many summands are nonzero; see also \cite[Section~6]{Liu26}.
		
		For simplicity, we denote $M^1\odot_{1,0} M^2$ by $M^1\odot M^2$, and $v^1\odot_{1} v^2$ by $v^1\odot v^2$ as in \cite[Section 4]{Liu26}. Then $$M^1\odot M^2=\spn\{v^1\odot v^2: v^i\in M^i,\ i=1,2 \},$$
		subjects to the following relations for $s+t>1$ and $a\in V$:
		\begin{equation}\label{eq:finalodotrel}
			\sum_{j\geq 0} \binom{\wt a-s}{j} \left(a_{(j-t)}v^1\right)\odot v^2
			=-\sum_{j\geq 0}\binom{-t}{j} (-1)^{-t-j} v^1\odot \left(a_{(\wt a-s+j)}v^2\right).
		\end{equation}
		
		
		
		
		
		\section[Left A(V)-module tensor product and point-shifting isomorphism]{\texorpdfstring{Construction of $R$-matrices on $\Mod_{\mathrm{fd}}(A(V))$ from restricted conformal blocks}{Construction of R-matrices on Mod\_fd(A(V)) from restricted conformal blocks}}\label{sec:odot}
		Throughout this section, we assume that $\CC_1(V)$ is semisimple and $\Phi^\L(\Mod_{\mathrm{fd}}(A(V)))\ssq \CC_1(V)$.
Using the point-shifting isomorphism $R_w$ and the parallel transport of the restricted conformal blocks, we construct the isomorphisms $R_\pm$ in Theorem~\ref{thm:Rmatrix}, and show that $R_-$ is the restriction of the braiding isomorphism of Huang-Lepowsky-Zhang to the bottom degrees, see Theorem~\ref{thm:braided-tensor-structure}.
		
		\subsection[Isomorphism of intertwining-operator spaces and point shifting]{\texorpdfstring{The isomorphism $I\fusion{M^1}{M^2}{M^3}\cong I\fusion{M^2}{M^1}{M^3}$ and the points shifting $(w,0)\mapsto (0,-w)$}{The isomorphism I(M\textasciicircum{}3; M\textasciicircum{}1, M\textasciicircum{}2) isomorphic to I(M\textasciicircum{}3; M\textasciicircum{}2, M\textasciicircum{}1) and the points shifting (w,0) -> (0,-w)}}
		We first recall some of the constructions in \cite[Section 7]{HL2}.
		Let $M^1,M^2,M^3$ be ordinary $V$-modules, and $r\in \Z$.
		There is a linear isomorphism
		\begin{align*}\label{ss}
			\Om_r: I\fusion{M^1}{M^2}{M^3}&\cong I\fusion{M^2}{M^1}{M^3},\quad I\mapsto \Om_r(I),\\ \Om_r(I)(v^2,e^{\pi i}x)v^1&=e^{-xL(-1)}I(v^1,e^{2r\pi i}x)v^2,
		\end{align*}
		such that $\Om_{1-r}(\Om_r(I))=\Om_r(\Om_{1-r}(I))=I$, see \cite[Proposition 7.1]{HL2}. Our $\Om_r$ is the operator denoted by $\Om_{r-1}$ in \cite{HL2}. We denote
		\begin{equation}\label{Om0}
			\Om_0(I)=\tilde{I},\quad \mathrm{then}\quad \tilde{I}(v^2,e^{\pi i}x)v^1=e^{-xL(-1)} I(v^1,x)v^2.
		\end{equation}
		Fix a determination $l_p(w)$ of $\log(w)$ as in \eqref{logw}. We use $l_p(w)+\pi i$ as the determination of $\log(-w)$.
		
		This isomorphism \eqref{Om0} leads to the following commutative diagram of linear isomorphisms:
		\begin{equation}\label{diagrampart1}
			\begin{tikzcd}
				I\fusion{M^1}{M^2}{M^3}\arrow[r,"\Om_0","\simeq"'] \arrow[d,"\simeq"]& I\fusion{M^2}{M^1}{M^3}\arrow[d,"\simeq"]\\
				\mathcal{M}[P(w)]_{M^1M^2}^{M^3}\arrow[r,"\Om_0","\simeq"'] \arrow[d,"\simeq"]& \mathcal{M}[P(-w)]_{M^2M^1}^{M^3}\arrow[d,"\simeq"]\\
				\mathscr{C}((M^3)' \o M^1\o M^2)_{(\P^1,\infty,w,0)}\arrow[r,dashed,"\Om_0","\simeq"'] & \mathscr{C}((M^3)' \o M^2\o M^1)_{(\P^1,\infty,-w,0)},
			\end{tikzcd}
		\end{equation}
		where the vertical maps are given by the identification \eqref{eq:P(w)isocfb}. The correspondence of elements in these vector spaces are given by the following diagram:
		\[
		\begin{tikzcd}
			I\arrow[r,mapsto] \arrow[d,mapsto]& \tilde{I}\arrow[d,mapsto]\\
			I(\cdot, e^{l_p(w)})\arrow[r,mapsto]\arrow[d,mapsto]& \tilde{I}(\cdot, e^{\pi i+l_p(w)})\arrow[d,mapsto]\\
			\varphi \arrow[r,dashed,mapsto]& \widetilde{\varphi}
		\end{tikzcd}
		\]
		Then by \eqref{eq:defofvarphiI}, \eqref{Om0}, and the fact that $\<v'|L(-1)v\>=\<L(1)v'|v\>$, we have
		\begin{equation}\label{eq:varphitilde}
			\begin{aligned}
				\braket*{\widetilde{\varphi}}{(v^3)'\o v^2\o v^1}&=\braket*{(v^3)'}{ \tilde{I}(v^2, e^{\pi i+l_p(w)})v^1}=\braket*{(v^3)'}{ e^{-wL(-1)}I(v^1, e^{l_p(w)})v^2}\\
				&=\braket*{e^{-wL(1)}(v^3)'}{ I(v^1, e^{l_p(w)})v^2}\\
				&=\braket*{\varphi}{e^{-wL(1)}(v^3)'\o v^1\o v^2}
			\end{aligned}
		\end{equation}
		Note that $e^{-wL(1)}(v^3)'$ is a well-defined element in $(M^3)'$ since $L(1).M(n)\subset M(n-1)$.
		
		\begin{remark}
			Note that the element $\om \o 1=\om \o \frac{z^0}{(z-w)^0}\in \mathcal{L}_{\P^1\bs\{\infty,w,0\}}(V)$. Then by \eqref{eq:actionofchiralliealgebraelements},
			\begin{align*}
				&\rho_{\infty,w,0}(\om\o 1).((v^3)'\o v^1\o v^2)\\
				&=-L(1)(v^3)'\o v^1\o v^2+ (v^3)'\o L(-1)v^1\o v^2+(v^3)'\o v^1\o L(-1)v^2.
			\end{align*}
			In particular, in the space of three-point coinvariants, we have
			\begin{equation}\label{2.21}
				[L(1)(v^3)'\o v^1\o v^2]=[(v^3)'\o L(-1)v^1\o v^2]+[(v^3)'\o v^1\o L(-1)v^2],
			\end{equation}
			where $[\al]$ is the equivalence class of $\al\in (M^3)'\o M^1\o M^2$ in the quotient space, see Definition~\ref{def:cfb}. The operator $e^{-wL(1)}$ acts on the restricted dual, whereas $e^{-wL(-1)}v^i$ in general lies only in the completion of $M^i$. We therefore use the formula on the restricted dual in the following proposition.
		\end{remark}
		
		
		\begin{prop}\label{prop:Om0blocks}
			Let $M^1,M^2,M^3$ be ordinary $V$-modules. Then there exists a linear isomorphism between the space of conformal blocks
			\begin{equation}\label{IsoOm0}
				\begin{aligned}
					\Om_0: \mathscr{C}((M^3)' \o M^1\o M^2)_{(\P^1,\infty,w,0)}&\cong \mathscr{C}((M^3)' \o M^2\o M^1)_{(\P^1,\infty,-w,0)},\quad \varphi\mapsto \widetilde{\varphi},\\
					\braket*{\widetilde{\varphi}}{(v^3)'\o v^2\o v^1}&=\braket*{\varphi}{e^{-wL(1)}(v^3)'\o v^1\o v^2}.
				\end{aligned}
			\end{equation}
			More precisely, the following identities hold
			\begin{align*}
				&\braket*{\varphi}{\rho_{\infty}\left(a\o\frac{z^n}{(z-w)^m}\right). e^{-wL(1)}(v^3)'\o v^1\o v^2}=\braket*{\widetilde{\varphi}}{\rho_{\infty}\left(a\o\frac{u^{-m}}{(u+w)^{-n}}\right). (v^3)'\o v^2\o v^1},\\
				&\braket*{\varphi}{e^{-wL(1)}(v^3)'\o \rho_{w}\left(a\o\frac{z^n}{(z-w)^m}\right).v^1\o v^2}=\braket*{\widetilde{\varphi}}{(v^3)'\o v^2\o \rho_{0}\left(a\o\frac{u^{-m}}{(u+w)^{-n}}\right).v^1},\numberthis\label{relvarphiandtildevarphi}\\
				&\braket*{\varphi}{e^{-wL(1)}(v^3)'\o v^1\o \rho_{0}\left(a\o\frac{z^n}{(z-w)^m}\right).v^2}=\braket*{\widetilde{\varphi}}{(v^3)'\o \rho_{-w}\left(a\o\frac{u^{-m}}{(u+w)^{-n}}\right).v^2\o v^1},
			\end{align*}
			for any $m,n\in \Z$, where $u$ is the affine coordinate on $\P^1\bs\{\infty,-w,0\}$.
		\end{prop}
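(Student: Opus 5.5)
The plan is to obtain the isomorphism directly from the commutative diagram \eqref{diagrampart1} together with \eqref{eq:varphitilde}, and then verify the three displayed identities \eqref{relvarphiandtildevarphi} by a direct change-of-coordinates computation. Define $\widetilde{\varphi}$ by the formula in \eqref{IsoOm0}. Since $L(1)$ lowers degree, $e^{-wL(1)}$ is a well-defined invertible operator on $(M^3)'$ with inverse $e^{wL(1)}$. Hence $\varphi\mapsto\widetilde\varphi$ is a linear bijection on the full duals $((M^3)'\o M^1\o M^2)^\ast\to((M^3)'\o M^2\o M^1)^\ast$, up to the transposition of the tensor factors. What remains is to show that it carries invariant functionals to invariant functionals, and likewise for its inverse.

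The first route uses Proposition~\ref{prop2.7}. Every $\varphi$ equals $\varphi_{I,p}$ for a unique $I\in I\fusion{M^1}{M^2}{M^3}$. The computation \eqref{eq:varphitilde} then shows $\widetilde\varphi=\varphi_{\tilde I,p'}$, where $\tilde I=\Om_0(I)$ and $l_{p'}(-w)=l_p(w)+\pi i$. By \cite[Proposition 7.1]{HL2}, $\tilde I$ is again an intertwining operator, so Proposition~\ref{prop2.7} applied at the point $-w$ shows that $\widetilde\varphi$ is a conformal block. Bijectivity follows because $\Om_0$ on intertwining operators is invertible, with inverse $\Om_1$. This already gives the isomorphism.

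For the refined identities \eqref{relvarphiandtildevarphi}, the intended geometric picture is the translation $u=z-w$. It sends $(\infty,w,0)$ to $(\infty,0,-w)$ and turns $z^n/(z-w)^m$ into $(u+w)^n/u^m$. However, the statement is written with $u^{-m}/(u+w)^{-n}$, which is the same rational function, so the formulas are consistent. The identities should be checked on each of the three local actions separately, using Corollary~\ref{coro2.8}.

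\begin{enumerate}
\item At $z=w$ versus $u=0$: the expansion of $z^n/(z-w)^m$ in powers of $z-w$ coincides with the expansion of $(u+w)^n u^{-m}$ in powers of $u$. Also $\tilde I(v^2,e^{\pi i}x)$ commutes appropriately with $Y_{M^1}$ through the skew-symmetry $e^{-xL(-1)}$. So the middle identity should reduce to the relation $e^{-wL(-1)}Y_{M^3}(a,z)=Y_{M^3}(a,z-w)e^{-wL(-1)}$, paired against $(v^3)'$.
\item At $z=0$ versus $u=-w$: the same argument applies. The residue at $z=0$ becomes the residue at $u=-w$, computed with $\rho_{-w}$ acting on $v^2$.
\item At $\infty$: the identity follows by conjugating $\rho_\infty$ with $e^{-wL(1)}$. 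On the restricted dual this implements the translation $z\mapsto z-w$, via the contragredient formula \eqref{eq:contragredient} and $\<v'|L(-1)v\>=\<L(1)v'|v\>$.
\end{enumerate}

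An alternative that avoids the residue bookkeeping is to verify the third identity via the $L(-1)$-conjugation formula $e^{wL(1)}\rho_\infty(a\o f(z))e^{-wL(1)}=\rho_\infty(a\o f(u+w))$. This can be derived from $e^{yL(-1)}Y(a,x)e^{-yL(-1)}=Y(a,x+y)$ transposed to the dual. The first two identities then follow from the statement that $\widetilde\varphi$ is a block, together with bijectivity.

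The main obstacle is the expansion domains and branches. The translation must be compatible with the regions $|z|>|w|$ versus $|u|>|w|$ at infinity, and $|w|>|z|$ versus $|w|>|u+w|$ near the other point. It must also track the $e^{\pi i}$ in $\tilde I(\cdot,e^{\pi i}x)$, so that the powers of $-w$ match. To handle this, I would work with formal series $e^{-wL(-1)}$ paired against the restricted dual, where all sums are finite, and expand both sides coefficientwise in $a_{(k)}$ using \eqref{eq:inftyaction}, \eqref{eq:1action}, \eqref{eq:0action}.
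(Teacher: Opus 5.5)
Your overall route matches the paper's. The isomorphism comes from \eqref{diagrampart1} and \eqref{eq:varphitilde} via Huang--Lepowsky's $\Omega_0$. The identity at $\infty$ comes from transposing $e^{-wL(1)}$ to $e^{-wL(-1)}$, using $e^{-wL(-1)}Y_{M^3}(a,z)=Y_{M^3}(a,z-w)e^{-wL(-1)}$, and substituting $u=z-w$. The paper does this with contour integrals that carry $I$ and $\tilde I$ along. Your transposed formula $e^{wL(1)}\rho_\infty(a\otimes f(z))e^{-wL(1)}=\rho_\infty(a\otimes f(u+w))$ on $(M^3)'$ is the same computation done algebraically, and it is correct: for $f(z)=z^n$ both sides equal $-\sum_k\binom{n}{k}w^k a'_{(n-k)}$. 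However, two of your justifications for the local identities are wrong as stated.

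First, the middle identity does not reduce to the translation formula for $Y_{M^3}$; no vertex operator of $M^3$ appears in it. Both local identities hold because the local operators agree as vectors. By \eqref{eq:1action} and \eqref{eq:0action},
\[
\rho_w\Bigl(a\otimes\frac{z^n}{(z-w)^m}\Bigr)v^1=\sum_{j\geq 0}\binom{n}{j}w^{n-j}a_{(j-m)}v^1=\rho_0\Bigl(a\otimes\frac{u^{-m}}{(u+w)^{-n}}\Bigr)v^1,
\]
\[
\rho_0\Bigl(a\otimes\frac{z^n}{(z-w)^m}\Bigr)v^2=\sum_{j\geq 0}\binom{-m}{j}(-w)^{-m-j}a_{(n+j)}v^2=\rho_{-w}\Bigl(a\otimes\frac{u^{-m}}{(u+w)^{-n}}\Bigr)v^2.
\]
So the second and third identities in \eqref{relvarphiandtildevarphi} are just the defining formula of $\widetilde\varphi$ evaluated on these vectors. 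The first sentence of your item already contains this; the reduction to $Y_{M^3}$ should be dropped.

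Second, in your alternative the local identities do not follow from $\widetilde\varphi$ being a block together with bijectivity. Invariance of $\varphi$ and $\widetilde\varphi$, combined with the identity at $\infty$, only equates the sums of the two local contributions, not each one separately.

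Once the local identities are seen to be tautological, your alternative becomes a complete and more self-contained proof than the paper's:
\begin{enumerate}
\item Define $\widetilde\varphi$ by the formula in \eqref{IsoOm0}.
\item Obtain all three identities as above, and add them.
\item Every element $a\otimes g(u)$ of $\mathcal L_{\P^1\setminus\{\infty,-w,0\}}(V)$ has the form $a\otimes f(u+w)$ with $f$ regular off $\{\infty,w,0\}$. Hence the sum shows directly that $\widetilde\varphi$ is a conformal block.
\item The same argument with $e^{wL(1)}$ gives the inverse.
\end{enumerate}
This avoids Proposition~\ref{prop2.7}, the Huang--Lepowsky map $\Omega_0$, and contour deformation. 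What the paper's route adds is the identification $\widetilde\varphi=\varphi_{\tilde I}$ with the branch $l_p(w)+\pi i$, i.e.\ the commutativity of \eqref{diagrampart1}.
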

		\begin{proof}
			The isomorphism \eqref{IsoOm0} follows directly from the commutative diagram \eqref{diagrampart1} and our calculation \eqref{eq:varphitilde}. It also follows from the identities \eqref{relvarphiandtildevarphi}, which gives a more precise relation between $\varphi$ and $\widetilde{\varphi}$.
			
			We use the language of contour integration for residues to show \eqref{relvarphiandtildevarphi}. All contours are positively oriented, and $\int_C$ denotes the normalized integral $(2\pi i)^{-1}\oint_C$. For the fixed complex number $w\in \P^1\bs\{ 0,\infty\}$, consider the following contours in the affine coordinate $z$:
			\begin{itemize}
				\item $C_\infty(z)$ a circle in $z$ centered at $0$ such that $|z|>2|w|$,
				\item $C_0(z)$ a circle in $z$ centered at $0$ such that $|z|<|w|$,
				\item $C_w(z)$ a circle in $z$ centered at $w$ such that $|z-w|<|w|$.
			\end{itemize}
			
			\begin{figure}[h]
				\centering
				\begin{tikzpicture}[scale=1]
					
					\draw[blue, thick, ->]
					(0,0) circle (3.2);
					\node[blue] at (3.4,-1.6) {$C_\infty(z)$};
					
					\draw[red, thick, ->]
					(0,0) circle (0.5);
					\node[red] at (0.8,-0.5) {$C_0(z)$};
					
					\fill (0,0) circle (2pt);
					\node at (-0.15,-0.25) {$0$};
					
					\coordinate (w) at (1.1,1.0);
					\fill (w) circle (2pt);
					\node at (1.25,1.1) {$w$};
					
					\draw[black, thick, ->]
					(w) circle (0.5);
					\node at (1.8,0.4) {$C_w(z)$};
					
				\end{tikzpicture}
				\caption{Integration contours for the residue computation}
			\end{figure}
			
					
					
					
					
					
					
			
			Choose the determinations $l_p(w)$ and $l_p(w)+\pi i$ of $\log(w)$ and $\log(-w)$, respectively. Let $I(\cdot, e^{l_p(w)})$ and $\tilde I(\cdot, e^{l_p(w)+\pi i})$ be the $P(w)$ and $P(-w)$-intertwining operators corresponding to $\varphi$ and $\widetilde{\varphi}$, respectively. Then by \eqref{eq:inftyaction} and Corollary~\ref{coro2.8}, we have
			\begin{align*}
				&\braket*{\varphi}{\rho_{\infty}\left(a\o\frac{z^n}{(z-w)^m}\right). e^{-wL(1)}(v^3)'\o v^1\o v^2}\\
				&=\Res_z^{\mathrm{out}} \braket*{Y_{(M^3)'}(\vartheta(a),z^{-1})e^{-wL(1)}(v^3)'}{I(v^1,e^{l_p(w)})v^2}\iota_{|z|>|w|}\left(\frac{z^n}{(z-w)^m}\right) dz\\
				&=-\Res_z^{\mathrm{out}} \braket*{(v^3)'}{e^{-wL(-1)}Y_{M^3}(a,z)I(v^1,e^{l_p(w)})v^2 }\iota_{|z|>|w|}\left(\frac{z^n}{(z-w)^m}\right)dz\\
				&=-\int_{C_{\infty}(z)} \braket*{(v^3)'}{Y_{M^3}(a,z-w)e^{-wL(-1)}I(v^1,e^{l_p(w)})v^2 }\iota_{|z|>|w|}\left(\frac{z^n}{(z-w)^m}\right)dz\\
				&=-\int_{C_{\infty}(z)} \braket*{(v^3)'}{Y_{M^3}(a,z-w)\tilde{I}(v^2,e^{l_p(w)+\pi i})v^1 }\iota_{|z|>|w|}\left(\frac{z^n}{(z-w)^m}\right)dz\\
				&=-\int_{C_{\infty}(u)} \braket*{(v^3)'}{Y_{M^3}(a,u)\tilde{I}(v^2,e^{l_p(w)+\pi i})v^1 }\iota_{|u|>|-w|}\left(\frac{(u+w)^n}{u^m}\right)du\quad (z-w)\mapsto u\\
				&=\braket*{\widetilde{\varphi}}{\rho_{\infty}\left(a\o\frac{u^{-m}}{(u+w)^{-n}}\right). (v^3)'\o v^2\o v^1},
			\end{align*}
			where $C_\infty(u)$ is a circle of $u$ centered at $0$ such that $|u|>|-w|$. Indeed, the translated contour lies in $|u|>|w|$, since the radius of $C_\infty(z)$ is larger than $2|w|$. The matrix coefficient is rational in $u$, with possible poles only at $0$ and $-w$, so the translated contour can be deformed to $C_\infty(u)$ without changing the integral. Similarly, by \eqref{eq:1action} and Corollary~\ref{coro2.8}, we have
			\begin{align*}
				&\braket*{\varphi}{e^{-wL(1)}(v^3)'\o \rho_{w}\left(a\o\frac{z^n}{(z-w)^m}\right).v^1\o v^2}\\
				&=\Res_{z=w} \braket*{e^{-wL(1)}(v^3)'}{I(Y_{M^1}(a,z-w)v^1,e^{l_p(w)})v^2} \iota_{|w|>|z-w|}\left(\frac{z^n}{(z-w)^m}\right)dz\\
				&=\int_{C_w(z)} \braket*{(v^3)'}{\tilde{I}(v^2,e^{l_p(w)+\pi i})Y_{M^1}(a,z-w)v^1} \iota_{|w|>|z-w|}\left(\frac{z^n}{(z-w)^m}\right)dz\\
				&=\int_{C_0(u)} \braket*{(v^3)'}{\tilde{I}(v^2,e^{l_p(w)+\pi i})Y_{M^1}(a,u)v^1} \iota_{|-w|>|u|}\left(\frac{(u+w)^n}{u^m}\right)du\quad (z-w)\mapsto u\\
				&=\braket*{\widetilde{\varphi}}{(v^3)'\o v^2\o \rho_{0}\left(a\o\frac{u^{-m}}{(u+w)^{-n}}\right).v^1},
			\end{align*}
			where $C_0(u)$ is a circle of $u$ centered at $0$ such that $|u|<|-w|$. Finally, by \eqref{eq:0action} and Corollary~\ref{coro2.8}, we have
			\begin{align*}
				&\braket*{\varphi}{e^{-wL(1)}(v^3)'\o v^1\o \rho_{0}\left(a\o\frac{z^n}{(z-w)^m}\right).v^2}\\
				&=\Res_{z=0}\braket*{e^{-wL(1)}(v^3)'}{I(v^1, e^{l_p(w)}) Y_{M^2}(a,z)v^2 }\iota_{|z|<|w|}\left(\frac{z^n}{(z-w)^m}\right)dz\\
				&=\int_{C_0(z)}\braket*{(v^3)'}{\tilde{I}( Y_{M^2}(a,z)v^2, e^{l_p(w)+\pi i}) v^1 }\iota_{|z|<|w|}\left(\frac{z^n}{(z-w)^m}\right)dz\\
				&=\int_{C_{-w}(u)}\braket*{(v^3)'}{\tilde{I}( Y_{M^2}(a,u-(-w))v^2, e^{l_p(w)+\pi i}) v^1 }\iota_{|u+w|<|w|}\left(\frac{(u+w)^n}{u^m}\right)du\quad (z-w)\mapsto u\\
				&=\braket*{\widetilde{\varphi}}{(v^3)'\o \rho_{-w}\left(a\o\frac{u^{-m}}{(u+w)^{-n}}\right).v^2\o v^1},
			\end{align*}
			This proves \eqref{relvarphiandtildevarphi}.
		\end{proof}
		
		\begin{remark}\label{remark:Om0}
			In view of \eqref{relvarphiandtildevarphi}, the geometric effect of $\Om_0$-action on the space of three-point conformal blocks is given by shifting the marked points $(w,0)$ uniformly to $(0,-w)$, with attached modules unchanged. i.e.,
			\[ (\infty, w,0, (M^3)', M^1,M^2)\mapsto (\infty, 0,-w,(M^3)', M^1,M^2). \]
			See Figure~\ref{fig3} for an illustration.
			\begin{figure}[ht]
				\centering
				\begin{tikzpicture}[thick, line cap=round, line join=round, scale=0.8]
					\begin{scope}[xshift=7cm]
						
						\coordinate (Ltop) at (-1, 3);
						\coordinate (Lbot) at (-2, -3);
						
						\coordinate (Rtop) at ( 3, 3);
						\coordinate (Rbot) at ( 2, -3);
						
						\draw (Lbot) -- (Ltop);
						\draw (Rtop) -- (Rbot);
						
						\coordinate (L0) at ($(Ltop)!0.3!(Lbot)$);
						\coordinate (Lw1) at ($(Ltop)!0.55!(Lbot)$);
						\coordinate (Linf) at ($(Ltop)!0.8!(Lbot)$);
						\fill (L0) circle (2pt); \node[left] at (L0) {$M^1\ra w$};
						\fill (Lw1) circle (2pt); \node[left] at (Lw1) {$M^2\ra 0$};
						\fill (Linf) circle (2pt); \node[left] at (Linf) {$ -w$};
						
						\coordinate (Rinf) at ($(Rtop)!0.3!(Rbot)$);
						\coordinate (Rw2) at ($(Rtop)!0.55!(Rbot)$);
						\coordinate (R0) at ($(Rtop)!0.8!(Rbot)$);
						\fill (Rinf) circle (2pt); \node[right] at (Rinf) {$w$};
						\fill (Rw2) circle (2pt); \node[right] at (Rw2) {$0 \ \ \leftarrow M^1$};
						\fill (R0) circle (2pt); \node[right] at (R0) {$-w\leftarrow M^2$};
						
						
					\end{scope}
				\end{tikzpicture}
				\caption{Translation of the marked points $(w,0)$ to $(0,-w)$}\label{fig3}
			\end{figure}
			In fact, with the point at $\infty$ fixed, these three-point
			conformal blocks form a sheaf $\mathscr C$ over the configuration space
			$\mathrm{Conf}_2(\A^1)
			=\{(z_1,z_2)\in\A^1\times\A^1:z_1\neq z_2\}$,
			equipped with its standard flat connection
			\cite{FBZ04,DGT24}.
			The isomorphism $\Om_0$ is given by parallel transport along
			the straight line path $(w,0)\mapsto(0,-w)$,
			followed by reordering the two finite marked points
			and their attached modules.
		\end{remark}

			\subsection[A(V)-module isomorphism Rw]{\texorpdfstring{The $A(V)$-module isomorphism $R_w: M^2\odot_{-w,0} M^1\ra M^1\odot_{w,0} M^2$}{The A(V)-module isomorphism R\_w: M\textasciicircum{}2 odot\_\{-w,0\} M\textasciicircum{}1 -> M\textasciicircum{}1 odot\_\{w,0\} M\textasciicircum{}2}}
			
			\begin{prop}\label{prop3.7}
				Let $M^1,M^2,M^3\in \CC_1(V)$. Then there exists a linear isomorphism which is natural in $M^1,M^2$:
				\begin{align*}
					\Om_0: \Hom_{A(V)}\left(M^1\odot_{w,0}M^2,M^3(0)\right)&\simeq \Hom_{A(V)}\left(M^2\odot_{-w,0}M^1,M^3(0)\right),\quad f\mapsto \tilde{f},\\
					\braket*{(u^3)'}{\tilde{f}(v^2\odot_{-w}v^1)}&=\braket*{(u^3)'}{f(v^1\odot_{w} v^2)},
				\end{align*}
				for all $(u^3)'\in M^3(0)^\ast$, $v^1\in M^1$, and $v^2\in M^2$.
			\end{prop}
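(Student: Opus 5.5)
The plan is to obtain $\Om_0$ on restricted blocks by restricting the isomorphism of Proposition~\ref{prop:Om0blocks}, and then to pass to $\Hom_{A(V)}$ via Corollary~\ref{coro:3.10}.

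First, by Proposition~\ref{prop:condforC1V}(2), $(M^3)'\in\CC_1(V)$. By Proposition~\ref{prop:sufffullyfaithful}, the counit $\Phi^\L(\Om((M^3)'))\to (M^3)'$ is an isomorphism. Lemma~\ref{lm:rescfbandcfb} therefore identifies $\mathscr C((M^3)'\o M^1\o M^2)_{(\P^1,\infty,w,0)}$ with the restricted blocks on $\Om((M^3)')\o M^1\o M^2$, via $\varphi\mapsto\varphi|$. The same holds at $-w$ with $M^1,M^2$ swapped.

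The key observation is that $L(1)$ kills $\Om((M^3)')=M^3(0)^\ast$, since $L(1)$ lowers degree. Hence for $(u^3)'\in\Om((M^3)')$ the formula \eqref{IsoOm0} reads
\[
\braket*{\widetilde\varphi}{(u^3)'\o v^2\o v^1}=\braket*{\varphi}{(u^3)'\o v^1\o v^2}.
\]
So the restriction of $\widetilde\varphi$ is determined by the restriction of $\varphi$ through exactly the formula in the statement. The full isomorphism $\Om_0$ of Proposition~\ref{prop:Om0blocks} then descends to a linear isomorphism $\varphi_{\res}\mapsto\widetilde\varphi_{\res}$ of restricted block spaces, because both vertical restriction maps are isomorphisms. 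Its inverse comes from $\Om_1$, or equivalently from running the same argument with $-w$ in place of $w$.

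Next, compose with Corollary~\ref{coro:3.10} on both sides, using $\Om(M^3)=M^3(0)$ and formula \eqref{f}. For $f\leftrightarrow\varphi_{\res}$, define $\tilde f\leftrightarrow\widetilde\varphi_{\res}$. Then \eqref{f} gives
\[
\braket*{(u^3)'}{\tilde f(v^2\odot_{-w}v^1)}=\braket*{\widetilde\varphi_{\res}}{(u^3)'\o v^2\o v^1}=\braket*{\varphi_{\res}}{(u^3)'\o v^1\o v^2}=\braket*{(u^3)'}{f(v^1\odot_w v^2)}.
\]
This is the desired formula. Since it is a composition of isomorphisms, $f\mapsto\tilde f$ is a linear isomorphism, and $\tilde f$ is $A(V)$-linear because it lies in the image of Corollary~\ref{coro:3.10}.

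Naturality in $M^1$ and $M^2$ is immediate from the explicit formula. For morphisms $g_i\colon N^i\to M^i$, precomposition with $g_1\odot_{w,0}g_2$ and with $g_2\odot_{-w,0}g_1$ commutes with $f\mapsto\tilde f$, because both sides evaluate $f$ on $g_1v^1\odot_w g_2v^2$.

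The main obstacle is justifying that the counit hypothesis of Lemma~\ref{lm:rescfbandcfb} holds, and checking that the restriction of $\widetilde\varphi$ really depends only on the restriction of $\varphi$. The second point is exactly where the vanishing $e^{-wL(1)}|_{M^3(0)^\ast}=\Id$ is used. Alternatively, one can verify directly from \eqref{relvarphiandtildevarphi} that the substitution $z\mapsto u=z-w$ sends $\mathcal L_{\P^1\bs\{w,0\}}(V)_{<0}$ onto $\mathcal L_{\P^1\bs\{0,-w\}}(V)_{<0}$. This holds because $a\o z^n(z-w)^{-m}\mapsto a\o (u+w)^n u^{-m}$ preserves the order at $\infty$, namely $n-m$. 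So the relations defining $M^1\odot_{w,0}M^2$ match those defining $M^2\odot_{-w,0}M^1$, with $\rho_w\leftrightarrow\rho_0$ and $\rho_0\leftrightarrow\rho_{-w}$. This provides a fallback, purely algebraic route to well-definedness.
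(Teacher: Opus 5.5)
Your proposal is correct and follows essentially the same route as the paper. You restrict the isomorphism of Proposition~\ref{prop:Om0blocks} via Lemma~\ref{lm:rescfbandcfb} and Corollary~\ref{coro:3.10}, use that $e^{-wL(1)}$ acts trivially on $M^3(0)^\ast$ to obtain the explicit formula, and deduce naturality from that formula; beyond the paper, you also explicitly check the counit hypothesis of Lemma~\ref{lm:rescfbandcfb} via Proposition~\ref{prop:condforC1V} (the paper uses it implicitly here and verifies it only in the proof of Theorem~\ref{thm:Rw}), and your algebraic fallback under $z\mapsto z-w$ is essentially the content of \eqref{eq:Rw1}--\eqref{eq:Rw2}.
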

			\begin{proof}
				Using Lemma~\ref{lm:rescfbandcfb} and Corollary~\ref{coro:3.10}, we can extend the diagram \eqref{diagrampart1} to the following commutative diagram:
				\begin{equation}\label{diagrampart2}
					\begin{tikzcd}
						I\fusion{M^1}{M^2}{M^3}\arrow[r,"\Om_0","\simeq"'] \arrow[d,"\simeq"]& I\fusion{M^2}{M^1}{M^3}\arrow[d,"\simeq"]\\
						\mathcal{M}[P(w)]_{M^1M^2}^{M^3}\arrow[r,"\Om_0","\simeq"'] \arrow[d,"\simeq"] & \mathcal{M}[P(-w)]_{M^2M^1}^{M^3}\arrow[d,"\simeq"]\\
						\mathscr{C}((M^3)' \o M^1\o M^2)_{(\P^1,\infty,w,0)}\arrow[r,"\Om_0","\simeq"'] \arrow[d,"\simeq","\eqref{eq:cfb-restriction}"']& \mathscr{C}((M^3)' \o M^2\o M^1)_{(\P^1,\infty,-w,0)} \arrow[d,"\simeq","\eqref{eq:cfb-restriction}"']\\
						\mathscr{C}_{\mathrm{res}}(M^3(0)^\ast \o M^1\o M^2)_{(\P^1,\infty,w,0)}\arrow[r,dashed,"\Om_0","\simeq"'] \arrow[d,"\simeq","\eqref{eq:rescfbandhom}"']& \mathscr{C}_{\mathrm{res}}(M^3(0)^\ast \o M^2\o M^1)_{(\P^1,\infty,-w,0)}\arrow[d,"\simeq","\eqref{eq:rescfbandhom}"']\\
						\Hom_{A(V)}\left(M^1\odot_{w,0}M^2, M^3(0)\right)\arrow[r,dashed,"\Om_0","\simeq"']& \Hom_{A(V)}\left(M^2\odot_{-w,0}M^1, M^3(0)\right).
					\end{tikzcd}
				\end{equation}
				Given $\varphi\in \mathscr{C}((M^3)' \o M^1\o M^2)_{(\P^1,\infty,w,0)}$, we denote the correspondence of elements in the vector spaces of \eqref{diagrampart2} by
				\[
				\begin{tikzcd}
					\varphi\arrow[r,mapsto,"\Om_0"] \arrow[d,mapsto,"\eqref{eq:cfb-restriction}"']& \widetilde{\varphi}\arrow[d,mapsto,"\eqref{eq:cfb-restriction}"]\\
					\varphi_{\mathrm{res}}\arrow[r,mapsto,"\Om_0"]\arrow[d,mapsto,"\eqref{eq:rescfbandhom}"']& \widetilde{\varphi}_{\mathrm{res}}\arrow[d,mapsto,"\eqref{eq:rescfbandhom}"]\\
					f \arrow[r,dashed,mapsto,"\Om_0"]&\tilde{f}.
				\end{tikzcd}
				\]
				Note that for $(u^3)'\in M^3(0)^\ast$, we have $L(1).(u^3)'=0$. Then by \eqref{IsoOm0} and \eqref{f}, we have
				\begin{equation}\label{tildefand f}
					\begin{aligned}
						\braket*{(u^3)'}{\tilde{f}(v^2\odot_{-w} v^1)}&=\braket*{\widetilde{\varphi}_{\mathrm{res}}}{(u^3)'\o v^2\o v^1}=\braket*{\varphi_{\mathrm{res}}}{e^{-wL(1)}(u^3)'\o v^1\o v^2}
						\\&=\braket*{\varphi_{\mathrm{res}}}{(u^3)'\o v^1\o v^2}=\braket*{(u^3)'}{f(v^1\odot_{w} v^2)},
					\end{aligned}
				\end{equation}
				where $v^1\in M^1,v^2\in M^2$, and $(u^3)'\in M^3(0)^\ast$. Finally, given morphisms $\al:N^1\to M^1$ and $ \b:N^2\to M^2,$ we claim that
				\begin{equation}\label{eq:naturalOm0}
					\Om^{N^1,N^2}_0(f\circ (\al\odot_{w,0}\b))=\Om^{M^1,M^2}_0(f)\circ (\b\odot_{-w,0} \al),
				\end{equation}
				for any $f: M^1\odot_{w,0}M^2\to M^3(0)$ an $A(V)$-homomorphism. Indeed, by the definition of $\Om_0(f)$, we have for any $(u^3)'\in M^3(0)^\ast$,
				\begin{align*}
					&\braket*{(u^3)'}{ \left(\Om^{N^1,N^2}_0(f\circ (\al\odot_{w,0}\b))\right)(n^2\odot_{-w}n^1)}=\braket*{(u^3)'}{f(\al(n^1)\odot_w \b(n^2))}\\
					&=\braket*{(u^3)'}{\tilde{f}(\b(n^2)\odot_{-w}\al(n^1))}=\braket*{(u^3)'}{\left(\Om^{M^1,M^2}_0(f)\circ (\b\odot_{-w,0} \al)\right)(n^2\odot_{-w}n^1)}.
				\end{align*}
				This shows \eqref{eq:naturalOm0}. Hence $\Om_0$ is natural in $M^1$ and $M^2$.
			\end{proof}
			
			
			\begin{lm}\label{lm3.10}
				Let $M^1,M^2,M^3\in \CC_1(V)$, and
				\[
				\Om_0: \Hom_{A(V)}\left(M^1\odot_{w,0}M^2,M^3(0)\right)\simeq \Hom_{A(V)}\left(M^2\odot_{-w,0}M^1,M^3(0)\right),\quad f\mapsto \tilde{f}\]
				be the isomorphism given by \eqref{diagrampart2}. Let $a\o (z^n/(z-w)^m)\in \mathcal{L}_{\P^1\bs\{w,0 \}}(V)_{\leq 0}$, with $n-m\leq \wt a-1$, and $a\o (u^{-m}/(u+w)^{-n})\in \mathcal{L}_{\P^1\bs\{-w,0\}}(V)_{\leq 0}$. Then
				\begin{align}
					f\left(\rho_w\left(a\o\frac{z^n}{(z-w)^m}\right).v^1\odot_w v^2\right)&=\tilde{f}\left(v^2\odot_{-w}\rho_0\left(a\o\frac{u^{-m}}{(u+w)^{-n}}\right).v^1\right)\label{f1},\\
					f\left(v^1\odot_w \rho_0\left(a\o\frac{z^n}{(z-w)^m}\right).v^2\right)&=\tilde{f}\left(\rho_{-w}\left(a\o\frac{u^{-m}}{(u+w)^{-n}}\right).v^2\odot_{-w} v^1\right). \label{f2}
				\end{align}
			\end{lm}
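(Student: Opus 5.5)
The plan is to pair both sides with an arbitrary $(u^3)'\in M^3(0)^\ast$ and reduce each identity to the corresponding line of \eqref{relvarphiandtildevarphi} in Proposition~\ref{prop:Om0blocks}. Let $\varphi\in\mathscr{C}((M^3)'\o M^1\o M^2)_{(\P^1,\infty,w,0)}$ correspond to $f$ through the vertical chain of \eqref{diagrampart2}, namely \eqref{eq:cfb-restriction} followed by \eqref{eq:rescfbandhom}. Then $\widetilde{\varphi}=\Om_0(\varphi)$ corresponds to $\tilde f$. By \eqref{f}, for any $x^1\in M^1$, $x^2\in M^2$ we have $\braket*{(u^3)'}{f(x^1\odot_w x^2)}=\braket*{\varphi}{(u^3)'\o x^1\o x^2}$, since $\varphi_{\mathrm{res}}$ is just the restriction of $\varphi$. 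The analogous statement holds for $\tilde f$ and $\widetilde{\varphi}$ on $M^2\odot_{-w,0}M^1$.

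For \eqref{f1}, I take $x^1=\rho_w(a\o z^n/(z-w)^m).v^1$ and $x^2=v^2$. Because $L(1)(u^3)'=0$ on the bottom degree of $(M^3)'$, we have $e^{-wL(1)}(u^3)'=(u^3)'$. The left-hand side paired with $(u^3)'$ then equals $\braket*{\varphi}{e^{-wL(1)}(u^3)'\o\rho_w(\cdots).v^1\o v^2}$. By the second identity in \eqref{relvarphiandtildevarphi}, this equals $\braket*{\widetilde{\varphi}}{(u^3)'\o v^2\o\rho_0(a\o u^{-m}/(u+w)^{-n}).v^1}$, which is $\braket*{(u^3)'}{\tilde f(v^2\odot_{-w}\rho_0(\cdots).v^1)}$. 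Since $(u^3)'$ is arbitrary and $M^3(0)$ is finite dimensional, \eqref{f1} follows. The identity \eqref{f2} is obtained in the same way from the third identity in \eqref{relvarphiandtildevarphi}.

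Two points need checking.

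First, the restriction to bottom degree must be compatible with the chiral Lie algebra action. The identities in Proposition~\ref{prop:Om0blocks} hold for all $m,n\in\Z$ and all $(v^3)'\in (M^3)'$, so no restriction on $m,n$ is actually needed to apply them pointwise. The hypothesis $n-m\le\wt a-1$ only ensures that the elements lie in $\mathcal{L}_{\P^1\bs\{w,0\}}(V)_{\leq 0}$, so the expressions are meaningful in the restricted setting.

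Second, I need that $a\o u^{-m}/(u+w)^{-n}$ indeed lies in $\mathcal{L}_{\P^1\bs\{-w,0\}}(V)_{\leq0}$. Its order condition at $\infty$ reads $-m-(-n)=n-m\le\wt a-1$, which is the same inequality, so membership is automatic.

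The main obstacle, though minor, is making sure that the chain of identifications in \eqref{diagrampart2} really sends $\widetilde{\varphi}$ to $\tilde f$ with the same pairing convention \eqref{f}. This is the content of the commutativity established in Proposition~\ref{prop3.7}, and I would cite it directly.
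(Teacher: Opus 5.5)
Your proposal is correct and is essentially the paper's proof: you pair with an arbitrary $(u^3)'\in M^3(0)^\ast$, use $e^{-wL(1)}(u^3)'=(u^3)'$ together with \eqref{f}, and apply the second (resp.\ third) identity of \eqref{relvarphiandtildevarphi} to obtain \eqref{f1} (resp.\ \eqref{f2}). Your additional checks — the order condition at $\infty$, and that the vertical identifications in \eqref{diagrampart2} send $\widetilde{\varphi}$ to $\tilde f$ with the same pairing convention — are accurate; the paper leaves them implicit.
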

			\begin{proof}
				By \eqref{relvarphiandtildevarphi} and \eqref{f}, and the fact that $e^{-wL(1)}(u^3)'=(u^3)'$ for all $(u^3)'\in M^3(0)^\ast$, we have
				\begin{align*}
					&\braket*{(u^3)'}{f\left(\rho_w\left(a\o\frac{z^n}{(z-w)^m}\right).v^1\odot_w v^2\right)}\\
					&=\braket*{\varphi_{\mathrm{res}}}{(u^3)'\o \rho_{w}\left(a\o\frac{z^n}{(z-w)^m}\right).v^1\o v^2}=\braket*{\widetilde{\varphi}_{\mathrm{res}}}{(u^3)'\o v^2\o \rho_{0}\left(a\o\frac{u^{-m}}{(u+w)^{-n}}\right).v^1}\\
					&=\braket*{(u^3)'}{\tilde{f}\left(v^2\odot_{-w}\rho_0\left(a\o\frac{u^{-m}}{(u+w)^{-n}}\right).v^1\right)}
				\end{align*}
				This proves \eqref{f1}. Similarly, \eqref{f2} follows from the third equality in \eqref{relvarphiandtildevarphi} and \eqref{f}, we omit the details.
			\end{proof}
			
			The following is our main theorem in this Section, which is essential for the definition of braiding isomorphism on $\Mod_{\mathrm{fd}}(A(V))$.
			
			\begin{thm}\label{thm:Rw}
				Let $M^1,M^2\in \CC_1(V)$, and $w\in \P^1\bs\{\infty, 0\}.$ Then
				\begin{equation}\label{Rw}
					R_w: M^2\odot_{-w,0} M^1\ra M^1\odot_{w,0} M^2,\quad R_w(v^2\odot_{-w}v^1)=v^1\odot_w v^2
				\end{equation}
				is a canonical isomorphism of left $A(V)$-modules which is natural in $M^1$ and $M^2$. It satisfies
				\begin{align}
					R_w\left(v^2\odot_{-w}\rho_0\left(a\o\frac{u^{-m}}{(u+w)^{-n}}\right).v^1\right)&=\rho_w\left(a\o\frac{z^n}{(z-w)^m}\right).v^1\odot_w v^2, \label{eq:Rw1}\\
					R_w\left(\rho_{-w}\left(a\o\frac{u^{-m}}{(u+w)^{-n}}\right).v^2\odot_{-w} v^1\right)&=v^1\odot_w \rho_0\left(a\o\frac{z^n}{(z-w)^m}\right).v^2,\label{eq:Rw2}
				\end{align}
				where $n-m\leq \wt a-1$.
			\end{thm}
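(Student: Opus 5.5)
The plan is to obtain $R_w$ by Yoneda from the natural isomorphism $\Om_0$ of Proposition~\ref{prop3.7}, and then to check that it is the displayed formula. Both $M^1\odot_{w,0}M^2$ and $M^2\odot_{-w,0}M^1$ are finite-dimensional $A(V)$-modules by Proposition~\ref{prop:condforC1V}(3). By Proposition~\ref{prop:condforC1V}(1), every object of $\Mod_\fd(A(V))=\CC_{A(V)}$ has the form $M^3(0)=\Om(M^3)$ with $M^3\in\CC_1(V)$. So Proposition~\ref{prop3.7} gives isomorphisms
\[
\Hom_{A(V)}(M^1\odot_{w,0}M^2,S)\cong\Hom_{A(V)}(M^2\odot_{-w,0}M^1,S)
\]
for all $S\in\Mod_\fd(A(V))$. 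I first check naturality in $S$: for $g:M^3(0)\to N^3(0)$, the formula $\langle(u^3)',\tilde f(v^2\odot_{-w}v^1)\rangle=\langle(u^3)',f(v^1\odot_w v^2)\rangle$ shows directly that $\widetilde{g\circ f}=g\circ\tilde f$. The Yoneda lemma then produces a unique $A(V)$-isomorphism $R_w:M^2\odot_{-w,0}M^1\to M^1\odot_{w,0}M^2$ with $\tilde f=f\circ R_w$ for all $f$.

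Next I identify $R_w$ concretely. Take $S=M^1\odot_{w,0}M^2$ and $f=\Id$. Then $R_w=\widetilde{\Id}$, and the defining identity, paired against all $(u^3)'\in S^\ast$, gives $R_w(v^2\odot_{-w}v^1)=v^1\odot_w v^2$. In particular, the assignment $v^2\otimes v^1\mapsto v^1\odot_w v^2$ kills $\mathcal{L}_{\P^1\bs\{-w,0\}}(V)_{<0}.(M^2\o M^1)$. This is why the map is well defined, even though that fact is not obvious from the relations \eqref{eq:actionfogeneralspann}.

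The inverse comes from the symmetric construction with $w$ replaced by $-w$ and the roles of $M^1,M^2$ swapped. Equivalently, $R_w$ is an isomorphism because $\Om_0$ was bijective for every $S$, and Yoneda applies to bijections. Naturality in $M^1,M^2$ follows from \eqref{eq:naturalOm0}, again by Yoneda, or simply from the explicit formula.

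For \eqref{eq:Rw1} and \eqref{eq:Rw2}, I apply Lemma~\ref{lm3.10} with $f=\Id$ and $\tilde f=R_w$. Relation \eqref{f1} becomes exactly \eqref{eq:Rw1}, and \eqref{f2} becomes \eqref{eq:Rw2}. The only point needing care is that Lemma~\ref{lm3.10} is stated for elements of the $\leq 0$ restricted chiral Lie algebras on both sides. So I check that $n-m\leq\wt a-1$ implies that $a\o u^{-m}(u+w)^n$ also has order at $\infty$ satisfying the same bound. This holds because the degree at $\infty$, namely $n-m$, is unchanged.

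The main obstacle I expect is the application of Lemma~\ref{lm3.10} and Proposition~\ref{prop3.7} to the non-irreducible target $S=M^1\odot_{w,0}M^2$. This needs $S\cong\Om(M^3)$ with the pairing identification of Corollary~\ref{coro:3.10}, that is, Lemma~\ref{lm:Omdualiso}, applied to $M^3=\Phi^\L(S)$. This is legitimate under the standing hypotheses, and I would make that transport explicit.
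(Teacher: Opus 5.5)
Your proposal is correct and is essentially the paper's proof. Taking $M^3=\Phi^\L(M^1\odot_{w,0}M^2)$ and $f=\Id$ in Proposition~\ref{prop3.7} gives $R_w=\widetilde{\Id}$ with the displayed formula, invertibility comes from the bijectivity of $\Om_0$ (the paper applies it with $M^3=\Phi^\L(M^2\odot_{-w,0}M^1)$, which is what your Yoneda argument amounts to), and \eqref{eq:Rw1}, \eqref{eq:Rw2} follow from Lemma~\ref{lm3.10} with $f=\Id$; your extra checks (naturality of $\Om_0$ in the target, and that $a\o u^{-m}(u+w)^{n}$ still lies in the $\leq 0$ part) are correct and only make explicit what the paper leaves implicit.
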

			
			\begin{proof}
				By Proposition~\ref{prop:condforC1V}, $M^2\odot_{-w,0} M^1$ and $M^1\odot_{w,0} M^2$ are finite-dimensional left $A(V)$-module. Moreover, $(\Phi^\L\dashv \Om): \Mod_{\mathrm{fd}}(A(V))\rightleftarrows \CC_1(V)$ is an equivalence of categories. Let $M^3=\Phi^\L(M^1\odot_{w,0} M^2)$. Then $M^3\in \CC_1(V)$ is a finite direct sum of irreducible ordinary $V$-modules. Hence \[
				M^3(0)=\Om (\Phi^\L (M^1\odot_{w,0} M^2))=M^1\odot_{w,0} M^2.
				\]
				By Proposition~\ref{prop:condforC1V}, $(M^3)'\in \CC_1(V)$, and so $(M^3)'\cong\Phi^\L(\Om((M^3)'))=\Phi^\L(M^3(0)^\ast)$. Thus the hypotheses of Lemma~\ref{lm:rescfbandcfb} hold. By Proposition~\ref{prop3.7} there exists a canonical isomorphism
				\begin{align*}
					\Om_0: \Hom_{A(V)}\left(M^1\odot_{w,0}M^2,M^1\odot_{w,0}M^2\right)&\simeq \Hom_{A(V)}\left(M^2\odot_{-w,0}M^1,M^1\odot_{w,0}M^2\right),\\
					f&\mapsto \tilde{f}.
				\end{align*}
				Let $f=\Id$. Then $\widetilde{\Id}: M^2\odot_{-w,0}M^1\ra M^1\odot_{w,0}M^2$ is an $A(V)$-homomorphism. Moreover, for any $(u^3)'\in (M^3(0))^\ast= (M^1\odot_{w,0} M^2)^\ast$, it follows from \eqref{tildefand f} that
				\[
				\braket*{(u^3)'}{\widetilde{\Id}(v^2\odot_{-w} v^1)}=\braket*{(u^3)'}{\Id(v^1\odot_{w} v^2)}=\braket*{(u^3)'}{v^1\odot_{w} v^2}.
				\]
				Thus, $\widetilde{\Id}(v^2\odot_{-w} v^1)=v^1\odot_{w} v^2$, and $R_w=\widetilde{\Id}$ \eqref{Rw} is a homomorphism of left $A(V)$-modules. Finally, we may take $M^3=\Phi^\L(M^2\odot_{-w,0}M^1)$ in diagram \eqref{diagrampart2} and construct the inverse of $R_w$ using the inverse of $\Om_0$. Hence $R_w$ is an isomorphism of $A(V)$-modules.
				
				Finally, to show \eqref{eq:Rw1} and \eqref{eq:Rw2}, we let $M^3=\Phi^\L(M^1\odot_{w,0}M^2)$ and $f=\Id$ in Lemma~\ref{lm3.10}. Then \eqref{eq:Rw1} follows from \eqref{f1}, \eqref{eq:Rw2} follows from \eqref{f2}. Naturality follows directly from the defining formula \eqref{Rw}.
				Indeed, for $V$-module homomorphisms
				$\alpha:M^1\to N^1$ and $\beta:M^2\to N^2$, we have
				\[ (\alpha\odot_{w,0}\beta)\circ R_w^{M^1,M^2} =
				R_w^{N^1,N^2}\circ(\beta\odot_{-w,0}\alpha),
				\]
				since both sides send $v^2\odot_{-w}v^1$ to
				$\alpha(v^1)\odot_w\beta(v^2)$.
			\end{proof}
			
			
			
			\begin{remark}
				In particular, by \eqref{odotaction}, we have
				\begin{equation}\label{Rwcomm}
					R_w\left(\rho_{-w,0}\left(a\o\frac{u^{-m}}{(u+w)^{-n}}\right).(v^2\odot_{-w}v^1)\right)=\rho_{w,0} \left(a\o\frac{z^n}{(z-w)^m}\right).(v^1\odot_{w} v^2).
				\end{equation}
				If we take $n=\wt a-1$ and $m=0$ in \eqref{Rwcomm}, then by \eqref{AVactionq} we have
				\begin{align*}
					R_w([a].(v^2\odot_{-w}v^1))&=R_w\left(\rho_{-w,0}\left(a\o \frac{1}{(u+w)^{-\wt a+1}}\right).(v^2\odot_{-w}v^1)\right)\\
					&=\rho_{w,0}\left(a\o z^{\wt a-1}\right).(v^1\odot_w v^2)\\
					&=[a].R_w(v^2\odot_{-w}v^1),
				\end{align*}
				for any $[a]\in A(V)$.
				This gives an alternative way of showing the $A(V)$-linearity of $R_w$ \eqref{Rw}.
			\end{remark}
			
			\subsection{Parallel transport of three-point conformal blocks}\label{subsec:parallel}
			Given two distinct points $x,y\in \P^{1}\bs\{ \infty,0\}$, there are associated spaces of three-point conformal blocks
			\[
			\mathscr{C}((M^3)' \o M^{2}\o M^{1})_{(\P^{1},\infty,x,0)}\quad \mathrm{and} \quad \mathscr{C}((M^3)' \o M^{2}\o M^{1})_{(\P^{1},\infty,y,0)}.
			\]
			We first prove that they are isomorphic as vector spaces using the idea of parallel transport isomorphism by Huang-Lepowsky-Zhang \cite{HLZ}.
			
			Without loss of generality, we assume $y=1$. Note that fixing a determination of $\log(x)$ with the normalization $\log(1)=0$
			amounts to choosing a homotopy class of paths with fixed endpoints:
			\[
			\gamma:[0,1]\to \C^\times,\qquad \gamma(0)=1,\ \gamma(1)=x.
			\]
			Denote by $l_\gamma(x)$ the logarithm of $x$ determined by $\gamma$, with $l_\gamma(1)=0$.
			
			\begin{example}\label{ex4.1}
				Let $x=-1$. There are two standard paths from $1$ to $-1$. One along the upper-half unit circle, the other along the lower-half unit circle, see Figure~\ref{fig2}.
				\begin{figure}[ht]
					\centering
					\begin{tikzpicture}[scale=2.5, thick]
						
						\draw[->] (-1.3,0) -- (1.3,0);
						\draw[->] (0,-1.3) -- (0,1.3);
						
						\draw (0,0) circle (1);
						
						\fill (1,0) circle (1pt);
						\node[below right] at (1,0) {$1$};
						
						\fill (-1,0) circle (1pt);
						\node[below left] at (-1,0) {$-1$};
						
						\draw[->, blue, thick]
						(1,0) arc (0:180:1);
						\node[above left] at (1,0.7) {$\gamma_+$};
						
						\draw[->, red, thick]
						(1,0) arc (0:-180:1);
						\node[below right] at (0.7,-0.7) {$\gamma_-$};
						
					\end{tikzpicture}
					\caption{Paths $\gamma_+$ and $\gamma_-$ from $1$ to $-1$}\label{fig2}
				\end{figure}
				The branch cut for the logarithm defined by $\gamma_+$ could be chosen to the negative $y$-axis; while the branch cut for the logarithm defined by $\gamma_-$ could be chosen to the positive $y$-axis.
				Then $l_{\gamma_+}(-1)=\pi i$ and $l_{\gamma_-}(-1)=-\pi i$.
				
				Similar to Remark~\ref{remark:Om0}, by fixing the marked points
				at $\infty$ and $0$, we obtain a sheaf of conformal blocks
				$\mathscr C_0$ over $\C^\times$, whose fiber at $z$ is
				\[
				\mathscr C_0|_z	=
				\mathscr C\bigl((M^3)'\o M^2\o M^1\bigr)
				_{(\P^1,\infty,z,0)}.	\]
				Here $\C^\times$ is identified with the slice
				$\{(z,0):z\neq0\}\subset\mathrm{Conf}_2(\A^1)$.
				The isomorphisms $T_{\ga_\pm}$ in \eqref{Tgamma}
				are parallel transport for the standard flat connection
				along the two reversed semicircular paths
				$\ga_\pm^{-1}$ from $-1$ to $1$ in Figure~\ref{fig2}.
			\end{example}
			
			
			
			\begin{lm}\label{lm3.1}
				Fix a path $\gamma:1\sim x$ in $\C^\times$. Let $l_\gamma(x)$ be the logarithm of $x$ uniquely determined by $\gamma$ and $l_\gamma(1)=0$.
				Let $M^{1},M^{2},M^3$ be ordinary $V$-modules. With respect to the action \eqref{eq:actionofchiralliealgebraelements} of the chiral Lie algebras on $(M^3)'\o M^{2}\o M^{1}$, we have
				\begin{equation}\label{eq3.1}
					\begin{aligned}
						&x^{\wt a-n+m-1}\cdot \left((x^{L(0)}\o x^{-L(0)}\o x^{-L(0)})\circ \rho_{\infty,x,0}\left( a\o\frac{z^n}{(z-x)^m}\right)\right).((v^3)'\o v^{2}\o v^{1})\\
						&= \left(\rho_{\infty,1,0}\left( a\o\frac{z^n}{(z-1)^m}\right)\circ (x^{L(0)}\o x^{-L(0)}\o x^{-L(0)})\right).((v^3)'\o v^{2}\o v^{1}),
					\end{aligned}
				\end{equation}
				where $x^{L(0)}=e^{l_\gamma(x)L(0)}=\sum_{j\geq 0}\frac{1}{j!} (l_\gamma(x))^jL(0)^j$ as an operator on ordinary modules, and $(v^3)'\o v^{2}\o v^{1}\in (M^3)' \o M^{2}\o M^{1}$.
			\end{lm}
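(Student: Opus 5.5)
The identity \eqref{eq3.1} is linear in $(v^3)'\o v^2\o v^1$ and splits into three parts according to \eqref{eq:actionofchiralliealgebraelements}. So I will prove it separately for the $\infty$-, $x$- (resp.\ $1$-) and $0$-components. In each case I take $a$ homogeneous, and $(v^3)'\in (M^3)'_{\beta}$, $v^2\in M^2_{\al_2}$, $v^1\in M^1_{\al_1}$ homogeneous $L(0)$-eigenvectors. This is legitimate because all modules are ordinary, so $x^{\pm L(0)}$ acts by the scalar $e^{\pm l_\gamma(x)\cdot\mathrm{eigenvalue}}$. All powers $x^c$ with $c\in\C$ are interpreted as $e^{c\,l_\gamma(x)}$. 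Since integer powers are branch independent, consistency only concerns the combination of non-integral exponents.

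The basic tool is the conjugation formula $x^{L(0)}a_{(k)}x^{-L(0)}=x^{\wt a-k-1}a_{(k)}$ on each module, and the analogous one for the contragredient modes $a'_{(k)}$. For the latter, the definition gives $a'_{(k)}$ degree $k+1-\wt a$, which follows from $(L(1)^ia)_{(2\wt a-k-i-2)}$ having degree $\wt a-i-(2\wt a-k-i-2)-1$.

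\textbf{$0$-component.} The left side is $x^{\wt a-n+m-1}\sum_j\binom{-m}{j}(-x)^{-m-j}\,x^{-L(0)}a_{(n+j)}v^2$, and $x^{-L(0)}a_{(n+j)}=x^{-(\wt a-n-j-1)}a_{(n+j)}x^{-L(0)}$. Collecting powers gives $x^{\wt a-n+m-1-m-j-\wt a+n+j+1}=x^0$, and the leftover sign $(-1)^{-m-j}$ matches the $w=1$ formula \eqref{eq:0action}. Here I must check that $(-x)^{-m-j}=(-1)^{-m-j}x^{-m-j}$ is unambiguous; it is, since the exponent is an integer.

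\textbf{Remaining components.} The $x$-component \eqref{eq:1action} works the same way. The term $x^{n-j}$ combines with $x^{-(\wt a-j+m-1)}$ from conjugating $a_{(j-m)}$, and the total exponent is again $0$. For the $\infty$-component, $x^{L(0)}a'_{(n-m-j)}=x^{n-m-j+1-\wt a}a'_{(n-m-j)}x^{L(0)}$, and the prefactors $x^{\wt a-n+m-1}x^{j}$ cancel this exactly.

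Summing the three components yields \eqref{eq3.1}. The only point needing care, which I expect to be the main obstacle, is the degree bookkeeping for $a'_{(k)}$ in the $\infty$-part and its compatibility with the sign convention for the contragredient $L(0)$-grading. The other parts are routine exponent counting.
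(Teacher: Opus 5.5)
Your proposal is correct and is essentially the paper's proof. Both reduce to $L(0)$-eigenvectors, use that $a_{(k)}$ shifts weight by $\wt a-k-1$ and $a'_{(k)}$ by $k+1-\wt a$, and check that all integer powers of $x$ cancel; the paper treats the three sums at once by pulling out the common factor $x^{\wt (v^3)'-\wt v^2-\wt v^1}$ rather than conjugating term by term. One cosmetic slip: with the ordering $(M^3)'\otimes M^2\otimes M^1$ in the lemma, the vector at $0$ is $v^1$, so your $0$-component should read $a_{(n+j)}v^1$ rather than $a_{(n+j)}v^2$, which does not affect the exponent count.
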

			\begin{proof}
				First, we note that $x^{L(0)}$ is a well-defined operator on an ordinary module $M$. Indeed, given any $v\in M_\la$, with $\la\in \C$, we have $x^{L(0)}.v=\sum_{j\geq 0}\frac{1}{j!} (l_\gamma(x)\lambda)^j\cdot v$, wherein the coefficient is a convergent series. Since $M$ is a direct sum of $M_\la$'s. Hence $x^{L(0)}$ is well-defined. Furthermore, the operator $x^{L(0)}$ is invertible, with inverse $x^{-L(0)}$.
				
				With the notation \eqref{eq:inftyaction}, we have $[L(0),a'_{(n-m-j)}]=(-\wt a+n-m-j+1) a'_{(n-m-j)}$. Since $M^i$ are ordinary $V$-modules, we may choose $(v^3)'\in (M^3)'$, $v^1\in M^1$, and $v^2\in M^2$ to be $L(0)$-eigenvectors. It follows from \eqref{eq:actionofchiralliealgebraelements} that
				\begin{align*}
					&x^{\wt a-n+m-1}\cdot \left((x^{L(0)}\o x^{-L(0)}\o x^{-L(0)})\circ \rho_{\infty,x,0}\left( a\o\frac{z^n}{(z-x)^m}\right)\right).((v^3)'\o v^{2}\o v^{1})\\
					&=-\sum_{j\geq 0}\binom{-m}{j} (-1)^jx^{\wt a-n+m-1+j} x^{L(0)}.(a'_{(n-m-j)}(v^3)')\o x^{-L(0)}.v^{2}\o x^{-L(0)}.v^{1}\\
					&\ \ \ + \sum_{j\geq 0} \binom{n}{j} x^{\wt a-n+m-1+n-j}x^{L(0)}.(v^3)'\o x^{-L(0)}.(a_{(j-m)}v^{2})\o x^{-L(0)}.v^{1}\\
					&\ \ \ +\sum_{j\geq 0} \binom{-m}{j}(-1)^{-m-j}x^{\wt a-n+m-1-m-j}x^{L(0)}.(v^3)'\o x^{-L(0)}.v^{2}\o x^{-L(0)}.(a_{(n+j)}v^{1})\\
					&= -\sum_{j\geq 0}\binom{-m}{j} (-1)^jx^{\wt(v^3)'-\wt v^{2}-\wt v^{1}}\cdot a'_{(n-m-j)}(v^3)'\o v^{2}\o v^{1}\\
					&\ \ \ +\sum_{j\geq 0} \binom{n}{j} x^{\wt(v^3)'-\wt v^{2}-\wt v^{1}}(v^3)'\o a_{(j-m)}v^{2}\o v^{1}\\
					&\ \ \ +\sum_{j\geq 0} \binom{-m}{j}(-1)^{-m-j}x^{\wt(v^3)'-\wt v^{2}-\wt v^{1}}(v^3)'\o v^{2}\o a_{(n+j)}v^{1}\\
					&=x^{\wt(v^3)'-\wt v^{2}-\wt v^{1}}\cdot \left( \rho_{\infty,1,0}\left( a\o\frac{z^n}{(z-1)^m}\right).((v^3)'\o v^{2}\o v^{1})\right)\\
					&= \left( \rho_{\infty,1,0}\left( a\o\frac{z^n}{(z-1)^m}\right)\circ (x^{L(0)}\o x^{-L(0)}\o x^{-L(0)})\right).((v^3)'\o v^{2}\o v^{1}),
				\end{align*}
				where $x^\al$ is defined as $e^{\al l_\gamma(x)}$ for any complex number $\al\in \C$.
			\end{proof}
			
			\begin{lm}\label{lm:Tgamma}
				Fix a path $\gamma:1\sim x$ in $\C^\times$. Let $l_\gamma(x)$ be the logarithm of $x$ uniquely determined by $\gamma$ and $l_\gamma(1)=0$. Then there exists an isomorphism of conformal blocks
				\begin{equation}\label{Tgamma}
					\begin{aligned}
						T_\gamma: \mathscr{C}((M^3)'\o M^{2}\o M^{1})&_{(\P^{1},\infty, x,0)}\ra \mathscr{C}((M^3)'\o M^{2}\o M^{1})_{(\P^{1},\infty, 1,0)},\quad \varphi\mapsto \varphi_{\gamma},\\
						\braket*{\varphi_\gamma}{(v^3)'\o v^{2}\o v^{1}}&=\braket*{\varphi}{x^{-L(0)}.(v^3)'\o x^{L(0)}.v^{2}\o x^{L(0)}.v^{1}}
					\end{aligned}
				\end{equation}
				is an isomorphism of three-point conformal blocks on $\P^{1}$, where $x^{L(0)}=e^{l_\ga (x) L(0)}$.
			\end{lm}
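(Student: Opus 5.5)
The plan is to derive Lemma~\ref{lm:Tgamma} directly from Lemma~\ref{lm3.1}. Write $D_x=x^{L(0)}\o x^{-L(0)}\o x^{-L(0)}$, an invertible operator on $(M^3)'\o M^2\o M^1$ with inverse $D_x^{-1}=x^{-L(0)}\o x^{L(0)}\o x^{L(0)}$. The definition \eqref{Tgamma} reads $\varphi_\gamma=\varphi\circ D_x^{-1}$. It therefore suffices to show that $D_x^{-1}$ maps the subspace
\[
\mathcal{L}_{\P^1\bs\{\infty,1,0\}}(V).\bigl((M^3)'\o M^2\o M^1\bigr)
\]
onto the subspace
\[
\mathcal{L}_{\P^1\bs\{\infty,x,0\}}(V).\bigl((M^3)'\o M^2\o M^1\bigr).
\]
Then $D_x^{-1}$ descends to an isomorphism of coinvariant spaces, and its transpose is the claimed isomorphism of conformal blocks.

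First I rewrite \eqref{eq3.1} as an intertwining relation. For a spanning element $X_x=a\o z^n/(z-x)^m$ and the corresponding $X_1=a\o z^n/(z-1)^m$, Lemma~\ref{lm3.1} says
\[
x^{\wt a-n+m-1}\, D_x\circ\rho_{\infty,x,0}(X_x)=\rho_{\infty,1,0}(X_1)\circ D_x .
\]
Equivalently,
\[
D_x^{-1}\circ\rho_{\infty,1,0}(X_1)=x^{\wt a-n+m-1}\,\rho_{\infty,x,0}(X_x)\circ D_x^{-1}.
\]
Both chiral Lie algebras are spanned by such elements as $a$ ranges over homogeneous vectors of $V$ and $m,n$ range over $\Z$. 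By linearity in $a$ we may take $a$ homogeneous, and the scalar $x^{\wt a-n+m-1}=e^{(\wt a-n+m-1)l_\gamma(x)}$ is nonzero. Hence $D_x^{-1}$ carries the image of $\rho_{\infty,1,0}(X_1)$ into the image of $\rho_{\infty,x,0}(X_x)$. Symmetrically, $D_x$ carries the image of $\rho_{\infty,x,0}(X_x)$ into the image of $\rho_{\infty,1,0}(X_1)$.

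One point needs care: the correspondence $X_x\leftrightarrow X_1$ must be compatible with the relation $L(-1)a\o f=-a\o f'$. The spanning set is not a basis, so I must check that $D_x^{-1}$ respects sums rather than individual symbols. This is automatic, however. The statement only concerns images of operators on the tensor product, and the sum of images over a spanning set is the whole action subspace. So no well-definedness issue arises on the Lie algebra side.

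Combining the two inclusions, $D_x^{-1}$ restricts to a bijection between the two action subspaces. It therefore induces a linear isomorphism
\[
[(M^3)'\o M^2\o M^1]_{(\P^1,\infty,1,0)}\to[(M^3)'\o M^2\o M^1]_{(\P^1,\infty,x,0)}.
\]
Dualizing gives $T_\gamma$, with $\braket*{\varphi_\gamma}{\cdot}=\braket*{\varphi}{D_x^{-1}\,\cdot}$ exactly as in \eqref{Tgamma}. The inverse of $T_\gamma$ is $\psi\mapsto\psi\circ D_x$.

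The only mildly delicate step is confirming that the operators $x^{\pm L(0)}$ are well defined on $(M^3)'$ and on $M^i$ and preserve homogeneous components. This was already handled in the proof of Lemma~\ref{lm3.1} using that the modules are ordinary. So I expect no real obstacle beyond bookkeeping of the scalar factor.
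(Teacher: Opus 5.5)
Your proposal is correct and is essentially the paper's proof. The paper also uses \eqref{eq3.1} to show that $\varphi\circ D_x^{-1}$ kills every vector $\rho_{\infty,1,0}(X_1)D_x w$, and hence, since $D_x$ is invertible, the whole $\mathcal{L}_{\P^1\bs\{\infty,1,0\}}(V)$-action subspace; it then gives the inverse as $\psi\mapsto\psi\circ D_x$, checked the same way. Your version, which shows the two action subspaces correspond under $D_x^{-1}$ and then dualizes the coinvariants, is a cleaner restatement of the same argument.
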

			\begin{proof}
				First, we show that $\varphi_\gamma$ is invariant under the action of $\mathcal{L}_{\P^{1}\bs\{\infty,1,0\}}(V)$. i.e., $T_\gamma$ is well-defined. It follows from \eqref{eq3.1} and \eqref{Tgamma} that
				\begin{align*}
					&\braket*{\varphi_\gamma}{\rho_{\infty,1,0}\left(a\o \frac{z^n}{(z-1)^m}\right).(x^{L(0)}.(v^3)'\o x^{-L(0)}.v^{2}\o x^{-L(0)}.v^{1})}\\
					&=\braket*{\varphi}{\left((x^{-L(0)}\o x^{L(0)}\o x^{L(0)})\circ \rho_{\infty,1,0}\left(a\o \frac{z^n}{(z-1)^m}\right)\right).(x^{L(0)}.(v^3)'\o x^{-L(0)}.v^{2}\o x^{-L(0)}.v^{1})}\\
					&=x^{\wt a-n+m-1}\braket*{\varphi}{\rho_{\infty,x,0}\left(a\o \frac{z^n}{(z-x)^m}\right).((v^3)'\o v^{2}\o v^{1})}\\
					&=0,
				\end{align*}
				as $\varphi\in \mathscr{C}((M^3)'\o M^{2}\o M^{1})_{(\P^{1},\infty, x,0)}$ is invariant under $\mathcal{L}_{\P^{1}\bs\{\infty,x,0\}}(V)$. Moreover, since $x^{L(0)}$ is an invertible operator on $M^i$, this shows $\varphi_\gamma$ vanishes on $\mathcal{L}_{\P^{1}\bs\{\infty,1,0\}}(V). ((M^3)'\o M^{2}\o M^{1})$.
				
				It is clear that the inverse of $T_\gamma$ is given by
				\begin{equation}\label{Tgammainverse}
					\begin{aligned}
						T_{\gamma^{-1}}: \mathscr{C}((M^3)'\o M^{2}\o M^{1})&_{(\P^{1},\infty, 1,0)}\ra \mathscr{C}((M^3)'\o M^{2}\o M^{1})_{(\P^{1},\infty, x,0)},\quad \varphi\mapsto \varphi_{\gamma^{-1}},\\
						\braket*{\varphi_{\gamma^{-1}}}{(v^3)'\o v^{2}\o v^{1}}&=\braket*{\varphi}{x^{L(0)}.(v^3)'\o x^{-L(0)}.v^{2}\o x^{-L(0)}.v^{1}}
					\end{aligned}
				\end{equation}
				The well-definedness of $T_{\gamma^{-1}}$ can be proved similarly using \eqref{eq3.1}.
			\end{proof}
			
			\begin{prop}\label{prop:Tgamma}
				Let $M^1,M^2,M^3\in \CC_1(V)$. Then there exists a linear isomorphism which is natural in $M^1,M^2$:
				\begin{equation}\label{fga}
					\begin{aligned}
						T_\gamma: \Hom_{A(V)}\left(M^2\odot_{x,0}M^1, M^3(0)\right) &\simeq \Hom_{A(V)}(M^2\odot_{1,0}M^1,M^3(0)), \quad f\mapsto f_{\ga},\\
						f_\ga(v^2\odot_1 v^1)
						&=x^{-L(0)}.f(x^{L(0)}.v^{2}\odot_x x^{L(0)}.v^{1}),
					\end{aligned}
				\end{equation}
				for all $(u^3)'\in M^3(0)^\ast$, $v^1\in M^1$, and $v^2\in M^2$.
			\end{prop}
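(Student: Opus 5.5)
The plan mirrors the proof of Proposition~\ref{prop3.7}, with $\Om_0$ replaced by the parallel transport $T_\gamma$ of Lemma~\ref{lm:Tgamma}. First, extend $T_\gamma$ down the columns of a commutative diagram analogous to \eqref{diagrampart2}. In that diagram the top row is $T_\gamma:\mathscr{C}((M^3)'\o M^2\o M^1)_{(\P^1,\infty,x,0)}\to \mathscr{C}((M^3)'\o M^2\o M^1)_{(\P^1,\infty,1,0)}$. The vertical maps are the restriction isomorphisms \eqref{eq:cfb-restriction} to $M^3(0)^\ast\o M^2\o M^1$, followed by the identifications \eqref{eq:rescfbandhom} with $\Hom_{A(V)}(M^2\odot_{x,0}M^1,M^3(0))$ and $\Hom_{A(V)}(M^2\odot_{1,0}M^1,M^3(0))$. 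The hypotheses are available: by Proposition~\ref{prop:condforC1V} we have $(M^3)'\in\CC_1(V)$ and $(M^3)'\cong\Phi^\L(\Om((M^3)'))$, so Lemma~\ref{lm:rescfbandcfb} and Corollary~\ref{coro:3.10} apply at both points $x$ and $1$. Composing vertical isomorphisms with $T_\gamma$ and the inverse vertical isomorphisms then defines the dashed bottom map $f\mapsto f_\gamma$. It is automatically a linear isomorphism, with inverse induced by $T_{\gamma^{-1}}$ from \eqref{Tgammainverse}.

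Second, identify the formula \eqref{fga}. The key point is that $x^{-L(0)}$ preserves the bottom degree $\Om((M^3)')=M^3(0)^\ast$, since $L(0)$ preserves each $L(0)$-eigenspace and the grading \eqref{eq:canonical-grading}. Hence $T_\gamma$ restricts to restricted conformal blocks: for $(u^3)'\in M^3(0)^\ast$,
\[
\braket*{(u^3)'}{f_\gamma(v^2\odot_1 v^1)}=\braket*{\varphi_{\mathrm{res}}}{x^{-L(0)}.(u^3)'\o x^{L(0)}.v^2\o x^{L(0)}.v^1}=\braket*{x^{-L(0)}.(u^3)'}{f(x^{L(0)}.v^2\odot_x x^{L(0)}.v^1)}.
\]
The pairing $M^3(0)^\ast\times M^3(0)$ intertwines the transpose of $x^{-L(0)}$ with $x^{-L(0)}$ on $M^3(0)$, because $L(0)$ on the contragredient is the transpose of $L(0)$. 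This gives $f_\gamma(v^2\odot_1v^1)=x^{-L(0)}.f(x^{L(0)}.v^2\odot_x x^{L(0)}.v^1)$. Here $x^{-L(0)}$ acts on $M^3(0)$ by $e^{-l_\gamma(x)[\om]}$, consistent with \eqref{eq:pi0-properties}.

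This identity also shows directly that the formula is well defined. By Lemma~\ref{lm3.1}, the map $x^{L(0)}\o x^{L(0)}$ carries $\mathcal{L}_{\P^1\bs\{1,0\}}(V)_{<0}.(M^2\o M^1)$ into $\mathcal{L}_{\P^1\bs\{x,0\}}(V)_{<0}.(M^2\o M^1)$, up to the scalar $x^{\wt a-n+m-1}$. The restricted subalgebras correspond because the condition $n-m\prec\wt a-1$ does not involve the marked point. The main obstacle I expect is checking that $f_\gamma$ is $A(V)$-linear. The cleanest route is to use the diagram: the bottom identification \eqref{eq:rescfbandhom} lands in $A(V)$-homomorphisms, so $A(V)$-linearity of $f_\gamma$ follows from the invariance of the transported block under $\mathcal{L}_{\P^1\bs\{1,0\}}(V)_{\leq 0}$, which Lemma~\ref{lm:Tgamma} supplies. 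As a direct check, apply \eqref{eq3.1} with $n=\wt a-1$ and $m=0$, so that the scalar factor is $1$. Then $[a]$ acting via $\rho_{1,0}(a\o z^{\wt a-1})$ is conjugated into $\rho_{x,0}(a\o z^{\wt a-1})$, which is $[a]$ on $M^2\odot_{x,0}M^1$ by \eqref{A(V)action}. Finally, $x^{-L(0)}=e^{-l_\gamma(x)[\om]}$ commutes with $A(V)$ on $M^3(0)$ since $[\om]$ is central.

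Naturality in $M^1,M^2$ is checked exactly as in \eqref{eq:naturalOm0}. Module homomorphisms $\al,\b$ commute with $x^{\pm L(0)}$, so both $T_\gamma(f\circ(\b\odot_{x,0}\al))$ and $T_\gamma(f)\circ(\b\odot_{1,0}\al)$ send $n^2\odot_1n^1$ to $x^{-L(0)}.f(x^{L(0)}\b(n^2)\odot_x x^{L(0)}\al(n^1))$.
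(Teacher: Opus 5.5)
Your proposal is correct and follows essentially the paper's route: extend $T_\gamma$ through the restriction isomorphisms \eqref{eq:cfb-restriction} and the identifications \eqref{eq:rescfbandhom}, read off \eqref{fga} from the fact that $L(0)$ on $M^3(0)^\ast$ is the transpose of $L(0)$ on $M^3(0)$, and check naturality directly from the formula. Your additional direct checks of well-definedness and $A(V)$-linearity via Lemma~\ref{lm3.1} are valid, but not needed, since the commutative diagram already supplies both.
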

			\begin{proof}
				Similar to Proposition~\ref{prop3.7}, using Lemma~\ref{lm:rescfbandcfb} and Corollary~\ref{coro:3.10} again,
				we can extend the isomorphism $T_\gamma$ to the following commutative diagram:
				
				\begin{equation}\label{diagramga1}
					\begin{tikzcd}
						\mathscr{C}((M^3)' \o M^{2}\o M^{1})_{(\P^{1},\infty,x,0)}\arrow[r,"T_\ga","\simeq"'] \arrow[d,"\simeq","\eqref{eq:cfb-restriction}"']& \mathscr{C}((M^3)' \o M^{2}\o M^{1})_{(\P^{1},\infty,1,0)} \arrow[d,"\simeq","\eqref{eq:cfb-restriction}"']\\
						\mathscr{C}_{\mathrm{res}}(M^3(0)^\ast \o M^{2}\o M^{1})_{(\P^{1},\infty,x,0)}\arrow[r,dashed,"T_\ga","\simeq"'] \arrow[d,"\simeq","\eqref{eq:rescfbandhom}"']& \mathscr{C}_{\mathrm{res}}(M^3(0)^\ast \o M^{2}\o M^{1})_{(\P^{1},\infty,1,0)}\arrow[d,"\simeq","\eqref{eq:rescfbandhom}"']\\
						\Hom_{A(V)}\left(M^{2}\odot_{x,0}M^{1}, M^3(0)\right)\arrow[r,dashed,"T_\ga","\simeq"']& \Hom_{A(V)}\left(M^{2}\odot_{1,0}M^{1}, M^3(0)\right),
					\end{tikzcd}
				\end{equation}
				where the vertical maps are given by \eqref{eq:cfb-restriction} and \eqref{eq:rescfbandhom}.
				Given $\varphi\in \mathscr{C}((M^3)'\o M^2\o M^1)_{(\P^1,\infty, x,0)}$, the corresponding elements under the isomorphisms in \eqref{diagrampart1} are denoted by
				\[
				\begin{tikzcd}
					\varphi\arrow[r,mapsto,"T_\ga"] \arrow[d,mapsto,"\eqref{eq:cfb-restriction}"']& \varphi_\ga\arrow[d,mapsto,"\eqref{eq:cfb-restriction}"]\\
					\varphi_{\mathrm{res}}\arrow[r,mapsto,"T_\ga"]\arrow[d,mapsto,"\eqref{eq:rescfbandhom}"']& \varphi_{\mathrm{res},\ga}\arrow[d,mapsto,"\eqref{eq:rescfbandhom}"]\\
					f \arrow[r,dashed,mapsto,"T_\ga"]&f_\ga,
				\end{tikzcd}
				\]
				For $(u^3)'\in M^3(0)^\ast$ and $u^3\in M^3(0)$, we have $\braket*{L(0)(u^3)'}{u^3}=\braket*{(u^3)'}{L(0)u^3}$. It follows from \eqref{Tgamma} that given any $A(V)$-homomorphism $f: M^{2}\odot_{x,0}M^{1}\ra M^3(0)$, the corresponding $A(V)$-homomorphism $f_\ga=T_\ga(f)$ has the following expression:
				\begin{align*}
					\braket*{(u^3)'}{f_\gamma(v^{2}\odot_1 v^{1})}
					&=\braket*{x^{-L(0)}.(u^3)'}{f(x^{L(0)}.v^{2}\odot_x x^{L(0)}.v^{1})}\\
					&=\braket*{(u^3)'}{x^{-L(0)}.f(x^{L(0)}.v^{2}\odot_x x^{L(0)}.v^{1})},
				\end{align*}
				where $x^{L(0)}=e^{l_\ga(x)L(0)}$. This proves \eqref{fga}. Finally, for $\alpha:N^1\to M^1$ and $\b:N^2\to M^2$, it follows immediately from \eqref{fga} that
				\[
				T_\gamma^{N^1,N^2}
				\!\left(f\circ(\beta\odot_{x,0}\alpha)\right)=
				T_\gamma^{M^1,M^2}(f)
				\circ(\beta\odot_{1,0}\alpha).
				\]
				The naturality of $T_\ga$ is clear.
			\end{proof}
			
			
			\subsection{\texorpdfstring{Construction of $R^{M^2,M^1}_\pm$ for the $\odot$-tensor product}{Construction of R\textasciicircum{}\{M\textasciicircum{}2,M\textasciicircum{}1\}\_+ and R\textasciicircum{}\{M\textasciicircum{}2,M\textasciicircum{}1\}\_- for the odot-tensor product}}
			
			We adopted the following notation in Definition~\ref{df:odotw}:
			\[
			M\odot N=M\odot_{1,0} N,\quad \mathrm{and}\quad u\odot v=u \odot_1 v\in M\odot N.
			\]
			
			By Proposition~\ref{prop:condforC1V}, every object $S$ of $\Mod_{\mathrm{fd}}(A(V))$ is a finite direct sum of irreducible modules $S=\bigoplus_i S^i$, with $S^i\cong W^i(0)$ for some irreducible $W^i\in\CC_1(V)$. Note that $[\om]$
			is a central element of $A(V)$ \cite{Z96} and acts on
			$W^i(0)$ by $h^i\cdot\Id_{W^i(0)}$, where $h^i\in\C$ is the conformal weight of $W^i$.
			Hence $[\om]$ acts semisimply on $S$, and we can write
			\begin{equation}\label{eq:epiiomaction}
				e^{\pm\pi i[\om]}\big|_S
				=
				\sum_i e^{\pm\pi ih^i}\,\Id_{S^i}.
			\end{equation}
			Since $[\om]$ is central, these are mutually inverse
			$A(V)$-linear automorphisms of $S$, which are natural in $S$.
			
			\begin{thm}\label{thm:Rmatrix}
				Let $M^1,M^2\in \CC_1(V)$. Then there exists isomorphisms of left $A(V)$-modules which are natural in $M^1,M^2$:
				\begin{equation}\label{R+-}
					\begin{aligned}
						R^{M^2,M^1}_\pm:& M^2\odot M^1\ra M^1\odot M^2,\\
						R^{M^2,M^1}_\pm (v^2\odot v^1)&=e^{\mp \pi i L(0)}. \left(e^{\pm \pi i L(0)}v^1\odot e^{\pm \pi i L(0)}v^2\right),
					\end{aligned}
				\end{equation}
				where the outer factor $e^{\mp\pi iL(0)}$ acts on $M^1\odot M^2$ through $e^{\mp\pi i[\om]}$ \eqref{eq:epiiomaction}.
				Moreover,
				\begin{equation}\label{eq:opposite-braiding-inverse}
					\left(R_-^{M^1,M^2}\right)^{-1}=R_+^{M^2,M^1},\qquad \left(R_+^{M^1,M^2}\right)^{-1}=R_-^{M^2,M^1}.
				\end{equation}
			\end{thm}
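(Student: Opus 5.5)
The plan is to build $R_\pm$ by composing three isomorphisms already at hand: a parallel-transport isomorphism moving the marked point from $1$ to $-1$, the point-shifting isomorphism $R_1$ of Theorem~\ref{thm:Rw} with $w=1$, and multiplication by $e^{\mp\pi i[\om]}$. Concretely, take $x=-1$ and the paths $\gamma_\pm$ of Example~\ref{ex4.1}, with $l_{\gamma_\pm}(-1)=\pm\pi i$. By Theorem~\ref{thm:Rw},
\[
R_1: M^2\odot_{-1,0}M^1\to M^1\odot M^2,\qquad v^2\odot_{-1}v^1\mapsto v^1\odot v^2,
\]
is an $A(V)$-isomorphism. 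Proposition~\ref{prop:Tgamma} identifies $\Hom_{A(V)}(M^2\odot_{-1,0}M^1,S)$ with $\Hom_{A(V)}(M^2\odot M^1,S)$ via $f\mapsto f_{\gamma}$, naturally in $S=M^3(0)$. Since $\Om$ is an equivalence and every $S\in\Mod_\fd(A(V))$ has the form $M^3(0)$, a Yoneda argument applies, exactly as in the proof of Theorem~\ref{thm:eta-explicit}. Taking $S=M^2\odot_{-1,0}M^1$ and $f=\Id$ gives an $A(V)$-isomorphism
\[
\tau_{\gamma_\pm}: M^2\odot M^1\to M^2\odot_{-1,0}M^1,\qquad v^2\odot v^1\mapsto e^{\mp\pi i[\om]}\bigl(e^{\pm\pi iL(0)}v^2\odot_{-1}e^{\pm\pi iL(0)}v^1\bigr).
\]
It is an isomorphism because $T_{\gamma^{-1}}$ from \eqref{Tgammainverse} supplies the inverse.

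Next I would set
\[
R_\pm^{M^2,M^1}=R_1\circ\tau_{\gamma_\pm}.
\]
Since $[\om]$ is central, $R_1$ commutes with $e^{\mp\pi i[\om]}$, so $R_\pm^{M^2,M^1}(v^2\odot v^1)=e^{\mp\pi i[\om]}\bigl(e^{\pm\pi iL(0)}v^1\odot e^{\pm\pi iL(0)}v^2\bigr)$, which is the formula \eqref{R+-}. I need to check that $\tau_{\gamma_\pm}$ is well defined on generators and is given by the stated formula. This comes from unwinding the Yoneda identification through Corollary~\ref{coro:3.10}, using that $x^{-L(0)}$ acts on $M^3(0)$ through $[\om]$ by \eqref{eq:pi0-properties}. $A(V)$-linearity then follows because every map in the composite is $A(V)$-linear. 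Naturality in $M^1,M^2$ follows from the naturality statements in Theorem~\ref{thm:Rw} and Proposition~\ref{prop:Tgamma}, together with the naturality of $e^{\pm\pi i[\om]}$ and the fact that $L(0)$ commutes with $V$-module maps.

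For \eqref{eq:opposite-braiding-inverse}, compose directly on generators:
\[
R_+^{M^2,M^1}R_-^{M^1,M^2}(v^1\odot v^2)=R_+^{M^2,M^1}\Bigl(e^{\pi i[\om]}\bigl(e^{-\pi iL(0)}v^2\odot e^{-\pi iL(0)}v^1\bigr)\Bigr).
\]
By $A(V)$-linearity $R_+^{M^2,M^1}$ commutes with $e^{\pi i[\om]}$, so this equals
\[
e^{\pi i[\om]}e^{-\pi i[\om]}\bigl(e^{\pi iL(0)}e^{-\pi iL(0)}v^1\odot e^{\pi iL(0)}e^{-\pi iL(0)}v^2\bigr)=v^1\odot v^2.
\]
The same computation in the other order, and with the signs exchanged, gives the second identity.

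I expect the main obstacle to be the well-definedness of the transport formula on $\odot$ directly, that is, checking that $v^2\odot v^1\mapsto e^{\pm\pi iL(0)}v^2\odot_{-1}e^{\pm\pi iL(0)}v^1$ respects the relations \eqref{eq:actionfogeneralspann}. Lemma~\ref{lm3.1} shows that the conjugation sends $\mathcal L_{\P^1\bs\{-1,0\}}(V)_{<0}$ to $\mathcal L_{\P^1\bs\{1,0\}}(V)_{<0}$ up to the scalar $x^{\wt a-n+m-1}$, since the order condition at $\infty$ is preserved. I would derive the claim from that lemma directly rather than only through the Yoneda route, to make the explicit formula airtight.
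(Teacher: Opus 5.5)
Your proposal is correct and is essentially the paper's first proof. The paper applies $T_{\gamma_\pm}$ (Proposition~\ref{prop:Tgamma}) directly to $R_1=\widetilde{\Id}$ with target $M^1\odot M^2$, whereas you apply it to $\Id$ on $M^2\odot_{-1,0}M^1$ and then post-compose with $R_1$; these give the same map because $R_1$ commutes with $[\om]$, and the inverse identities are checked on generators exactly as you do. Your planned direct well-definedness check via Lemma~\ref{lm3.1}, which works because the order condition $n-m<\wt a-1$ is preserved, is a mild variant of the paper's second proof. That proof instead verifies the relations for the whole composite using $e^{\pm\pi iL(0)}a_{(j)}e^{\mp\pi iL(0)}=(-1)^{\wt a-j-1}a_{(j)}$ together with \eqref{eq:Rw1} and \eqref{eq:Rw2}.
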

			\begin{proof}[First Proof of Theorem~\ref{thm:Rmatrix}: using the natural identifications of Hom-spaces] \leavevmode\par
				
				Let $\ga_\pm:1\sim -1$ be the paths in
				Example~\ref{ex4.1}, so that
				$l_{\ga_\pm}(-1)=\pm\pi i$.
				By Proposition~\ref{prop3.7} and
				Proposition~\ref{prop:Tgamma}, for any
				$M^3\in \CC_1(V)$, we have a composition of isomorphisms
				\begin{align*}
					&\Hom_{A(V)}
					\left(M^1\odot_{1,0}M^2,M^3(0)\right)
					\xrightarrow{\Om_0}
					\Hom_{A(V)}
					\left(M^2\odot_{-1,0}M^1,M^3(0)\right)\\
					&\hspace{4em}\xrightarrow{T_{\ga_\pm}}
					\Hom_{A(V)}
					\left(M^2\odot_{1,0}M^1,M^3(0)\right), \qquad f\mapsto\tilde f\mapsto\tilde f_{\ga_\pm}.
				\end{align*}
				It follows from \eqref{tildefand f} and \eqref{fga}
				that
				\begin{equation}\label{imageoff}
					\tilde f_{\ga_\pm}(v^2\odot v^1)
					= e^{\mp\pi iL(0)}.
					f\left(
					e^{\pm\pi iL(0)}v^1\odot
					e^{\pm\pi iL(0)}v^2
					\right).
				\end{equation}
				Similar to the proof of Theorem~\ref{thm:Rw}, since
				$M^1\odot_{1,0}M^2$ is a finite-dimensional object in the semisimple category $\Mod_{\mathrm{fd}}(A(V))$, by letting
				$$
				M^3=\Phi^\L(M^1\odot_{1,0}M^2),\quad \mathrm{with} \quad M^3(0)=M^1\odot_{1,0}M^2,
				$$
				the element $f=\Id\in \Hom_{A(V)}
				(M^1\odot_{1,0}M^2,M^1\odot_{1,0}M^2)$ gives rise to an isomorphism of $A(V)$-modules under $\Om_0$:
				\[
				R_1=\widetilde{\Id}:M^2\odot_{-1,0}M^1\ra M^1\odot_{1,0}M^2,
				\qquad
				R_1(v^2\odot_{-1}v^1)=v^1\odot v^2.
				\]
				Then by Proposition~\ref{prop:Tgamma}, together with \eqref{imageoff} for $f=\Id$, we have homomorphisms of $A(V)$-modules by applying $T_{\ga_\pm}$
				\begin{align*}
					& R_\pm^{M^2,M^1} :=
					T_{\ga_\pm}(R_1)=
					\widetilde{\Id}_{\ga_\pm}: M^2\odot_{1,0}M^1\to M^1\odot_{1,0}M^2,\\
					&R_\pm^{M^2,M^1}(v^2\odot v^1)=e^{\mp\pi iL(0)}.\left(e^{\pm\pi iL(0)}v^1\odot e^{\pm\pi iL(0)}v^2\right),
				\end{align*}
				which is the formula \eqref{R+-}.
				
				Let $\alpha:M^1\ra N^1$ and $\beta:M^2\ra N^2$
				be $V$-module homomorphisms. Since $\alpha$ and
				$\beta$ commute with $L(0)$, and
				$\alpha\odot\beta$ is an $A(V)$-homomorphism,
				\eqref{R+-} gives
				\[ (\alpha\odot\beta)\circ R_\pm^{M^2,M^1} =
				R_\pm^{N^2,N^1}\circ(\beta\odot\alpha). \]
				Hence $R_\pm^{M^2,M^1}$ are natural in $M^1$ and $M^2$.
				
				Finally, we may interchange $M^2,M^1$ in \eqref{R+-} and obtain $A(V)$-homomorphisms $R_\pm^{M^1,M^2}: M^1\odot M^2\to M^2\odot M^1 $. Since $ R_-^{M^1,M^2}$ commutes with the action of $e^{-\pi i[\om]}$, we have
				\begin{align*}
					R_-^{M^1,M^2}\left(R_+^{M^2,M^1}(v^2\odot v^1)\right)&=R_-^{M^1,M^2}\left(e^{-\pi i [\om]}.\left(e^{\pi i L(0)}v^1\odot e^{\pi i L(0)}v^2\right)\right)
					\\ &= e^{-\pi i[\om]}. R_-^{M^1,M^2}
					\left(e^{\pi iL(0)}v^1\odot e^{\pi iL(0)}v^2
					\right)\\
					&= e^{-\pi i[\om]}e^{\pi i[\om]}. \left(
					e^{-\pi iL(0)}e^{\pi iL(0)}v^2 \odot
					e^{-\pi iL(0)}e^{\pi iL(0)}v^1 \right)\\
					&=v^2\odot v^1.
				\end{align*}
				Similarly,
				$
				R_+^{M^2,M^1}R_-^{M^1,M^2}(v^1\odot v^2) =
				v^1\odot v^2.
				$ Therefore, $R_+^{M^2,M^1}$ and $R_-^{M^1,M^2}$
				are mutually inverse isomorphisms of $A(V)$-modules.
				Interchanging $M^1$ and $M^2$ shows that
				$R_-^{M^2,M^1}$ and $R_+^{M^1,M^2}$ are also
				mutually inverse. This proves
				\eqref{eq:opposite-braiding-inverse} and completes
				the proof.
			\end{proof}
			\begin{proof}[Second Proof of Theorem~\ref{thm:Rmatrix}: using relations in restricted conformal blocks] \leavevmode\par
				First, we show well-definedness. Using the isomorphism
				$R_1$ in Theorem~\ref{thm:Rw}, define
				\[ \widetilde R_\pm:
				M^2\o_\C M^1\ra M^1\odot M^2, \quad
				\widetilde R_\pm(v^2\o v^1) =
				e^{\mp\pi i[\om]}.
				R_1\left( e^{\pm\pi iL(0)}v^2
				\odot_{-1} e^{\pm\pi iL(0)}v^1
				\right). \]
				By \eqref{Rw}, this is precisely
				\[
				\widetilde R_\pm(v^2\o v^1) = e^{\mp\pi i[\om]}.
				\left( e^{\pm\pi iL(0)}v^1
				\odot e^{\pm\pi iL(0)}v^2
				\right). \]
				Let $a\in V$ be homogeneous and $m,n\in\Z$ satisfy
				$n-m<\wt a-1$.
				Using the identity
				\[ e^{\pm\pi iL(0)}a_{(j)}e^{\mp\pi iL(0)} =
				(-1)^{\wt a-j-1}a_{(j)} \]
				and the local-action formulas \eqref{eq:1action}, \eqref{eq:0action}, we can show that
				\[ \begin{aligned}
					e^{\pm\pi iL(0)}
					\rho_1\left(a\o\frac{t^{-m}}{(t-1)^{-n}}\right)
					&=(-1)^{\wt a+n-m-1}
					\rho_{-1}\left(a\o\frac{u^{-m}}{(u+1)^{-n}}\right)
					e^{\pm\pi iL(0)},\\
					e^{\pm\pi iL(0)}
					\rho_0\left(a\o\frac{t^{-m}}{(t-1)^{-n}}\right)
					&=(-1)^{\wt a+n-m-1}
					\rho_0\left(a\o\frac{u^{-m}}{(u+1)^{-n}}\right)
					e^{\pm\pi iL(0)}.
				\end{aligned}
				\]
				Apply \eqref{eq:Rw2} with $w=1$, using the formulas above, we have
				\begin{align*}
					&\widetilde R_\pm\left(
					\rho_1\left(a\o\frac{t^{-m}}{(t-1)^{-n}}\right).v^2 \o v^1\right)= e^{\mp \pi i [\om]}. R_1\left( e^{\pm \pi i L(0)} \rho_1\left(a\o\frac{t^{-m}}{(t-1)^{-n}}\right).v^2 \odot_{-1} e^{\pm \pi i L(0)}v^1\right)
					\\
					&= (-1)^{\wt a+n-m-1}
					e^{\mp\pi i[\om]}.
					R_1\left( \rho_{-1}\left(a\o\frac{u^{-m}}{(u+1)^{-n}}\right).
					e^{\pm\pi iL(0)}v^2 \odot_{-1}e^{\pm\pi iL(0)}v^1 \right)\\
					&= (-1)^{\wt a+n-m-1}
					e^{\mp\pi i[\om]}. \left( e^{\pm\pi iL(0)}v^1 \odot \rho_0\left(a\o\frac{z^n}{(z-1)^m}\right).
					e^{\pm\pi iL(0)}v^2 \right).
				\end{align*}
				Similarly, apply \eqref{eq:Rw1} with $w=1$, we have
				\begin{align*}
					&\widetilde R_\pm\left( v^2\o
					\rho_0\left(a\o\frac{t^{-m}}{(t-1)^{-n}}\right).v^1\right)=\\
					&=(-1)^{\wt a+n-m-1} e^{\mp\pi i[\om]}. R_1\left( e^{\pm\pi iL(0)}v^2 \odot_{-1}
					\rho_0\left(a\o\frac{u^{-m}}{(u+1)^{-n}}\right).e^{\pm\pi iL(0)}v^1
					\right)\\
					&= (-1)^{\wt a+n-m-1}
					e^{\mp\pi i[\om]}. \left( \rho_1\left(a\o\frac{z^n}{(z-1)^m}\right).
					e^{\pm\pi iL(0)}v^1 \odot e^{\pm\pi iL(0)}v^2 \right).
				\end{align*}
				Adding these two identities and applying
				\eqref{eq:actionfogeneralspann}, we obtain
				\[ \widetilde R_\pm\left(
				\rho_1\left(a\o\frac{t^{-m}}{(t-1)^{-n}}\right).v^2 \o v^1 + v^2\o
				\rho_0\left(a\o\frac{t^{-m}}{(t-1)^{-n}}\right).v^1
				\right)=0 \]
				in $M^1\odot M^2$, since $n-m<\wt a-1$.
				
				Note that the elements
				$a\o t^{-m}/(t-1)^{-n}$ span restricted chiral ideal
				$\mathcal L_{\P^1\bs\{1,0\}}(V)_{<0}$ as $a$ varies in $V$ and $m,n$ varies in $ \Z$, with $n-m<\wt a-1$, see \eqref{eq:defofinftyrestrictchiral}.
				Hence $\widetilde R_\pm$ annihilates the subspace
				$ \mathcal L_{\P^1\bs\{1,0\}}(V)_{<0}.
				(M^2\o_\C M^1)$
				and induces a well-defined linear map
				\[ R_\pm^{M^2,M^1}:
				M^2\odot M^1\ra M^1\odot M^2 \]
				given by \eqref{R+-}. Using \eqref{Rwcomm} with $w=1$, together with a similar argument as above, we can show that $R^{M^2,M^1}_\pm$ are $A(V)$-homomorphisms.
				The naturality and \eqref{eq:opposite-braiding-inverse} follow from a similar arguments as the first proof, we omit the details.
			\end{proof}
			
			\begin{df}
				In what follows, we write
				\begin{equation}\label{eq:braiding-formula}
					\begin{aligned}
						&\R_{M^1,M^2}:=R_-^{M^1,M^2}: M^1\odot M^2\to M^2\odot M^1,\\
						&\R_{M^1,M^2}(v^1\odot v^2)=
						e^{\pi i[\om]}.
						\left(
						e^{-\pi iL(0)}v^2\odot
						e^{-\pi iL(0)}v^1
						\right).
					\end{aligned}
				\end{equation}
				Here $[\om]$ acts on $M^2\odot M^1$, and the
				outer factor $e^{\pi i[\om]}$ may also be written
				as $e^{\pi iL(0)}$.
			\end{df}
			
			In the next Subsection, we show that this
			family of isomorphisms defines a braiding on
			$\Mod_{\mathrm{fd}}(A(V))$ with respect to the tensor structure
			transported from $\CC_1(V)$ through the equivalence
			$(\Phi^\L,\Om)$, using the canonical identification
			$\eta_1$ in \eqref{eq:eta-explicit}.

			\subsection{\texorpdfstring{Comparison with the braiding isomorphism of Huang--Lepowsky--Zhang}{Comparison with the braiding isomorphism of Huang-Lepowsky-Zhang}}\label{sec:braiding}
			
			By \cite[Theorem~1.2]{H25} and \cite[Section~12]{HLZ},
			$\CC_1(V)$ has a natural braided tensor category structure with a
			twist. As in Section~\ref{sec:eta}, we take
			$\bt=\bt_{P(1)}$ and fix the branch $l(1)=0$.
			We take the braiding associated to the path $\ga_+^{-1}$ from $-1$ to $1$
			in the upper half-plane, see Example~\ref{ex4.1}. Thus the continued logarithm at $-1$
			is $l_{\ga_+}(-1)=\pi i$.
			
			Let $\Y_{M^1,M^2}$ be the universal intertwining operator satisfying
			\[
			\Y_{M^1,M^2}(v^1,e^0)v^2=v^1\bt v^2.
			\]
			With the above choice of branch, the HLZ braiding is characterized by
			\begin{equation}\label{eq:HLZ-braiding}
				\overline{c_{M^1,M^2}}(v^1\bt v^2)
				=e^{L(-1)}\Y_{M^2,M^1}(v^2,e^{\pi i})v^1.
			\end{equation}
			Indeed, the commutativity isomorphism first sends
			$v^1\bt v^2$ to $e^{L(-1)}(v^2\bt_{P(-1)}v^1)$, and parallel
			transport from $-1$ to $1$ in the upper half-plane sends
			$v^2\bt_{P(-1)}v^1$ to
			$\Y_{M^2,M^1}(v^2,e^{\pi i})v^1$; see
			\cite[Section~12.2]{HLZ}.
			
			Following \cite[Theorem~6.8]{Liu26}, we define
			\begin{equation}\label{eq:A-tensor-product}
				S\bt T:=\Phi^\L(S)\odot \Phi^\L(T),\qquad
				f\bt g:=\Phi^\L(f)\odot \Phi^\L(g),
				\quad S,T\in\Mod_{\mathrm{fd}}(A(V)).
			\end{equation}
			By Proposition~\ref{prop:condforC1V}, \eqref{eq:A-tensor-product}
			is a bifunctor on $\Mod_{\mathrm{fd}}(A(V))$. We use $\bt$ for the tensor products in both categories, with
			the intended product determined by its arguments. We take the unit object to be
			$\1_{A(V)}:=\Om(V)=\C\vac$. Denote the counit of the adjoint equivalence $(\Phi^\L\dashv\Om)$ by
			$\varepsilon:\Phi^\L\Om\Rightarrow\Id_{\CC_1(V)}$, and define
			\begin{equation}\label{eq:J-tensorator}
				\begin{aligned}
					J_{S,T}&:=\varepsilon_{\Phi^\L(S)\bt \Phi^\L(T)}\circ \Phi^\L(\eta_1^{\Phi^\L(S),\Phi^\L(T)}):
					\Phi^\L(S\bt T)\xrightarrow{\ \simeq\ }\Phi^\L(S)\bt \Phi^\L(T),\\
					J_0&:=\varepsilon_V:
					\Phi^\L(\1_{A(V)})\xrightarrow{\ \simeq\ }V.
				\end{aligned}
			\end{equation}
			By Theorem~\ref{thm:eta-explicit} and the naturality of
			$\varepsilon$, these maps are natural isomorphisms.
			Let $\alpha$, $\lambda$, and $\rho$ be the associativity and
			unit constraints of $\CC_1(V)$, and put
			$J^{\mathrm L}_{S,T,U}=(J_{S,T}\bt\Id_{\Phi^\L(U)})\circ J_{S\bt T,U}$ and
			$J^{\mathrm R}_{S,T,U}=(\Id_{\Phi^\L(S)}\bt J_{T,U})\circ J_{S,T\bt U}$.
			Since $\Phi^\L$ is fully faithful, there exist unique natural
			isomorphisms $\alpha^{A(V)}$, $\lambda^{A(V)}$, and
			$\rho^{A(V)}$ satisfying
			\begin{equation}\label{eq:transported-constraints}
				\begin{aligned}
					\Phi^\L(\alpha^{A(V)}_{S,T,U})
					&=(J^{\mathrm R}_{S,T,U})^{-1}
					\circ\alpha_{\Phi^\L(S),\Phi^\L(T),\Phi^\L(U)}\circ J^{\mathrm L}_{S,T,U},\\
					\Phi^\L(\lambda^{A(V)}_S)
					&=\lambda_{\Phi^\L(S)}\circ(J_0\bt\Id_{\Phi^\L(S)})
					\circ J_{\1_{A(V)},S},\\
					\Phi^\L(\rho^{A(V)}_S)
					&=\rho_{\Phi^\L(S)}\circ(\Id_{\Phi^\L(S)}\bt J_0)
					\circ J_{S,\1_{A(V)}}.
				\end{aligned}
			\end{equation}
			Finally, we define
			$\R_{S,T}:=\R_{\Phi^\L(S),\Phi^\L(T)}:S\bt T\xrightarrow{\ \simeq\ }T\bt S$, see \eqref{eq:braiding-formula}.
			
			\begin{thm}\label{thm:braided-tensor-structure}
				Under the assumptions of this section, we have the following results.
				\begin{enumerate}
					\item For $M^1,M^2\in\CC_1(V)$, we have
					\begin{equation}\label{eq:key-identity}
						\eta_1^{M^2,M^1}\circ\R_{M^1,M^2}
						=\Om(c_{M^1,M^2})\circ\eta_1^{M^1,M^2}.
					\end{equation}
					In other words, $\R_{M^1,M^2}$ is the restriction of the HLZ braiding $c_{M^1,M^2}$ to bottom degrees, and it is the unique homomorphism of left $A(V)$-modules satisfying \eqref{eq:key-identity}.
					\item The tensor product \eqref{eq:A-tensor-product}, the
					unit $\1_{A(V)}$, the constraints
					\eqref{eq:transported-constraints}, and the isomorphisms $\R_{S,T}$ give
					$\Mod_{\mathrm{fd}}(A(V))$ a braided tensor category structure. In particular, $\R$ satisfies the hexagon identities, and $(\Phi^\L,J^{-1},J_0^{-1})$ is a braided tensor
					equivalence:
					\begin{equation}\label{eq:braided-functor-compatibility}
						J_{T,S}\circ \Phi^\L(\R_{S,T})
						=c_{\Phi^\L(S),\Phi^\L(T)}\circ J_{S,T}.
					\end{equation}
					\item The twist $\theta_M=e^{2\pi iL(0)}$ on $\CC_1(V)$
					corresponds to $\theta_S=e^{2\pi i[\om]}\big|_S$.
					If $\CC_1(V)$ is rigid, ribbon, or modular, then $\Mod_{\mathrm{fd}}(A(V))$ has the same structure with
					the transported duals and twist. In particular, if $V$ is strongly
					rational, then $\Phi^\L$ is an equivalence of modular tensor
					categories.
				\end{enumerate}
			\end{thm}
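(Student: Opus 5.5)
The plan is to prove (1) by a direct computation on generators $v^1\odot v^2$, and then obtain (2) and (3) formally by transport of structure along the equivalence $\Phi^\L$.

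\textbf{Step 1: the key identity.} By Theorem~\ref{thm:eta-explicit}, $\eta_1^{M^1,M^2}(v^1\odot v^2)=\pi_0(v^1\bt v^2)$. Using \eqref{eq:HLZ-braiding} and \eqref{eq:pi0-properties}, the right-hand side of \eqref{eq:key-identity} is
\[
\Om(c_{M^1,M^2})\pi_0(v^1\bt v^2)=\pi_0\,\overline{c_{M^1,M^2}}(v^1\bt v^2)=\pi_0\bigl(\Y_{M^2,M^1}(v^2,e^{\pi i})v^1\bigr),
\]
because $\pi_0 e^{L(-1)}=\pi_0$. Since $\Y_{M^2,M^1}(\cdot,e^{\pi i})$ is the $P(-1)$-intertwining map obtained from $\bt_{P(1)}$ by the branch $l_{\ga_+}(-1)=\pi i$, Lemma~\ref{lm2.6} with $z=\pi i$ gives
\[
\Y_{M^2,M^1}(v^2,e^{\pi i})v^1=e^{\pi iL(0)}\bigl(e^{-\pi iL(0)}v^2\bt e^{-\pi iL(0)}v^1\bigr),
\]
where $\bt=\bt_{P(1)}$. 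This is checked coefficientwise: $x^{L(0)}\Y(u,x)x^{-L(0)}=\Y(x^{L(0)}u,x\cdot x)$-type conjugation evaluated at $x=e^{\pi i}$. Applying $\pi_0$ and $\pi_0e^{\pi iL(0)}=e^{\pi i[\om]}\pi_0$ then yields
\[
e^{\pi i[\om]}\,\eta_1^{M^2,M^1}\bigl(e^{-\pi iL(0)}v^2\odot e^{-\pi iL(0)}v^1\bigr)=\eta_1^{M^2,M^1}\R_{M^1,M^2}(v^1\odot v^2),
\]
using that $\eta_1$ is $A(V)$-linear and \eqref{eq:braiding-formula}. Uniqueness follows because $\eta_1^{M^2,M^1}$ is an isomorphism. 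The main obstacle is the sign and branch bookkeeping: one must verify that the HLZ conventions (commutativity with $e^{L(-1)}$ followed by parallel transport along $\ga_+^{-1}$) give exactly $e^{\pi i}$ rather than $e^{-\pi i}$, so that one lands on $R_-$ and not $R_+$. If the signs come out reversed, I would compare with \eqref{imageoff}, which identifies $T_{\ga_\pm}\circ\Om_0$ with $R_\pm$, and track which path corresponds to HLZ's choice.

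\textbf{Step 2: braided equivalence.} Apply $\Phi^\L$ to \eqref{eq:key-identity} with $M^1=\Phi^\L(S)$ and $M^2=\Phi^\L(T)$, and compose with the counit $\varepsilon$. Naturality of $\varepsilon$ turns $\Phi^\L\Om(c)$ into $c$, which gives \eqref{eq:braided-functor-compatibility} after unwinding the definition \eqref{eq:J-tensorator}. The constraints \eqref{eq:transported-constraints} are transported through the fully faithful, essentially surjective monoidal functor $(\Phi^\L,J^{-1},J_0^{-1})$. Hence the pentagon and triangle axioms hold in $\Mod_{\mathrm{fd}}(A(V))$ because they hold in $\CC_1(V)$. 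Likewise the hexagon identities for $\R$ follow from those for $c$ via \eqref{eq:braided-functor-compatibility} and faithfulness of $\Phi^\L$. Naturality of $\R$ is Theorem~\ref{thm:Rmatrix}.

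\textbf{Step 3: twist and further structure.} For (3), note that $\Om(e^{2\pi iL(0)})=e^{2\pi i[\om]}$ by \eqref{eq:pi0-properties}, and $e^{2\pi iL(0)}$ is natural. Hence under the equivalence the twist corresponds to $\theta_S=e^{2\pi i[\om]}|_S$, and the balancing axiom transfers. Rigidity, ribbon, and modularity are invariant under braided tensor equivalence with transported duals. In the strongly rational case, $\CC_1(V)=\Ord(V)$ is modular by \cite{H08}, so $\Mod_{\mathrm{fd}}(A(V))$ is modular as well.
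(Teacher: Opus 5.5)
Your proposal is correct and follows essentially the same route as the paper. In Step 1 you obtain $\pi_0\Y_{M^2,M^1}(v^2,e^{\pi i})v^1$ from the global $L(0)$-conjugation identity (Lemma~\ref{lm2.6} with $z=\pi i$, i.e.\ $y^{L(0)}\Y(u,x)y^{-L(0)}=\Y(y^{L(0)}u,xy)$) before projecting, whereas the paper does the same computation after projecting, one $[\om]$-eigenspace at a time. Steps 2 and 3 match the paper's transport-of-structure argument via naturality of the counit and faithfulness of $\Phi^\L$. Your branch bookkeeping also comes out right: once \eqref{eq:HLZ-braiding} is taken as the characterization of $c$, the exponent $e^{\pi i}$ is fixed and lands on $\R=R_-$.
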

			\begin{proof}
				(1) Write $\Y=\Y_{M^2,M^1}$. We first determine
				the bottom-degree component of $\Y(v^2,e^{\pi i})v^1$.
				By linearity, we may assume that $v^1$ and $v^2$ are $L(0)$-eigenvectors.
				Write
				\[
				\Y(v^2,z)v^1
				=\sum_{n\in\C}v^2_{(n)}v^1\,z^{-n-1}.
				\]
				By the $L(0)$-bracket identity for intertwining operators, we have
				\[
				L(0)\bigl(v^2_{(n)}v^1\bigr)
				=(\wt v^1+\wt v^2-n-1)v^2_{(n)}v^1.
				\]
				Let $p_h$ be the projection of $\Om(M^2\bt M^1)$ onto the
				$h$-eigenspace of $[\om]$. It follows that only the term with
				$n=\wt v^1+\wt v^2-h-1$ can contribute to
				$p_h\pi_0\Y(v^2,z)v^1$. Thus we have
				\[
				p_h\pi_0\Y(v^2,e^{\pi i})v^1
				=e^{\pi i(h-\wt v^1-\wt v^2)}
				p_h\pi_0\Y(v^2,e^0)v^1.
				\]
				Since $[\om]$ acts semisimply on the finite-dimensional space
				$\Om(M^2\bt M^1)$, we sum over its eigenvalues. Since $\pi_0\Y(v^2,e^0)v^1=\pi_0(v^2\bt v^1)=\eta_1(v^2\odot v^1)$ by Theorem~\ref{thm:eta-explicit}, we obtain
				\begin{equation}\label{eq:bottom-half-turn}
					\pi_0\Y(v^2,e^{\pi i})v^1
					=e^{\pi i[\om]}\,
					\eta_1\left(e^{-\pi iL(0)}v^2\odot
					e^{-\pi iL(0)}v^1\right).
				\end{equation}
				By linearity, the same identity holds for arbitrary $v^1,v^2$.
				Then by \eqref{eq:pi0-properties}, \eqref{eq:HLZ-braiding}, \eqref{eq:bottom-half-turn}, and the $A(V)$-linearity of $\eta_1$, we have
				\begin{align*}
					\Om(c_{M^1,M^2})\,\eta_1(v^1\odot v^2)
					&=\pi_0\,\overline{c_{M^1,M^2}}(v^1\bt v^2)
					=\pi_0e^{L(-1)}\Y(v^2,e^{\pi i})v^1
					=\pi_0\Y(v^2,e^{\pi i})v^1\\
					&=\eta_1\left(e^{\pi i[\om]}
					\left(e^{-\pi iL(0)}v^2\odot
					e^{-\pi iL(0)}v^1\right)\right)
					=\eta_1\,\R_{M^1,M^2}(v^1\odot v^2).
				\end{align*}
				This proves \eqref{eq:key-identity}. The uniqueness follows since $\eta_1^{M^1,M^2}$ and $\eta_1^{M^2,M^1}$ are isomorphisms.
				
				(2) Applying (1) to $\Phi^\L(S),\Phi^\L(T)$ and using the naturality of the counit,
				we have
				\begin{align*}
					J_{T,S}\circ \Phi^\L(\R_{S,T})
					&=\varepsilon_{\Phi^\L(T)\bt \Phi^\L(S)}
					\circ \Phi^\L(\eta_1^{\Phi^\L(T),\Phi^\L(S)}\circ\R_{\Phi^\L(S),\Phi^\L(T)})\\
					&=\varepsilon_{\Phi^\L(T)\bt \Phi^\L(S)}
					\circ \Phi^\L\Om(c_{\Phi^\L(S),\Phi^\L(T)})\circ \Phi^\L(\eta_1^{\Phi^\L(S),\Phi^\L(T)})\\
					&=c_{\Phi^\L(S),\Phi^\L(T)}\circ\varepsilon_{\Phi^\L(S)\bt \Phi^\L(T)}
					\circ \Phi^\L(\eta_1^{\Phi^\L(S),\Phi^\L(T)})
					=c_{\Phi^\L(S),\Phi^\L(T)}\circ J_{S,T}.
				\end{align*}
				This proves \eqref{eq:braided-functor-compatibility}.
				By \eqref{eq:transported-constraints}, \eqref{eq:braided-functor-compatibility}, and the naturality of $J$, the maps $J$ identify the images under $\Phi^\L$ of the pentagon, triangle, and hexagon diagrams in $\Mod_{\mathrm{fd}}(A(V))$ with the corresponding diagrams in $\CC_1(V)$, which commute. Since $\Phi^\L$ is faithful, the original diagrams also commute.
				
				(3) For homogeneous $a\in V$ and $n\in\Z$, the identity
				$[L(0),a_{(n)}]=(\wt a-n-1)a_{(n)}$
				implies that $e^{2\pi iL(0)}$ commutes with all modes of $V$.
				Thus it is a natural $V$-module automorphism, whose restriction
				to $\Om(\Phi^\L(S))=S$ is $e^{2\pi i[\om]}|_S$. Since $\Phi^\L(S)$ is generated by $S$, we have $\Phi^\L(e^{2\pi i[\om]}|_S)=e^{2\pi iL(0)}|_{\Phi^\L(S)}$.
		Moreover, by \eqref{eq:HLZ-braiding} and the $L(0)$-conjugation property, 
\begin{align*}
\overline{c_{M^2,M^1}\circ c_{M^1,M^2}}\,\Y_{M^1,M^2}(v^1,x)v^2&=\Y_{M^1,M^2}(v^1,e^{2\pi i}x)v^2\\
&=e^{2\pi iL(0)}\Y_{M^1,M^2}(e^{-2\pi iL(0)}v^1,x)e^{-2\pi iL(0)}v^2,
\end{align*} 
which is the balancing identity for $\theta=e^{2\pi iL(0)}$, see also \cite{H08}.
				The duals, and the ribbon and modular structures, are transported along the braided tensor equivalence $\Phi^\L$ in the same way.
				Finally, if $V$ is strongly rational, then
				$\CC_1(V)=\Ord(V)$ with these structures is modular by
				\cite{H08}.
			\end{proof}
			
			If we use the lower half-plane, then
			$c_{M^1,M^2}$ is replaced by $c_{M^2,M^1}^{-1}$ and $\R_{M^1,M^2}$
			is replaced by $R_+^{M^1,M^2}=(\R_{M^2,M^1})^{-1}$.
			

			\section{Explicit braiding isomorphism for affine VOAs at positive integral levels}\label{sec:affine}
In this section, we describe the fusion tensor product and the braiding isomorphism on $\Mod_\fd(A(L_k(\g)))$ for $k\in\Z_{>0}$ explicitly in terms of finite-dimensional $\g$-modules and the Casimir tensor of $\g$, see Theorems~\ref{thm:affine-odot-quotient} and \ref{thm:VOA-Drinfeld-braiding}.
			\subsection{\texorpdfstring{Description of the $\odot$-tensor product for the affine VOA $L_k(\g)$}{Description of the odot-tensor product for the affine VOA L\_k(g)}}
			Throughout this section, let $\g$ be a finite-dimensional simple Lie algebra over $\C$, and let $(\cdot\mid\cdot)$ be the invariant bilinear form on $\g$ normalized by $(\theta\mid\theta)=2$, where $\theta$ is the highest root. Fix root vectors $e_\theta\in \g_\theta$ and $f_\theta\in \g_{-\theta}$. Let $h^\vee$ be the dual Coxeter number of $\g$, and $k\in\Z_{>0}$. Put
			\begin{equation}\label{eq:affine-kappa}
				\kappa=k+h^\vee,
				\qquad
				P_+^k=\{\la\in P_+:(\la\mid\theta)\leq k\}.
			\end{equation}
			Let $V^k(\g)$ be the universal affine VOA at level $k$, and let
			\[
			\hat{\g}=\g\o\C[t,t^{-1}]\oplus\C K,
			\quad \mathrm{with}\quad
			a(n)=a\o t^n,
			\]
			for $a\in \g$ and $n\in \Z$.
			For $\la\in P^k_+$, denote by
			\[
			V^k(\g,\la)
			=U(\hat{\g})\o_{U(\g[t]\oplus\C K)}L(\la)
			\]
			the Weyl module associated to $L(\la)$, where $\g\o t\C[t]$ acts trivially on $L(\la)$ and $K$ acts as $k$. $ V^k(\g,\la)$ is a $V^k(\g)$-module with a unique maximal submodule $W^k(\g,\lambda)$.
			
			Since $k\in \Z_{>0}$ and $\la\in P^k_+$, $W^k(\g,\lambda)$ is generated by one singular vector $e_\theta(-1)^{k-(\la|\theta)+1}v_\la$ \cite{Kac90}.
			Denote the irreducible quotient $\hat{\g}$ or $V^k(\g)$-module by
			\begin{equation}\label{eq:affineirr}
				\hat{L}_k(\la):=\frac{V^k(\g,\lambda)}{W^k(\g,\lambda)}=\frac{V^k(\g,\lambda)}{U(\hat{\g}).(e_\theta(-1)^{k-(\la|\theta)+1}v_\la)}.
			\end{equation}
			Let $L_k(\g)=V^k(\g)/W^k(\g)$ be the affine VOA at level $k$ \cite{FZ92}, which is a strongly rational VOA \cite{DLM97}, with irreducible modules
			\[
			\mathscr{W}=\{\hat{L}_k(\la): \la\in P^k_+\},\quad \mathrm{where }\quad \Om(\hat{L}_k(\la))=\hat{L}_k(\la)(0)=L(\la).
			\]
			Note that the Zhu algebras of $V^k(\g)$ and $L_k(\g)$ have the following identifications
			\begin{equation}\label{eq:affine-Zhu}
				\begin{aligned}
					& A(V^k(\g))\xrightarrow{\simeq} U(\g),\quad [a^1(-n_1-1)\ds a^r(-n_r-1)\vac]\mapsto (-1)^{n_1+\ds +n_r}a^r\ds a^1,\\
					& A(L_k(\g))\cong U(\g)/I_k, \qquad
					I_k=\<e_\theta^{k+1}\>=U(\g)e_\theta^{k+1}U(\g).
				\end{aligned}
			\end{equation}
			
			In order to describe the braiding isomorphism on $\Mod_{\mathrm{fd}}(A(L_k(\g)))$, we first give a more concrete description of the $\odot$-tensor product $\hat{L}_k(\la)\odot \hat{L}_k(\mu)$ using \eqref{eq:affineirr} and its defining relations \eqref{eq:actionfogeneralspann}.
			
			The following Theorem is a variation of \cite[Proposition 6.5]{AL26}.
			\begin{thm}\label{thm:affine-odot-quotient}
				For $\la,\mu\in P_+^k$, let $J_{\la,\mu}$ be the subspace of $L(\la)\o L(\mu)$ defined by
				\begin{equation}\label{eq:J-lambda-mu}
					J_{\la,\mu}
					=v_\la\o e_\theta^{k-(\la|\theta)+1}L(\mu)
					+e_\theta^{k-(\mu|\theta)+1}L(\la)\o v_\mu,
				\end{equation}
				and let $U(\g).J_{\la,\mu}$ be the $\g$-submodule generated by $J_{\la,\mu}$ with respect to the diagonal action.
				Then we have an isomorphism of $A(L_k(\g))$-modules
				\begin{equation}\label{eq:affine-odot-quotient}
					\tilde\varphi: \frac{L(\la)\o_\C L(\mu)}{U(\g).J_{\la,\mu}} \xlongrightarrow{\simeq} \hat{L}_k(\la)\odot \hat{L}_k(\mu),\qquad \overline{u\o v}\mapsto u\odot v.
				\end{equation}
				Here the diagonal $\g$-action on $L(\la)\o_\C L(\mu)$
				descends to an action of
				$A(L_k(\g))\cong U(\g)/I_k$ on the quotient.
			\end{thm}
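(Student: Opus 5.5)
We will proceed in three steps. First we reduce $\hat L_k(\la)\odot\hat L_k(\mu)$ to a quotient of $L(\la)\o L(\mu)$. Next we show that the kernel contains $U(\g).J_{\la,\mu}$. Finally we show that the kernel is no larger.

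\emph{Step 1 (reduction to bottom degrees).} Since $L_k(\g)$ is generated by $a(-1)\vac$, $a\in\g$, and $\hat L_k(\la)$ is spanned by monomials $a^1(-n_1)\cdots a^r(-n_r)u$ with $n_i\geq1$ and $u\in L(\la)$, we use the relations \eqref{eq:actionfogeneralspann} with $w=1$ for $\wt a=1$. With $(s,t)=(1,1)$, the relation expresses $(a(-1)u)\odot v$ through terms of lower total degree. With $(s,t)=(2,0)$ we get $u\odot a(-1)v=-\sum_j(-1)^j(a(j)u)\odot v$. Together with their $L(-1)$-derivatives (or general $(s,t)$), these let us move all negative modes off both factors, exactly as in the proof of Proposition~\ref{prop:condforC1V}(3). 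By induction on degree, the map $L(\la)\o L(\mu)\to\hat L_k(\la)\odot\hat L_k(\mu)$, $u\o v\mapsto u\odot v$, is surjective. By \eqref{A(V)action} with $\wt a=1$, $[a].(u\odot v)=a_{(0)}u\odot v+u\odot a_{(0)}v$. Hence $[a]$ acts by the diagonal action and the map is $\g$-equivariant.

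\emph{Step 2 ($U(\g).J_{\la,\mu}$ maps to zero).} Put $N=k-(\la|\theta)+1$. The singular vector $e_\theta(-1)^N v_\la$ vanishes in $\hat L_k(\la)$, so $0=(e_\theta(-1)^Nv_\la)\odot v$ for all $v\in L(\mu)$. We apply the relation with $(s,t)=(1,1)$ repeatedly, peeling one $e_\theta(-1)$ at a time. On bottom-degree inputs only the terms $u\odot e_\theta(j)v$ with $j\geq0$ survive, and $e_\theta(j)v=0$ for $j\geq1$, so $e_\theta(-1)$ turns into $e_\theta(0)=e_\theta$ acting on the second factor. Since $e_\theta(j)$, $j\geq0$, annihilate $v_\la$ except for the zero mode, which kills the highest weight vector, the result is a nonzero multiple of $v_\la\odot e_\theta^Nv$. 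This gives the first summand of $J_{\la,\mu}$; the second is symmetric, using the relation with $(s,t)=(2,0)$ on $u\odot e_\theta(-1)^{N'}v_\mu$, with $N'=k-(\mu|\theta)+1$. Equivariance from Step 1 then shows that all of $U(\g).J_{\la,\mu}$ maps to zero, so $\tilde\varphi$ is well defined and surjective.

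\emph{Step 3 (injectivity, the main obstacle).} We will compare dimensions. By \eqref{odotiso}, $\dim\hat L_k(\la)\odot\hat L_k(\mu)=\sum_\nu N_{\la\mu}^\nu\dim L(\nu)$, with the level-$k$ fusion rules. We will show that $(L(\la)\o L(\mu))/U(\g).J_{\la,\mu}$ has no larger dimension. The quotient is a finite-dimensional $\g$-module, so we compute $\Hom_\g$ from it to $L(\nu)$. Such maps are $\g$-maps $\phi:L(\la)\o L(\mu)\to L(\nu)$ vanishing on $J_{\la,\mu}$. Dualizing gives vectors $x\in\Hom_\g(L(\la)\o L(\mu),L(\nu))$, equivalently via $v_\la$ highest-weight constraints, killed by $e_\theta^N$ on the $\mu$-side and by $e_\theta^{N'}$ on the $\la$-side. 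This is the classical Gepner--Witten / Frenkel--Zhu description of the fusion rules $N_{\la\mu}^\nu$, as in \cite{FZ92} and \cite[Proposition~6.5]{AL26}. Hence the multiplicity of $L(\nu)$ in the quotient is at most $N_{\la\mu}^\nu$. Since $\tilde\varphi$ is surjective, the dimensions are equal and $\tilde\varphi$ is an isomorphism.

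Finally, the $A(L_k(\g))$-action on the target forces $I_k$ to act by zero on the quotient. The delicate points are the precise matching of the $J$-conditions with the Frenkel--Zhu fusion-rule criterion and the complete-reducibility argument at the level of multiplicities. We would follow the argument of \cite[Proposition~6.5]{AL26} there.
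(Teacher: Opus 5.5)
Your proposal is correct, but it proves injectivity by a different route than the paper.

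\textbf{The paper's route.} The paper never counts dimensions. It composes \cite[Corollary~6.3]{Liu26}, $A(\hat L_k(\la))\otimes_{A(L_k(\g))}L(\mu)\cong\hat L_k(\la)\odot\hat L_k(\mu)$, with \cite[Theorem~3.2.2]{FZ92}. Together these identify the kernel of $u\otimes v\mapsto u\odot v$ outright as $U(\g).(v_\la\otimes e_\theta^{k-(\la|\theta)+1}L(\mu))$. The only remaining work is your Step~2 computation with $(s,t)=(2,0)$, which shows that the second summand of $J_{\la,\mu}$ already lies in this kernel.

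\textbf{Your route.} You kill both summands directly from the relations \eqref{eq:actionfogeneralspann}, so well-definedness and surjectivity are self-contained. You then get injectivity by a squeeze. Let $Q$ be the Frenkel--Zhu quotient of $L(\la)\otimes L(\mu)$ by $U(\g)$ applied to the first summand alone. Then
\[
\dim T_{\la,\mu}\le\dim Q=\sum_\nu N^\nu_{\la\mu}\dim L(\nu)=\dim\hat L_k(\la)\odot\hat L_k(\mu)\le\dim T_{\la,\mu}.
\]
The middle equalities use \cite[Theorem~3.2.2]{FZ92} together with the fusion rules theorem \cite{FZ92,Li99b}, and then \eqref{odotiso}. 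These replace the paper's use of \cite[Corollary~6.3]{Liu26}.

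\textbf{Comparison.} The paper's argument is shorter and exhibits the kernel explicitly. In particular it shows $U(\g).J_{\la,\mu}=U(\g).(v_\la\otimes e_\theta^{k-(\la|\theta)+1}L(\mu))$, so the second summand is redundant. Your squeeze recovers the same fact as a byproduct, but it relies on the restricted-conformal-block dimension formula and the (nontrivial) fusion rules theorem.

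\textbf{Two points to tighten in Step~3.} First, the Frenkel--Zhu/Gepner--Witten criterion involves only the constraint $v_\la\otimes e_\theta^{N}L(\mu)$. Imposing the $\la$-side constraint as well cuts the Hom-space down further, so you get the inequality you need, not an identification with that criterion. Second, you must also rule out constituents $L(\nu)$ with $\nu\notin P^k_+$, for which no level-$k$ fusion rule is defined. This follows because $Q\cong A(\hat L_k(\la))\otimes_{A(L_k(\g))}L(\mu)$ is a $U(\g)/I_k$-module, and hence so is its quotient $T_{\la,\mu}$.
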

			\begin{proof}
				Since the affine VOA $L_k(\g)$ is strongly rational, it was proved in \cite[Corollary 6.3]{Liu26} that the following map is an isomorphism of left $A(L_k(\g))$-modules
				\begin{equation}\label{eq:FZandodot1}
					A(\hat{L}_k(\la))\otimes_{A(L_k(\g))} L(\mu) \xlongrightarrow{\simeq} \hat{L}_k(\la)\odot \hat{L}_k(\mu),\quad [\al]\o v\mapsto \al\odot v,
				\end{equation}
				where $\al\in\hat{L}_k(\la)$, $v\in L(\mu)$, and $A(\hat{L}_k(\la))$ is Frenkel-Zhu's bimodule. On the other hand, it was proved in \cite[Theorem 3.2.2]{FZ92} that the following map is an isomorphism
				\begin{equation}\label{eq:FZandodot2}
					\frac{L(\la)\o_\C L(\mu)}{U(\g).\left(v_\la\o e_\theta^{k-(\la|\theta)+1}L(\mu)\right)} \xlongrightarrow{\simeq} A(\hat{L}_k(\la))\otimes_{A(L_k(\g))} L(\mu),\quad \overline{u\o v}\mapsto [u]\o v.
				\end{equation}
				By composing \eqref{eq:FZandodot1} and \eqref{eq:FZandodot2}, we see that there exists a surjective map
				\[
				\varphi: L(\la)\o_\C L(\mu)\twoheadrightarrow \hat{L}_k(\la)\odot \hat{L}_k(\mu),\quad u\o v\mapsto u\odot v,
				\]
				with $\ker(\varphi)= U(\g).(v_\la\o e_\theta^{k-(\la|\theta)+1}L(\mu))$. We claim that $\ker(\varphi)=U(\g).J_{\la,\mu}$.
				
				Indeed, by the definition of $J_{\la,\mu}$ \eqref{eq:J-lambda-mu}, it is clear that $\ker(\varphi)\ssq U(\g).J_{\la,\mu}$. On the other hand, it is clear that
				$v_\la\o e_\theta^{k-(\la|\theta)+1}v\in \ker(\varphi)$ for all $v\in L(\mu)$.
				Now apply \eqref{eq:actionfogeneralspann} to $a=e_\theta$, $t=0,s=2$, and $\wt e_\theta=1$, we have the following relation in $\hat{L}_k(\la)\odot \hat{L}_k(\mu)$
				\[
				u\odot e_\theta(-1) v=-e_\theta(0)u\odot v=-e_\theta u\odot v. \]
				Since $e_\theta(-1)^{k-(\mu|\theta)+1}v_\mu=0$ in $\hat{L}_k(\mu)$ \eqref{eq:affineirr}, it follows that for any $u\in L(\la)$, we have
				\[
				e_\theta^{k-(\mu|\theta)+1}u\odot v_\mu=(-1)^{k-(\mu|\theta)+1} u\odot e_\theta(-1)^{k-(\mu|\theta)+1}v_\mu=(-1)^{k-(\mu|\theta)+1} u\odot 0=0.
				\]
				In other words, $\varphi(e_\theta^{k-(\mu|\theta)+1}u\o v_\mu)=0$, and so $e_\theta^{k-(\mu|\theta)+1}u\o v_\mu\in \ker(\varphi)$. Thus,
				\[
				J_{\la,\mu}=v_\la\o e_\theta^{k-(\la|\theta)+1}L(\mu)+ e_\theta^{k-(\mu|\theta)+1}L(\la)\o v_\mu\ssq \ker(\varphi). \]
				Since $\ker(\varphi)$ is a $U(\g)$-submodule of $L(\la)\o_\C L(\mu)$, we have $U(\g).J_{\la,\mu}\ssq \ker(\varphi)$. This proves the isomorphism \eqref{eq:affine-odot-quotient}.
			\end{proof}
			For the simplicity of notation, we write
			\begin{equation}\label{eq:Qlamu}
				T_{\la,\mu}:=
				\frac{L(\la)\o_\C L(\mu)}{U(\g).J_{\la,\mu}},
			\end{equation}
			and denote the quotient map by
			\[
			\pi_{\la,\mu}: L(\la)\o_\C L(\mu)\twoheadrightarrow T_{\la,\mu},\quad u\o v\mapsto \overline{u\o v}.
			\]
			The following Corollary follows immediately from \eqref{eq:A-tensor-product}, Theorem~\ref{thm:braided-tensor-structure}, and Theorem~\ref{thm:affine-odot-quotient}.
			\begin{coro}
				Let $k\in \Z_{>0}$. The category $\Mod_{\mathrm{fd}}(U(\g)/I_k)$ \eqref{eq:affine-Zhu} is a modular tensor category, with fusion tensor product given by
				\begin{equation}
					L(\la) \bt L(\mu)=T_{\la,\mu}=\frac{L(\la)\o_\C L(\mu)}{U(\g).J_{\la,\mu}},\quad \la,\mu \in P^k_+.
				\end{equation}
				Moreover, the action of $U(\g)/I_k$ on $T_{\la,\mu}$ is induced by the diagonal $\g$-action on $L(\la)\o_\C L(\mu)$.
			\end{coro}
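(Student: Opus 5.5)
This is a direct assembly of earlier results, in three steps: strong rationality, the identification of the tensor product with $T_{\la,\mu}$, and the description of the action.

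\emph{Step 1: strong rationality.} First I would note that $L_k(\g)$ at $k\in\Z_{>0}$ is strongly rational \cite{FZ92,DLM97}. By Example~\ref{ex:levels}(1), it therefore satisfies the standing hypotheses that $\CC_1(V)=\Ord(V)$ is semisimple and $\Phi^\L(\Mod_\fd(A(V)))\ssq\CC_1(V)$. Proposition~\ref{prop:condforC1V} then gives $\CC_{A(V)}=\Mod_\fd(A(V))$. Via \eqref{eq:affine-Zhu}, this category is $\Mod_\fd(U(\g)/I_k)$. Its irreducible objects are $L(\la)=\Om(\hat L_k(\la))$ for $\la\in P^k_+$.

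\emph{Step 2: the tensor product on simples.} Theorem~\ref{thm:braided-tensor-structure}(2),(3) makes $\Mod_\fd(U(\g)/I_k)$ a braided tensor category with tensor product \eqref{eq:A-tensor-product}, and $\Phi^\L$ a braided tensor equivalence onto $\CC_1(V)$. Since $V$ is strongly rational, $\CC_1(V)=\Ord(V)$ is modular \cite{H08}, so the transported structure is modular as well. For the tensor product on simples, the one point to check is $\Phi^\L(L(\la))\cong\hat L_k(\la)$. This holds by Proposition~\ref{prop:sufffullyfaithful}, because $\Phi^\L$ is quasi-inverse to $\Om$ and $\Om(\hat L_k(\la))=L(\la)$. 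Hence
\[
L(\la)\bt L(\mu)=\hat L_k(\la)\odot\hat L_k(\mu)\cong T_{\la,\mu}
\]
by Theorem~\ref{thm:affine-odot-quotient}. For general objects, both sides are additive, and the tensor product is determined up to the natural isomorphism transported through $\Phi^\L$.

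\emph{Step 3: the action.} By Proposition~\ref{prop:AVaction} with $w=1$, $[a]$ for $a\in\g\subset V_1$ acts on $u\odot v$ by $\rho_{1,0}(a\o z^0)$. For $u\in L(\la)$ and $v\in L(\mu)$ this is $a_{(0)}u\odot v+u\odot a_{(0)}v$, which is the diagonal action $au\o v+u\o av$. Since $A(L_k(\g))$ is generated by the classes $[a]$, $a\in\g$, the isomorphism $\tilde\varphi$ of Theorem~\ref{thm:affine-odot-quotient} intertwines the induced diagonal action with the $A(V)$-action.

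I expect no real obstacle. The only care needed is the compatibility in Step 3 between the sign convention in \eqref{eq:affine-Zhu} and the action formula \eqref{A(V)action}, but for degree-one generators the sign is trivial.
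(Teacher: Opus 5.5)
Your proposal is correct and follows the paper's route. The paper also derives the corollary directly from the definition $S\bt T=\Phi^\L(S)\odot\Phi^\L(T)$, Theorem~\ref{thm:braided-tensor-structure}, and Theorem~\ref{thm:affine-odot-quotient}; your checks that $\Phi^\L(L(\la))\cong\hat L_k(\la)$ and that each $[a]$ with $a\in\g$ acts diagonally just spell out what the paper leaves implicit.
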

			
			\subsection{\texorpdfstring{The braiding isomorphism on $\Mod_{\fd}(A(L_k(\g)))$}{The braiding isomorphism on Mod\_fd(A(L\_k(g)))}}
			
			Under the isomorphism \eqref{eq:affine-odot-quotient}, the braiding isomorphism $\R=\mathcal R_{\hat{L}_k(\la), \hat{L}_k(\mu)}$ translates to an isomorphism $\R_{\la,\mu}: T_{\la,\mu}\xrightarrow{\simeq} T_{\mu,\la}$ that fits into the following diagram:
			\begin{equation}\label{eq:affinediagram}
				\begin{tikzcd}
					\hat{L}_k(\la)\odot \hat{L}_k(\mu) \arrow[r,"\R"] \arrow[d,"\simeq","\eqref{eq:affine-odot-quotient}"']& \hat{L}_k(\mu)\odot \hat{L}_k(\la) \arrow[d,"\eqref{eq:affine-odot-quotient}","\simeq"']\\
					T_{\la,\mu}\arrow[r,dashed,"\R_{\la,\mu}"] & T_{\mu,\la}
				\end{tikzcd}
			\end{equation}
			
			In order to give an explicit description of $\R_{\la,\mu}$, in view of \eqref{eq:braiding-formula}, we first need to determine the action of $[\om]\in A(L_k(\g))$ on $T_{\la,\mu}$.
			The Virasoro element $\om $ of the affine VOA $L_k(\g)$ is given by the Segal-Sugawara construction. More precisely, choose dual bases $\{x_a\}_{a=1}^{\dim\g}$ and $\{x^a\}_{a=1}^{\dim\g}$ of $\g$ with respect to $(\cdot\mid\cdot)$, recall that
			$$\omega=\frac{1}{2\kappa}\sum_a x^a(-1)x_a(-1)\vac\in L_k(\g)_2.$$
			Consider the following two elements
			\begin{equation}\label{eq:CandOmega}
				C=\sum_a x_ax^a\in U(\g),
				\quad \Omega=\sum_a x_a\o x^a\in U(\g)\o U(\g).
			\end{equation}
			Let $\Delta: U(\g)\to U(\g)\o_\C U(\g)$ be the standard coproduct, then
			\begin{equation}\label{eq:coproduct-Casimir}
				\Delta(C)=C\o 1+1\o C+2\Omega.
			\end{equation}
			Note that $C$ is the Casimir element, which acts on the irreducible $\g$-module $L(\la)$ by the scalar $(\la|\la+2\rho)$. Then
			\[
			\frac{1}{2\kappa}C|_{L(\la)}=\frac{(\la|\la+2\rho)}{2\kappa}\cdot \Id_{L(\la)}=h_\la\cdot \Id_{L(\la)}, \quad \la\in P^k_+.
			\]
			Under the isomorphism of Zhu algebra \eqref{eq:affine-Zhu}, we have
			\begin{equation}\label{eq:[om]inquotient}
				[\om]=\frac{1}{2\kappa}\sum_a x_ax^a+I_k= \frac{1}{2\kappa} \bar{C}\in U(\g)/I_k.
			\end{equation}
			
			\begin{lm}\label{prop:affine-omega-action}
				Under the quotient identification \eqref{eq:affine-odot-quotient}, the action of $[\omega]\in U(\g)/I_k$ on $T_{\la,\mu}$ \eqref{eq:Qlamu} is given by
				\begin{equation}\label{eq:affine-omega-action}
					[\omega].\pi_{\la,\mu}(u\o v)
					= \pi_{\la,\mu}\left(
					(h_\la+h_\mu)u\o v
					+\frac{1}{\kappa}\Omega_{\la,\mu}(u\o v)
					\right),
				\end{equation}
				where $u\in L(\la)$, $v\in L(\mu)$, and $\Omega_{\la,\mu}$ is defined by
				\begin{equation}\label{eq:Omlamu}
					U(\g)\o_\C U(\g)\to \End(L(\la)\o_\C L(\mu)),\quad \Om\mapsto \Om_{\la,\mu}.
				\end{equation}
				In particular, $U(\g).J_{\la,\mu}\subset L(\la)\o_{\C} L(\mu)$ is invariant under $\Omega_{\la,\mu}$.
			\end{lm}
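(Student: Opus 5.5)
The plan is to compute $[\om].(u\odot v)$ directly from the $A(V)$-action formula \eqref{A(V)action} with $w=1$, and then transport the result through $\tilde\varphi$ of Theorem~\ref{thm:affine-odot-quotient}. With $\om=\frac{1}{2\kappa}\sum_a x^a(-1)x_a(-1)\vac$ and $\wt\om=2$, \eqref{A(V)action} gives
\[
[\om].(u\odot v)=\bigl(\om_{(0)}u+\om_{(1)}u\bigr)\odot v+u\odot o(\om)v=(L(-1)u+L(0)u)\odot v+u\odot L(0)v .
\]
For $u\in L(\la)=\hat L_k(\la)(0)$ and $v\in L(\mu)$ we have $L(0)u=h_\la u$ and $L(0)v=h_\mu v$. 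The whole task therefore reduces to rewriting $L(-1)u\odot v$ as an element of $L(\la)\odot L(\mu)$, i.e. in bottom-degree terms. The answer should be $\frac1\kappa\sum_a x_au\odot x^av$.

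To rewrite this term, I first use the Sugawara formula on a bottom-degree vector:
\[
L(-1)u=\frac{1}{\kappa}\sum_a x^a(-1)x_a(0)u ,
\]
since the modes $x(n)$ with $n>0$ kill $u$. The summands $x^a(0)x_a(-1)u$ are reordered using $[x^a(0),x_a(-1)]=[x^a,x_a](-1)$, which sums to zero. Next I apply the relation \eqref{eq:actionfogeneralspann} with $w=1$, $a=x^a$ (weight $1$), and $(s,t)=(1,1)$, exactly as in the proof of Proposition~\ref{prop:condforC1V}. For $y=x_a u$ this gives
\[
(x^a(-1)y)\odot v=\sum_{j\ge0}y\odot x^a(j)v=y\odot x^a(0)v ,
\]
because $x^a(j)v=0$ for $j\ge1$ when $v$ is in the bottom degree. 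Hence
\[
L(-1)u\odot v=\frac1\kappa\sum_a x_au\odot x^av=\frac1\kappa\,\Om_{\la,\mu}(u\o v)
\]
read in $\hat L_k(\la)\odot\hat L_k(\mu)$. Pulling back along $\tilde\varphi$ then gives \eqref{eq:affine-omega-action}. The main care needed is the binomial bookkeeping and signs in \eqref{eq:actionfogeneralspann}; I will double-check them against the sign conventions of the Zhu isomorphism \eqref{eq:affine-Zhu}. As a consistency check, $[\om]=\frac1{2\kappa}\bar C$, and the coproduct formula \eqref{eq:coproduct-Casimir} applied to the diagonal action yields the same expression $h_\la+h_\mu+\frac1\kappa\Om$. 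This independent derivation is available because Theorem~\ref{thm:affine-odot-quotient} says the $A(L_k(\g))$-action on $T_{\la,\mu}$ is induced by the diagonal action.

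For the final claim, note that $\pi_{\la,\mu}$ is the quotient by $U(\g).J_{\la,\mu}$ and $[\om]$ acts on $T_{\la,\mu}$, so the operator $(h_\la+h_\mu)\Id+\frac1\kappa\Om_{\la,\mu}$ on $L(\la)\o L(\mu)$ must preserve $\ker\pi_{\la,\mu}$. This is immediate because it equals $\frac1{2\kappa}\Delta(C)$ acting diagonally, and $U(\g).J_{\la,\mu}$ is a diagonal $U(\g)$-submodule. Since $h_\la+h_\mu$ is a scalar, $\Om_{\la,\mu}$ itself preserves $U(\g).J_{\la,\mu}$.
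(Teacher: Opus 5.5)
Your proposal is correct and takes essentially the paper's approach. Your main computation is the paper's second proof: expand $[\om].(u\odot v)$ via \eqref{A(V)action}, write $L(-1)u$ by the Sugawara formula on a bottom-degree vector, and apply \eqref{eq:actionfogeneralspann} with $(s,t)=(1,1)$. Your consistency check via $[\om]=\frac{1}{2\kappa}\bar C$ and \eqref{eq:coproduct-Casimir} is the paper's first proof, which is also exactly how the paper derives the $\Om_{\la,\mu}$-invariance of $U(\g).J_{\la,\mu}$.
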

			\begin{proof}
				By Theorem~\ref{thm:affine-odot-quotient}, the $U(\g)/I_k$-action on $T_{\la,\mu}$ is induced by the diagonal $\g$-action. Then for any $u\o v\in L(\la)\o_\C L(\mu)$, it follows from \eqref{eq:coproduct-Casimir} and \eqref{eq:[om]inquotient} that
				\begin{align*}
					[\omega]. \pi_{\la,\mu}(u\o v)&= [\omega]. \overline{u\o v}=\frac{1}{2\kappa}\overline{\Delta(C).(u\o v)}\\
					&=\frac{1}{2\kappa} \pi_{\la,\mu}\left(Cu\o v+u\o Cv+ 2\Om_{\la,\mu}(u\o v)\right) \\
					&= \pi_{\la,\mu}\left(
					(h_\la+h_\mu)u\o v
					+\frac{1}{\kappa}\Omega_{\la,\mu}(u\o v)
					\right),
				\end{align*}
				which proves \eqref{eq:affine-omega-action}. Since $U(\g).J_{\la,\mu}$ is a diagonal $\g$-submodule, it is invariant under $\Delta(C)$. On $L(\la)\o L(\mu)$, the operator $C\o 1+1\o C$ is the scalar $2\kappa (h_\la+h_\mu)\cdot \Id$. Hence \eqref{eq:coproduct-Casimir} implies that $U(\g).J_{\la,\mu}$ is invariant under $\Omega_{\la,\mu}$.
			\end{proof}
			
			\begin{proof}[Second proof of \eqref{eq:affine-omega-action}]
				There is an alternative way to prove \eqref{eq:affine-omega-action} using the relations in $\odot$-tensor product only, without invoking the identification of Zhu algebras \eqref{eq:affine-Zhu}.
				
				Indeed, by \eqref{A(V)action}, the action of $[\om]\in A(L_k(\g))$ on $\hat{L}_k(\la)\odot \hat{L}_k(\mu)$ is given by
				\begin{align*}
					[\om].(u\odot v)&=\sum_{j\geq 0} \binom{1}{j} \om_{(j)} u\odot v+u\odot o(\om)v\\
					&=L(0)u\odot v+L(-1)u\odot v+u\odot L(0)v,
				\end{align*}
				for $u\in L(\la)=\hat{L}_k(\la)(0),v\in L(\mu)=\hat{L}_k(\mu)(0)$. Since $L(0)$ acts semisimply on the irreducible ordinary module $\hat{L}_k(\la)$, we have $L(0)u=h_\la u$ and $L(0)v=h_\mu v$. Moreover, it follows from the Sugawara formula \cite{Kac90} and $x^a(i)u=0$ for $i>0$ that
				\[
				L(-1)u=\frac{1}{\kappa} \sum_a \sum_{i\geq 0} x_a(-1-i)x^a(i)u=\frac{1}{\kappa} \sum_a x_a(-1)x^a(0)u.
				\]
				Apply \eqref{eq:actionfogeneralspann} to $x^a\in L_k(\g)_1$, $s=1$, and $t=1$, we have
				\begin{align*}
					L(-1)u\odot v&=-\frac{1}{\kappa} \sum_a \sum_{j\geq 0}\binom{-1}{j} (-1)^{-1-j} x^a(0)u\odot x_a(j)v\\
					&=\frac{1}{\kappa} \sum_a x^a(0)u\odot x_a(0)v=\frac{1}{\kappa} \sum_a x^au\odot x_av.
				\end{align*}
				Since $\tilde\varphi$ is an $A(V)$-module isomorphism, it follows from \eqref{eq:CandOmega} that
				\begin{align*}
					[\om]. \pi_{\la,\mu}(u\o v)&=\tilde \varphi^{-1} ([\om].(u\odot v))=\tilde\varphi^{-1}\left((h_\la+h_\mu)u\odot v+\frac{1}{\kappa} \sum_a x^au\odot x_a v\right)\\
					&=\pi_{\la,\mu}\left((h_\la+h_\mu)u\o v+\frac{1}{\kappa} \Om_{\la,\mu}(u\o v) \right),
				\end{align*}
				where we used $\sum_a x^au\o x_a v=\Om_{\la,\mu}(u\o v)$. This
				proves \eqref{eq:affine-omega-action}.
			\end{proof}
			
			Since $(h_\la+h_\mu)\cdot \Id$ and $\frac{1}{\kappa}\Om_{\la,\mu}$ are commutative when viewed as operators on $L(\la)\o_\C L(\mu)$, it follows from \eqref{eq:affine-omega-action} that
			\begin{equation}\label{eq:epiiomcalc}
				e^{\pi i [\om]}\pi_{\la,\mu}(u\o v)=\pi_{\la,\mu}\left(e^{\pi i (h_\la+h_\mu)}\exp\left(\frac{\pi i }{\kappa} \Om_{\la,\mu}\right)(u\o v) \right).
			\end{equation}
			Now we define a tensor-swapping operator. Let
			\[
			\tau:L(\la)\o_\C L(\mu)\ra L(\mu)\o_\C L(\la),
			\quad\tau(u\o v)=v\o u.
			\]
			It is easy to see from \eqref{eq:CandOmega} that
			$
			\tau\circ \Omega_{\la,\mu}
			=\Omega_{\mu,\la}\circ \tau.
			$
			Then by \eqref{eq:braiding-formula}, the corresponding elements in vector spaces of the diagram \eqref{eq:affinediagram} are given by
			\[
			\begin{tikzcd}
				u\odot v\arrow[r,mapsto,"\mathcal R"] \arrow[d,mapsto,"\eqref{eq:affine-odot-quotient}"'] &
				e^{\pi i[\om]}.
				\left(e^{-\pi iL(0)}v\odot e^{-\pi iL(0)}u\right)
				\arrow[d,mapsto,"\eqref{eq:affine-odot-quotient}"] \\ \pi_{\la,\mu}(u\o v)
				\arrow[r,mapsto,dashed,"\mathcal R_{\la,\mu}"]
				&
				\tilde\varphi^{-1} \left( e^{\pi i[\om]}.
				\left(e^{-\pi iL(0)}v\odot e^{-\pi iL(0)}u\right)\right).
			\end{tikzcd}
			\]
			It follows from \eqref{eq:affine-odot-quotient} and \eqref{eq:epiiomcalc} that
			\begin{align*}
				&\tilde \varphi^{-1} \left( e^{\pi i[\om]}.
				\left(e^{-\pi iL(0)}v\odot e^{-\pi iL(0)}u\right)\right)=e^{\pi i [\om]} \pi_{\mu,\la} \left(e^{-\pi i h_\mu}v\o e^{-\pi i h_\la}u\right)\\
				&=\pi_{\mu,\la} \left(\exp\left(\frac{\pi i }{\kappa} \Om_{\mu,\la}\right) (v\o u)\right)=\pi_{\mu,\la} \left(\exp\left(\frac{\pi i }{\kappa} \Om_{\mu,\la}\right) \tau(u\o v)\right)\\
				&=\pi_{\mu,\la}\left(\left(\tau\circ \exp\left(\frac{\pi i }{\kappa} \Om_{\la,\mu}\right)\right)(u\o v) \right).
			\end{align*}
			Therefore, we proved the following Theorem.
			\begin{thm}\label{thm:VOA-Drinfeld-braiding}
				Under the identifications $\hat{L}_k(\la)\odot \hat{L}_k(\mu)\simeq T_{\la,\mu}=(L(\la)\o_\C L(\mu))/U(\g).J_{\la,\mu}$ in Theorem~\ref{thm:affine-odot-quotient}, the braiding isomorphism on the modular tensor category $\Mod_\fd(A(L_k(\g)))\simeq \Mod_{\fd}(U(\g)/I_k)$ is given by
				\begin{equation}\label{eq:VOA-Drinfeld-braiding}
					\R_{\la,\mu}: T_{\la,\mu}\xrightarrow{\simeq} T_{\mu,\la},\quad \R_{\la,\mu}\left(\pi_{\la,\mu}(u\o v)\right)
					=\pi_{\mu,\la}\left(\left(\tau\circ \exp\left(\frac{\pi i }{\kappa} \Om_{\la,\mu}\right)\right)(u\o v) \right).
				\end{equation}
				Equivalently, denote $q=e^{\pi i/\kappa}\in \C^\times$, then we can write
				\[
				\R_{\la,\mu}(\overline{u\o v})=\overline{ q^{\Om_{\mu,\la}} (v\o u)},
				\]
				where $\la,\mu\in P^k_+$, $u\in L(\la)$, $v\in L(\mu)$, and $\Om_{\la,\mu}$ is defined by \eqref{eq:Omlamu}.
			\end{thm}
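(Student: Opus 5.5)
The plan is to transport the general braiding formula \eqref{eq:braiding-formula} through the isomorphism $\tilde\varphi$ of Theorem~\ref{thm:affine-odot-quotient}, using the explicit $[\om]$-action of Lemma~\ref{prop:affine-omega-action}. Since $\hat{L}_k(\la),\hat{L}_k(\mu)$ are generated by their bottom degrees and $\odot$ is spanned by $u\odot v$ with $u\in L(\la)$, $v\in L(\mu)$, it suffices to compute $\R$ on such elements. The diagram \eqref{eq:affinediagram} then forces the definition $\R_{\la,\mu}:=\tilde\varphi^{-1}\circ\R\circ\tilde\varphi$, which is well defined and an isomorphism because each of the three maps is.

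The first step is to evaluate the inner factors. Since $L(0)$ acts on the bottom degree $L(\la)$ by $h_\la$, we have $e^{-\pi iL(0)}u=e^{-\pi ih_\la}u$, and similarly for $v$. Hence
\[
\R(u\odot v)=e^{\pi i[\om]}.\bigl(e^{-\pi i(h_\la+h_\mu)}\,v\odot u\bigr).
\]
Pulling this back by $\tilde\varphi^{-1}$ gives $e^{-\pi i(h_\la+h_\mu)}\,e^{\pi i[\om]}.\pi_{\mu,\la}(v\o u)$.

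The second step is to exponentiate Lemma~\ref{prop:affine-omega-action}. On $T_{\mu,\la}$, $[\om]$ is induced by the operator $(h_\la+h_\mu)+\frac1\kappa\Om_{\mu,\la}$ on $L(\mu)\o L(\la)$. This operator preserves $U(\g).J_{\mu,\la}$, by the invariance statement in that lemma. Therefore any power series in $[\om]$ acting on the finite-dimensional quotient is induced by the same power series of this operator. This gives \eqref{eq:epiiomcalc} with $\la,\mu$ swapped, and the scalar factors $e^{\pm\pi i(h_\la+h_\mu)}$ cancel. We obtain
\[
\R_{\la,\mu}\bigl(\pi_{\la,\mu}(u\o v)\bigr)=\pi_{\mu,\la}\Bigl(\exp\bigl(\tfrac{\pi i}{\kappa}\Om_{\mu,\la}\bigr)(v\o u)\Bigr).
\]

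The final step is to rewrite this with $\tau$. The Casimir tensor is symmetric, $\sum_a x_a\o x^a=\sum_a x^a\o x_a$, because the dual bases can be interchanged. Hence $\tau\circ\Om_{\la,\mu}=\Om_{\mu,\la}\circ\tau$, and the same intertwining holds for the exponential. This gives the form \eqref{eq:VOA-Drinfeld-braiding}; the form $q^{\Om_{\mu,\la}}(v\o u)$ is just a restatement of the displayed identity.

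The only point needing care is the exponentiation step: one must justify passing $e^{\pi i[\om]}$ to the quotient. This follows from the invariance of $U(\g).J_{\mu,\la}$ under $\Om_{\mu,\la}$ together with finite-dimensionality, so it is mild.
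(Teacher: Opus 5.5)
Your proposal is correct and follows essentially the same route as the paper: transport \eqref{eq:braiding-formula} along $\tilde\varphi$, replace $e^{-\pi iL(0)}$ by the scalars $e^{-\pi ih_\la}$ and $e^{-\pi ih_\mu}$ on bottom degrees, exponentiate the $[\om]$-action of Lemma~\ref{prop:affine-omega-action} (this is the paper's \eqref{eq:epiiomcalc}), and finish with $\tau\circ\Om_{\la,\mu}=\Om_{\mu,\la}\circ\tau$. Your justification of the exponentiation step, via invariance of $U(\g).J_{\mu,\la}$ under $\Om_{\mu,\la}$ and finite-dimensionality, is slightly more explicit than the paper's one-line remark about commuting operators.
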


\section{\texorpdfstring{Braiding isomorphism on $KL^k(\g)$ at generic levels}{Braiding isomorphism on KL\textasciicircum{}k(g) at generic levels}}\label{sec:generic}

Let $\g$, $(\cdot\mid\cdot)$, $h^\vee$, and $\hat{\g}$ be as in Section~\ref{sec:affine}, and let $k\in \C$ with $\kappa=k+h^\vee\neq 0$. For $\la\in P_+$, we denote by $V^k(\g,\la)$ the Weyl module associated to $L(\la)$, as in Section~\ref{sec:affine}. 

Throughout this section, we assume that either $k+h^\vee\notin \Q$, or $\g=\sl_2$ and $k=-2\pm\frac{1}{p}$ with $p\in\Z_{>0}$. In both cases, there is no singular vector in the vacuum module, that is, $L_k(\g)=V^k(\g)$ \cite{GK07} and $L_{-2\pm \frac{1}{p}}(\sl_2)=V^{-2\pm \frac{1}{p}}(\sl_2)$ \cite{AM26}. We write $V=L_k(\g)=V^k(\g)$. Then
$$A(V)\cong U(\g),\quad [a^1(-n_1-1)\ds a^r(-n_r-1)\vac]\mapsto (-1)^{n_1+\ds +n_r}a^r\ds a^1,$$
where $a^i\in \g$ for all $i$, and $n_1\geq \ds \geq n_r\geq 0$. Hence $\Mod_\fd(A(V))$ is the category of finite-dimensional $\g$-modules, which is semisimple. Moreover, the category $\CC_1(V)=KL^k(\g)$ is semisimple and $\Phi^\L(\Mod_\fd(U(\g)))\ssq KL^k(\g)$, see Subsection~\ref{sec:2.2.3} and Example~\ref{ex:levels}. Hence Theorems~\ref{thm:eta-explicit}, \ref{thm:Rmatrix}, and \ref{thm:braided-tensor-structure} apply to $V$.

\subsection{\texorpdfstring{Description of the $\odot$-tensor product at generic levels}{Description of the odot-tensor product at generic levels}}
			\begin{lm}\label{lm:affine-span}
				Let $S,T\in \Mod_\fd(A(L_k(\g)))$. Then $ \Phi^\L(S)\odot \Phi^\L(T)=\spn\{s\odot t : s\in S, t\in T\}. $ In particular, the canonical map
				\begin{equation}\label{eq:cansurj}
					S\otimes_\C T\to \Phi^\L(S)\odot \Phi^\L(T)=S\bt T,\quad s\o t\mapsto s\odot t\end{equation}
				is surjective. Furthermore, the action of $A(L_k(\g))$ on $\Phi^\L(S)\odot \Phi^\L(T)$ is given as follows: for any $a\in \g$, with $[a]=a(-1)\vac+O(V)\in A(L_k(\g))=U(\g)$,
				\begin{equation}\label{eq:Aactiononodot}
					[a].(s\odot t)=a(0)s\odot t+ s\odot a(0)t,\quad s\in S,\ t\in T.
				\end{equation}
			\end{lm}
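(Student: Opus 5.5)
We argue by a degree induction, in the spirit of the proof of Proposition~\ref{prop:condforC1V}(3), using that the generalized Verma modules are generated by the bottom degree. Here $V=V^k(\g)$ is strongly generated by $V_1=\g$. Hence $\Phi^\L(S)=U(\hat{\g})\o_{U(\g[t]\oplus\C K)}S$ is spanned by monomials
\[
a^1(-n_1)\cdots a^r(-n_r)s,\qquad a^i\in\g,\ n_i\geq 1,\ s\in S,
\]
and similarly for $\Phi^\L(T)$. Every such monomial with $r\geq 1$ lies in $C_1(\Phi^\L(S))$, since it can be written as $a_{(-1)}x$ with $a=a^1(-n_1)\vac\in V_+$. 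Therefore $U^1=S$ and $U^2=T$ are homogeneous complements of the required form for the surjectivity argument. To pass from irreducible modules to arbitrary $S,T$, note that $\Phi^\L(S)$ is a finite direct sum by semisimplicity; in any case the argument below is insensitive to irreducibility.

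\textbf{Step 1 (reduce the first factor).} We show that $x\odot y$ with $x=a(-n)x_0$, $n\geq1$, can be rewritten in terms of elements of lower total degree. For $n=1$ we use the relation \eqref{eq:actionfogeneralspann} with $(s,t)=(1,1)$ at $w=1$, as in the proof of Proposition~\ref{prop:condforC1V}. For general $n$ we use $(s,t)=(1,n)$; since $\wt a=1$ we have $s+t>1$, and the relation reads
\[
\sum_{j\ge0}\binom{0}{j}a_{(j-n)}x_0\odot y=-\sum_{j}\binom{-n}{j}(-1)^{-n-j}x_0\odot a_{(j)}y.
\]
The left-hand side is exactly $a(-n)x_0\odot y$. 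The right-hand side involves only the modes $a(j)$ with $j\ge0$, which do not raise degree, so its total degree is strictly smaller. By induction on $\deg x+\deg y$ we reduce to the case $x\in S$.

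\textbf{Step 2 (reduce the second factor).} Now let $x=s\in S$ and $y=a(-n)y_0$. We take $(s,t)=(1+n,0)$ in \eqref{eq:actionfogeneralspann}; then $\wt a-s=-n$ and
\[
\sum_j\binom{-n}{j}a_{(j)}s\odot y_0=-s\odot a_{(-n)}y_0.
\]
The left-hand side only involves $a(j)s$ with $j\ge0$. Since $s\in S$, these vanish for $j>0$, and for $j=0$ we get $a(0)s\in S$. Iterating, every $s\odot y$ reduces to a combination of elements $s'\odot t$ with $s'\in S$ and $t\in T$. This proves that $\Phi^\L(S)\odot\Phi^\L(T)$ is spanned by $s\odot t$, and hence that \eqref{eq:cansurj} is surjective.

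\textbf{Step 3 (the $A(V)$-action).} Apply \eqref{A(V)action} with $w=1$ and $\wt a=1$:
\[
[a].(s\odot t)=\sum_{j\ge0}\binom{0}{j}a_{(j)}s\odot t+s\odot a_{(0)}t=a(0)s\odot t+s\odot a(0)t.
\]
This gives \eqref{eq:Aactiononodot}, and it is legitimate because $\Phi^\L(S)$ is generated by its bottom degree, as Proposition~\ref{prop:AVaction} requires.

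The main point needing care is checking the binomial bookkeeping in Step~1, namely that the choice $(s,t)=(1,n)$ isolates $a(-n)$ exactly with coefficient $1$; the remainder is routine degree induction.
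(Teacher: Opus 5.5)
Your proof is correct and is essentially the paper's argument: you use the same spanning monomials $a^1(-n_1)\cdots a^r(-n_r)s$ for $\Phi^\L(S)$, the same relation \eqref{eq:finalodotrel} with $(s,t)=(1,n)$ to strip negative modes off the first factor, and \eqref{A(V)action} with $\wt a=1$ for the action. The only difference is the order of the two reductions: the paper first puts the second factor into $T$ by citing \cite[Lemma~4.5]{Liu26} (your Step~2 with $(s,t)=(n+1,0)$ plays that role), after which only $a(0)$ survives and one gets the closed formula $a^1(-n_1)\cdots a^r(-n_r)s\odot t=(-1)^{n_1+\cdots+n_r+r}\,s\odot a^r(0)\cdots a^1(0)t$; your order instead needs the total-degree induction, because $a(j)y$ does not simplify for general $y$.
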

			\begin{proof}
				It follows from \cite[Lemma 4.5]{Liu26} that
				$\Phi^\L(S)\odot \Phi^\L(T)=\spn\{v\odot t: v\in \Phi^\L(S), t\in T\}$. Moreover, since $\mathscr{U}(L_k(\g))\cong \hat{U}_k(\hat{\g})/\hat{I}_k$, where $\hat{I}_k$ is the closed two-sided ideal generated by the image of the singular vectors of $V^k(\g)$ in $\hat{U}_k(\hat{\g})$ \cite{FZ92,FBZ04}, then we have
				\[
				\Phi^\L(S)=\spn\{a^1(-n_1)\ds a^r(-n_r)s: a^i\in \g, n_1\geq \ds \geq n_r\geq 1,s\in S \}.
				\]
				Note that $a^i\in \g\cong V_1$.
				Then by \eqref{eq:finalodotrel} with $\wt a^1=1$, $s=1$, and $t=n_1$, we have
				\begin{align*}
					a^1(-n_1)\ds a^r(-n_r)s\odot t&= -\sum_{j\geq 0}\binom{-n_1}{j} (-1)^{-n_1-j} (a^2(-n_2)\ds a^r(-n_r)s)\odot a^1(j)t\\
					&=(-1)^{-n_1-1} (a^2(-n_2)\ds a^r(-n_r)s)\odot a^1(0)t,
				\end{align*}
				since $t\in T=\Om(\Phi^\L(T))$ and $a^1(j)t=0$ for $j\geq 1$. Proceed inductively, we have
				\[
				a^1(-n_1)\ds a^r(-n_r)s\odot t=(-1)^{-n_1-\ds -n_r-r} s\odot (a^r(0)\ds a^1(0)t),
				\]
				where $a^r(0)\ds a^1(0)t\in T$.
				Hence $\Phi^\L(S)\odot \Phi^\L(T)=S\odot T$. Finally, the action formula \eqref{eq:Aactiononodot} of $A(L_k(\g))$ on $s\odot t$ is given by \eqref{A(V)action}.
			\end{proof}
	The following Proposition generalizes \cite[Lemma 6.3]{AL26}.
			\begin{prop}\label{prop:generic-odot}
				Assume $V=V^k(\g)$, where either $k+h^\vee\notin\Q$,
				or $\g=\sl_2$ and $k=-2\pm\frac{1}{p}$ with
				$p\in\mathbb Z_{>0}$. Then we have
				\[
				\Phi^\L(L(\la))\cong V^k(\g,\la),
				\qquad \forall\,L(\la)\in\Mod_\fd(A(L_k(\g))).
				\]
				In particular, for any simple modules $L(\la),L(\mu)\in\Mod_\fd(A(L_k(\g)))$ the canonical map \eqref{eq:cansurj} is a natural isomorphism of $A(L_k(\g))\cong U(\g)$-modules
				\begin{equation}\label{eq:genericlevelodot}
					L(\la)\o_\C L(\mu)\simeq L(\la)\bt L(\mu),\quad s\o t\mapsto s\odot t,
				\end{equation}
				where the left-hand side carries the usual diagonal $\g$-action.
			\end{prop}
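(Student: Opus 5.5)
Two things have to be shown: the identification $\Phi^\L(L(\la))\cong V^k(\g,\la)$, and the bijectivity of the canonical map \eqref{eq:cansurj} for simple $S=L(\la)$, $T=L(\mu)$.

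\emph{Step 1: generalized Verma modules are Weyl modules.} Since $V=V^k(\g)$ is the universal affine VOA, its universal enveloping algebra $\scrU(V)$ is the completion $\hat U_k(\hat\g)$ of $U(\hat\g)/(K-k)$, with no further ideal. Hence
\[
\Phi^\L(L(\la))=(\scrU/\sN^1_\L\scrU)\o_{\scrU_0}L(\la)
\]
is the $\hat\g$-module induced from $L(\la)$, on which $\g\o t\C[t]$ acts by zero and $K$ by $k$. That is exactly $V^k(\g,\la)$. One can see this either by comparing universal properties (both are universal among modules generated by a copy of $L(\la)$ annihilated by positive modes), or by the PBW description of $\Phi^\L(S)$ used in Lemma~\ref{lm:affine-span}.

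\emph{Step 2: surjectivity and $\g$-linearity.} By Lemma~\ref{lm:affine-span}, the map $s\o t\mapsto s\odot t$ from $L(\la)\o_\C L(\mu)$ to $L(\la)\bt L(\mu)$ is surjective. By \eqref{eq:Aactiononodot} it intertwines the diagonal $\g$-action. Naturality is clear from the formula.

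\emph{Step 3: injectivity by a dimension count.} I would compare dimensions using \eqref{odotiso}:
\[
L(\la)\bt L(\mu)\cong\bigoplus_{\nu}L(\nu)\o I\fusion{V^k(\g,\la)}{V^k(\g,\mu)}{V^k(\g,\nu)}^\ast .
\]
So it suffices to show that the fusion rule among these Weyl modules equals the multiplicity $[L(\la)\o L(\mu):L(\nu)]$. For the upper bound, apply the fusion rules theorem, in the form given by Frenkel--Zhu/Li, or the isomorphism \eqref{eq:FZandodot1}. The $A(V)$-bimodule $A(V^k(\g,\la))$ is $L(\la)\o U(\g)$ with the standard bimodule structure (cf.\ \cite[Theorem~3.2.2]{FZ92} with no singular vector), so
\[
N\le\dim\Hom_\g\bigl(L(\la)\o L(\mu),L(\nu)\bigr).
\]
This bound is equivalent to the surjectivity already obtained in Step 2. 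For the reverse inequality, the cleanest route is to construct the intertwining operators directly. Given a $\g$-homomorphism $L(\la)\o L(\mu)\to L(\nu)$, I would build an intertwining operator by induction from the top levels. Alternatively, I would use the genericity assumption via the KZ/Kazhdan--Lusztig theory: the category $KL^k(\g)$ is semisimple with simple objects $V^k(\g,\nu)$, and its fusion rules agree with the classical tensor product multiplicities \cite{KL1,KL2}.

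\emph{Expected obstacle.} The hard part is this lower bound on fusion rules. A cheap alternative avoiding KL theory is to construct the restricted conformal block directly: define a functional on $\Om((M^3)')\o M^1\o M^2$ using the Verma-type freeness of $V^k(\g,\la)$ and $V^k(\g,\mu)$, and check that it kills $\mathcal L_{\P^1\bs\{1,0\}}(V)_{<0}$. That check is essentially the statement that the relations \eqref{eq:finalodotrel} only ever move modes between the two factors, with no singular-vector relation to impose. Concretely, I would define a linear map from $\Phi^\L(S)\o\Phi^\L(T)$ to $S\o T$ by the reduction formula in the proof of Lemma~\ref{lm:affine-span}. Its well-definedness follows from the PBW freeness of Weyl modules, which is guaranteed by $\Phi^\L(L(\la))\cong V^k(\g,\la)$. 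I would then verify that it vanishes on the relations, so that it inverts the canonical map.
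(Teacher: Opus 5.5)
\textbf{Verdict: genuine gap.} Steps 1 and 2 match the paper. Step 3 (injectivity) is the real content of the proposition, and none of your three options establishes it.

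\textbf{The dimension-count route.} Using \eqref{odotiso}, you reduce injectivity to the lower bound $N\fusion{V^k(\g,\la)}{V^k(\g,\mu)}{V^k(\g,\nu)}\ge[L(\la)\o L(\mu):L(\nu)]$. Neither justification you offer for it works as stated.
\begin{itemize}
\item ``Build intertwining operators by induction from the top levels'' is not an argument.
\item Kazhdan--Lusztig theory requires $\kappa\notin\Q_{\ge0}$, so it says nothing for $\sl_2$ at $k=-2+\frac1p$, where $\kappa=\frac1p>0$.
\item At $k=-2-\frac1p$ it lands at $q=e^{\pi i/\kappa}=\pm1$, where classical multiplicities need separate justification.
\item Even for $\kappa\notin\Q$, you would also need to identify the KL fusion product with the HLZ/Huang $P(1)$-tensor product that \eqref{odotiso} refers to.
\end{itemize}
This route can probably be rescued uniformly. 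As I recall, Li's form of the fusion rules theorem \cite{Li99b} gives equality, not just $\le$, when $M^2$ and $(M^3)'$ are generalized Verma modules. That holds here: $V^k(\g,\mu)=\Phi^\L(L(\mu))$ and $V^k(\g,\nu)'\cong\Phi^\L(L(\nu)^*)$. Combined with $A(V^k(\g,\la))\cong L(\la)\o U(\g)$, this would give the exact fusion rules.

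\textbf{The ``cheap alternative''.} This is precisely the paper's proof. One defines $\tilde\varphi:V^k(\g,\la)\o V^k(\g,\mu)\to L(\la)\o L(\mu)$ on PBW monomials by the reduction rules \eqref{eq:rel1} and \eqref{eq:rel2}. These are the relations \eqref{eq:finalodotrel} for $a\in\g$ with $(s,t)=(1,r)$ and $(r+1,0)$. One then shows $\tilde\varphi$ kills all relations, so it inverts the canonical map. However, you skip the two places where the work lies.
\begin{itemize}
\item \emph{Compatibility with reordering.} PBW freeness only makes a recursive definition possible. That both rules then hold for arbitrary, non-PBW-ordered vectors is a separate compatibility check against the brackets of $\hat\g$.
\item \emph{Passing from $\g$ to all of $V$.} The definition only involves $a\in V_1=\g$. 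But $\mathcal L_{\P^1\bs\{1,0\}}(V)_{<0}$ contains $b\o f$ for every $b\in V$, whose local actions are normally ordered products of modes. So ``the relations only move modes between the two factors'' does not cover them.
\end{itemize}
The paper bridges the second point as follows. By induction on the length of $b$, it constructs a rational function $F_b(u\o v)(z)$ with these properties:
\begin{itemize}
\item its only poles are at $0$ and $1$;
\item its expansions at $z=1$ and $z=0$ are $\tilde\varphi(Y(b,z-1)u\o v)$ and $\tilde\varphi(u\o Y(b,z)v)$;
\item it is $O(z^{-\wt b})$ at $\infty$.
\end{itemize}
For $f=z^n/(z-1)^m$ with $n-m<\wt b-1$, the product $F_bf$ has no residue at $\infty$. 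The residue theorem then shows that $\tilde\varphi$ vanishes on $\mathcal L_{\P^1\bs\{1,0\}}(V)_{<0}.(V^k(\g,\la)\o V^k(\g,\mu))$.
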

			\begin{proof}
				Since $\scrU(V^k(\g))\cong \hat{U}_k(\hat{\g})$ and $\deg(a(n))=-n$, for all $a\in \g=V_1$ and $n\in \Z$, then by \eqref{Verma},
				\[
				\Phi^\L(L(\la))=\left(\frac{\hat{U}_k(\hat{\g})}{ {\hat{U}_k(\hat{\g})}{\hat{U}_k(\hat{\g}_{<0})}} \right) \otimes_{\hat{U}(\hat{\g}_0)} L(\la)\cong \mathrm{Ind}^{\hat{\g}}_{\hat{\g}_{\geq 0}} L(\la)= V^k(\g,\lambda).
				\]
				Similar to the proof of \cite[Lemma 6.3]{AL26}, we define a linear map
				\[
				\tilde \varphi: V^k(\g,\la)\otimes_\C V^k(\g,\mu)\to L(\la)\o_\C L(\mu)
				\]
				on the tensor products of PBW basis elements of $V^k(\g,\la)$ and $V^k(\g,\mu)$ by induction on the total degree, as follows.
				
				For $u\in L(\la)$ and $v\in L(\mu)$, define $\tilde\varphi(u\o v):=u\o v\in L(\la)\o_\C L(\mu)$.
				Choose an ordered basis $\{a^1,\ds, a^n\}$ of $\g$. A PBW basis element of $V^k(\g,\la)$ of positive degree can be uniquely written as $a(-r)u$ for some $a=a^i$, $r\geq 1$, and $u$ a PBW basis element of lower degree. Define
				\begin{equation}\label{eq:rel1}
					\tilde{\varphi}(a(-r)u\o v):=-\sum_{j\geq 0}\binom{-r}{j} (-1)^{-r-j} \tilde{\varphi}(u\o a(j)v).
				\end{equation}
				Similarly, given a PBW basis element $a(-r)v$ of $V^k(\g,\mu)$, with $r\geq 1$, define
				\begin{equation}\label{eq:rel2}
					\tilde{\varphi}(u\o a(-r)v):=-\sum_{j\geq 0}\binom{-r}{j}\tilde \varphi (a(j)u\o v).
				\end{equation}
				Note that
				$$\deg(u\o v)<\deg (a(-r)u\o v),\quad \deg(a(j)u\o v)=-j+\deg (u\o v)<\deg(u\o a(-r)v),$$ for all $j\geq 0$, we can apply the induction hypothesis after rewriting
				$a(j)u$ and $a(j)v$ as linear combinations of PBW basis
				elements for all $j\geq 0$. Hence $\tilde\varphi$ is well-defined.
				
				Moreover, using the commutation relations in $\hat{\g}$ and induction on the total degree, one can show that \eqref{eq:rel1} and
				\eqref{eq:rel2} are compatible with PBW reordering and hold
				for arbitrary pairs of PBW basis elements. By bilinearity, \eqref{eq:rel1} and
				\eqref{eq:rel2} hold for all $u\in V^k(\g,\la)$ and $v\in V^k(\g,\mu)$.
				
				Now, we claim that given $b=a^1(-n_1)\ds a^r(-n_r)\vac\in V^k(\g)$, there exists a rational function
				$$F_b: V^k(\g,\la)\o_\C V^k(\g,\mu)\to (L(\la)\o_\C L(\mu))[z,z^{-1},(z-1)^{-1}],\quad u\o v\mapsto F_b(u\o v)(z),$$
				with poles only at $z=0$ and $z=1$, such that
				\begin{equation}\label{eq:Fb}
					\begin{aligned}
						\iota_{z=1}F_b(u\o v)(z)
						&=\tilde\varphi\bigl(Y(b,z-1)u\otimes v\bigr),\\
						\iota_{z=0}F_b(u\o v)(z)
						&=\tilde\varphi\bigl(u\otimes Y(b,z)v\bigr),\\
						\iota_{z=\infty} F_b(u\o v)(z)
						&\in z^{-\wt b}
						\bigl(L(\la)\otimes_\C L(\mu)\bigr)[\![z^{-1}]\!].
					\end{aligned}
				\end{equation}
				We use induction on the length $r$ of $b$ to construct $F_b$ \eqref{eq:Fb}. When $r=0$ and $b=\vac$, we let $F_\vac(u\o v)(z)=\tilde{\varphi}(u\o v)$. For $b=a(-1)\vac$, let
				\[
				F_{a(-1)\vac}(u\o v)(z):=
				\sum_{j\ge0}\frac{\tilde\varphi(a(j)u\otimes v)}{(z-1)^{j+1}}+
				\sum_{j\ge0}\frac{\tilde\varphi(u\otimes a(j)v)}{z^{j+1}}.
				\]
				
				In view of \eqref{eq:finalodotrel}, $\tilde{\varphi}$ is defined by the $\odot$ tensor product relation \eqref{eq:finalodotrel} with particular choices $(s,t)=(1,r)$ and $(s,t)=(r+1,0)$. Therefore, it is easy to show by induction that $\tilde\varphi$ preserves the relations \eqref{eq:finalodotrel}. In particular, $\tilde\varphi$ induces a well-defined linear map
				\[
				\varphi: V^k(\g,\la)\odot V^k(\g,\mu)\to L(\la)\o_\C L(\mu)
				\]
				which is clearly the inverse of the surjective map \eqref{eq:cansurj}.
				This shows \eqref{eq:genericlevelodot} is an isomorphism. Furthermore, it follows from \eqref{eq:Aactiononodot} that \eqref{eq:genericlevelodot} is an isomorphism of $\g$-modules, where the $\g$-act on $L(\la)\o_\C L(\mu)$ diagonally.
			\end{proof}
			
			\subsection{\texorpdfstring{The braiding on $\Mod_\fd(U(\g))$ and Drinfeld's $R$-matrix}{The braiding isomorphism on Mod\_fd(U(g))}}
For $\la\in P_+$, let $h_\la=(\la|\la+2\rho)/2\kappa$ be the conformal weight of $V^k(\g,\la)$. The second proof of \eqref{eq:affine-omega-action} only uses the relations \eqref{eq:actionfogeneralspann} and the Segal-Sugawara construction. Hence the same argument shows that the action of $[\om]\in A(V)\cong U(\g)$ on $L(\la)\o_\C L(\mu)\cong L(\la)\bt L(\mu)$ \eqref{eq:genericlevelodot} is given by
			\[
			[\om].(u\o v)=(h_\la+h_\mu)u\o v+\frac{1}{\kappa}\Om_{\la,\mu}(u\o v),\quad u\in L(\la),\ v\in L(\mu).
			\]
			Since $\Mod_\fd(U(\g))$ is semisimple and the canonical map \eqref{eq:cansurj} is natural, the isomorphism \eqref{eq:genericlevelodot} holds for all objects $S,T$ of $\Mod_\fd(U(\g))$, i.e., $S\bt T=S\o_\C T$ with the diagonal action of $U(\g)$. 
	
Then by \eqref{eq:braiding-formula}, the same calculation as in the proof of Theorem~\ref{thm:VOA-Drinfeld-braiding}, and the naturality of $\R$, we have the following Corollary.
			\begin{coro}\label{coro:generic-braiding}
				Let $V=V^k(\g)$ be as above. The category $\Mod_{\mathrm{fd}}(U(\g))$ is a braided tensor category, with fusion tensor product given by
				\[	S \bt T=S\o_\C T,	\]
				on which $U(\g)$ acts diagonally, with the associativity and unit constraints \eqref{eq:transported-constraints}, and the braiding isomorphism is given by
				\begin{equation}\label{eq:generic-braiding}
					\R_{S,T}: S\o_\C T\xrightarrow{\simeq} T\o_\C S,\quad \R_{S,T}(u\o v)=\left(\tau\circ \exp\left(\frac{\pi i }{\kappa} \Om_{S,T}\right)\right)(u\o v)=q^{\Om_{T,S}}(v\o u),
				\end{equation}
				where $q=e^{\pi i/\kappa}$, $u\in S$, $v\in T$, and $\Om_{S,T}$ is the action of $\Om$ \eqref{eq:CandOmega} on $S\o_\C T$. Moreover, $\Phi^\L:\Mod_{\mathrm{fd}}(U(\g))\to KL^k(\g)$ is a braided tensor equivalence.
			\end{coro}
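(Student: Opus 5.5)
The plan is to assemble Corollary~\ref{coro:generic-braiding} from three ingredients already available: the identification of the fusion product, the braiding formula \eqref{eq:braiding-formula}, and the computation of $[\om]$.

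\textbf{Step 1 (the tensor structure).} By the discussion at the start of Section~\ref{sec:generic}, the hypotheses of Theorem~\ref{thm:braided-tensor-structure} hold for $V=V^k(\g)$. Hence $\Mod_{\mathrm{fd}}(U(\g))$ carries a braided tensor structure with $S\bt T=\Phi^\L(S)\odot\Phi^\L(T)$, the constraints \eqref{eq:transported-constraints}, and braiding $\R_{S,T}$. Part (2) of that theorem also shows that $\Phi^\L$ is a braided tensor equivalence onto $\CC_1(V)=KL^k(\g)$.

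\textbf{Step 2 (the underlying space).} By Proposition~\ref{prop:generic-odot}, $L(\la)\bt L(\mu)\cong L(\la)\o_\C L(\mu)$ via $s\o t\mapsto s\odot t$, compatibly with the diagonal $\g$-action. The map \eqref{eq:cansurj} is natural and surjective, and it is an isomorphism on simple objects. Since $\Mod_\fd(U(\g))$ is semisimple and $\odot$ commutes with finite direct sums, \eqref{eq:cansurj} is an isomorphism $S\o_\C T\cong S\bt T$ for all $S,T$.

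\textbf{Step 3 (the action of $[\om]$).} The second proof of \eqref{eq:affine-omega-action} uses only \eqref{eq:actionfogeneralspann}, \eqref{A(V)action} and the Sugawara form of $L(-1)$ on bottom vectors. It therefore applies verbatim and gives
\[
[\om]=(h_\la+h_\mu)+\tfrac1\kappa\Om_{\la,\mu}\quad\text{on } L(\la)\o L(\mu).
\]
By additivity this becomes $[\om]|_{S\o T}=L(0)_S\o1+1\o L(0)_T+\frac1\kappa\Om_{S,T}$, where $L(0)$ acts on the bottom degree $S$ by $[\om]$. These three operators commute, since $\Om$ commutes with $C\o1$ and $1\o C$. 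Exponentiating gives
\[
e^{\pi i[\om]}=\bigl(e^{\pi iL(0)}\o e^{\pi iL(0)}\bigr)\exp\!\left(\tfrac{\pi i}{\kappa}\Om\right).
\]

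\textbf{Step 4 (the braiding formula).} Insert this into \eqref{eq:braiding-formula}:
\[
\R_{S,T}(u\o v)=e^{\pi i[\om]}\bigl(e^{-\pi iL(0)}v\o e^{-\pi iL(0)}u\bigr)=\exp\!\left(\tfrac{\pi i}\kappa\Om_{T,S}\right)(v\o u).
\]
The phases cancel here because $\Om_{T,S}$ commutes with $L(0)\o1$ and $1\o L(0)$. Finally $\tau\Om_{S,T}=\Om_{T,S}\tau$, since $\Om$ is symmetric. This yields \eqref{eq:generic-braiding}, and $q^{\Om_{T,S}}$ is merely notation.

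\textbf{Main obstacle.} The only delicate point is Step 3 for non-simple $S,T$, namely the commuting and exponentiation of operators. If needed, I would reduce to simple summands via naturality of $\R$ and of \eqref{eq:cansurj}, exactly as the paper does, and compute there with scalars $h_\la,h_\mu$.
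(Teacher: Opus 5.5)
Your proposal is correct and follows the paper's own argument. You apply Theorem~\ref{thm:braided-tensor-structure}, extend Proposition~\ref{prop:generic-odot} to all objects by semisimplicity and naturality of \eqref{eq:cansurj}, reuse the second proof of \eqref{eq:affine-omega-action} for the $[\om]$-action, and substitute into \eqref{eq:braiding-formula} as in Theorem~\ref{thm:VOA-Drinfeld-braiding}. The only cosmetic difference is that you exponentiate directly on arbitrary $S\o_\C T$, which is legitimate because $[\om]$ acts on $S$ by the central element $\frac{1}{2\kappa}C$, so the three summands commute; the paper instead computes on simple summands and extends by naturality of $\R$.
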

			
In this case, there is no quotient in \eqref{eq:generic-braiding} since 	$S \bt T=S\o_\C T$, and the braiding isomorphism is given by an $R$-matrix. Indeed, let
	\begin{equation}\label{eq:Rmatrix}
\mathscr R=\exp\left(\frac{\pi i}{\kappa}\Om\right)=\sum_{n\geq 0}\frac{1}{n!}\left(\frac{\pi i}{\kappa}\right)^n\Om^n\in U(\g)\hat{\otimes} U(\g),	
\end{equation}		
			which converges on $S\o_\C T$ for all $S,T\in\Mod_\fd(U(\g))$, since $S\o_\C T$ is finite-dimensional. Then \eqref{eq:generic-braiding} can be written as $$\R_{S,T}=\tau\circ\mathscr R|_{S\o_\C T},\quad S,T\in \Mod_\fd(U(\g)).$$ 
			
\subsubsection{Relations with Drinfeld's $R$-matrix}			
 $\mathscr R$ \eqref{eq:Rmatrix} is a specialization of Drinfeld's $R$-matrix \cite{Dri90}. We first recall some notation and constructions in \cite[Section 19]{KL3}. Let $\varpi$ be a formal variable, and let $\A=\C[\![\varpi]\!]$. Let $(a_{ij})_{i,j\in I}$ be the Cartan matrix of $\g$, and choose integers $\delta_i>0$ such that $\delta_ia_{ij}=\delta_ja_{ji}$ for all $i,j\in I$. Under our normalization $(\theta|\theta)=2$, we have
		\[
		\delta_i=\frac{d}{2}(\alpha_i|\alpha_i),\quad \mathrm{where}\quad d=\max_{i\in I}\delta_i.
		\]
		Let $\g_\A=\A\o_\C\g$, and let $\mathfrak U=\A\o_\C U(\g)$ be its universal enveloping algebra over $\A$, with the coproduct $\Delta:\mathfrak U\ra\mathfrak U\o_\A\mathfrak U$ given by $\Delta(x)=x\o 1+1\o x$ for $x\in\g_\A$. The Casimir element $C$ and the element $\Om$ in \eqref{eq:CandOmega} can be viewed as elements $C_\A\in\mathfrak U$ and $\Om_\A\in\mathfrak U\o_\A\mathfrak U$. The following element is introduced in \cite[Section 19.2]{KL3}:
		\begin{equation}\label{eq:t}
			t=\frac{d}{2}\left(\Delta(C_\A)-1\o C_\A-C_\A\o 1\right)=d\,\Om_\A.
		\end{equation}
		It satisfies $\Delta(x)t=t\Delta(x)$ in $\mathfrak U\o_\A\mathfrak U$ for all $x\in\g_\A$.
		
		Let $\mathscr D$ be the category of $\mathfrak U$-modules which are free of finite rank as $\A$-modules. It is referred to as the {\em Drinfeld category} in \cite{KL3}. In particular, $\A\o_\C S$ is an object of $\mathscr D$ for any $S\in\Mod_\fd(U(\g))$. For $V_1,V_2\in\mathscr D$, $V_1\o_\A V_2$ is a $\mathfrak U$-module via $\Delta$, and the series
		\[
		e^{\pi i\varpi t}=\sum_{n\geq 0}\varpi^n\frac{(\pi i t)^n}{n!}
		\]
		converges $\varpi$-adically on $V_1\o_\A V_2$. Following \cite[Section 19.12]{KL3}, the commutativity isomorphism is defined by
		\begin{equation}\label{eq:com}
			C_{V_1,V_2}: V_1\o_\A V_2\xrightarrow{\simeq} V_2\o_\A V_1,\quad x_1\o x_2\mapsto e^{\pi i\varpi t}(x_2\o x_1).
		\end{equation}
		Since $t$ commutes with $\Delta(\g_\A)$, $C_{V_1,V_2}$ is a $\mathfrak U$-module isomorphism.
		
		\begin{lm}\label{lm:Drinfeld-category}\cite{Dri90,KL3}
			The category $\mathscr D$ carries an $\A$-linear braided monoidal structure. Its tensor product is $V_1\o_\A V_2$, with the $\mathfrak U$-action induced by $\Delta$, and its commutativity isomorphism is given by \eqref{eq:com}. The associator is defined by comparing asymptotically normalized solutions of the KZ equation at $0$ and $1$.
			
		Moreover, $\mathscr D$ is equivalent, as an $\A$-linear braided monoidal category, to the category of type-$1$ modules over the formal Drinfeld--Jimbo quantum group $U_q(\g)$ that are free of finite rank over $\A$, where $q=e^{\pi i\varpi}$.
		\end{lm}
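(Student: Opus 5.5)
This is a cited result from \cite{Dri90,KL3}, so the plan is to reconstruct Drinfeld's argument rather than derive it from the VOA machinery of the earlier sections. Work throughout over $\A=\C[\![\varpi]\!]$ with the element $t=d\,\Om_\A$ of \eqref{eq:t}. First I will check the formal properties of $t$ that are used repeatedly: $t$ is symmetric, $t$ commutes with $\Delta(\mathfrak U)$, and the infinitesimal braid relations hold, namely $[t_{12},t_{13}+t_{23}]=0$ and $[t_{12}+t_{13},t_{23}]=0$ in $\mathfrak U^{\o 3}$. These follow from $\Delta(C_\A)$ being central in $\Delta(\mathfrak U)$ together with \eqref{eq:coproduct-Casimir}. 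With these in hand, the tensor product $V_1\o_\A V_2$ via $\Delta$ is clearly an object of $\mathscr D$. The map \eqref{eq:com} is a $\mathfrak U$-isomorphism, since $e^{\pi i\varpi t}$ converges $\varpi$-adically and commutes with $\Delta(\mathfrak U)$.

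Next I will construct the associator. Consider the KZ equation
\[
G'(x)=\varpi\left(\frac{t_{12}}{x}+\frac{t_{23}}{x-1}\right)G(x)
\]
on $(0,1)$, with values in $\mathfrak U^{\hat\o 3}[\![\varpi]\!]$. Its solutions $G_0$ and $G_1$ with asymptotics $x^{\varpi t_{12}}$ at $0$ and $(1-x)^{\varpi t_{23}}$ at $1$ exist and are unique, by a formal power series argument order by order in $\varpi$. The Drinfeld associator is then $\Phi_{KZ}=G_1^{-1}G_0$. It commutes with $\Delta^{(2)}(\mathfrak U)$ because $t_{12}$ and $t_{23}$ do, and it defines $\alpha_{V_1,V_2,V_3}$. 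The unit constraints are trivial, with $\A$ as the unit.

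The main obstacle is verifying the pentagon and the two hexagon identities for $(\Phi_{KZ},e^{\pi i\varpi t})$. I would follow Drinfeld's approach. For the pentagon, interpret both sides as transition maps between asymptotic solutions of the four-point KZ system at different boundary points of the compactified configuration space. Uniqueness of normalized solutions then forces the five-term relation, since the composition of transports around the pentagon of boundary strata is trivial by flatness, which in turn rests on the infinitesimal braid relations. For the hexagons, compare the solution normalized at $0$ with the one obtained by the substitution $x\mapsto 1-x$ and by analytic continuation along a half-turn around $0$. The half-turn produces the factor $e^{\pi i\varpi t_{12}}$ from $x^{\varpi t_{12}}$, and this gives
\[
(\Delta\o1)(e^{\pi i\varpi t})=\Phi_{312}\,e^{\pi i\varpi t_{13}}\,\Phi_{132}^{-1}\,e^{\pi i\varpi t_{23}}\,\Phi_{123},
\]
together with its mirror identity.

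Finally, the equivalence with type-$1$ modules over $U_q(\g)$, $q=e^{\pi i\varpi}$, is the Drinfeld--Kohno theorem. I would take this step from \cite{Dri90} (quasi-Hopf twisting) and \cite[Section 19]{KL3} rather than reprove it. The outline is: $(\mathfrak U,\Delta,\Phi_{KZ},e^{\pi i\varpi t})$ is a quasitriangular quasi-Hopf algebra; by rigidity of $U(\g)$ modulo $\varpi$ (vanishing of the relevant cohomology), it is twist-equivalent to a genuine quasitriangular Hopf algebra deforming $U(\g)$; and uniqueness of such deformations identifies it with $U_q(\g)$. This yields the $\A$-linear braided monoidal equivalence.
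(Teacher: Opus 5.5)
The paper gives no proof of this lemma; it is quoted from \cite{Dri90,KL3}. Your proposal likewise reconstructs Drinfeld's KZ construction and defers the Drinfeld--Kohno equivalence to those sources, so it is consistent with the paper. Several of your formal steps are correct:
\begin{itemize}
\item $t=d\,\Omega_{\mathbb A}$ is symmetric and commutes with $\Delta(\mathfrak U)$.
\item The infinitesimal braid relations follow from this together with $(\Delta\otimes 1)(t)=t_{13}+t_{23}$.
\item The normalized solutions $G_0,G_1$ exist and are unique order by order in $\varpi$.
\item The normalization $G'=\varpi(t_{12}/x+t_{23}/(x-1))G$ makes the half-turn monodromy $e^{\pi i\varpi t_{12}}$ match \eqref{eq:com}.
\item The pentagon sketch via the four-point system is the standard argument.
\end{itemize}

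Two steps of your reconstruction would not work as written.

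First, the hexagon does not follow from the solution at $0$, the reflection $x\mapsto 1-x$ and a half-turn at $0$ alone. Put $A=t_{12}$, $B=t_{23}$, $C=t_{13}$. The reflection only yields $\Phi(B,A)=\Phi(A,B)^{-1}$. Using this together with $[t_{12},t_{13}+t_{23}]=0$, your identity is equivalent to $e^{\pi i\varpi A}\,\Phi(C,A)\,e^{\pi i\varpi C}\,\Phi(B,C)\,e^{\pi i\varpi B}\,\Phi(A,B)=e^{\pi i\varpi(A+B+C)}$. The factors $\Phi(C,A)$, $\Phi(B,C)$ and $e^{\pi i\varpi C}$ involve the residue of the KZ connection at $\infty$. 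That residue is $-A-B$, which equals $C$ up to $t_{12}+t_{13}+t_{23}$, an element commuting with all $t_{ij}$. So you need the solution normalized at $\infty$ (equivalently, the cyclic symmetry $x\mapsto 1/(1-x)$). You also need triviality of the composed transport around the whole boundary of the upper half-plane, with half-turns at $0$, $1$ and $\infty$; this is Drinfeld's argument. The second hexagon reduces in the same way to this relation with $A$ and $B$ interchanged.

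Second, your gloss on the twisting step is inaccurate. The $\varpi^2$-coefficient of $\Phi_{KZ}$ is a nonzero multiple of $[t_{12},t_{23}]$. This is a nonzero element of $(\Lambda^3\mathfrak g)^{\mathfrak g}$, hence a nonzero class in the cohomology $\Lambda^\bullet\mathfrak g$ of the cobar complex of $U(\mathfrak g)$. So no twist that keeps the coproduct cocommutative to first order can kill $\Phi_{KZ}$. The twist must have a nonzero skew first-order part $r_0\in\Lambda^2\mathfrak g$ solving the modified classical Yang--Baxter equation. This is exactly where the standard cobracket, and hence the Drinfeld--Jimbo coproduct, comes from; it is not a consequence of vanishing cohomology. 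Since you cite this step rather than prove it, the error does not affect your logic, but the mechanism should be stated correctly.
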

		
		Now we compare $C_{V_1,V_2}$ \eqref{eq:com} with our braiding  $\R_{S,T}$ \eqref{eq:generic-braiding}. For finite-dimensional $U(\g)$-modules $S,T$, we let $V_1=\A\o_\C S$ and $V_2=\A\o_\C T$. Then $V_1\o_\A V_2=\A\o_\C(S\o_\C T)$, on which $\Om_\A$ acts by $\Id_\A\o\Om_{S,T}$. For any $z\in\C$, define
		\begin{equation}\label{eq:Cs}
			\mathcal C_{S,T}(z)=\tau\circ\exp\left(z\cdot\pi i\,(d\,\Om_{S,T})\right): S\o_\C T\xrightarrow{\simeq} T\o_\C S.
		\end{equation}
		Since $S\o_\C T$ is finite-dimensional, this gives a family of well-defined $U(\g)$-module isomorphisms, and the matrix coefficients of $\mathcal C_{S,T}(z)$ with respect to any bases of $S\o_\C T$ and $T\o_\C S$ are holomorphic functions of $z$. Let $\mathrm{Hol}(\C)$ be the ring of holomorphic functions on $\C$, and define the formal Taylor expansion
		\[
	\iota_0:\mathrm{Hol}(\C)\ra\A=\C[\![\varpi]\!],\quad f\mapsto\sum_{n\geq 0}\frac{f^{(n)}(0)}{n!}\varpi^n.
		\]
		It is an injective ring homomorphism. Applying $\iota_0$ to the matrix coefficients of $\mathcal C_{S,T}(z)$, we have
		\begin{equation}\label{eq:TayCV1V2}
\iota_0(\mathcal C_{S,T})=\tau_\A\circ\exp\left(\pi i\varpi\,(d\,\Om_\A)\right)=C_{V_1,V_2},
		\end{equation}
		in view of \eqref{eq:t}, \eqref{eq:com}, and $\tau\circ\Om_{S,T}=\Om_{T,S}\circ\tau$, where $\tau_\A=\Id_\A\o\tau$. On the other hand, let $z=1/(d\kappa)$, then by \eqref{eq:generic-braiding} we have
		\[
		\mathcal C_{S,T}\left(\frac{1}{d\kappa}\right)=\tau\circ\exp\left(\frac{\pi i}{\kappa}\Om_{S,T}\right)=\R_{S,T}.
		\]
		Therefore, we have the following comparison between our braiding isomorphism and Drinfeld's commutativity isomorphism.
		\begin{prop}\label{prop:generic-Drinfeld}
			Let $S,T\in\Mod_\fd(U(\g))$, and let $V_1=\A\o_\C S$ and $V_2=\A\o_\C T$. The family of $U(\g)$-module isomorphisms $\mathcal C_{S,T}(z):S\o_\C T\ra T\o_\C S$ in \eqref{eq:Cs} is holomorphic in $z\in\C$, its formal Taylor expansion at $z=0$ is Drinfeld's commutativity isomorphism $C_{V_1,V_2}$ \eqref{eq:com}, and its value at $z=1/(d\kappa)$ is the braiding isomorphism \eqref{eq:generic-braiding}:
			\[
			\mathrm{Tay}_0(\mathcal C_{S,T})=C_{V_1,V_2},\qquad \mathcal C_{S,T}\left(\frac{1}{d\kappa}\right)=\R_{S,T}.
			\]
	In other words, with respect to any bases of $S\o_\C T$ and $T\o_\C S$, the matrix coefficients of $C_{V_1,V_2}$ are power series in $\varpi$ with infinite radius of convergence, and their values at $\varpi=1/(d\kappa)$ give $\R_{S,T}$. In particular, $\mathscr R$ \eqref{eq:Rmatrix} is Drinfeld's $R$-matrix $e^{\pi i\varpi t}$ at $\varpi=1/(d\kappa)$.
		\end{prop}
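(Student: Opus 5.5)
The plan is to verify the two identities in the statement directly from the explicit formulas that precede it. The last sentence then follows by setting $S=T$ to be arbitrary and reading off $\mathscr R$.

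\emph{Holomorphy and value at $z=1/(d\kappa)$.} Since $S\o_\C T$ is finite-dimensional, $\Om_{S,T}$ is a fixed matrix $A$ once bases are chosen. Hence $\exp(z\pi i\, dA)=\sum_n z^n(\pi i d)^nA^n/n!$ is an entire matrix-valued function. Composing with the constant matrix of $\tau$ keeps every matrix coefficient of $\mathcal C_{S,T}(z)$ in $\mathrm{Hol}(\C)$. Each $\mathcal C_{S,T}(z)$ is $U(\g)$-linear because $\Om$ commutes with $\Delta(x)$ for $x\in\g$ (the identity \eqref{eq:coproduct-Casimir} together with the centrality of $C$) and $\tau$ intertwines the diagonal actions. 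Substituting $z=1/(d\kappa)$ gives $\tau\circ\exp(\frac{\pi i}{\kappa}\Om_{S,T})$, which is $\R_{S,T}$ by Corollary~\ref{coro:generic-braiding}.

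\emph{Taylor expansion.} The Taylor series at $0$ of an entire function is its power series, with infinite radius of convergence. Applying $\iota_0$ entrywise to $\sum_n z^n(\pi i d)^n\tau A^n/n!$ therefore gives $\sum_n\varpi^n(\pi i d)^n\tau_\A\Om_\A^n/n!=\tau_\A\circ e^{\pi i\varpi t}$, using $t=d\,\Om_\A$ from \eqref{eq:t} and the fact that $\Om_\A$ acts on $V_1\o_\A V_2=\A\o_\C(S\o_\C T)$ as $\Id_\A\o\Om_{S,T}$. Next, rewrite $\tau_\A\circ e^{\pi i\varpi t}$ as $e^{\pi i\varpi t}\circ\tau_\A$ on $V_1\o_\A V_2$. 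For this, use $\tau\circ\Om_{S,T}=\Om_{T,S}\circ\tau$, which holds because $\Om=\sum_a x_a\o x^a$ is symmetric under the flip: the dual bases can be interchanged, since the form is symmetric. Conjugating the exponential series term by term then gives the identity. The resulting map sends $x_1\o x_2\mapsto e^{\pi i\varpi t}(x_2\o x_1)$, which is exactly $C_{V_1,V_2}$ in \eqref{eq:com}. The $\varpi$-adic convergence in $\A$ agrees with the formal expansion, because $\iota_0$ is a ring homomorphism that commutes with the finite matrix products in each degree of $\varpi$.

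\emph{Main obstacle.} The only point needing care is the bookkeeping of the identification $\mathrm{Tay}_0=\iota_0$ on matrix coefficients. Specifically, one has to check that the $\A$-linear extension of matrices over $\C$ matches the $\mathfrak U$-action on $\A\o_\C S$, so that "matrix coefficients of $C_{V_1,V_2}$" makes sense in the bases $1\o(\text{basis of }S\o T)$. Once this is fixed, the statement that the coefficients of $C_{V_1,V_2}$ are entire power series in $\varpi$ whose values at $\varpi=1/(d\kappa)$ give $\R_{S,T}$ is just the combination of the two identities above. Finally, $\mathscr R=e^{\pi i\varpi t}|_{\varpi=1/(d\kappa)}$ because $\pi i\varpi d\,\Om=\frac{\pi i}{\kappa}\Om$ at that value.
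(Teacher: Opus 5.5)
Your proposal is correct and follows essentially the same route as the paper: the matrix coefficients of $\mathcal C_{S,T}(z)$ are entire because $S\otimes_{\mathbb C}T$ is finite-dimensional; applying $\iota_0$ entrywise, together with $t=d\,\Omega_{\mathbb A}$ and $\tau\circ\Omega_{S,T}=\Omega_{T,S}\circ\tau$, identifies the Taylor expansion with $C_{V_1,V_2}$; and evaluating at $z=1/(d\kappa)$ recovers $\mathcal R_{S,T}$ by Corollary~\ref{coro:generic-braiding}. You also fill in two details the paper leaves implicit, namely the $U(\mathfrak g)$-linearity via \eqref{eq:coproduct-Casimir} and the centrality of $C$, and the flip symmetry of $\Omega$; the last claim about $\mathscr R$ needs no specialization $S=T$, since it holds on every $S\otimes_{\mathbb C}T$.
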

		
	\begin{remark}\label{rem:generic-associativity}
		The associators $\al$ in the category $\Mod_\fd(U(\g))$ in Corollary~\ref{coro:generic-braiding} are transported from $KL^k(\g)$ through $\Phi^\L$, and $\R$ satisfies the hexagon identities with respect to them by Theorem~\ref{thm:braided-tensor-structure}. Under \eqref{eq:genericlevelodot}, they are natural $U(\g)$-module automorphisms of $S\o_\C T\o_\C U$ for $S,T,U\in\Mod_{\mathrm{fd}}(U(\g))$, but they are not the trivial ones in general. Indeed, for $\g=\sl_2$ and $S=T=U=L(\om_1)$, a direct computation shows that $\R$ does not satisfy the hexagon identities with respect to the trivial associativity constraint unless $1/\kappa\in 2\Z$. This is consistent with Lemma~\ref{lm:Drinfeld-category}, where the associativity isomorphism of $\mathscr D$ is defined by the KZ equation.
	\end{remark}
			

			

		\end{document}